\newcommand{\new}{$\dagger$\xspace}
\pgfplotsset{compat=1.14}
\newcommand\restr[2]{{
  \left.\kern-\nulldelimiterspace 
  #1   \vphantom{\big|} 
  \right|_{#2} 
  }}
\newtheorem{thm}{Theorem}[section]
\newtheorem{rmk}[thm]{Remark}
\newtheorem{prop}[thm]{Proposition}
\newtheorem{cor}{Corollary}[thm]
\newtheorem{lema}[thm]{Lemma}
\newtheorem{defi}[thm]{Definition}
\newtheorem{exe}[thm]{Example}
\newtheorem{exen}[thm]{\new Example}
\newcolumntype{C}[1]{>{\centering\arraybackslash}p{#1}}
\title{Explicit Constructions of Halphen Pencils}
\author{Aline Zanardini}
\address{Mathematics Department, University of Pennsylvania, USA}
\email{alinez@math.upenn.edu}
\date{\today}
\begin{document}

\maketitle

\begin{abstract}
We construct rational elliptic surfaces of index two by explicitly constructing their associated Halphen pencils in the projective plane $\mathbb{P}^2$. For each of the types of singular fibers that occur we construct at least one example having that type of fiber and in fact, for some, we construct all possible examples. We establish a precise dictionary between the fibers in a rational elliptic surface and the corresponding plane curves and, in particular, we study the singularities of the curves appearing in a Halphen pencil. 
\end{abstract}

\setcounter{tocdepth}{3}
\tableofcontents

\section{Introduction}

Elliptic surfaces play an  important role in many questions from different areas of  Mathematics and also in theoretical Physics. They are central in the classification of algebraic surfaces, they appear in the construction of exotic four manifolds and they are also present in the formulation of F-theory. Examples of elliptic surfaces include Enriques surfaces, Dolgachev surfaces, all surfaces of Kodaira dimension one and many rational surfaces. In this paper we are interested in the latter.

We say a rational surface $Y$ is a rational elliptic surface if $Y$ admits a fibration $f:Y \to \mathbb{P}^1$ whose generic fiber is a smooth curve of genus one. We do not necessarily assume the existence of a global section. If $Y$ is a rational elliptic surface, then there exists some $m\geq 1$, called the index of the fibration, so that $f$ is given by the anti-pluricanonical system $|-mK_Y|$. Moreover, $m=1$ if and only if $f$ admits a global section and whenever $m>1$ there exists exactly one multiple fiber in $Y$, this of multiplicity $m$ (see e.g. \cite[Chapter V, \S 6]{dc}). 

Rational elliptic surfaces admitting a global section have been widely studied under many different points of view. Different compactifications for their moduli space have been constructed and are well understood \cite{av}, \cite{kdw}, \cite{kdtwisted}, \cite{hl}, \cite{stab},\cite{mirW}; their automorphism groups have been classified \cite{tolga2},\cite{tolga1}; and all possible configurations of singular fibers are known \cite{list},\cite{perssonlist}. It is also known that these surfaces can be realized from a pencil of cubic curves in the plane (by blowing-up their nine base points) and explicit examples having a Mordell-Weil group with some particular rank have been considered in \cite[Theorem 5.6.2]{dc}, \cite{fusi},\cite{pastro} and \cite{salgado}.

Nevertheless,  there are not many explicit constructions in the literature for those rational elliptic surfaces that do not admit a global section. The goal of this paper is to provide such explicit constructions. For each of the types of singular fibers that occur we construct at least one example having that type of fiber and in fact, for some, we construct all possible examples. Our approach is purely geometric.

Similar to the $m=1$ case, rational elliptic surfaces of any index $m$ can be realized as a nine point blow-up of $\mathbb{P}^2$, where the nine points are base points of a certain pencil of plane curves. These are called Halphen pencils (of index $m$), after the French mathematician Georges Henri Halphen who first studied these objects in \cite{halphen}. 

Concretely, if $f:Y\to \mathbb{P}^1$ is a rational elliptic surface, then there exists a birational map $\pi: Y \to \mathbb{P}^2$ so that $f\circ \pi^{-1}$ is a pencil of plane curves of degree $3m$ having nine (possibly infinitely near) singular base points of multiplicity $m$. In particular, any fiber $F$ corresponds to a plane curve $B$ of degree $3m$, namely $\pi(F)$ \cite[Theorem 5.6.1]{dc}. 

An important ingredient in our approach is the study of the singularities of a plane curve occurring in a Halphen pencil. The log canonical threshold (lct) plays an important role. We establish some precise relations between the log canonical thresholds of the pairs $(Y,F)$ and $(\mathbb{P}^2,B)$, which provide us with bounds for the lct of the latter. We prove the following results, where $M_B$ (resp. $M_F$) denotes the largest multiplicity of a component of $B$ (resp. $F$):

\begin{thm}
If $F$ is any (non-multiple) fiber of $Y$, then the corresponding plane curve $B$ is such that
\[
lct(\mathbb{P}^2,B)\leq \frac{1}{M_B}\leq 2lct(Y,F)
\]
and these inequalities do not depend on the index $m$ of the fibration.

Further,
\begin{enumerate}[(i)]
\item if $m>1$ and $F$ is reduced, then $B$ is reduced and we have
\[
\frac{1}{m}<lct(\mathbb{P}^2,B)\leq lct(Y,F)
\]
\item if $M_F\geq m$ and $F$ is not reduced, then
\[
lct(Y,F)\leq lct(\mathbb{P}^2,B) 
\]
\end{enumerate}
\end{thm}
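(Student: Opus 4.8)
The plan is to transport the question through the birational morphism $\pi\colon Y\to\mathbb{P}^2$, a composition of nine point blow-ups, controlling log discrepancies by a single structural identity and supplying the remaining numerology from Kodaira's classification of the singular fibres. Write $A_X(E)$ for the log discrepancy of a divisorial valuation $E$ over $X$, so that $lct(X,D)=\inf_E A_X(E)/\operatorname{ord}_E D$, and let $\mathcal{E}:=K_Y-\pi^*K_{\mathbb{P}^2}=\sum_{i=1}^{9}b_iE_i$ be the relative canonical divisor, an effective $\pi$-exceptional divisor with all $b_i\geq 1$. The first step is the \emph{key identity}
\[
\pi^*B=F+m\,\mathcal{E},
\]
valid for every fibre $F$ and its corresponding curve $B$. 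It rests on the fact that every member of a Halphen pencil of index $m$ meets each of the nine base points with multiplicity \emph{exactly} $m$ (otherwise blowing up the nine base points would not already resolve the pencil), so $F$ is the strict transform $\widetilde{B}$; then $\pi^*B-F$ is effective and $\pi$-exceptional, it is linearly equivalent to $m\,\mathcal{E}$ by a Picard group computation (using $B\sim 3mH$ and $F\sim -mK_Y$), and two linearly equivalent $\pi$-exceptional divisors coincide. From $F=\widetilde{B}$ I also record $M_B=M_F$: since $F$ has no $\pi$-exceptional component, its prime components are precisely the strict transforms of those of $B$, with the same multiplicities.

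Together with $K_Y=\pi^*K_{\mathbb{P}^2}+\mathcal{E}$, the key identity gives, for every divisorial valuation $E$ over $Y$ (equivalently over $\mathbb{P}^2$), writing $s=s(E):=\operatorname{ord}_E\mathcal{E}\geq 0$,
\[
A_{\mathbb{P}^2}(E)=A_Y(E)+s,\qquad \operatorname{ord}_E B=\operatorname{ord}_E F+m\,s.
\]
These two formulas drive every comparison below. The bound $lct(\mathbb{P}^2,B)\leq 1/M_B$ comes from taking $E$ to be a component of $B$ of multiplicity $M_B$ (then $A_{\mathbb{P}^2}(E)=1$ and $\operatorname{ord}_E B=M_B$). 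For $1/M_B\leq 2\,lct(Y,F)$, i.e.\ $lct(Y,F)\geq 1/(2M_F)$, run through Kodaira's list: if $F$ has only nodes as singularities — all types except $II$, $III$, $IV$ — then $\operatorname{Supp}(F)$ is a normal crossing divisor and $lct(Y,F)=1/M_F$; for $F$ of type $II$, $III$, $IV$ the fibre is reduced ($M_F=1$) and $lct(Y,F)$ is $5/6$, $3/4$, $2/3$ respectively (the threshold of a cusp, a tacnode, an ordinary triple point), each $>1/2$. The non-multiple fibres of $Y$ are the Kodaira-type fibres regardless of the index, and these quantities depend only on the type, so the displayed chain and its proof do not see $m$.

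For (i): if $F$ is reduced it is of type $I_n$ ($n\geq 0$), $II$, $III$ or $IV$, so $lct(Y,F)\geq 2/3$; since $m\geq 2$ this yields $lct(Y,F)>1/m$, and $M_B=M_F=1$ makes $B$ reduced. Then $lct(\mathbb{P}^2,B)>1/m$ follows by contradiction: if $A_{\mathbb{P}^2}(E)\leq\tfrac1m\operatorname{ord}_E B$ for some $E$, subtracting $s$ and $ms$ via the two formulas gives $A_Y(E)\leq\tfrac1m\operatorname{ord}_E F$, contradicting $lct(Y,F)>1/m$. Finally, with $\lambda:=lct(\mathbb{P}^2,B)$ (so $m\lambda>1$), every $E$ satisfies $A_Y(E)=A_{\mathbb{P}^2}(E)-s\geq\lambda\operatorname{ord}_E B-s\geq\lambda(\operatorname{ord}_E B-ms)=\lambda\operatorname{ord}_E F$, whence $lct(Y,F)\geq\lambda$. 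Part (ii) is the same bookkeeping in reverse: $F$ non-reduced (and non-multiple) forces $F$ to be of type $I_n^*$ ($n\geq 0$), $IV^*$, $III^*$ or $II^*$, all normal crossing, so $\mu:=lct(Y,F)=1/M_F\leq 1/m$ by the hypothesis $M_F\geq m$; then for every $E$, $A_{\mathbb{P}^2}(E)=A_Y(E)+s\geq\mu\operatorname{ord}_E F+s=\mu\operatorname{ord}_E B+(1-m\mu)s\geq\mu\operatorname{ord}_E B$, so $lct(\mathbb{P}^2,B)\geq\mu$.

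The step I expect to be the main obstacle is the key identity $\pi^*B=F+m\,\mathcal{E}$ — concretely, proving with care that every pencil member meets each base point with multiplicity exactly $m$ when some of the nine base points are infinitely near — together with pinning down the exact values of $lct(Y,F)$ and $M_F$ throughout Kodaira's table. Once those inputs are in place, the rest consists of the short valuation-theoretic manipulations above.
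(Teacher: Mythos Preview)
Your central structural claim --- that the fibre $F$ equals the strict transform $\widetilde{B}$, and hence $M_B=M_F$ --- is false. Individual members of a Halphen pencil may well meet a base point with multiplicity strictly larger than $m$; only the \emph{generic} member (equivalently, the scheme-theoretic base locus) has multiplicity exactly $m$ there. When this happens the corresponding exceptional curve becomes a component of $F$, with multiplicity $d_j^{(i)}=m_{P_j^{(1)}}(B)+\cdots+m_{P_j^{(i)}}(B)-im>0$. For a concrete witness take $m=2$, $B=3Q$ a triple conic and $F$ of type $II^*$: then $M_B=3$ while $M_F=6$.

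Curiously, your key identity $\pi^*B=F+m\,\mathcal{E}$ is nonetheless correct --- your Picard-group argument at the end of that paragraph does prove it, since $\pi^*B-F$ is $\pi$-exceptional and linearly equivalent to $m\mathcal{E}$ --- but it does \emph{not} imply $F=\widetilde{B}$. What it implies is $F=\widetilde{B}+\sum_j d_j^{(i)}E_j^{(i)}$ with the $d_j^{(i)}$ as above.

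This breaks exactly one piece of your argument: the inequality $1/M_B\leq 2\,lct(Y,F)$. Your case analysis yields only $2\,lct(Y,F)\geq 1/M_F$, and since $M_B\leq M_F$ the inequality between $1/M_B$ and $1/M_F$ goes the wrong way. In the paper this step (Proposition~\ref{lctstrict}) is not formal: for $F$ of type $II^*,III^*,IV^*$ it rests on Propositions~\ref{notreduced} and~\ref{mult3}, which show via a combinatorial analysis of how disjoint chains of exceptional $(-2)$-curves can sit inside the dual graph $\tilde E_6,\tilde E_7,\tilde E_8$ that $B$ must be non-reduced, and for $II^*$ even $M_B\geq 3$. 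That combinatorial input is genuinely needed and is missing from your approach.

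By contrast, your proofs of (i) and (ii) are essentially correct and cleaner than the paper's explicit bookkeeping with the multiplicities $b_j^{(i)},c_j^{(i)},d_j^{(i)}$: once the (correct) identity $\pi^*B=F+m\mathcal{E}$ is in hand, the valuation manipulations you wrote go through verbatim. Two small repairs: in (i), deduce that $B$ is reduced from the fact that $\overline{F}\leq F$ is reduced (not from $M_B=M_F$); and in (ii), your use of $lct(Y,F)=1/M_F$ for the non-reduced types is fine since those fibres are simple normal crossing.
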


In fact we establish a dictionary between the fibers in the surface and the corresponding plane curves. When $m=2$ and $F$ is of type $II^*,III^*$ or $IV^*$ we obtain the following complete characterization for the plane curve $B$:

\begin{thm}
A fiber $F$ of type $II^*$ can only be realized by one of the following plane curves:
\begin{enumerate}[(i)]
\item a triple conic 
\item a nodal cubic and an inflection line, with the line taken with multiplicity three 
\item two triples lines
\item a conic and a tangent line, with the line taken with multiplicity four 
\item a line with multiplicity five and another line 
\end{enumerate}
If $F$ is of type $III^*$, then $F$ can only be realized by one of the following curves:
\begin{enumerate}[(i)]
\item a double line, a cubic and another line 
\item a double conic and another conic 
\item a triple conic
\item two triple lines 
\item a triple line, a double line and another line 
\item a triple line, a conic and a line 
\item a triple line and a cubic 
\item a conic and a line, with the line taken with multiplicity four 
\item a line with multiplicity four and two other lines  
\end{enumerate}
And whenever $F$ is of type $IV^*$ we have that $F$ can only be realized by one of the curves below:
\begin{enumerate}[(i)]
\item a double conic and a conic 
\item a double line, a conic and two lines 
\item a double line, a cubic and a line 
\item a double line and two conics
\item two double lines and two lines 
\item two double lines and a conic 
\item a double conic and two lines 
\item a triple conic 
\item a triple line, a conic and a line 
\item a triple line, a double line and another line  
\item a triple line and three lines 
\item a triple line and a cubic 
\end{enumerate}

Conversely, we can construct a Halphen pencil of index two, $\lambda B + \mu(2C)=0$, where $B$ is any one of the curves above and the corresponding (non-multiple) fiber is of type $II^*,III^*$ or $IV^*$. 
\end{thm}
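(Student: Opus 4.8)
The plan is to establish the two halves separately: first that a fiber of each listed type can \emph{only} come from the curves in the corresponding list, and then, constructively, that each of those curves really does occur.

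\smallskip
\noindent\emph{The classification.} Fix a rational elliptic surface $Y$ of index $m=2$ carrying a fiber $F$ of type $II^*$, $III^*$ or $IV^*$, together with a birational morphism $\pi\colon Y\to\mathbb P^2$ contracting nine $(-1)$-curves, so that $B=\pi_*F$ is a plane sextic and $\pi^*B=F+2\sum_j E_j$. Decompose $F=\sum_i a_iC_i$ into irreducible components; the $a_i$ are precisely the marks of the affine diagram $\widetilde E_8$, $\widetilde E_7$ or $\widetilde E_6$, and $B=\sum_i a_i\,\pi_*C_i$, the sum running over the components not contracted by $\pi$. Each such $C_i$ is a smooth rational $(-2)$-curve, so its image $B_i=\pi_*C_i$ is a rational plane curve; writing $C_i\equiv d_iH-\sum_j m_{ij}E_j$ in $\operatorname{Pic}(Y)$, the adjunction relations $C_i\cdot K_Y=0$ and $C_i^2=-2$ read $\sum_j m_{ij}=3d_i$ and $\sum_j m_{ij}^2=d_i^2+2$, while $\sum_i a_id_i=\deg B=6$ and $\sum_i a_im_{ij}=2$ for every $j$.

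\smallskip
\noindent\emph{The combinatorial core.} Over nine nonnegative integers, the identities $\sum_j m_{ij}=3d_i$ and $\sum_j m_{ij}^2=d_i^2+2$ confine each non-contracted component $B_i$ to a short list — lines through three base points, smooth conics through six, and rational cubics with a single node or cusp passing through seven of them — and, combined with $\sum_i a_id_i=6$ where $a_i$ ranges over the marks of the relevant diagram, they leave only finitely many candidate sextics, a list that contains every curve in the statement. One then imposes the incidence data read off from the affine diagram — adjacent components meet in exactly one point of $Y$, non-adjacent ones are disjoint — together with the requirement that the nine base points (with their infinitely near structure) distribute so that $\sum_i a_im_{ij}=2$ at each $j$; this is what rules out the spurious candidates, and in particular excludes configurations of higher degree. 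For instance a hypothetical double conic and double line for $IV^*$ would force the two corresponding $(-2)$-curves, which carry the same mark $2$ and are therefore disjoint on $Y$, to have as strict transforms a conic and a line meeting only at base points, and a count shows the six and three base points they use cannot be packed into the nine available compatibly with $\pi^*B=F+2\sum_j E_j$. I expect this elimination — to be carried out uniformly across all the infinitely near degenerations, where the dictionary between plane-curve singularities and fiber components established earlier does the bookkeeping — to be the main obstacle, rather than the numerics themselves.

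\smallskip
\noindent\emph{The construction.} For each curve $B$ in the three lists I would exhibit an explicit Halphen pencil of index two realizing it. After choosing coordinates and writing $B$ in a normal form, the point is to produce a cubic $C$ meeting $B$ in exactly nine points — some possibly infinitely near — each of local intersection multiplicity two, so that the pencil $\lambda B+\mu(2C)=0$ has precisely these nine points as its base points; then a generic member is a smooth curve of genus one and $2C$ is its unique multiple fiber, so the index is exactly two. It remains to confirm that the member $B$ gives the advertised fiber type: one resolves the singularities of $B$ at the nine base points, reads off the resulting tree of $(-2)$-curves, and checks that it is $\widetilde E_8$, $\widetilde E_7$ or $\widetilde E_6$ — equivalently, that $e(F)$ equals $10$, $9$ or $8$ and the fiber is additive — again through the dictionary. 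These checks are routine but must be done case by case; the delicate points are producing the auxiliary cubic with precisely the right tangencies in the most degenerate configurations — say when $B$ contains a line of multiplicity five, or when $B$ is a triple conic, so that all or nearly all nine base points lie in a single infinitely near chain — and verifying that the index does not drop to one, which one does by checking that these nine points cannot at the same time be the base locus of a pencil of cubics.
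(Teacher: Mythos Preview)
Your approach is correct in outline and would work, but it is organized differently from the paper's. You set up the elimination through $\operatorname{Pic}(Y)$: each fiber component is $d_iH-\sum_j m_{ij}E_j$ with $\sum_j m_{ij}=3d_i$, $\sum_j m_{ij}^2=d_i^2+2$, together with the global constraints $\sum_i a_id_i=6$ and $\sum_i a_im_{ij}=2$. The paper instead exploits a single geometric observation: if one two-colors the dual graph of $F$ (blue for components coming from $B$, black for exceptional ones), then every black vertex is adjacent to at most two other black vertices, because the exceptional locus over each base point is a \emph{chain}. This immediately forces $B$ to be non-reduced for all three types and, for $II^*$, forces $M_B\ge 3$; combined with the component count $n_F=n_B+9-k-n_{E\setminus C}$ and the intersection formula $I_{P_j^{(1)}}(B,C)=2a_j$, the remaining exclusions become short colored-Dynkin-diagram arguments carried out case by case. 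Your lattice constraints encode the same information --- the chain condition is hidden in the fact that contracted $(-2)$-classes have the shape $e_s-e_{s+1}$ --- but extracting the exclusions from that is less transparent than reading them off a colored diagram.

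One imprecision worth correcting: the bare numerics do \emph{not} confine each non-contracted $B_i$ to lines, conics and cubics. A rational quartic with three nodes ($\sum m_{ij}=12$, $\sum m_{ij}^2=18$) and a rational quintic with six nodes also satisfy your equations, and the paper must separately rule out, for instance, a double line plus a rational quartic for both $III^*$ and $IV^*$ by dedicated arguments. So the candidate list you have to walk through is longer than your phrasing suggests --- though you do correctly flag the elimination step as the main obstacle. For the constructive half, your plan and the paper's coincide: one explicit pencil per curve type, with the auxiliary cubic $C$ engineered to meet $B$ with the prescribed tangencies and the fiber type verified by resolving the base scheme.
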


And for any index $m$ we prove Proposition \ref{notredintro} below, providing a new proof for a result of Miranda \cite[Lemma 6.4]{stab}.

\begin{prop}
If $F$ is of type $II^*,III^*$ or $IV^*$, then $B$  cannot be reduced.
\label{notredintro}
\end{prop}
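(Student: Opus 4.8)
The plan is to obtain this as an immediate consequence of the first theorem stated above, which supplies the index-independent inequality $1/M_B \le 2\,lct(Y,F)$ for every non-multiple fiber $F$ and its associated plane curve $B = \pi(F)$. Two ingredients are needed. First I would record that a fiber of type $II^*$, $III^*$ or $IV^*$ is automatically non-multiple, so that the first theorem applies: the unique multiple fiber of a complex elliptic surface has good or multiplicative reduction, i.e. its reduced support $D$ is a Kodaira fiber of type $I_b$ with $b \ge 0$, because $\mathcal{O}_D(D)$ must be a nontrivial torsion class in $\operatorname{Pic}(D)$, which is impossible when $D$ is a tree of rational curves (for then $\operatorname{Pic}(D)$ is torsion-free) -- and the reduced fibers of type $II^*$, $III^*$, $IV^*$ are exactly such trees. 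Second I would use the completely general bound $lct(Y,F) \le 1/M_F$, valid for any effective divisor on the smooth surface $Y$: it follows by restricting the pair $(Y,F)$ to a general, hence smooth, point of a component of maximal multiplicity and using monotonicity of the log canonical threshold. For the three types at hand $M_F = 6, 4, 3$ respectively.

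Combining these, $1/M_B \le 2\,lct(Y,F) \le 2/M_F$, so $M_B \ge M_F/2 \ge 3/2$ in every case. Since $M_B$ is a positive integer this forces $M_B \ge 2$; that is, $B$ has an irreducible component occurring with multiplicity at least two, and hence $B$ is not reduced.

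There is no genuine computational obstacle here -- all the content is packaged into the first theorem, and one does not even need the exact value of $lct(Y,F)$, only the crude estimate $1/M_F$. The step that deserves care is simply checking that $F$ meets the hypotheses of that theorem, namely that a fiber of type $II^*$, $III^*$ or $IV^*$ can never be the multiple fiber. For readers preferring to avoid the log canonical threshold machinery, there is a combinatorial alternative: since $\pi$ is a composition of blow-downs, the non-exceptional components of $F$ are exactly the strict transforms of the components of $B$ and occur in $F$ with the same multiplicities, so a reduced $B$ would force every non-exceptional component of $F$ to have Kodaira multiplicity one; but each of the three fiber types has a unique component of multiplicity one, which would make $B$ irreducible and rational with smooth strict transform equal to that end component, and a self-intersection and arithmetic genus count on $Y$ then yields a contradiction. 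The lct argument above is shorter and uniform in the index $m$.
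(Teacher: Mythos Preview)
Your main argument is circular in the paper's logical structure. The inequality $1/M_B \le 2\,lct(Y,F)$ that you invoke is exactly Proposition~\ref{lctstrict}, and if you look at how the paper proves that proposition you will see that the cases $F$ of type $II^*,III^*,IV^*$ are handled by appealing to Propositions~\ref{notreduced} and~\ref{mult3}. Proposition~\ref{notreduced} is the very statement you are trying to establish. The paper even flags this in the remark immediately after Proposition~\ref{lctstrict}: the inequality \emph{implies} Propositions~\ref{notreduced} and~\ref{mult3}, but the implication is being used in the other direction. So ``all the content is packaged into the first theorem'' is true, but the package contains the proposition itself.

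Your combinatorial alternative is much closer to what the paper actually does, but it has a factual slip: it is not true that ``each of the three fiber types has a unique component of multiplicity one.'' A fiber of type $III^*$ has two such components and a fiber of type $IV^*$ has three, so the reduction to an irreducible $B$ does not go through for those types. The paper's proof (Proposition~\ref{notreduced}) instead uses Lemma~\ref{nodes}: the $\pi$-exceptional components of $F$ are arranged in disjoint chains, so in the dual graph every ``black'' (exceptional) vertex has at most two black neighbours. If $B$ were reduced then every ``blue'' (non-exceptional) vertex would have multiplicity~$1$ in $F$; but in each of $\tilde E_6,\tilde E_7,\tilde E_8$ the trivalent vertex and all three of its neighbours have multiplicity $\ge 2$, hence are black, contradicting the lemma. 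That is the whole argument, and it is what ultimately feeds into the lct inequality you wanted to quote.
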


The many examples we construct complement (and in some sense complete) the few existing ones considered in \cite{fuji90},\cite{fuji98},\cite{kimu1} and \cite{kimu2}.  Some examples of rational elliptic surfaces $Y$ of index two have also been constructed independently by Antonio Laface \cite{exlaface}. His approach, however, builds on the analysis of lattices by considering the $(-1)$ curves of $Y$ as integer points in a polytope inside $Pic(Y)$ and then using information about the intersection form. 

Surprisingly, Halphen pencils have appeared in \cite{squares} in the solution of a problem in Diophantine geometry and a generalization to higher dimensions has been considered in \cite{cheltsov1} and \cite{cheltsov2}. Other possible applications include the study of certain $K3$ surfaces \cite{dpn}, \cite{zhang} and the construction of: F-theory compactifications \cite{kimu1},\cite{kimu2}, discrete Painlev\'{e} equations \cite{sakai} and  a moduli space for rational elliptic surfaces of index two \cite{azstab}. 

In a forthcoming paper \cite{azstab} we will use the constructions of Halphen pencils presented here and the results from Section \ref{hp} to study the stability, in the sense of geometric invariant theory (GIT), of Halphen pencils under the action of $SL(3)$ and hence, to construct a compactification for the moduli space of rational elliptic surfaces of index two.  This was our original motivation and it also explains why we have exhibited all possible examples of Halphen pencils yielding rational elliptic surfaces with fibers of type $II^*,III^*$ or $IV^*$ (Theorems \ref{allpossibleiistar}, \ref{allpossibleiiistar} and \ref{allpossibleivstar}). In \cite{azstab} we show these types of fiber are associated with unstability.

The work of Miranda in \cite{stab} describes the GIT stability conditions for pencils of plane cubics, which leads to a compactification of the moduli space of rational elliptic surfaces with section. Such compactification agrees with the one obtained by the same author in \cite{mirW}, where the surfaces are described by equations, namely their Weierstrass models.

When the existence of a global section is not assumed, there is no analogue for the Weierstrass model and even the dimensions of the parameter spaces involved are much higher, which makes it much harder to solve the classification problem. Tools from Birational Geometry and the results obtained in this paper have helped us to overcome such difficulties.

\subsection*{Organization}

The paper is organized as follows: We begin, in Section \ref{lct},  by presenting some basic background material from Birational Geometry that will be needed later. Next, in Section \ref{res} we introduce the geometric objects we are interested in, namely rational elliptic surfaces and their associated Halphen pencils. Section \ref{hp} is devoted to establishing the dictionary between the curves in a Halphen pencil and the fibers in the corresponding elliptic surface. In particular, we study the log canonical thresholds of the plane curves that can occur in a Halphen pencil. In Section \ref{iistariiistar} we prove Theorems \ref{allpossibleiistar}, \ref{allpossibleiiistar} and \ref{allpossibleivstar} that completely characterize all possible examples of Halphen pencils of index two yielding fibers of type $II^*,III^*$ and $IV^*$. Then in Section \ref{constructions} we present many new constructions of Halphen pencils of index two. These are summarized in Tables \ref{examplesi7i8} through \ref{examplesiistar}. A more detailed and explicit geometric description is given right after in Section \ref{gd}. We work over $\mathbb{C}$.

\subsection*{Acknowledgments}
 I would like to thank my advisor, Antonella Grassi, for the many insightful conversations, the numerous enriching suggestions on earlier drafts and the encouragement for writing this paper. I also would like to thank  Antonio Laface for helpful conversations. This work is part of my PhD thesis and it was partially supported by a Dissertation Completion Fellowship at the University of Pennsylvania. This project originated from work supported by NSF Grant No. DMS-1440140 while the author was a program associate at MSRI during the Spring 2019 semester.

\section{The Log Canonical Threshold}
\label{lct}

We first recall some necessary background notions in Birational Geometry concerning log canonical pairs. We refer to \cite{singpairs} for a more detailed exposition. 

Let $X$ be a normal algebraic variety and let $\Delta=\sum d_iD_i \subset X$ be a $\mathbb{Q}$-divisor, i.e. a $\mathbb{Q}$-linear combination of prime divisors.

\begin{defi}
Given any birational morphism $\mu: \tilde{X}\to X$, with $\tilde{X}$ normal, we can write
\[
K_{\tilde{X}} \equiv \mu^*(K_X+\Delta)+\sum a_E E
\]
where $E\subset \tilde{X}$ are distinct prime divisors, $a_E \doteq a(E,X,\Delta)$ are the discrepancies of $E$ with respect to $(X,\Delta)$ and a nonexceptional divisor $E$ appears in the sum if and only if $E=\mu_{*}^{-1}D_i$ for some $i$ (in that case with coefficient $a(E,X,\Delta)=-d_i$).
\end{defi}

\begin{defi}
A \textbf{log resolution} of the pair $(X,\Delta)$ consists of a proper birational morphism $\mu: \tilde{X}\to X$ such that $\tilde{X}$ is smooth and $\mu^{-1}(\Delta)\cup Exc(\mu)$ is a divisor with global normal crossings.
\label{logres}
\end{defi}

\begin{defi}
We say $(X,\Delta)$ is  \textbf{log canonical} if $K_X+\Delta$ is $\mathbb{Q}$-Cartier and given any log resolution $\mu:\tilde{X}\to X$ we have
\[
K_{\tilde{X}} \equiv \mu^*(K_X+\Delta)+\sum a_E E
\]
with all $a_E\geq - 1$.
\end{defi}

\begin{rmk}
In particular, if $X$ is smooth and $\Delta=d_iD_i$ is simple normal crossings, then $(X,\Delta)$ is log canonical if and only if $d_i\leq 1$ for all $i$.
\label{snc}
\end{rmk}

\begin{defi}
The number
\[
lct(X,\Delta) \doteq \sup \{\,\,t\,\,;\,\, (X,t\Delta)\,\, \mbox{is log canonical}\}
\]
is called the \textbf{log canonical threshold} of $(X,\Delta)$. 
\end{defi}

We can also consider a local version:
\[
lct_p(X,\Delta) \doteq \sup \{\,\,t\,\,;\,\, (X,t\Delta)\,\, \mbox{is log canonical in an open neighborhood of $p$}\}
\]
where $p\in X$ is a closed point.

\begin{lema}
Given a log resolution $\mu: \tilde{X}\to X$, write
\[
K_{\tilde{X}}=\mu^*K_X+\sum a_iE_i \qquad \mbox{and} \qquad \mu^*\Delta=\tilde{\Delta}+\sum b_iE_i
\]
where $\tilde{\Delta}=d_i\tilde{D}_i$ (resp. $\tilde{D}_i$) denotes the strict transform of $\Delta$ (resp. $D_i$) under $\mu$ and $E_i \subset \tilde{X}$ are the exceptional divisors of $\mu$.
Then
\[
lct(X,\Delta)=\min\left\{\frac{1+a_i}{b_i},\frac{1}{d_i},1\right\}
\]
\label{complct}
\end{lema}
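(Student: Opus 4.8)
The plan is to compute the log canonical threshold directly from the definition by analyzing the discrepancies contributed by the exceptional divisors $E_i$ and the strict transforms $\tilde D_i$ separately. First I would fix a log resolution $\mu:\tilde X\to X$ as in the statement and use the two defining relations to expand
\[
K_{\tilde X}-\mu^*(K_X+t\Delta)=\sum a_iE_i+t\Bigl(\tilde\Delta+\sum b_iE_i\Bigr)^{-}
\]
more precisely, writing $\mu^*(K_X+t\Delta)=\mu^*K_X+t\mu^*\Delta$ and substituting $\mu^*\Delta=\tilde\Delta+\sum b_iE_i$, one gets
\[
K_{\tilde X}\equiv\mu^*(K_X+t\Delta)+\sum(a_i-tb_i)E_i-t\tilde\Delta .
\]
Since $\mu^{-1}(\Delta)\cup\mathrm{Exc}(\mu)$ has global normal crossings, $\tilde X$ is smooth, and the boundary $t\tilde\Delta+\sum(tb_i-a_i)E_i$ is simple normal crossings, Remark \ref{snc} applies: the pair $(\tilde X,\,t\tilde\Delta+\sum(tb_i-a_i)E_i)$ — equivalently $(X,t\Delta)$ via the discrepancy formula — is log canonical if and only if every coefficient of this boundary divisor is at most $1$.

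The second step is to translate ``all coefficients $\le 1$'' into the stated minimum. The coefficient of $\tilde D_i$ is $td_i$, so the condition $td_i\le 1$ for each $i$ with $d_i>0$ gives $t\le 1/d_i$; the coefficient of $E_i$ in the boundary is $tb_i-a_i$, so the condition $tb_i-a_i\le 1$, i.e. $t(−b_i)\ge −(1+a_i)$ — one must be a little careful here about the sign of $b_i$. When $b_i>0$ this reads $t\le(1+a_i)/b_i$; when $b_i=0$ it reads $a_i\ge -1$, which holds because $\mu$ is a log resolution of the smooth (hence canonical, in fact terminal) situation, so it imposes no constraint and is harmlessly absorbed since $(1+a_i)/b_i=+\infty$; and $b_i<0$ cannot occur because $\Delta$ is effective and $\mu^*\Delta$ is $\mu$-nef over the support, or more simply because $b_i=\mathrm{mult}_{E_i}\mu^*\Delta\ge 0$ as $\Delta\ge 0$. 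Finally the definition of $\mathrm{lct}$ as a supremum of admissible $t$ forces
\[
\mathrm{lct}(X,\Delta)=\sup\{t: \text{all these inequalities hold}\}=\min\Bigl\{\frac{1+a_i}{b_i},\ \frac{1}{d_i}\Bigr\},
\]
and one should also explain the extra entry ``$1$'' in the stated formula: this records the convention that $\mathrm{lct}\le 1$ whenever $\Delta\ne 0$ has some component with $d_i\ge 1$ passing through nothing is needed — actually $1$ appears as the bound coming from components $\tilde D_i$ with $d_i\le 1$, or can be seen as the value $\tfrac{1}{d_i}$ in the normalized case $d_i=1$; since in the paper's applications $\Delta=B$ or $\Delta=F$ has integral coefficients it is natural to include it explicitly. (If $\Delta=0$ then the supremum is unbounded and the formula is read with the convention $\min\emptyset$-entries omitted, giving $+\infty$; but this degenerate case does not arise for us.)

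The only real subtlety — and the step I would treat most carefully — is the sign and edge-case bookkeeping for the $E_i$: ensuring $b_i\ge 0$ always, handling $b_i=0$ (no constraint, value $+\infty$), and checking that a strict transform $\tilde D_i$ and an exceptional $E_i$ cannot be forced to coincide so that the two families of inequalities are genuinely the ones listed. Once the normal-crossings hypothesis is in hand, everything reduces to Remark \ref{snc} coefficient by coefficient, so there is no geometric difficulty beyond organizing these cases; I would state the argument in one short paragraph and relegate the sign check to a parenthetical remark.
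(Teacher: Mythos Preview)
The paper states this lemma without proof; it is a standard computation in birational geometry (see e.g.\ Koll\'ar's survey \cite{singpairs}, to which the paper refers). Your argument is the correct and standard one: expand $K_{\tilde X}-\mu^*(K_X+t\Delta)$, read off the discrepancies $a(E_i,X,t\Delta)=a_i-tb_i$ and $a(\tilde D_i,X,t\Delta)=-td_i$, and impose $\ge -1$ on each; since one log resolution suffices to test log canonicity, the supremum defining the threshold is exactly the stated minimum.

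Two points deserve cleanup. First, your explanation of the extra ``$1$'' in the minimum is garbled (the clause ``passing through nothing is needed'' looks like an editing artifact): the honest statement is simply that when all $d_i\ge 1$ the term $1$ is redundant, and otherwise it caps the threshold at $1$ by convention; since the paper only applies the lemma to divisors with integer coefficients this never matters. Second, the detour through Remark~\ref{snc} applied to an auxiliary pair on $\tilde X$ is unnecessary --- it is cleaner, and closer to Definition~2.3, to just read the discrepancies directly from your displayed formula and impose the inequality $\ge -1$. Your handling of the edge cases ($b_i=0$ gives no constraint, $b_i<0$ is impossible since $\Delta$ is effective) is correct.
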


\section{Rational Elliptic Surfaces with a Multiple Fiber}
\label{res}

We now introduce the geometric objects we are interested in studying in this paper, namely rational elliptic surfaces and their associated Halphen pencils. We point the reader to \cite[Chapter V, \S 6]{dc} for more details.

Let $Y$ be a smooth and projective surface and let $f:Y\to C$ be a fibration (a surjective proper flat morphism) such that the generic fiber is a smooth genus one curve.  And, further, assume $Y$ is relatively minimal, meaning there are no $(-1)-$curves in any fiber. We will refer to this data as an \textbf{elliptic surface}\footnote{in the literature this is often referred to as a genus one fibration }, even though the generic fiber of $f$ does not necessarily have the structure of an elliptic curve. 

Any elliptic surface has finitely many singular fibers and the configuration of all non-multiple singular fibers is exactly the same as the one in the associated Jacobian fibration (see Section \ref{jac}). The possible non-multiple singular fibers have been classified by Kodaira and N\'{e}ron \cite{kodaira1},\cite{kodaira2} and \cite{neron} and Table \ref{fibers}  below gives the full classification. Over a field of characteristic zero, any multiple fiber is of type $I_n$ for some $n\geq 0$ \cite[Proposition 5.1.8]{dc} .

{\renewcommand{\arraystretch}{1.5}

\begin{table}[H]
\centering
\begin{tabular}{|c  | c | c |}
\hline 
\bf{Kodaira Type} & \bf{Number of Components} & \bf{Dual Graph}\\
\hline 
$I_0$ & 1 (smooth) & \begin{tikzpicture}[line cap=round,line join=round,>=triangle 45,x=1.0cm,y=1.0cm]
\clip(-.2,-.2) rectangle (.2,.2);
\begin{scriptsize}
\draw [fill=black] (0.,0.) circle (2.5pt);
\end{scriptsize}
\end{tikzpicture}\\
$I_1$ & 1 (with a node) & \begin{tikzpicture}[line cap=round,line join=round,>=triangle 45,x=1.0cm,y=1.0cm]
\clip(-.2,-.2) rectangle (.2,.2);
\begin{scriptsize}
\draw [fill=black] (0.,0.) circle (2.5pt);
\end{scriptsize}
\end{tikzpicture}\\
$I_n$ & $n\geq 2$ & $\tilde{A}_{n-1}$\\
$II$ & 1 (with a cusp) & \begin{tikzpicture}[line cap=round,line join=round,>=triangle 45,x=1.0cm,y=1.0cm]
\clip(-.2,-.2) rectangle (.2,.2);
\begin{scriptsize}
\draw [fill=black] (0.,0.) circle (2.5pt);
\end{scriptsize}
\end{tikzpicture}\\
$III$ & 2 & $\tilde{A}_1$ \\
$IV$ & 3 & $\tilde{A}_2$ \\
$I_n^*$ & $n+5$ & $\tilde{D}_{4+n}$\\
$IV^*$ & $7$ & $\tilde{E}_6$\\
$III^*$ & $8$ & $\tilde{E}_7$\\
$II^*$ & $9$ & $\tilde{E}_8$\\
\hline
\end{tabular}
\caption{Kodaira's Classification}
\label{fibers}
\end{table}
}

Given $f: Y \to C$ as before, we define the \textbf{index} of the fibration, and denote it by $d_{Y}$, as the positive generator of the ideal
\[
\{D\cdot Y_{\eta} \,\,; \,\, D\in \text{Pic}(Y)\} \trianglelefteq \mathbb{Z}
\]
where $Y_{\eta}$ is a generic fiber. Since $Y$ is projective $d_Y$ is always finite.

Note that $d_{Y}=1$ if and only if $f$ admits a section, if and only if the generic fiber has the structure of a (smooth) elliptic curve.

In this paper we are interested in the situation where $Y$ is rational and $d_Y>1$. One can show that when $Y$ is rational then $C\simeq \mathbb{P}^1$ (Lur\"{o}th's Theorem) and $f$ is given by the linear system $|-mK_Y|$, where $m=d_Y$ (Proposition \ref{Halphen}). In particular, $K_Y^2=0$ and $Y$ can be obtained as a nine-point blow-up of $\mathbb{P}^2$. 

\subsection{Halphen Pencils}

\begin{defi}
A \textbf{Halphen pencil of index $m$} is a pencil of plane curves of degree $3m$  with nine (possibly infinitely near) base points of multiplicity $m$.  
\end{defi}

Such geometric object is in one-to-one correspondence with rational elliptic surfaces:

\begin{prop}[{\cite[Theorem 5.6.1]{dc}}, {\cite[Main Theorem 2.1]{fuji90}}]
Let $f:Y \to \mathbb{P}^1$ be a rational elliptic surface of index $m$ and let $F$ be a choice of  a fiber of $f$, then there exists a birational map $\pi: Y \to \mathbb{P}^2$ so that $f\circ \pi^{-1}$ is a Halphen pencil of index $m$ and, moreover, $B\doteq \pi(F)$  is a plane curve of degree $3m$:
\[
\xymatrix{
F\subset Y \ar@{-->}[rr]^-\pi \ar[dr]_f & & \mathbb{P}^2 \supset B\doteq \pi(F) \ar@{-->}[dl]^{f \circ \pi^{-1}}\\
&\mathbb{P}^1 & 
}
\] 
Conversely, given a Halphen pencil of index $m$, taking the minimal resolution of its base points we obtain a rational elliptic surface of index $m$.
\label{Halphen}
\end{prop}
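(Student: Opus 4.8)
The plan is to prove the two implications separately, in both directions exploiting the description of $f$ as the anti-$m$-canonical pencil $|-mK_Y|$ together with the presentation of a rational surface with $K_Y^2=0$ as a nine-point blow-up of $\mathbb{P}^2$. First I would record that $f$ is indeed given by $|-mK_Y|$: applying Kodaira's canonical bundle formula to $Y$, which carries a single multiple fiber $mF_0$ and satisfies $\chi(\mathcal{O}_Y)=1$, gives $K_Y=f^*(K_{\mathbb{P}^1}\otimes L)+(m-1)F_0$ with $\deg L=\chi(\mathcal{O}_Y)=1$; since a general fiber is $F\sim mF_0$, this yields $-K_Y\sim F_0$ and hence $-mK_Y\sim F$, so $f$ is the pencil $|-mK_Y|$.

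For the forward direction, using that $Y$ is rational with $K_Y^2=0$, choose a birational morphism $\pi\colon Y\to\mathbb{P}^2$ contracting nine successive $(-1)$-curves; such a $\pi$ exists by running the minimal model program on the rational surface $Y$ and arranging the minimal model to be $\mathbb{P}^2$. Writing $H=\pi^*\mathcal{O}_{\mathbb{P}^2}(1)$ and letting $\mathcal{E}_1,\dots,\mathcal{E}_9$ be the total transforms of the exceptional divisors, one has $H^2=1$, $\mathcal{E}_i\cdot\mathcal{E}_j=-\delta_{ij}$, $H\cdot\mathcal{E}_i=0$, and
\[
K_Y=-3H+\sum_{i=1}^9\mathcal{E}_i,\qquad F=-mK_Y=3mH-m\sum_{i=1}^9\mathcal{E}_i,
\]
where $F$ denotes the class of a general fiber. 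Pushing forward by $\pi$, which sends $H$ to the line class and contracts each $\mathcal{E}_i$, shows that the members of $f\circ\pi^{-1}$ are plane curves of degree $F\cdot H=3m$ having a point of multiplicity $m$ at each of the nine (possibly infinitely near) centers; since $f$ maps onto $\mathbb{P}^1$ the system $|-mK_Y|$ is one-dimensional, so $f\circ\pi^{-1}$ is a pencil. This is exactly a Halphen pencil of index $m$, and $B\doteq\pi(F)$ has degree $3m$.

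For the converse, given a Halphen pencil $P$ of index $m$, let $\pi\colon Y\to\mathbb{P}^2$ be the minimal resolution of its nine base points, chosen so that the strict transforms of the members of $P$ become base-point free; these define a morphism $f\colon Y\to\mathbb{P}^1$. In the same basis the strict transform of a member is $F=3mH-m\sum_i\mathcal{E}_i=-mK_Y$, whence $F^2=m^2K_Y^2=0$ and $F\cdot K_Y=-mK_Y^2=0$, so adjunction gives $2p_a(F)-2=F^2+F\cdot K_Y=0$ and $p_a(F)=1$; by Bertini the general member is a smooth genus-one curve. The index is then computed directly: since $F=-mK_Y$ we have $\{D\cdot F:D\in\operatorname{Pic}(Y)\}=m\{D\cdot K_Y:D\in\operatorname{Pic}(Y)\}$, and as $\mathcal{E}_9\cdot K_Y=-1$ the latter ideal is all of $\mathbb{Z}$; hence $d_Y=m$ and $Y$ is a rational elliptic surface of index exactly $m$.

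I expect the delicate points to be: (a) in the forward direction, guaranteeing that $\pi$ lands on $\mathbb{P}^2$ with exactly nine centers rather than on a Hirzebruch surface, so that $|-mK_Y|$ becomes the correct degree-$3m$, multiplicity-$m$ system; and (b) keeping the multiplicity bookkeeping honest when some base points are infinitely near, i.e.\ verifying that $F=3mH-m\sum_i\mathcal{E}_i$ still holds in the total-transform basis. A secondary point, in the converse, is relative minimality of $f$: a $(-1)$-curve $C$ inside a fiber would satisfy $C\cdot F=0$, yet $C\cdot K_Y=-1$ by adjunction forces $C\cdot F=-m(C\cdot K_Y)=m\neq0$, a contradiction, so no such curve exists.
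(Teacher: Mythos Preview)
The paper does not give its own proof of this proposition: it is stated with citations to \cite[Theorem 5.6.1]{dc} and \cite[Main Theorem 2.1]{fuji90} and used as background input, so there is no argument in the paper to compare yours against.

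That said, your sketch follows the standard line of the cited proofs and is essentially correct. The one point you flag but do not settle is (a): showing that the output of the MMP can be taken to be $\mathbb{P}^2$ rather than some $\mathbb{F}_n$. This is where the actual content lies. The usual way to close it is to note that $-K_Y\sim F_0$ is nef, and nefness of the anticanonical is preserved under contracting $(-1)$-curves; hence any minimal model has nef anticanonical, forcing it to be $\mathbb{P}^2$, $\mathbb{F}_0$, $\mathbb{F}_1$, or $\mathbb{F}_2$. From $\mathbb{F}_1$ one blows down to $\mathbb{P}^2$; from $\mathbb{F}_0$ or $\mathbb{F}_2$ one performs an elementary transformation (blow up a suitable point on the image of the last contracted curve, blow down the other ruling) to reach $\mathbb{F}_1$ and then $\mathbb{P}^2$, and one checks this elementary transformation can be lifted to a different choice of $(-1)$-curve on $Y$. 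Without this step your forward direction is incomplete. Your point (b) is fine once you work consistently with total transforms $\mathcal{E}_i$, as you do; and your relative-minimality and index computations in the converse are clean and correct.
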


Moreover, since the canonical bundle formula (see e.g. \cite[Theorem 12.1]{barth}) implies Lemma \ref{l1} below we have that any Halphen pencil of index $m$ contains exactly one cubic of multiplicity $m$, which corresponds to the unique multiple fiber in the associated rational elliptic surface. In fact the cubic corresponds to a fiber of type $I_n$ for some $n\leq 9$ \cite[Proposition 5.1.8]{dc}. And if none of the base points are singular points of the cubic, then we can further restrict to $n \leq 3$ (Lemma \ref{Cnotsingular}).

\begin{lema}[{\cite[Proposition 5.61,(iii)]{dc}}]
If $f:Y \to \mathbb{P}^1$ is a rational elliptic surface, then $f$ has at most one multiple fiber.
\label{l1}
\end{lema}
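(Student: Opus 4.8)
The plan is to deduce the bound directly from the canonical bundle formula for a relatively minimal elliptic surface, exactly as the sentence preceding the statement suggests. Writing the multiple fibers as $m_i F_i$ (with $F_i$ reduced and $m_i \geq 2$), the formula (see \cite[Theorem 12.1]{barth}) reads
\[
K_Y \sim f^*(K_C + \mathfrak{d}) + \sum_i (m_i - 1) F_i,
\]
where $\mathfrak{d}$ is a divisor on $C\simeq \mathbb{P}^1$ of degree $\chi(\mathcal{O}_Y)$. Since $Y$ is rational we have $\chi(\mathcal{O}_Y)=1$, so $\deg(K_C+\mathfrak{d}) = -2+1 = -1$, and hence $f^*(K_C+\mathfrak{d}) \equiv -F$, where $F$ denotes the class of a general (reduced) fiber.

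Next I would pass to numerical equivalence in $\mathrm{NS}(Y)\otimes\mathbb{Q}$, where the decisive simplification is that every fiber is numerically proportional to $F$: each multiple fiber satisfies $m_i F_i \sim F$ (all fibers are pullbacks of points of $\mathbb{P}^1$), so $F_i \equiv \tfrac{1}{m_i} F$. Substituting into the displayed formula gives
\[
K_Y \equiv \left(-1 + \sum_i \left(1 - \frac{1}{m_i}\right)\right) F .
\]
On the other hand, by Proposition \ref{Halphen} the fibration is given by $|-mK_Y|$ with $m$ the index, so a general fiber satisfies $F \equiv -m K_Y$, i.e.\ $K_Y \equiv -\tfrac{1}{m} F$. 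Comparing the two expressions for $K_Y$ (legitimate because $F\not\equiv 0$, being nef and nonzero) yields the single numerical identity
\[
\sum_i \left(1 - \frac{1}{m_i}\right) = 1 - \frac{1}{m}.
\]

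Finally I would extract the count. The right-hand side satisfies $1-\tfrac{1}{m} < 1$, while each summand on the left satisfies $1 - \tfrac{1}{m_i} \geq \tfrac{1}{2}$ because $m_i \geq 2$. Hence if there were $k$ multiple fibers we would obtain $\tfrac{k}{2} \leq 1 - \tfrac{1}{m} < 1$, forcing $k \leq 1$, which is precisely the assertion.

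The main obstacle lies not in the arithmetic of the last step but in setting up the first two paragraphs cleanly: one must invoke the canonical bundle formula with the correct degree term $\deg\mathfrak{d} = \chi(\mathcal{O}_Y)=1$, and one must justify reducing every divisor class to a multiple of the single class $F$ and then comparing coefficients, which rests on $F$ being a nonzero (nef) class. If one prefers to avoid the index, the strict inequality can be obtained instead from effectivity: Riemann--Roch gives $\chi(-K_Y) = 1 + K_Y^2 = 1$ and $h^2(-K_Y) = h^0(2K_Y) = 0$ on a rational surface, so $-K_Y \equiv \left(1 - \sum_i(1-\tfrac{1}{m_i})\right) F$ is effective and nonzero, whence its coefficient is strictly positive and again $\sum_i(1-\tfrac{1}{m_i}) < 1$.
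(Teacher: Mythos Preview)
Your proposal is correct and is exactly the argument the paper has in mind: the paper does not supply its own proof of this lemma but merely cites it and remarks that it follows from the canonical bundle formula, which is precisely what you carry out in detail. Your alternative closing paragraph, deducing the strict inequality from effectivity of $-K_Y$ via Riemann--Roch rather than from $F\sim -mK_Y$, is a nice safeguard against any worry of circularity with Proposition~\ref{Halphen}.
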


\subsection{The Jacobian}
\label{jac}

If $f: Y \to C$ is an elliptic surface of index $m>1$, then the generic fiber $Y_{\eta}$ is a smooth genus one curve over the function field of $C$ that has no rational points over this field (the fibration has no sections). If we let $Jac(Y_{\eta})$ denote the corresponding Jacobian variety of divisors of degree $0$ on $Y_{\eta}$ that is, the connected component of the identity of $Pic(Y_{\eta})$, then we can construct an elliptic surface $J \to C$ with a section whose generic fiber $J_{\eta}$ is isomorphic to $Jac(Y_{\eta})$.  The fibration $J \to C$ is called the associated Jacobian fibration (to $f: Y \to C$). 

If $Y$ is rational, then $J$ is also rational \cite[Proposition 5.6.1 (ii)]{dc} and one can prove the following:

\begin{thm}[{\cite[Corollary 5.4.7]{dc}}]
Let $J \to \mathbb{P}^1$ be a rational elliptic surface with section. Given $m \geq 1$ and a closed point $p \in \mathbb{P}^1$ such that $J_p$ is of type $I_n, 0 \leq n\leq 9$, there exists a rational elliptic surface $Y \to \mathbb{P}^1$ of index $m$ with unique multiple fiber $Y_p=m\overline{Y}_p$ satisfying $\overline{Y}_p\simeq J_p$. Moreover, $[Y]$ is an element of order $m$ in $H^{1}(\mathbb{P}^1,\mathcal{J})$, the group of isomorphism classes of torsors over the generic fiber $J_{\eta}$.
\label{tateshaf}
\end{thm}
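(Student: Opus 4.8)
The plan is to combine the Ogg--Shafarevich description of genus one fibrations sharing a fixed Jacobian with Kodaira's logarithmic transformation, which supplies the geometric realization. Recall that $H^1(\mathbb{P}^1,\mathcal{J})$ classifies isomorphism classes of elliptic surfaces whose associated Jacobian fibration is $J\to\mathbb{P}^1$, and that it fits into a local--global exact sequence whose local terms at each point record the multiple fiber datum there. Accordingly the argument splits into three parts: first, build $Y$ explicitly as an $m$-fold logarithmic transformation of $J$ at $p$; second, verify that $Y$ is again a rational elliptic surface of index $m$ with $\overline{Y}_p\simeq J_p$ and no other multiple fiber; third, identify the class $[Y]$ and show it has order exactly $m$.

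For the construction, I would work over a small analytic (or formal) disk $\Delta\ni p$, where $J$ restricts to an elliptic fibration $J_\Delta\to\Delta$ with central fiber of type $I_n$. The identity component of the N\'eron model at an $I_n$ fiber has smooth locus $\mathbb{G}_m$, hence contains the group $\mu_m$ of $m$-th roots of unity; fix a nontrivial $m$-torsion section $\sigma$ supported near $p$. Pulling back along the cyclic degree-$m$ cover $\Delta'\to\Delta,\ t\mapsto t^m$, translating fibers by $\sigma$ so as to produce a free $\mu_m$-action compatible with the deck transformations, and descending, yields a fibration $Y_\Delta\to\Delta$ whose central fiber is $m\,\overline{Y}_p$ with $\overline{Y}_p\simeq J_p$ of type $I_n$. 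Away from $p$ one glues back $J$ unchanged, so $Y\to\mathbb{P}^1$ coincides with $J$ outside $p$ and has $p$ as its only multiple fiber, which is consistent with Lemma \ref{l1}.

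For the second part, logarithmic transformations leave the topological Euler number unchanged, so $c_2(Y)=c_2(J)=12$, and $K_Y^2=0$ since $Y$ is relatively minimal. The delicate point is rationality. I would compute $\chi(\mathcal{O}_Y)=1$ and check $q(Y)=p_g(Y)=0$, so that Castelnuovo's criterion applies; equivalently, and more cleanly in the present setting, the canonical bundle formula with a single multiplicity-$m$ fiber over $\mathbb{P}^1$ gives $-K_Y\sim\overline{Y}_p$, which is effective and rigid, so $|-mK_Y|$ is exactly the fibration and the index is precisely $m$. Together with $K_Y^2=0$ this lets one invoke Proposition \ref{Halphen} to present $Y$ as the blow-up of a Halphen pencil of index $m$, giving rationality at once. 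The hypothesis $0\le n\le 9$ enters only to guarantee that $J$, and hence $Y$, is a genuine rational elliptic surface: the fibral root lattice $A_{n-1}$ of an $I_n$ fiber must embed in the rank-$8$ essential lattice $E_8$, forcing $n\le 9$.

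For the last part, I would trace $[Y]$ through the local--global sequence for $H^1(\mathbb{P}^1,\mathcal{J})$. Since $Y$ and $J$ agree away from $p$, the class $[Y]$ is detected entirely by its local invariant at $p$, which for the order-$m$ logarithmic transform at an $I_n$ fiber is the element $\tfrac{1}{m}\in\mathbb{Q}/\mathbb{Z}$ of the local Weil--Ch\^atelet group; in particular the order of $[Y]$ is divisible by $m$, hence at least $m$. Conversely, pulling $Y$ back along the degree-$m$ base change $t\mapsto t^m$ totally ramified at $p$ trivializes the multiple fiber and produces a fibration admitting a section, namely the pullback of $J$, which shows $m\cdot[Y]=0$. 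Therefore $[Y]$ has order exactly $m$. The main obstacle I anticipate is this order computation together with the $q=0$ verification: one must make the local invariant precise enough to pin down the order rather than a proper divisor of $m$, and simultaneously ensure the multiple fiber of type $mI_n$ does not render $Y$ irregular. These are exactly the steps where the $I_n$-specific geometry, namely the unipotent local monodromy and the $\mathbb{G}_m$ identity component carrying $\mu_m$, is indispensable.
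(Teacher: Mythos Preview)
The paper does not supply its own proof of this theorem: it is quoted verbatim as \cite[Corollary 5.4.7]{dc} and used as background, so there is no in-paper argument to compare against. Your sketch via logarithmic transformation plus the Ogg--Shafarevich local--global sequence is in fact the standard route to results of this shape and is, in outline, sound.

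Two points to tighten. First, your appeal to Proposition~\ref{Halphen} to deduce rationality is circular: that proposition \emph{assumes} $Y$ is rational. You already name the honest fix---compute $\chi(\mathcal{O}_Y)=1$ from Noether's formula (using $c_2(Y)=12$ preserved by the log transform and $K_Y^2=0$), then argue $q(Y)=p_g(Y)=0$ directly and invoke Castelnuovo---so commit to that and drop the Halphen shortcut. Second, your description of the local picture is written for $n\ge 1$: at an $I_0$ fiber the identity component of the N\'eron model is the full elliptic curve, not $\mathbb{G}_m$, and the local monodromy is trivial rather than nontrivially unipotent. The construction still goes through (the curve has $m$-torsion), but the phrasing ``$\mathbb{G}_m$ identity component'' and ``unipotent local monodromy'' should be adjusted to cover $n=0$ as well.
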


Moreover, the Shioda-Tate formula (see e.g. \cite[Corollary 6.13]{schuttshioda}) applied to the associated Jacobian fibration $J \to \mathbb{P}^1$ implies that the possible singular fibers occurring on $J$ (hence on $Y$) can have at most 9 irreducible components. In particular, following Kodaira's classification, if $F$ is a singular fiber of a rational elliptic surface $Y \to \mathbb{P}^1$, then $F$ is of type $I_n$ for $n\leq 9, II,III,IV,I_n^*$ for $n\leq 4,II^*,III^*$ or $IV^*$. In fact, given any integer $m>1$ any type in this list can be realized by some rational elliptic surface $Y \to \mathbb{P}^1$ of index $m$. More precisely, 

\begin{prop}[{\cite[Corollary 5.6.6]{dc}}]
 If $Y_p$ is a non-multiple fiber of a rational elliptic surface $Y \to \mathbb{P}^1$ of index $m$, then $b_2(Y_p)\leq 9$ and any Kodaira type satisfying this condition can be realized.
\label{mainexe}
\end{prop}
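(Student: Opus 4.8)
The statement splits into an upper bound on $b_2(Y_p)$ and a realizability claim; in both cases the plan is to transfer the question to the associated Jacobian fibration $J \to \mathbb{P}^1$, which is again rational, carries a section, and shares with $Y$ the configuration of all non-multiple singular fibers (Section \ref{res}). Note also that for any Kodaira fiber the number $b_2$ equals the number of irreducible components, since such a fiber is either an irreducible rational (or nodal/cuspidal) curve or a tree/cycle of $\mathbb{P}^1$'s, whose components are independent in $H_2$; thus the two halves of the statement really concern the number of components.

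For the bound I would apply the Shioda--Tate formula (see e.g. \cite[Corollary 6.13]{schuttshioda}) to $J$. Since $J$ is a nine-point blow-up of $\mathbb{P}^2$ we have $\rho(J)=b_2(J)=10$, so
\[
10 \;=\; \rho(J) \;=\; 2 + \operatorname{rk}\operatorname{MW}(J) + \sum_{q \in \mathbb{P}^1}\bigl(m_q(J)-1\bigr),
\]
where $m_q(J)$ denotes the number of irreducible components of $J_q$. Hence $\sum_q\bigl(m_q(J)-1\bigr)\le 8$, so $m_q(J)\le 9$ for every $q$, and therefore $b_2(Y_p)=b_2(J_p)=m_p(J)\le 9$ for every non-multiple fiber $Y_p$.

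For realizability, fix $m\ge 1$ and a Kodaira type $T$ with at most nine components. First I would produce a rational elliptic surface with section $J\to\mathbb{P}^1$ having a fiber of type $T$ at some point $q_0$ --- equivalently, a pencil of plane cubics one of whose members blows up to a fiber of type $T$. This holds by the classification of singular fiber configurations on rational elliptic surfaces with section \cite{list},\cite{perssonlist} (consistent with the constraint that the fiberwise Euler numbers sum to $12$, so there is always room for the remaining fibers). Next, choose a point $p_0\ne q_0$ at which $J_{p_0}$ is smooth (type $I_0$); such a point exists because only finitely many fibers are singular. Applying Theorem \ref{tateshaf} with $n=0$ produces a rational elliptic surface $Y\to\mathbb{P}^1$ of index $m$ whose associated Jacobian fibration is $J$ and whose unique multiple fiber is $Y_{p_0}=m\,\overline{Y}_{p_0}$ with $\overline{Y}_{p_0}\simeq J_{p_0}$. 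Since $Y$ and $J$ have the same non-multiple fibers, $Y_{q_0}$ is of type $T$, and it is a non-multiple fiber because the only multiple fiber of $Y$ lies over $p_0\ne q_0$. This exhibits the desired realization.

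The hard part is the realizability half --- specifically, guaranteeing that some cubic pencil (equivalently, some index-one rational elliptic surface) carries a fiber of the prescribed type $T$; for the largest admissible fibers $II^*$, $III^*$, $IV^*$ and $I_4^*$ this is exactly where one genuinely invokes the classification of admissible fiber configurations. Once that input is available, the passage to index $m$ is formal: the only thing to verify is that the twisting construction of Theorem \ref{tateshaf} leaves the non-multiple fibers untouched, which is precisely the assertion that $Y$ and its Jacobian $J$ share their non-multiple fiber configuration.
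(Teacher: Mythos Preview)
Your argument is correct and matches the approach the paper sketches in the paragraph preceding the proposition: the bound comes from Shioda--Tate applied to the Jacobian, and realizability comes by first producing a rational elliptic surface with section carrying the desired fiber type (via \cite{list},\cite{perssonlist}) and then applying the twisting construction of Theorem~\ref{tateshaf} at a smooth fiber. The paper itself does not give a self-contained proof here---it cites \cite[Corollary 5.6.6]{dc}---but your write-up is a faithful and slightly more detailed unpacking of that same line of reasoning.
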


Such statement, however, does not provide explicit constructions, which we do in this paper.

\section{Halphen Pencils and Rational Elliptic Surfaces of index $m$}
\label{hp}

In this section we will establish a dictionary between the curves in a Halphen pencil and the fibers in the corresponding rational elliptic surface. In particular, we will provide a description of the singularities of  a plane curve in a Halphen pencil. But first we need to introduce some notations and deduce some equations.

We will fix a Halphen pencil of index $m$ and we will denote it by $\mathcal{P}$. The corresponding rational elliptic surface will be denoted by $f:Y \to \mathbb{P}^1$ and $\pi: Y \to \mathbb{P}^2$ will denote the blow-up at the nine base points of $\mathcal{P}$.

If $F$ is any (non-multiple) fiber of $Y$ we will denote by $B$ the corresponding plane curve of degree $3m$, i.e. $\pi(F)$. Further, $mC$ will denote the unique multiple cubic of $\mathcal{P}$ and $mE$ will denote the unique multiple fiber of $f$.

Because $-K_Y$ is nef, every smooth rational curve $R$ on $Y$ has self-intersection $R^2\geq -2$ (adjunction formula). This implies we can write the set of base points of $\mathcal{P}$ as in \cite[Section 2]{dolgcant}: 
\begin{equation}
\{P_1^{(1)},\ldots,P_1^{(a_1)},\ldots,P_k^{(1)},\ldots,P_k^{(a_k)}\}
\label{basepts}
\end{equation}
where $a_1+\ldots+a_k=9$, $P_j^{(1)}$ are points in $\mathbb{P}^2$ and $P_j^{(i+1)}$ is infinitely near to the previous point $P_j^{(i)}$ (of order 1). 

Moreover, if $C$ is smooth and we choose a flex point as the origin for the group law $\oplus$ on $C$, then  \cite{dolgcant}:
\[
a_1P_1^{(1)}\oplus \ldots \oplus a_kP_k^{(1)}=\varepsilon_m
\]
where $\varepsilon_m$ is a torsion point of order $m$ in $C$ (w.r.t $\oplus$).

Expressing the base points of $\mathcal{P}$ as in (\ref{basepts}) is the same as saying that each exceptional curve
\[
E_j \doteq \pi^{-1}(P_j^{(1)})
\]
consists of a chain of $(-2)$ curves of length $(a_j-1)$ with one more $(-1)$  curve at the end of the chain. The latter a multisection of degree $m$.

Thus, whenever we write
\begin{equation}
F=\overline{F}+d_1^{(1)}E_1^{(1)}+\ldots+d_1^{(a_1-1)}E_1^{(a_-1)}+\ldots+d_k^{(1)}E_k^{(1)}+\ldots+d_k^{(a_k-1)}E_k^{(a_k-1)}
\label{eqF}
\end{equation}
where $\overline{F}$ denotes the strict transform of $B$ under $\pi$ and each $E_j^{(i)}$ is the $\pi$-exceptional divisor over the base point $P_j^{(i)}$; we have the following (dual) picture for the components of $E_j$ appearing in the fiber $F$:

\begin{figure}[H]
\centering
\begin{tikzpicture}[line cap=round,line join=round,>=triangle 45,x=1.0cm,y=1.0cm]
\clip(-3.,-1) rectangle (7.,1.);
\draw [line width=1.5pt] (-2.,0.)-- (1.,0.);
\draw [line width=1.5pt] (3.,0.)-- (6.,0.);
\begin{scriptsize}
\draw [fill=black] (-2.,0.) circle (2.5pt);
\draw[color=black] (-2.,.5) node {$d_j^{(1)}E_j^{(1)}$};
\draw [fill=black] (0.,0.) circle (2.5pt);
\draw[color=black] (0.,.5) node {$d_j^{(2)}E_j^{(2)}$};
\draw [color=black] (2.,0.) node {$\ldots$};
\draw [fill=black] (4.,0.) circle (2.5pt);
\draw [fill=black] (6.,0.) circle (2.5pt);
\draw[color=black] (6.,.5) node {$d_j^{(a_j-1)}E_j^{(a_j-1)}$};
\end{scriptsize}
\end{tikzpicture}
\caption{Chains of exceptional rational curves appearing in $F$}
\label{chains}
\end{figure}
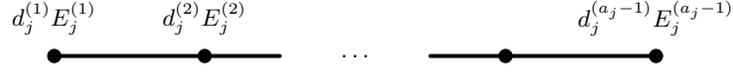

Because the chains $E_j$ are disjoint from each other, it follows that: 

\begin{lema}
If we color the nodes of the dual graph of $F$ corresponding to the components coming from $B$ in blue and the nodes corresponding to the exceptional components $d_j^{(i)}E_j^{(i)}$ in black, then every black node is connected to at most two other black nodes.
\label{nodes}
\end{lema}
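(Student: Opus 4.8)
The plan is to read the claim directly off the structure of the exceptional locus of $\pi$ recorded just above the statement. First I would note that $\mathrm{Exc}(\pi)=E_1\sqcup\cdots\sqcup E_k$ is a disjoint union, where $E_j=\pi^{-1}(P_j^{(1)})$, and that each $E_j$ is the chain
\[
E_j^{(1)} - E_j^{(2)} - \cdots - E_j^{(a_j-1)} - (\text{final }(-1)\text{-curve}),
\]
in which $E_j^{(i)}$ is a $(-2)$-curve meeting, among the curves of $E_j$, exactly $E_j^{(i-1)}$ and $E_j^{(i+1)}$ (with the obvious convention at the two ends), and the final $(-1)$-curve is a degree-$m$ multisection. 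This is precisely the description already used in writing the base points as in (\ref{basepts}); the only input it requires is that $-K_Y$ is nef, so no smooth rational curve on $Y$ has self-intersection below $-2$.

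Next I would identify the black nodes. By (\ref{eqF}) the components of $F$ other than those of $\overline{F}$ are exactly the $(-2)$-curves $E_j^{(i)}$ for $1\le j\le k$ and $1\le i\le a_j-1$; the terminal $(-1)$-curve of each chain $E_j$ is a multisection, hence meets the generic fiber and is not a component of any fiber (equivalently, a relatively minimal elliptic surface contains no $(-1)$-curve inside a fiber). Thus the black nodes of the dual graph of $F$ are precisely these curves $E_j^{(i)}$, and two of them are joined by an edge if and only if the corresponding curves intersect on $Y$.

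Finally I would bound the number of black neighbours of a fixed black node $E_j^{(i)}$. On the one hand, for $j'\neq j$ the chains $E_j$ and $E_{j'}$ are disjoint (they lie over the distinct points $P_j^{(1)}$ and $P_{j'}^{(1)}$), so $E_j^{(i)}$ meets no black curve belonging to a different chain. On the other hand, within $E_j$ the chain structure gives $E_j^{(i)}\cdot E_j^{(i')}=0$ whenever $|i-i'|\ge 2$, so among the black curves of its own chain $E_j^{(i)}$ can meet only $E_j^{(i-1)}$ and $E_j^{(i+1)}$. Hence $E_j^{(i)}$ is joined to at most two black nodes, which is the assertion. There is no genuine obstacle beyond invoking the chain description behind (\ref{basepts}); the only point worth stating carefully is that the terminal $(-1)$-curves are multisections and therefore contribute no node to the dual graph of $F$.
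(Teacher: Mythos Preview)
Your argument is correct and is exactly the approach the paper takes: the paper's entire justification is the sentence ``Because the chains $E_j$ are disjoint from each other, it follows that \ldots'' preceding the lemma, and you have simply spelled out what that sentence means (disjointness of the $E_j$ handles black nodes from different chains, the linear chain structure of each $E_j$ handles black nodes within the same chain). Your additional remark that the terminal $(-1)$-curve is a multisection and hence not a fiber component is already recorded in the paper's setup and is the reason the sum in (\ref{eqF}) stops at $a_j-1$.
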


This simple observation has some interesting consequences like Propositions  \ref{notreduced} and \ref{mult3} below. In Section \ref{iistariiistar} we also use Lemma \ref{nodes} repeatedly in order to characterize which curves $B$ can yield a fiber of type $II^*,III^*$ or $IV^*$ when $m=2$.

\begin{prop}
If  $F$ is of type $II^*, III^*$ or $IV^*$, then $B\doteq \pi(F)$ cannot be reduced.
\label{notreduced}
\end{prop}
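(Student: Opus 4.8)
The plan is to argue by contradiction using the structure of the dual graph of the fiber $F$ together with Lemma \ref{nodes}. Suppose $B$ is reduced. Since $B$ has degree $3m$ and the multiple cubic $mC$ of the pencil has all nine base points of multiplicity $m$, a reduced $B$ passing through the same nine base points with multiplicity $m$ at each forces, via the equation (\ref{eqF}), that the strict transform $\overline{F}$ has every component appearing with coefficient $1$; that is, $F$ is itself reduced. Indeed, the fibers of type $II^*, III^*, IV^*$ are always reduced (every component has multiplicity $1$, as one reads off from Kodaira's table — the $\tilde{E}_8,\tilde{E}_7,\tilde{E}_6$ configurations have the extended Dynkin diagram multiplicities, but the issue is rather the shape of the graph). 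So the real content is combinatorial: I want to show the dual graph of such an $F$ cannot be two-colored blue/black as in Lemma \ref{nodes} with at least one black node, while using up exactly nine exceptional $(-1)$-and-$(-2)$-chain contributions distributed among the base points.

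Concretely, the key steps in order would be: First, recall that $E_j = \pi^{-1}(P_j^{(1)})$ is a chain of $(a_j-1)$ curves of self-intersection $-2$ followed by one $(-1)$-curve (a multisection of degree $m$), and that the $(-1)$-curves never appear in a fiber $F$ while the internal $(-2)$-curves of the chain may. Second, observe that the blue nodes — the components of $\overline{F} = \pi^{-1}_*(B)$ — together with the black nodes $E_j^{(i)}$ must assemble into exactly the dual graph $\tilde{E}_8$, $\tilde{E}_7$, or $\tilde{E}_6$, and that each $\tilde{E}$-graph has a unique vertex of valence three (the ``branch point''). Third, apply Lemma \ref{nodes}: each black node touches at most two other black nodes, so the black nodes form disjoint paths inside the graph; in particular the valence-three branch vertex, if black, would need three black neighbors — impossible — so it must be blue, i.e. a component of $B$. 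Fourth, count: since $B$ is reduced of degree $3m$ and distinct from $C$, $B$ cannot contain $C$; a short intersection-number computation ($B\cdot C$ versus the multiplicities forced at the nine base points) bounds the number of blue components and pins down how $\overline{F}$ meets the chains. The contradiction should emerge from the fact that to realize the long legs of $\tilde{E}_8$ (respectively $\tilde{E}_7$, $\tilde{E}_6$) one is forced to use more black nodes in a single connected sub-path than the disjoint chains $E_j$ (each of length at most $a_j - 1$, with $\sum(a_j-1) = 9-k \le 8$) can supply while keeping a blue node at every point where two different chains would otherwise need to meet.

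I would organize the proof around the three fiber types separately but with a shared lemma: in any two-coloring of $\tilde{E}_8$, $\tilde{E}_7$, $\tilde{E}_6$ satisfying the ``black nodes form disjoint paths'' condition of Lemma \ref{nodes}, the number of blue nodes is at least some explicit bound (e.g. at least $3$ for $\tilde{E}_6$, more for the others), and moreover the blue nodes cannot all lie on a single sub-path. Then I would show that if $B$ were reduced, the geometry (nine base points of multiplicity $m$ on a curve of degree $3m$, the interaction with $C$) forces $\overline{F}$ to have too few components, or components arranged too linearly, to meet that bound — contradiction.

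The main obstacle I anticipate is the bookkeeping in the last step: translating ``$B$ reduced of degree $3m$ through nine points of multiplicity $m$'' into a sharp enough statement about the number and incidence pattern of blue nodes in the dual graph. One has to handle the possibility that $B$ is irreducible (then there is a single blue node, and one must show a single blue node can never disconnect the black nodes into the required disjoint paths inside $\tilde{E}_8, \tilde{E}_7, \tilde{E}_6$ — this case is actually the cleanest, since a graph with one blue vertex forces all but one node black), as well as the opposite extreme where $B$ is a union of many lines and conics (then there are several blue nodes, but a genus/degree constraint limits how the strict transforms can be configured). Keeping the infinitely-near structure of the base points straight — so that the black chains really are the disjoint $E_j$'s and not some finer or coarser grouping — is where I expect to spend the most care.
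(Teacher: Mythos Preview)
Your proposal contains a genuine gap in step 3, and as a result you are working much harder than necessary.

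In step 3 you write that the branch vertex, if black, ``would need three black neighbors --- impossible --- so it must be blue.'' This inference is not valid. Lemma~\ref{nodes} says a black node has at most two black neighbors; it does not say a black node has \emph{only} black neighbors. A black branch vertex with two black neighbors and one blue neighbor is perfectly consistent with Lemma~\ref{nodes}. So you cannot conclude the branch vertex is blue from the shape of the graph alone, and the elaborate counting you outline in step 4 is then an attempt to repair an argument that has already gone off the rails. (Your opening paragraph is also confused: you correctly observe that the components of $\overline{F}$ have coefficient $1$, then incorrectly say ``$F$ is itself reduced,'' then say the $\tilde E$-fibers are reduced, then correct yourself parenthetically. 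The fibers $II^*,III^*,IV^*$ are of course \emph{not} reduced.)

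What you are missing is that the observation you already made --- blue nodes have multiplicity $1$ in $F$ --- is by itself the whole proof. In each of $\tilde E_8,\tilde E_7,\tilde E_6$ the trivalent vertex has multiplicity $\ge 3$ and all three of its neighbors have multiplicity $\ge 2$. Hence if $B$ is reduced, the branch vertex and all three of its neighbors are black, and Lemma~\ref{nodes} is violated immediately. That is the paper's proof, in one sentence; no counting of components, no degree bounds, no case analysis on the decomposition of $B$ is needed.
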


\begin{proof}
If $B$ were reduced, then coloring the dual graph of $F$ as in Lemma \ref{nodes} we would obtain a black node  which is connected to more than two black nodes.
\end{proof}

\begin{prop}
If $F$ is of type $II^*$, then $M_{B}\geq 3$, where $M_B$ denotes the largest multiplicity of a component of $B$.
\label{mult3}
\end{prop}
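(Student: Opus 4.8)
The plan is to argue by contradiction using the dual-graph coloring of Lemma \ref{nodes}. Suppose $F$ is of type $II^*$, so its dual graph is $\tilde{E}_8$: a chain of eight nodes with one extra node attached to the third node from one end, giving a unique node of valence three. By Proposition \ref{notreduced} we already know $B$ cannot be reduced, so at least one component of $B$ has multiplicity $\geq 2$; we must push this to $\geq 3$. Assume for contradiction that $M_B=2$, i.e. every component of $B$ has multiplicity $1$ or $2$.

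First I would recall the structure of $F$ from equation (\ref{eqF}): $F=\overline{F}+\sum_{j,i} d_j^{(i)}E_j^{(i)}$, where the exceptional chains $E_j$ are mutually disjoint, each $E_j$ being a chain of $(-2)$-curves capped by a $(-1)$-curve (a multisection of degree $m$). Since $F$ is a fiber of a relatively minimal elliptic surface, no component of $F$ is a $(-1)$-curve, so the $(-1)$-curve at the end of each chain $E_j$ does \emph{not} appear in $F$; hence the black nodes of the dual graph of $F$ (the exceptional components actually occurring) form \emph{disjoint chains}, as in Figure \ref{chains}, and in particular (Lemma \ref{nodes}) every black node meets at most two other black nodes. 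The blue nodes come from the components of $B$, and a blue node corresponding to a component of multiplicity $\mu$ in $B$ has multiplicity (at least) $\mu$ as a component of $F$ — more precisely the coefficient of $\overline{F}$'s component in $F$ equals the multiplicity of the corresponding component of $B$, since $\pi$ does not alter these leading coefficients.

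Now the key numerical input: for a fiber of type $II^*$ the multiplicities of the components (the coefficients in the fiber, equivalently the coordinates of the null vector / highest root of $\tilde{E}_8$) are $1,2,3,4,5,6,4,3,2$, with the node of multiplicity $6$ being the trivalent one and its three neighbors having multiplicities $3,4$, and... wait, the standard $\tilde{E}_8$ marks are $1-2-3-4-5-6-4-2$ along the long arm with a $3$ hanging off the $6$; so the neighbors of the $6$-node carry multiplicities $5,4,3$ and there is a unique component of multiplicity $6$. Under our assumption $M_B\le 2$, any blue node has multiplicity $\le 2$, so every component of $F$ of multiplicity $\ge 3$ — and there are six of them, namely those with marks $3,4,5,6,4,3$ — must be a black (exceptional) node. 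These six black nodes must be distributed among disjoint chains (each chain a sub-path of $\tilde{E}_8$), and the trivalent node of $\tilde{E}_8$ carries mark $6$, hence is black; but a black node lies on a chain and therefore has at most two black \emph{or} blue neighbors inside the chain and... no: Lemma \ref{nodes} says a black node has at most two black neighbors, which a trivalent black node with all three neighbors of mark $\ge 3$ (hence all black) would violate. Concretely, the mark-$6$ node is adjacent in $\tilde{E}_8$ to nodes of marks $5,4,3$, all $\ge 3$, hence all black under $M_B\le 2$; that is a black node with three black neighbors, contradicting Lemma \ref{nodes}. Therefore $M_B\ge 3$.

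\textbf{Main obstacle.} The delicate point is justifying that the coefficient of a component of $F$ arising from a component $D$ of $B$ equals exactly the multiplicity of $D$ in $B$ (so that $M_B\le 2$ really forces all mark-$\ge 3$ nodes to be exceptional), and pinning down which marked node of $\tilde{E}_8$ is trivalent together with its neighbors' marks — i.e., getting the $\tilde{E}_8$ bookkeeping exactly right. Once the identification "blue node of $F$ $\leftrightarrow$ component of $B$ with the same multiplicity" is in hand, the contradiction is immediate from Lemma \ref{nodes} applied to the mark-$6$ node, since all of its $\tilde{E}_8$-neighbors have marks $3,4,5 \ge 3$ and hence cannot be blue. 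I would also remark that the same coloring argument, applied to the trivalent nodes of $\tilde{E}_7$ and $\tilde{E}_6$, yields the analogous lower bounds on $M_B$ for fibers of type $III^*$ and $IV^*$, which is presumably how the paper proceeds in Section \ref{iistariiistar}.
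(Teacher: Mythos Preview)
Your proposal is correct and follows exactly the paper's approach: assume $M_B\le 2$, color the $\tilde{E}_8$ dual graph, and observe that the trivalent node (multiplicity $6$) and all three of its neighbors (multiplicities $5,4,3$) are then forced to be black, contradicting Lemma~\ref{nodes}. The paper's own proof is a single sentence to this effect; you have simply unpacked the details, and the ``delicate point'' you flag (that the multiplicity of a blue component in $F$ equals the multiplicity of the corresponding component in $B$) is immediate from the decomposition $F=\overline{F}+\sum d_j^{(i)}E_j^{(i)}$ in~(\ref{eqF}), since strict transform preserves component multiplicities.
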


\begin{proof}
Again, we look at the dual graph of $F$. Assuming $M_B <3$ contradicts Lemma \ref{nodes}.
\end{proof}


Writing $F$ as in (\ref{eqF}) we can further deduce Equation (\ref{numbercomp}) below, which computes the number of components of $F$.

\begin{prop}
If $n_F$ and $n_B$ denote the number of components of $F$ and $B$, respectively, then
\begin{equation}
n_F=n_B+\sum_{j=1}^{k}(a_j-1)-n_{E\backslash C}=n_B+\sum_{j=1}^{k}a_j-k-n_{E\backslash C}=n_B+9-k-n_{E\backslash C}
\label{numbercomp}
\end{equation}
where $n_{E \backslash C}$ denotes the difference between the number of components of $E$ and the number of components of $C$.
\end{prop}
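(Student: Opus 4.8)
The plan is to read off the irreducible components of $F$ directly from its expression (\ref{eqF}). Since $\pi\colon Y\to\mathbb{P}^2$ is birational and contracts no component of $B$, the strict transform $\overline F$ has exactly $n_B$ irreducible components, one over each component of $B$. Hence the components of $F$ are these together with the exceptional curves $E_j^{(i)}$, $1\le i\le a_j-1$, that occur with positive coefficient $d_j^{(i)}$, so that
\[
n_F=n_B+\#\{(j,i)\colon 1\le i\le a_j-1,\ d_j^{(i)}>0\}.
\]
The total number of admissible pairs $(j,i)$ is $\sum_{j=1}^k(a_j-1)=\sum_{j=1}^k a_j-k=9-k$, which already gives the three displayed forms; so it remains to prove that the number of admissible pairs with $d_j^{(i)}=0$ equals $n_{E\setminus C}$.

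First I would record, as in Figure \ref{chains}, that each $E_j^{(i)}$ with $1\le i\le a_j-1$ is a smooth rational curve with $(E_j^{(i)})^2=-2$; by the adjunction formula together with the nefness of $-K_Y$ this forces $-mK_Y\cdot E_j^{(i)}=0$, so $E_j^{(i)}$ is vertical and lies in a unique fiber of $f$. Thus $d_j^{(i)}=0$ exactly when $E_j^{(i)}$ is contained in a fiber other than $F$. If $E_j^{(i)}\subset E$ this certainly holds, since the multiple fiber $mE$ is disjoint from $F$; and the exceptional components of $E$ are precisely the $(-2)$-curves it contains, the remaining components being the strict transforms of the $n_C$ components of $C$, so there are $n_E-n_C=n_{E\setminus C}$ of them. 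Hence the claim reduces to showing that an exceptional $(-2)$-curve fails to be a component of $F$ \emph{only} if it is a component of $E$, i.e. that none of these curves sits in a fiber different from both $F$ and $mE$.

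To prove this I would translate divisors into multiplicity sequences: from $\overline F\sim -mK_Y-\sum d_j^{(i)}E_j^{(i)}$ and $\overline C\sim -K_Y-\sum \epsilon_j^{(i)}E_j^{(i)}$, where $\epsilon_j^{(i)}$ denotes the multiplicity of $E_j^{(i)}$ in $E$ (so the plane curve $mC$ corresponds to $-mK_Y-\sum m\epsilon_j^{(i)}E_j^{(i)}$), one recovers the multiplicities of $B$ and of $C$ along each cluster $P_j^{(1)},\dots,P_j^{(a_j)}$ from the coefficients $d_j^{(i)}$ and $\epsilon_j^{(i)}$. Since $F$ and $mE$ generate the pencil $|-mK_Y|$ while $B$ and $mC$ are their plane models, the multiplicity conditions that the (possibly infinitely near) base points of $\mathcal P$ impose on the members of the pencil should then determine, chain by chain, exactly which initial segment of $E_j^{(1)}-\dots-E_j^{(a_j-1)}$ lies on $E$ and which complementary terminal segment lies on $F$, and in particular prevent any of these curves from appearing in a third fiber. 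With this in hand the admissible pairs with $d_j^{(i)}=0$ are exactly the exceptional components of $E$, so their number is $n_{E\setminus C}$, and the formula follows.

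I expect the main obstacle to be precisely this last step: controlling how each exceptional chain splits between the multiple fiber and the chosen fiber $F$, and ruling out an exceptional $(-2)$-curve escaping into another fiber. This is the only place where one genuinely uses how the Halphen pencil $\mathcal P$ (equivalently the birational morphism $\pi$) is attached to the pair $(Y,F)$ via Proposition \ref{Halphen}; everything else is bookkeeping with (\ref{eqF}) and the dual graph of Figure \ref{chains}.
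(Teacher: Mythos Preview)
Your bookkeeping is correct, and you have correctly isolated the only nontrivial point: among the $9-k$ exceptional $(-2)$-curves $E_j^{(i)}$ ($1\le i\le a_j-1$), those with $d_j^{(i)}=0$ should be exactly the exceptional components of $E$, i.e.\ should number $n_{E\backslash C}$. The paper gives no argument beyond the sentence ``Writing $F$ as in (\ref{eqF}) we can further deduce Equation (\ref{numbercomp})'', so there is nothing substantive to compare your approach against.

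The obstacle you flag, however, is real and cannot be overcome in the generality you attempt: as stated for an \emph{arbitrary} non-multiple fiber $F$, the formula is false. Consecutive curves in a chain $E_j^{(1)},\dots,E_j^{(a_j-1)}$ meet, so the whole chain lies in a single fiber of $f$---but nothing forces that fiber to be $F$ or $E$. Concretely, take the pencil of Example \ref{iistartripleconic}: there $k=1$, the nodal cubic $C$ is smooth at the unique base point so $E\simeq C$ and $n_{E\backslash C}=0$, and (\ref{numbercomp}) would predict $n_F=n_B+8$ for \emph{every} non-multiple $F$. For a generic smooth fiber $F'$ one has $n_{F'}=n_{B'}=1$; the eight exceptional $(-2)$-curves all sit in the $II^*$ fiber, a third fiber distinct from both $F'$ and $E$. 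So the multiplicity-sequence argument you sketch cannot supply the missing constraint, because that constraint does not hold for general $F$. In the paper's actual applications (Section \ref{iistariiistar}) the formula is invoked only for $F$ of type $II^*,III^*,IV^*$; there the fiber classification on the Jacobian leaves essentially no room for other reducible non-multiple fibers, and it is this extra input---not the general setup---that makes the component count come out right.
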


The type of the multiple fiber $mE$ imposes restrictions on the numbers $n_{E\backslash C}$ appearing in Equation \ref{numbercomp} above. For instance, whenever  $m>1$ we have that

\begin{lema}
If $F$ is of type $IV^*$, then $n_{E\backslash C}\in \{0,1,2\}$ and if $F$ is of type $III^*$ (resp. $II^*$), then $n_{E\backslash C}\in \{0,1\}$ (resp. $n_{E\backslash C}=0$). 
\label{nEC}
\end{lema}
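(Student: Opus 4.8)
The plan is to analyze the structure of the dual graph of $F$, using the key fact that the multiple fiber $mE$ is of type $I_n$ for some $n$ (this was noted earlier: over characteristic zero any multiple fiber is of type $I_n$, and for a rational elliptic surface $n\leq 9$). Since $E$ is one of the fibers, its dual graph is $\tilde A_{n-1}$ (a cycle of $n$ curves, or a single nodal/smooth curve when $n\leq 2$). The cubic $C$ in the Halphen pencil is the reduced curve underlying $mE$, and $mC$ passes through the nine base points with multiplicity $m$, so $C$ is either a smooth cubic, an irreducible nodal/cuspidal cubic, a conic plus a line, or three lines; in all cases the number of components of $C$ is at most $3$. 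The quantity $n_{E\setminus C}$ is the number of components of $E$ minus the number of components of $C$, i.e. the number of $\pi$-exceptional $(-2)$-curves that appear in $E$ itself (not just in an arbitrary fiber $F$).

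First I would bound $n_{E\setminus C}$ by a Euler-characteristic / Shioda–Tate count. By Proposition \ref{mainexe} applied to the (Jacobian, hence also to $Y$) fibration, a non-multiple fiber $F$ of type $IV^*$, $III^*$, $II^*$ has $b_2(F)=6,7,8$ respectively, meaning $F$ has $7,8,9$ components. Meanwhile $E$ has $n\geq 1$ components. The sum of $(b_2(\text{fiber})-1) = (\#\text{components}-1)$ over all fibers is bounded by the Shioda–Tate relation on the Jacobian surface $J$: $\sum_{p}(m_p - 1) \leq 8$, where $m_p$ is the number of components of the fiber over $p$. So if $F$ contributes $6,7,$ or $8$ to this sum, then the multiple fiber's reduced curve contributes $n-1$, and $n - 1 \leq 8 - (b_2(F)) $ gives $n\leq 3, 2, 1$ in the three cases. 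Now $n_{E\setminus C} = n - (\#\text{components of }C)$, and since $C$ always has at least one component, $n_{E\setminus C}\leq n-1$, so $n_{E\setminus C}\leq 2,1,0$ respectively. This already gives the claimed upper bounds; the lower bound $n_{E\setminus C}\geq 0$ holds because $C\subseteq E$ (as $C$ is the strict transform of itself and the exceptional curves only add components), so $E$ has at least as many components as $C$.

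The step I expect to be the main obstacle is justifying the Shioda–Tate bound in the form I want, namely that the reduced curve $C$ underlying the multiple fiber contributes exactly $n - 1$ where $mE$ has Kodaira type $I_n$ on $Y$, and that this genuinely competes with $F$ in a single global inequality on $Y$ (or its Jacobian). The subtlety is that Shioda–Tate is usually stated for surfaces with a section; one must pass to the associated Jacobian fibration $J\to\mathbb{P}^1$, which by Theorem \ref{tateshaf} has the same configuration of non-multiple singular fibers as $Y$, and whose fiber over the point $p$ carrying the multiple fiber is of type $I_n$ (since $\overline Y_p \simeq J_p$). Then on $J$, $\rho(J)=10$ (rational elliptic surface with section), and Shioda–Tate gives $10 = 2 + \operatorname{rk}\mathrm{MW} + \sum_p(m_p-1)$, so $\sum_p(m_p-1)\leq 8$; the fiber of type $F$ (which also appears on $J$) contributes $b_2(F)$, and the $I_n$ fiber contributes $n-1$. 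Once this is pinned down, I would simply record that $n - 1 \leq 8 - b_2(F)$ and translate through $n_{E\setminus C}\leq n-1$. An alternative, more self-contained route — which I would use if I wanted to avoid invoking Shioda–Tate — is a direct $K_Y^2 = 0$ / $\chi_{\mathrm{top}}(Y) = 12$ computation: $\chi_{\mathrm{top}}(Y)=12$ equals the sum of the topological Euler numbers of the singular fibers, and a fiber of type $IV^*$/$III^*$/$II^*$ already contributes $8/9/10$, leaving at most $4/3/2$ for all remaining fibers including the (reduced) $I_n$ fiber, whose Euler number is $n$; hence $n\leq 4,3,2$. This is slightly weaker than the count above, so to get the sharp bounds $\{0,1,2\}$, $\{0,1\}$, $\{0\}$ I would combine it with the observation that $C$ has at least one component and, when $n$ is near its maximum, use that the remaining room must also accommodate at least the generic behaviour — but the cleanest writeup is via Shioda–Tate on $J$ as above. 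I would finish by noting the case $n_{E\setminus C}=n - \#\{\text{components of }C\}$ explicitly: e.g. for type $II^*$ we forced $n = 1$ (so $C$ is an irreducible cubic, smooth or nodal/cuspidal, with one component) giving $n_{E\setminus C} = 0$.
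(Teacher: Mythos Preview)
Your argument is correct in substance: passing to the Jacobian $J$ (which carries the same non-multiple fibers as $Y$ and an $I_n$ fiber in place of the multiple one, by the discussion preceding Theorem~\ref{tateshaf}) and applying Shioda--Tate gives $(m_F-1)+(n-1)\leq 8$, hence $n\leq 3,2,1$ for $F$ of type $IV^*,III^*,II^*$; then $n_{E\setminus C}=\#E-\#C\leq n-1$ since $\#C\geq 1$, while $n_{E\setminus C}\geq 0$ because the strict transform of $C$ already sits inside $E$. One bookkeeping slip: $b_2$ of a Kodaira fiber equals its number of components, so $b_2(IV^*),b_2(III^*),b_2(II^*)=7,8,9$, not $6,7,8$; the quantity entering Shioda--Tate is $m_F-1=6,7,8$, and your inequality should read $n-1\leq 8-(m_F-1)$. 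With that adjustment the numbers you quote and the conclusion are unchanged. (Your side remark that the Euler-characteristic route yields only $n\leq 4,3,2$ is also correct.)

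The paper's proof takes a shorter but less self-contained route: it simply invokes Persson's classification of singular fiber configurations on rational elliptic surfaces (\cite{list}) to read off directly that when one fiber is $IV^*$ (resp.\ $III^*$, $II^*$) the multiple fiber must be of type $I_0,I_1,I_2$ or $I_3$ (resp.\ $I_0,I_1,I_2$; resp.\ $I_0,I_1$), and the bound on $n_{E\setminus C}$ follows. Your Shioda--Tate computation on the Jacobian is precisely the lattice input underlying Persson's tables, so you are trading a black-box citation for a two-line derivation; this is more transparent and avoids relying on the full classification, at the cost of having to justify the passage to $J$ explicitly (which you do).
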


\begin{proof}
If $m>1$ and $F$ is of type $IV^*,III^*$ or $II^*$, then the classification in \cite{list} tells us the unique multiple fiber $mE$ of $Y$ can be realized as the strict transform of $mC$. If $F$ is of type $IV^*$, then $E$ is of type $I_0,I_1,I_2$ or $I_3$. Whereas if $F$ is of type $III^*$ (resp. $II^*$), then $E$ is of type $I_0,I_1$ or $I_2$ (resp. $I_0$ or $I_1$). 
\end{proof}

\begin{rmk}
If $m=1$ and $B$ is any given curve in $\mathcal{P}$, then we can always take the other generator of $\mathcal{P}$ to be a smooth cubic. In particular, we can always assume that $n_{E\backslash C}=0$ in Equation (\ref{numbercomp}).
\end{rmk}

We can also write
\[
K_Y=\pi^*K_{\mathbb{P}^2}+b_1^{(1)}E_1^{(1)}+\ldots+b_1^{(a_1)}E_1^{(a_1)}+\ldots+b_k^{(1)}E_k^{(1)}+\ldots+b_k^{(a_k)}E_k^{(a_k)}
\]
and
\[
 \pi^*B=\overline{F}+c_1^{(1)}E_1^{(1)}+\ldots+c_1^{(a_1)}E_1^{(a_1)}+\ldots+c_k^{(1)}E_k^{(1)}+\ldots+c_k^{(a_k)}E_k^{(a_k)}
\]

and we know how to compute each of the multiplicities $b_j^{(i)}\doteq b_j^{(i)}(B), c_j^{(i)}\doteq c_j^{(i)}(B)$ and $d_j^{(i)}\doteq d_j^{(i)}(B)$ rather explicitly.

For any base point $P_j^{(1)}$, the induced pencil on the surface obtained by blowing-up $P_j^{(1)}$ is 
\[
({\pi_j^{(1)}})^*\mathcal{P}-mE_j^{(1)}
\]
where $\pi_j^{(1)}$ is the blow-up map. In particular, given any curve $B$ of $\mathcal{P}$, the induced member is
\[
B_j^{(1)}+(m_{P_j^{(1)}}(B)-m)E_j^{(1)}
\]
where $B_j^{(1)}$ is the strict transform of $B$ under $\pi_j^{(1)}$ and $m_{P_j^{(1)}}(B)$ denotes the multiplicity of the point $P_j^{(1)}$ on the curve $B$.

In other words, $d_j^{(1)}=m_{P_j^{(1)}}(B)-m$ and, more generally,
\[
d_j^{(i)}=d_j^{(i-1)}+m_{P_j^{(i)}}(B)-m 
\]
where $m_{P_j^{(i)}}(B)$ denotes the multiplicity of the point $P_j^{(i)}$ on the strict transform of the curve $B$ under the blow-up of $P_j^{(1)},\ldots,P_j^{(i-1)}$.

On the other hand, we also know that $c_j^{(1)}=m_{p_j^{(1)}}(B)$  and 
\begin{equation}
c_j^{(i)}=c_j^{(i-1)}+m_{P_j^{(i)}}(B)=m_{P_j^{(1)}}(B)+\ldots+m_{P_j^{(i)}}(B)
\label{formulacj}
\end{equation}

Thus, 
\begin{equation}
d_j^{(i)}=c_j^{(i-1)}+m_{P_j^{(i)}}(B)-i\cdot m=c_j^{(i)}-i\cdot m=m_{P_j^{(1)}}(B)+\ldots+m_{P_j^{(i)}}(B)-i\cdot m 
\label{formuladjcj}
\end{equation}

In particular, 

\begin{equation}
d_j^{(i)} \leq i\cdot (m_{P_j^{(1)}}(B)-m)\leq i\cdot 2m
\label{bounddj}
\end{equation}

And the condition $d_j^{(a_j)}=0$ implies 
\begin{equation}
m_{P_j^{(1)}}(B)+\ldots+m_{P_j^{(a_j)}}(B)=a_j\cdot m
\label{formulasummults}
\end{equation}

Therefore, whenever $C$ is smooth at the base point $P_j^{(1)}$, using Noether's formula \cite{fulton} we obtain
\begin{equation}
I_{P_j^{(1)}}(B,C)=a_j\cdot m
\label{formulaintBC}
\end{equation}
where $I_{P_j^{(1)}}(B,C)$ denotes the intersection multiplicity of $B$ and $C$ at the point $P_j^{(1)}$.

Lastly, we have $b_j^{(i)}=i$ for all $j=1,\ldots k$ and $i=1,\ldots, a_j$.

\subsection{The (unique) multiple cubic}
\label{multiplecubic}

The cubic $C$ is smooth at every base point of $\mathcal{P}$ if and only if $\pi$ restricts to an isomorphism $E\simeq C$. This implies any $\pi-$exceptional curve must be either a multisection or a component of $F$. 

We also prove a partial converse of this statement:

\begin{lema}
For any index $m$ and any type of fiber we have
\[
d_j^{(1)}>0 \Rightarrow m_{P_j^{(1)}}(mC)=m
\]
That is, if the exceptional curve $E_j^{(1)}$ appears as a component in $F$ (with multiplicity $d_j^{(1)}>0$)  then $C$ is smooth at the point $P_j^{(1)}$. 
\label{Csmoothdj}
\end{lema}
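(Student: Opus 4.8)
The plan is to compare the chosen fiber $F$ with the multiple fiber $mE$, and to exploit the elementary fact that two distinct fibers of the fibration $f\colon Y\to\mathbb{P}^1$ are disjoint and hence share no irreducible component. First I would reuse the coefficient computation carried out just before the statement: for any member $G$ of the pencil $\mathcal{P}$, the coefficient of the exceptional curve $E_j^{(1)}$ in the fiber associated with $G$ is $m_{P_j^{(1)}}(G)-m$ (this is precisely $d_j^{(1)}(G)$). Since $mC$ is itself a member of $\mathcal{P}$, applying this to $G=mC$ shows that the coefficient of $E_j^{(1)}$ in $mE$ equals $m_{P_j^{(1)}}(mC)-m=m\bigl(m_{P_j^{(1)}}(C)-1\bigr)$, using $m_{P_j^{(1)}}(mC)=m\cdot m_{P_j^{(1)}}(C)$.

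Then I would argue by contradiction. The hypothesis $d_j^{(1)}>0$ says exactly that $E_j^{(1)}$ is an irreducible component of the effective divisor $F$, by Equation (\ref{eqF}). Suppose $C$ were singular at $P_j^{(1)}$, i.e. $m_{P_j^{(1)}}(C)\geq 2$; then the coefficient computed above is strictly positive, so $E_j^{(1)}$ would also be an irreducible component of $mE$. But $F$ is a non-multiple fiber, so $F\neq mE$, and distinct fibers of $f$ are disjoint — they cannot have the irreducible curve $E_j^{(1)}$ as a common component. This contradiction forces $m_{P_j^{(1)}}(C)\leq 1$; since every member of $\mathcal{P}$, in particular $mC$, passes through each base point (so $m_{P_j^{(1)}}(mC)\geq m$, hence $m_{P_j^{(1)}}(C)\geq 1$), we conclude $m_{P_j^{(1)}}(C)=1$, equivalently $m_{P_j^{(1)}}(mC)=m$ and $C$ is smooth at $P_j^{(1)}$, as claimed.

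I do not expect a serious obstacle: once the formula $d_j^{(1)}(G)=m_{P_j^{(1)}}(G)-m$ is in hand and applied to the member $mC$, everything collapses to the statement that distinct fibers share no component. The only points deserving a word of care are that $mC$ genuinely lies in $\mathcal{P}$ (so the coefficient formula applies to it), that $C$ meets every base point of $\mathcal{P}$, and that it is the hypothesis ``$F$ is a non-multiple fiber'' that guarantees $F\neq mE$. An alternative route — reading off $m_{P_j^{(1)}}(C)$ from the local intersection number $\overline{C}\cdot E_j^{(1)}$ together with the $(-2)$-chain structure in Figure \ref{chains} — also works, but it is longer and ultimately unnecessary.
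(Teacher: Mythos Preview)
Your proof is correct and follows essentially the same approach as the paper: both apply the coefficient formula $d_j^{(1)}(G)=m_{P_j^{(1)}}(G)-m$ to the member $mC$ and then invoke the fact that two distinct fibers of $f$ share no irreducible component. The paper's version is simply more terse, while you have spelled out the contradiction and the lower bound $m_{P_j^{(1)}}(C)\geq 1$ explicitly.
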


\begin{proof}
If the exceptional curve $E_j^{(1)}$ appears as a component in $F$, then $mE_j^{(1)}$ cannot appear as a component of the multiple fiber $mE$. Hence $m_{P_j^{(1)}}(mC)-m=0$.
\end{proof}

\begin{cor}
If $C$ is singular at a base point $P_j^{(1)}$, then $m_{P_j^{(1)}}(B)=m$. Moreover, at the point $P_j^{(1)}$ the curve $B$ consists of a single component (branch) with multiplicity $m$.
\end{cor}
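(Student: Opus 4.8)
The plan is to treat the two assertions separately. For the equality $m_{P_j^{(1)}}(B)=m$ I would invoke the contrapositive of Lemma \ref{Csmoothdj}. Since $C$ is singular at $P_j^{(1)}$ we have $m_{P_j^{(1)}}(mC)=m\cdot m_{P_j^{(1)}}(C)\geq 2m>m$, so Lemma \ref{Csmoothdj} forces $d_j^{(1)}\leq 0$. On the other hand every member of $\mathcal{P}$ passes through the base point $P_j^{(1)}$ with multiplicity at least $m$, so $d_j^{(1)}=m_{P_j^{(1)}}(B)-m\geq 0$. Hence $d_j^{(1)}=0$ and $m_{P_j^{(1)}}(B)=m$.

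For the ``moreover'' part I would first propagate this along the whole chain over $P_j^{(1)}$: by (\ref{formulasummults}) we have $\sum_{i=1}^{a_j}m_{P_j^{(i)}}(B)=a_j m$ while each summand is $\geq m$ (the base point condition at each, possibly infinitely near, point $P_j^{(i)}$), so in fact $m_{P_j^{(i)}}(B)=m$ for every $i$, and then $d_j^{(i)}=0$ for all $i$ by (\ref{formuladjcj}), i.e. no exceptional curve over $P_j^{(1)}$ occurs in $F$. Next I would analyse the branches $\gamma_1,\dots,\gamma_r$ of $B$ at $P_j^{(1)}$, of multiplicities $\mu_1,\dots,\mu_r$ with $\sum_l \mu_l=m$. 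After blowing up $P_j^{(1)}$ each $\gamma_l$ gives a branch centered at the point of $E_j^{(1)}$ cut out by its tangent, of multiplicity at most $\mu_l$; since the strict transform of $B$ must again have multiplicity exactly $m$ at $P_j^{(2)}\in E_j^{(1)}$, all the $\gamma_l$ must share this tangent and keep their multiplicities. Iterating along $P_j^{(1)},\dots,P_j^{(a_j)}$, the branches travel together without any multiplicity drop, and a short intersection computation (using $\pi^*B=\overline{F}+\sum c_j^{(i)}E_j^{(i)}+\cdots$ and $c_j^{(i)}=im$) shows that on $Y$ the strict transform $\overline{F}$ meets the terminal $(-1)$-curve $E_j^{(a_j)}$, and no other curve of the chain, in a scheme of length $m$.

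It remains to exclude $r\geq 2$, and this is where the argument genuinely uses that we are in a Halphen pencil and that $F$ is a fiber of $f$. One clean input is that $C$ must be reducible: an irreducible cubic that is singular at a base point has that singularity (partially) resolved by $\pi$, so its strict transform $E$ would be a smooth rational curve (node case) or still of type $II$ (cusp case), neither of which can be the reduced part of the multiple fiber $mE$; hence $C$ splits off a smooth component (a line or a conic). A Bézout/Noether count in the spirit of (\ref{formulaintBC}) then shows that $B$ must share with $C$ the component through $P_j^{(1)}$ and cannot contain any further component through that point, so that locally at $P_j^{(1)}$ the curve $B$ is that (smooth) component taken with multiplicity $m$. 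Alternatively one finishes from the branch analysis above, using that $E_j^{(a_j)}$ is a multisection of degree $m$ (hence lies in no fiber) while $\overline{F}\subseteq F$, together with the fact that components of a reducible Kodaira fiber are smooth rational curves meeting transversally. I expect this last step to be the main obstacle: the local multiplicity data by itself is also consistent with several smooth branches tangent to high order (for instance $y^m=x^{mk}$), so the single-branch conclusion cannot come from the chain alone and must use the global structure of the pencil and of the fibration.
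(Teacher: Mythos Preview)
Your first paragraph is exactly the paper's argument: contrapositive of Lemma~\ref{Csmoothdj} plus $d_j^{(1)}\geq 0$ from the base point condition. Nothing to add there.

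For the second assertion you are working much harder than the paper does, and in one place you go wrong. The paper's proof is a single sentence: since $C$ is singular at $P_j^{(1)}$, the exceptional curve $E_j^{(1)}$ appears with positive multiplicity in the member of the induced pencil coming from $mC$, while (because $d_j^{(1)}=0$) the member coming from $B$ is just the strict transform $B^{(1)}$ with no exceptional contribution. Hence every point of $B^{(1)}\cap E_j^{(1)}$ is a base point of the induced pencil, and the chain structure of the base locus forces there to be only one such point, namely $P_j^{(2)}$. That is all the paper claims: the tangent cone of $B$ at $P_j^{(1)}$ is a single line with multiplicity $m$.

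You read ``single component (branch)'' as analytic irreducibility of the germ, and then correctly note that your local data alone cannot rule out something like $y^m-x^{mk}$; this leads you to the detour in your last paragraph. That detour is both unnecessary (the paper is not asserting analytic irreducibility) and, in ``Option~1'', flawed: an irreducible nodal cubic $C$ singular at a base point is perfectly compatible with a multiple fiber, because the reduced fiber $E$ is not just $\overline{C}$ but $\overline{C}$ together with the exceptional curves $E_j^{(i)}$ that lie in $mE$, and this can form an $I_n$ cycle. So the implication ``$\overline{C}$ smooth rational $\Rightarrow$ impossible multiple fiber'' fails. Separately, your appeal to (\ref{formulasummults}) to get $m_{P_j^{(i)}}(B)=m$ for all $i$ assumes each summand is $\geq m$; that is not literally the ``base point condition'' (the induced member at level $i$ includes $d_j^{(i-1)}E_j^{(i-1)}$), though the conclusion is true via the observation that the $(-2)$-chain over $P_j^{(1)}$ is connected and hence entirely contained in the fiber $mE$ once $E_j^{(1)}$ is.
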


\begin{proof}
It follows from Lemma \ref{Csmoothdj} that if $C$ is singular at a base point $P_j^{(1)}$, then $E_j^{(1)}$ is not a component of $F$, hence $d_j^{(1)}=0$, which further implies $m_{P_j^{(1)}}(B)=m$. The last statement is obvious, otherwise one would need to blow-up more than one point lying in $E_j^{(1)}$ in order to separate $\mathcal{P}$.
\end{proof}

Since we are working over a field of characteristic zero, the unique multiple fiber $mE$ can only be of multiplicative type, i.e. of type $I_n$. If $n\leq 3$, then $mE$ can be realized as the strict transform (under $\pi$) of the unique multiple cubic $mC$. But if $n> 3$, then, necessarily, $C$ must be singular at a base point of $\mathcal{P}$. In other words, 

\begin{lema}
If $Y$ contains a multiple fiber of type $I_n$ with $4 \leq n \leq 9$, then $C$ is singular at a base point of $\mathcal{P}$.
\label{Cnotsingular}
\end{lema}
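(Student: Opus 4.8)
The plan is to argue by contrapositive: assume $C$ is smooth at every base point of $\mathcal{P}$ and show that the multiple fiber $mE$ must then be of type $I_n$ with $n \leq 3$. The key observation, already recorded in Section \ref{multiplecubic}, is that $C$ being smooth at all nine base points is equivalent to $\pi$ restricting to an isomorphism $E \simeq C$. In particular $E = \overline{C}$, the strict transform of $C$, is an irreducible curve, so the multiple fiber has a single component; by Kodaira's classification (Table \ref{fibers}) the only multiplicative types with one component are $I_0$, $I_1$, $I_2$, or $I_3$ — note $I_2$ and $I_3$ have one and two extra components coming from exceptional curves, but those would have to be $\pi$-exceptional curves lying in $E$, contradicting that $\pi|_E$ is an isomorphism, so in fact only $I_0$ and $I_1$ survive. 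Either way $n \leq 3$, which is the contrapositive of the claim.

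More carefully, the mechanism forcing $n$ to be small is the self-intersection. Since $mE$ is a fiber, $E^2 = 0$, and $E = \overline{C}$ with $C$ a plane cubic, so $C^2 = 9$ in $\mathbb{P}^2$ and each blow-up at a base point $P_j^{(i)}$ drops the self-intersection of the strict transform by $m_{P_j^{(i)}}(C)^2 = 1$ (using smoothness of $C$ along the relevant branch). Hence $9$ base points, each a smooth point of $C$, bring $C^2 = 9$ down to exactly $0$. This is consistent, and it shows $\overline{C}$ is a smooth rational or nodal/cuspidal curve of self-intersection zero — a fiber of type $I_0$ or $I_1$ (type $II$ is excluded since we are in characteristic zero where multiple fibers are of type $I_n$, per \cite[Proposition 5.1.8]{dc}). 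In particular $n \in \{0,1\}$, and certainly $n \leq 3$, so the hypothesis $4 \leq n \leq 9$ cannot hold when $C$ is smooth at all base points.

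Here is the step-by-step outline. First, invoke \cite[Proposition 5.1.8]{dc} to record that in characteristic zero $mE$ is of type $I_n$. Second, suppose for contradiction that $C$ is smooth at every base point of $\mathcal{P}$; by the discussion opening Section \ref{multiplecubic}, $\pi$ then restricts to an isomorphism $E \simeq C$, so every $\pi$-exceptional curve is either a multisection or a component of a non-multiple fiber $F$ — none lies in $E$. Third, conclude that $E$ is irreducible with $E^2 = 0$ and arithmetic genus $1$ (it is an anticanonical curve on the rational elliptic surface), so by Table \ref{fibers} the fiber $mE$, having a single component, is of type $I_0$ or $I_1$. Fourth, observe $n \in \{0,1\}$ contradicts $n \geq 4$. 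This establishes the contrapositive, hence the lemma.

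I do not expect a serious obstacle here; the content is essentially bookkeeping built on the isomorphism criterion of Section \ref{multiplecubic} and Kodaira's list. The one point requiring a little care is the claim that $\pi|_E$ being an isomorphism genuinely forbids the multi-component multiplicative types $I_n$ with $n \geq 2$: the extra components of an $I_n$ fiber ($n \geq 2$) on this surface would have to be $(-2)$-curves among the $\pi$-exceptional divisors (the surface is a nine-point blow-up of $\mathbb{P}^2$ and $E$ is the strict transform of a single cubic, which contributes only one component), and such exceptional $(-2)$-curves would map to points of $C$, so would be contracted by $\pi$ — but then $\pi$ cannot be an isomorphism on a neighborhood of $E$ inside the fiber. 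Spelling this out cleanly is the only non-trivial moment; everything else is a direct appeal to the quoted results.
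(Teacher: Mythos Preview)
Your approach is essentially the paper's: argue by contrapositive, use that if $C$ is smooth at every base point then $\pi$ restricts to an isomorphism $E \simeq C$, and then bound the number of components of $E$. However, there is a gap. You repeatedly assert that $E = \overline{C}$ is \emph{irreducible} and hence that $n \in \{0,1\}$. Nothing in the hypotheses forces $C$ to be irreducible: as Proposition~\ref{lctmc} records, $C$ may be a conic plus a transversal line, or three lines in general position, and if none of the resulting nodes happen to be base points then $E \simeq C$ genuinely has two or three components (type $I_2$ or $I_3$). Your attempt to exclude $I_2$ and $I_3$ by arguing that the ``extra'' components would have to be $\pi$-exceptional presupposes that $C$ contributes only one component, which is exactly what is in question.

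The paper's argument avoids this entirely: from $E \simeq C$ one only needs that a plane cubic has at most three irreducible components, so $E$ has at most three components; but an $I_n$ fiber with $n \geq 4$ has at least four components, contradiction. That is the whole proof. Your self-intersection computation is correct but unnecessary, and your stronger conclusion $n \in \{0,1\}$ is false in general.
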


\begin{proof}
If $C$ is smooth at every base point of $\mathcal{P}$, then the corresponding multiple fiber on $Y$ is given by
\[
m\overline{C}+m\cdot \sum_{i,j} (m_{P_j^{(i)}}(C)-1)E_j^{(i)}=m\overline{C}
\] 
where $\overline{C}$ is the strict transform of $C$ under $\pi$ and $m_{P_j^{(i)}}(C)=1$ is the multiplicity of $P_j^{(i)}$ on the strict transform of $C$ under the blow-up of $P_j^{(1)},\ldots,P_j^{(i-1)}$. That is,  the multiple fiber of $Y$ is simply given by the strict transform of $mC$. But each of the fibers $I_n (4 \leq n \leq 9)$ have at least four components and hence the corresponding multiple fiber cannot be realized as strict transforms of a multiple cubic in the plane, a contradiction.
\end{proof}

When $C$ is singular at a base point of $\mathcal{P}$, it is also useful and interesting to understand how singular it can be.

\begin{prop}
For any index $m$ we have that $lct(\mathbb{P}^2,mC)=\frac{1}{m}$.
\label{lctmc}
\end{prop}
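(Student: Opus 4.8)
The plan is to compute $lct(\mathbb{P}^2, mC)$ directly using the definition of the log canonical threshold together with the homogeneity of the problem under scaling the coefficient. Since $mC = m \cdot C$ as a $\mathbb{Q}$-divisor, we have $lct(\mathbb{P}^2, mC) = \sup\{t : (\mathbb{P}^2, tm\,C) \text{ is log canonical}\} = \frac{1}{m}\sup\{s : (\mathbb{P}^2, s\,C) \text{ is log canonical}\} = \frac{1}{m} lct(\mathbb{P}^2, C)$. So the statement reduces to showing $lct(\mathbb{P}^2, C) = 1$, where $C$ is the reduced cubic curve generating the Halphen pencil.

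To see $lct(\mathbb{P}^2, C) = 1$, I would split into cases according to the singularities of $C$. First, $lct(\mathbb{P}^2, C) \leq 1$ always holds because the coefficient of the (reduced, hence multiplicity-one) divisor $C$ in a log resolution is $1$, and Lemma \ref{complct} gives $lct \leq \min\{\ldots, \tfrac{1}{d_i}, 1\} \leq 1$ (taking $d_i = 1$). For the reverse inequality $lct(\mathbb{P}^2, C) \geq 1$: by Proposition \ref{Halphen} and the discussion following Lemma \ref{l1}, the strict transform $E = \overline{C}$ on $Y$ is (a component of) a fiber of type $I_n$ with $n \leq 9$, so after resolving, $C$ has only nodes as singularities (type $I_n$ fibers, and cubics whose resolution gives such fibers, are nodal — a cubic with a cusp would give type $II$, which is not of multiplicative type and cannot occur as a multiple fiber over characteristic zero by \cite[Proposition 5.1.8]{dc}). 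A reduced plane curve with only nodal singularities has $lct = 1$ at every point: locally the node is $\{xy = 0\}$, already simple normal crossings, and by Remark \ref{snc} the pair $(\mathbb{C}^2, \{xy=0\})$ is log canonical, so $lct_p = 1$ there; at smooth points $lct_p = 1$ trivially. Hence $lct(\mathbb{P}^2, C) = 1$.

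The main obstacle — or rather the main thing to get right — is justifying that $C$ is at worst nodal. The cleanest argument uses the constraint that $mC$ is the \emph{multiple} fiber of the elliptic surface $Y$, and over a field of characteristic zero multiple fibers are necessarily of type $I_n$ (this is stated in the excerpt, citing \cite[Proposition 5.1.8]{dc}); a type $I_n$ fiber, when $n \leq 3$ and realized as the strict transform of $C$ with $C$ smooth at the base points, forces $C$ to be smooth ($n=0$), a nodal cubic ($n=1$), a conic plus a secant line ($n=2$), or a triangle of lines ($n=3$) — in every case only ordinary double points — while for $n \geq 4$, $C$ is singular at a base point but the local picture obtained from Lemma \ref{Csmoothdj} and its corollary still shows the singularity is resolved by blow-ups producing a chain of $(-2)$-curves, i.e. $C$ has a node-type singularity (possibly with infinitely near points all of multiplicity one after the first blow-up). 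In all cases a log resolution has $E_i$ appearing in $\mu^*C$ with multiplicity $b_i = $ (a positive integer) and discrepancy $a_i = i - 1$ or similar, and one checks $\frac{1+a_i}{b_i} \geq 1$; combined with the $\frac{1}{d_i} = 1$ and the $1$ in Lemma \ref{complct}, this gives $lct(\mathbb{P}^2, C) = 1$. Alternatively, and more simply, since every singular point of $C$ is an ordinary node (or, in the infinitely-near case, a point whose embedded resolution is a chain — still $lct = 1$ by an explicit Newton-polygon or blow-up computation), we get $lct_p(\mathbb{P}^2, C) = 1$ for all $p$, whence $lct(\mathbb{P}^2, C) = 1$ and therefore $lct(\mathbb{P}^2, mC) = \frac{1}{m}$.
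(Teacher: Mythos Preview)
Your reduction $lct(\mathbb{P}^2,mC)=\tfrac{1}{m}\,lct(\mathbb{P}^2,C)$ and the plan to show $lct(\mathbb{P}^2,C)=1$ by proving that $C$ has at worst nodes is exactly the right strategy, and it is also what the paper does. The gap is in your justification that $C$ is nodal. Your argument ``a cubic with a cusp would give type $II$, which is not of multiplicative type'' only applies when the cusp (or tacnode, or triple point) is \emph{not} a base point of $\mathcal{P}$; if it \emph{is} a base point, the singularity gets (partially) resolved when constructing $Y$, and you cannot read off the fiber type of $E$ directly from the singularity type of $C$. Your attempt to cover this case --- ``the local picture obtained from Lemma \ref{Csmoothdj} and its corollary still shows the singularity is resolved by blow-ups producing a chain of $(-2)$-curves, i.e.\ $C$ has a node-type singularity'' --- does not work: Lemma \ref{Csmoothdj} and its corollary concern the curve $B$, not $C$, and in any case the exceptional locus over \emph{any} base point is a chain (this is built into the setup (\ref{basepts})), so the chain structure alone places no constraint on the singularity of $C$. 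A cusp is not a ``node-type'' singularity, and its local $lct$ is $5/6$, not $1$.

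The paper closes this gap with a multiplicity argument on the multiple fiber. If $C$ were three concurrent lines at a base point $P_j^{(1)}$, then the exceptional curve $E_j^{(1)}$ would appear in the multiple fiber $mE$ with multiplicity $m\cdot m_{P_j^{(1)}}(C)-m = 3m-m = 2m$; but every component of an $I_n$ fiber has multiplicity $1$, so every component of $mE$ has multiplicity exactly $m$, a contradiction. The cuspidal and conic-plus-tangent-line cases reduce to this one, since successive blow-ups of the base point eventually produce a point where three branches (strict transform plus exceptional curves) are concurrent, and the same $2m$-multiplicity contradiction fires. Once these three configurations are excluded, $C$ is reduced with only ordinary nodes, and then your argument (Remark \ref{snc} at each node) gives $lct(\mathbb{P}^2,C)=1$ as desired.
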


\begin{proof}
If $C$ is irreducible, then there is nothing to prove. Otherwise, we claim that $C$ consists of either a conic and a line intersecting it transversally or three distinct lines in general position (i.e. not concurrent at a point).

Clearly $C$ cannot be non-reduced so we must exclude the following three cases:
\begin{enumerate}[(a)]
\item a cusp
\item a conic and a tangent line
\item three concurrent lines
\end{enumerate}

Because the unique multiple fiber of $Y$ can only be of type $I_n, n\leq 9$, in any of the above cases the singular point of $C$ must be a base point of the pencil $\mathcal{P}$. Moreover, since (c) can be obtained as soon as one blow-up (of the tangency point) is performed in a cubic as in (b) and, in turn, (b) can be obtained as soon as one blow-up (of the cusp) is performed in a cubic as in (a), it suffices to consider only case (c). But blowing-up the concurrency point yields a component with multiplicity $2m$, which is an absurd. Such component is not a multisection of degree $m$ and it cannot be a component in the multiple fiber either.
\end{proof}

\begin{prop}
If $F$ is of type $IV^*$ or $III^*$, then $C$ is singular at most one base point of $\mathcal{P}$.
\end{prop}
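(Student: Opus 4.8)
The statement to prove is: if $F$ is of type $IV^*$ or $III^*$, then $C$ is singular at most one base point of $\mathcal{P}$. The natural strategy is to argue by contradiction and count components. Suppose $C$ is singular at two distinct base points, say $P_{j_1}^{(1)}$ and $P_{j_2}^{(1)}$. By Proposition \ref{lctmc} and the analysis in its proof, the only way $C$ can be singular (and $mC$ still give a legitimate Halphen pencil) is when $C$ is reducible: either a conic plus a transversal line, or three lines in general position. In the first case $C$ has a unique singular point, so it cannot be singular at two base points; hence we are forced into the case where $C$ is three lines in general position, with the two base points being two of the three nodes of $C$ (the three pairwise intersection points).

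First I would pin down what happens locally at each such node. By the Corollary following Lemma \ref{Csmoothdj}, at a base point $P_j^{(1)}$ where $C$ is singular we have $m_{P_j^{(1)}}(B)=m$ and $B$ consists of a single branch of multiplicity $m$ there; moreover $d_j^{(1)}=0$, so the exceptional divisor over $P_j^{(1)}$ is a multisection and contributes nothing to the fiber $F$. The key point is that the chain of exceptional curves $E_j$ sitting over such a base point must \emph{entirely} be absorbed either into the multiple fiber $mE$ or be a multisection — and since $C$ is a \emph{reduced} cubic singular at $P_j^{(1)}$, the multiple fiber $mE=m\overline{C}$ requires blowing up the node so that $\overline{C}$ separates; this forces $a_j\geq 2$ (one needs to blow up the node to resolve it into the $I_n$ fiber), and the exceptional curves over $P_j^{(1)}$ that are not part of $E$ must be multisections, not fiber components. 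So each singular base point of $C$ "uses up" at least two of the nine base points ($a_j\geq 2$) without contributing new components to $F$ — in fact it pushes components \emph{into} the multiple fiber $mE$, raising the type of $E$.

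Now I would combine this with the constraint from Lemma \ref{nEC}: when $m>1$ and $F$ is of type $III^*$, the multiple fiber $E$ is of type $I_0, I_1$ or $I_2$, and when $F$ is of type $IV^*$, $E$ is of type $I_0,\ldots,I_3$. A node of $C$ that is a base point forces blow-ups over it that increase the number of components of $E$: resolving one node of the three-lines configuration (or producing it in the first place, as in the cusp $\to$ conic+line $\to$ three concurrent lines degeneration in the proof of Proposition \ref{lctmc}) contributes extra components to $mE$. Having $C$ singular at two base points would force $E$ to have too many components — more than the $I_2$ (resp. $I_3$) ceiling — giving the contradiction. Concretely: each singular base point of $C$ contributes at least two additional components to the multiple fiber beyond those of the smooth model, so two singular base points would push $E$ past type $I_2$ (for $III^*$) and one should check it even exceeds $I_3$ (for $IV^*$); alternatively one reaches a component of multiplicity $2m$ exactly as in the proof of Proposition \ref{lctmc}, which is the cleanest contradiction since such a curve can be neither a degree-$m$ multisection nor a component of $mE$.

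\textbf{Main obstacle.} The delicate step is the bookkeeping that turns "$C$ singular at two base points" into "$E$ has too many components" in a way that is uniform and airtight — in particular ruling out degenerate configurations where the two nodes of $C$ that are base points share infinitely near structure in a way that economizes on components, and handling the $m=1$ case separately (where the Remark after Lemma \ref{nEC} says one may take the other generator smooth, so one argues directly on the number of irreducible components of $F$ via Equation (\ref{numbercomp}) rather than via the type of $E$). I would expect the cleanest uniform argument to go through the "multiplicity $2m$ component is absurd" mechanism from Proposition \ref{lctmc}: blowing up a node of the three-line configuration that is also a base point of $\mathcal{P}$ where all three lines pass through — wait, they don't all pass through one node, so instead the argument must be that demanding the strict transform of $C$ give an $I_n$ fiber with $n$ large enough to accommodate two resolved nodes already overshoots the allowed type of $E$ for $III^*$ and $IV^*$, and verifying this overshoot case-by-case (three lines, two of whose three intersection points are base points) is the real content.
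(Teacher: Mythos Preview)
Your overall strategy --- count the components of the multiple fiber $E$ and compare against the ceiling imposed by Lemma \ref{nEC} --- is exactly what the paper does. But you made a concrete geometric error that sends you down the wrong branch.

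You write: ``In the first case [conic plus transversal line] $C$ has a unique singular point, so it cannot be singular at two base points; hence we are forced into the case where $C$ is three lines in general position.'' This is false. A smooth conic and a line meeting \emph{transversally} intersect in \emph{two} distinct points (B\'ezout), and each is a node of the union. So conic+line has exactly two singular points, and both could well be base points. This is precisely the case the paper analyzes: if both nodes are base points, each must be blown up at least twice (to separate the branches of $C$ and then resolve the pencil), contributing at least one new component to $E$ per node. Since $C$ already has two components, $E$ ends up with at least four, i.e.\ type $I_n$ with $n\geq 4$, contradicting the $I_2$ (for $III^*$) or $I_3$ (for $IV^*$) bound.

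Your three-lines analysis is salvageable but overcomplicated. Three lines in general position already give $\overline{C}$ three components, so $E$ is at least $I_3$. For $III^*$ this already exceeds the $I_2$ ceiling, so $C$ cannot be three lines at all. For $IV^*$, a single node of $C$ being a base point would push $E$ to at least four components, so no node can be a base point. Hence in both cases, $C$ singular at $\geq 2$ base points forces the conic+line configuration --- which is why the paper jumps directly to it. The ``multiplicity $2m$'' mechanism from Proposition \ref{lctmc} is irrelevant here; that argument was for three \emph{concurrent} lines, which is already excluded.
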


\begin{proof}
From the proof of Proposition \ref{lctmc} we know that $C$ is reduced and either $C$ is irreducible or it consists of a conic and a line intersecting transversally or  three lines in general position. Moreover,  from the classification in \cite{list} we also know that if $F$ is of type $IV^*$ (resp. $III^*$), then the multiple fiber $mE$ can only be of type $I_0,I_1,I_2$ or $I_3$ (resp. $I_0,I_1$ or $I_2$). Now, if $C$ were singular at more than one base point of $\mathcal{P}$, then $C$ would  necessarily consist of a conic and a line intersecting transversally and the two intersecting points would be base points of $\mathcal{P}$. But then we would need to  blow-up each of those two points at least twice, which would yield at least two more components in the multiple fiber. That is, $mE$ would be of type $I_n$ with $n\geq 4$, a contradiction.
\end{proof}

\begin{rmk}
If $F$ is of type $II^*$, then $C$ must be smooth at every base point of $\mathcal{P}$, because $E$ is of type $I_0$ or $I_1$ \cite{list}. In particular, $E$ (hence $C$) is irreducible, $\pi$ restricts to an isomorphism $E\simeq C$ and $C$ cannot be singular at any base point of $\mathcal{P}$. 
\label{rmkCiistar}
\end{rmk}

\subsection{The singularities of $B$ and the log canonical threshold} We are now ready to study the singularities of the curve $B$ in terms of the type of the fiber $F$. We investigate the multiplicities of $B$ at the base points of $\mathcal{P}$ and we compute bounds for the log canonical threshold of the pair $(\mathbb{P}^2,B)$ by establishing some relations between the log canonical thresholds of the pairs $(Y,F)$ and $(\mathbb{P}^2,B)$. 

We begin by proving the following Lemma:

\begin{lema}
If $\mathcal{P}$ does not contain an infinitely near point as a base point (i.e $a_j=1$ for all $j=1,\ldots,k$), then $k=9$ and 
\[
F=\overline{F}+\sum_{j=1}^{9} (m_{P_j^{(1)}}(B)-m)E_j^{(1)}=\overline{F}
\] 
\label{induced}
\end{lema}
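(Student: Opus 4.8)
The plan is to analyze what the hypothesis $a_j = 1$ for all $j$ means geometrically and combine it with the numerical constraints already derived in this section. Since $a_1 + \ldots + a_k = 9$ and each $a_j = 1$, we immediately get $k = 9$. The content then is to show that each coefficient $m_{P_j^{(1)}}(B) - m$ vanishes, so that $F = \overline{F}$.

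First I would invoke Equation (\ref{formulasummults}), which states $m_{P_j^{(1)}}(B) + \ldots + m_{P_j^{(a_j)}}(B) = a_j \cdot m$. Under the hypothesis $a_j = 1$, this collapses to the single equation $m_{P_j^{(1)}}(B) = m$ for each $j = 1, \ldots, 9$. Consequently $d_j^{(1)} = m_{P_j^{(1)}}(B) - m = 0$ for every $j$. (Equivalently, one can note that when $a_j=1$ there are simply no exceptional curves $E_j^{(i)}$ with $i \geq 1$ appearing in the expansion (\ref{eqF}) of $F$, since the chain in Figure \ref{chains} is empty; the coefficient $d_j^{(a_j)} = d_j^{(1)}$ must already be zero because this is the coefficient of the $(-1)$-curve at the end of the chain, which is a multisection and cannot be a fiber component.)

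Substituting $d_j^{(1)} = 0$ for all $j$ into the general expression (\ref{eqF}) for $F$, every term in the sum drops out and we are left with $F = \overline{F} + \sum_{j=1}^{9} (m_{P_j^{(1)}}(B) - m) E_j^{(1)} = \overline{F} + \sum_{j=1}^{9} 0 \cdot E_j^{(1)} = \overline{F}$, which is exactly the claimed identity.

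I do not expect any serious obstacle here: the statement is essentially an unwinding of the definitions together with the condition $d_j^{(a_j)} = 0$ (the end-of-chain $(-1)$-curve is a multisection, not a fiber component), both of which have already been established earlier in the section. The only point requiring a word of care is the justification that $d_j^{(a_j)}=0$ — equivalently Equation (\ref{formulasummults}) — but this is available from the discussion preceding Lemma \ref{induced}, so the proof is short.
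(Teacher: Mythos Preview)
Your proof is correct and follows essentially the same approach as the paper: both use $a_1+\cdots+a_k=9$ to get $k=9$, and then invoke the condition $d_j^{(a_j)}=0$ (equivalently, Equation~(\ref{formulasummults})) together with $a_j=1$ to conclude $d_j^{(1)}=m_{P_j^{(1)}}(B)-m=0$ for every $j$. The paper's proof is a one-line compression of exactly what you wrote.
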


\begin{proof}
If $a_j=1$ for all $j=1,\ldots,k$, then it is clear that $k=9$, since $a_1+\ldots+a_k=9$. Moreover, $0=d_j^{(a_j)}=d_j^{(1)}=m_{P_j^{(1)}}(B)-m$ for all $j=1,\ldots,9$.
\end{proof}

\begin{cor}
Let $S_F$ denote the sum of all the multiplicities of the components of a fiber $F$ and let $n_F$ denote the number of its components. If either $S_F>3m$ or $n_F>3m$, then $\mathcal{P}$ must contain an infinitely near point as a base point. In particular, there exists some $1\leq j\leq k$ so that $a_j>1$ and $d_j^{(1)}\geq 1$.
\label{inftybadfiber}
\end{cor}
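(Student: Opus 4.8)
The plan is to read the statement straight off the decomposition $F=\overline F+\sum_{j,i}d_j^{(i)}E_j^{(i)}$ of~(\ref{eqF}), combined with the elementary bounds forced by $\deg B=3m$. First I would record that $\overline F$, being the strict transform of $B$ under the birational morphism $\pi$, has exactly the same irreducible components as $B$ and with the same multiplicities: writing $B=\sum_l\mu_lB_l$ with the $B_l$ distinct and irreducible, the identity $\sum_l\mu_l\deg B_l=3m$ forces $n_B=\#\{l\}\leq\sum_l\deg B_l\leq 3m$ and likewise $S_B=\sum_l\mu_l\leq\sum_l\mu_l\deg B_l=3m$, where $S_B$ denotes the sum of the multiplicities of the components of $B$. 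Since the $E_j^{(i)}$ are pairwise distinct and none of them is a component of $\overline F$, the decomposition~(\ref{eqF}) yields $n_F=n_B+\#\{(j,i):d_j^{(i)}>0\}$ and $S_F=S_B+\sum_{j,i}d_j^{(i)}$.

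Next, if $S_F>3m$ or $n_F>3m$, then in the first case $\sum_{j,i}d_j^{(i)}=S_F-S_B>0$, and in the second $\#\{(j,i):d_j^{(i)}>0\}=n_F-n_B>0$; either way there is an index pair $(j_0,i_0)$ with $d_{j_0}^{(i_0)}\geq 1$, i.e.\ some exceptional curve genuinely occurs as a component of $F$. The last curve $E_j^{(a_j)}$ of each chain is the $(-1)$-curve, which is a multisection of $f$ and hence never a component of a fiber, so $d_j^{(a_j)}=0$; thus $i_0\leq a_{j_0}-1$, and in particular $a_{j_0}>1$, so $\mathcal P$ has an infinitely near base point. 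This already gives the first assertion of the Corollary (which one could also extract from the contrapositive of Lemma~\ref{induced}), and more.

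It remains to upgrade ``$d_{j_0}^{(i_0)}\geq 1$ for some $i_0$'' to ``$d_{j_0}^{(1)}\geq 1$''. For this I would invoke~(\ref{bounddj}): $d_{j_0}^{(i_0)}\leq i_0\bigl(m_{P_{j_0}^{(1)}}(B)-m\bigr)=i_0\,d_{j_0}^{(1)}$. Together with $d_{j_0}^{(i_0)}\geq 1$ and $i_0\geq 1$ this gives $d_{j_0}^{(1)}>0$, hence $d_{j_0}^{(1)}\geq 1$ since it is an integer.

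The main point that requires care is exactly this last step: a priori an exceptional curve far out in a chain could appear in $F$ while $E_{j_0}^{(1)}$ does not. What rules this out is that along a chain each $P_j^{(i+1)}$ is proximate only to $P_j^{(i)}$ — a consequence of $-K_Y$ being nef, which forbids smooth rational curves of self-intersection $\leq -3$ — so the multiplicities $m_{P_j^{(l)}}(B)$ are non-increasing along the chain; this monotonicity is precisely what is already packaged into inequality~(\ref{bounddj}). Modulo that inequality, which is available in the text, the Corollary is pure bookkeeping from~(\ref{eqF}).
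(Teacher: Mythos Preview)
Your proof is correct. The paper's own argument is the bare contrapositive of Lemma~\ref{induced}: if no base point is infinitely near then $F=\overline F$, whence $S_F\leq 3m$ and $n_F\leq 3m$. That handles the first sentence of the Corollary but, as written, the paper does not spell out the ``in particular'' clause; you do, and your use of~(\ref{bounddj}) to pass from some $d_{j_0}^{(i_0)}\geq 1$ back down to $d_{j_0}^{(1)}\geq 1$ is exactly the missing step. So the two approaches agree in spirit---both rest on the decomposition~(\ref{eqF}) and the degree bound $\deg B=3m$---but yours is the more complete argument, since it actually pins down a single index $j_0$ with both $a_{j_0}>1$ and $d_{j_0}^{(1)}\geq 1$, which the paper's contrapositive alone does not immediately give.
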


\begin{proof}
If $\mathcal{P}$ does not contain an infinitely near point as a base point, then Lemma \ref{induced} tells us $F$ is the strict transform of a member of $\mathcal{P}$, which implies both  $S_F\leq 3m$ and $n_F \leq 3m$. 
\end{proof}

\begin{cor}
Using the same notations as in Corollary \ref{inftybadfiber}, if a fiber $F$ is such that $S_F>3m$ or $n_F>3m$, then there exists a base point $P_j^{(1)}$ in $\mathcal{P}$ such that $m_{P_j^{(1)}}\geq m+1$.
\end{cor}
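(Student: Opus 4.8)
The plan is to deduce this directly from Corollary \ref{inftybadfiber} together with the explicit formula for $d_j^{(1)}$ established earlier. First I would invoke Corollary \ref{inftybadfiber}: since $S_F > 3m$ or $n_F > 3m$, the pencil $\mathcal{P}$ must contain an infinitely near point as a base point, and more precisely there is an index $1 \leq j \leq k$ with $a_j > 1$ and $d_j^{(1)} \geq 1$.

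Next I would recall the identity $d_j^{(1)} = m_{P_j^{(1)}}(B) - m$, which was derived right after the description of the induced pencil on the blow-up of a single base point (it also follows from (\ref{formuladjcj}) with $i=1$, since $c_j^{(1)} = m_{P_j^{(1)}}(B)$). Substituting the inequality $d_j^{(1)} \geq 1$ into this identity yields $m_{P_j^{(1)}}(B) - m \geq 1$, i.e. $m_{P_j^{(1)}}(B) \geq m+1$, which is exactly the claim (with $m_{P_j^{(1)}}$ understood as $m_{P_j^{(1)}}(B)$).

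There is essentially no obstacle here: the statement is a reformulation of Corollary \ref{inftybadfiber} through the dictionary between the $d_j^{(i)}$ and the multiplicities of $B$ at the base points. The only point worth making explicit in the write-up is why $a_j > 1$ is needed at all — namely, that $d_j^{(1)} \geq 1$ can only be forced when there is at least one infinitely near point in the $j$-th chain, since if $a_j = 1$ then $d_j^{(a_j)} = d_j^{(1)} = 0$ and no such base point of higher multiplicity is produced. Thus the hypothesis $S_F > 3m$ or $n_F > 3m$ is genuinely used, via Corollary \ref{inftybadfiber}, to guarantee the existence of the required chain.
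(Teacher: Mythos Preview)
Your proposal is correct and matches the paper's own proof almost verbatim: the paper simply cites Corollary \ref{inftybadfiber} to obtain some $j$ with $d_j^{(1)}\geq 1$ and then applies the identity $d_j^{(1)}=m_{P_j^{(1)}}(B)-m$. Your added remark about the role of $a_j>1$ is extra commentary, not a different argument.
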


\begin{proof}
By Corollary \ref{inftybadfiber} there exists some $j$ so that $d_j^{(1)}\geq 1$ and the result follows from the equality $d_j^{(1)}=m_{P_j^{(1)}}(B)-m$.
\end{proof}

We also prove the following:

\begin{lema}
Every base point $P_j^{(1)}$ of $\mathcal{P}$ is such that $m_{P_j^{(1)}}(B)\leq \min\{M_F+m, 3m\}$, where $M_F$ denotes the largest multiplicity of a component of $F$.
\label{mbasept}
\end{lema}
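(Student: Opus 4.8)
The plan is to bound $m_{P_j^{(1)}}(B)$ from above by two separate arguments, one giving the bound $3m$ and the other giving $M_F+m$, then take the minimum. The first bound is elementary: $B$ is a plane curve of degree $3m$ (Proposition \ref{Halphen}), so the multiplicity of $B$ at any point of $\mathbb{P}^2$, in particular at $P_j^{(1)}$, cannot exceed $3m$. This disposes of the $3m$ term without any use of the elliptic surface structure.

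For the bound $m_{P_j^{(1)}}(B)\leq M_F+m$ I would argue on the surface $Y$ via the exceptional configuration. Recall from Equation (\ref{formuladjcj}) that $d_j^{(1)}=m_{P_j^{(1)}}(B)-m$, so the claim is equivalent to $d_j^{(1)}\leq M_F$. Now $d_j^{(1)}$ is precisely the coefficient with which the exceptional curve $E_j^{(1)}$ appears in the fiber $F$, as displayed in (\ref{eqF}). Since every coefficient appearing in the expression (\ref{eqF}) for $F$ is by definition at most $M_F$ (the largest multiplicity of a component of $F$), we get $d_j^{(1)}\leq M_F$ immediately. Combining, $m_{P_j^{(1)}}(B)=d_j^{(1)}+m\leq M_F+m$.

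Putting the two bounds together yields $m_{P_j^{(1)}}(B)\leq\min\{M_F+m,\,3m\}$, as claimed. I do not expect any real obstacle here: both halves are essentially bookkeeping — one on $\mathbb{P}^2$ using the degree of $B$, the other on $Y$ using that $E_j^{(1)}$ is one of the components of $F$ and hence its multiplicity in $F$ is at most $M_F$. The only point to be slightly careful about is the degenerate case where $E_j^{(1)}$ does not appear in $F$ at all, i.e. $d_j^{(1)}=0$; but then $m_{P_j^{(1)}}(B)=m$, which certainly satisfies the asserted inequality, so nothing is lost. One could also phrase the $d_j^{(1)}\leq M_F$ step as: $M_F$ dominates every coefficient in (\ref{eqF}), including $d_j^{(1)}$, whether or not that coefficient is zero.
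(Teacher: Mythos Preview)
Your proposal is correct and follows essentially the same approach as the paper: the paper's one-line proof likewise invokes the degree bound $\deg B=3m$ for the $3m$ term and the identity $d_j^{(1)}=m_{P_j^{(1)}}(B)-m$ together with $d_j^{(1)}\leq M_F$ for the other term. Your version is just more explicit, in particular in noting the harmless degenerate case $d_j^{(1)}=0$.
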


\begin{proof}
If follows from the fact that $B$ has degree $3m, d_j^{(1)}=m_{P_j^{(1)}}(B)-m$ and $d_j^{(1)}\leq M_F$.
\end{proof}

\begin{cor}
If $F$ is non-reduced and $m \leq M_F$, then every base point $P_j^{(1)}$ of $\mathcal{P}$ is such that $m_{P_j^{(1)}}\leq 2M_F$. 
\label{2mf}
\end{cor}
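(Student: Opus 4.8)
The plan is to read this off directly from Lemma \ref{mbasept}. That lemma already gives, for every base point $P_j^{(1)}$ of $\mathcal{P}$, the bound $m_{P_j^{(1)}}(B)\le \min\{M_F+m,\,3m\}$, so all that remains is to control the first term of the minimum under the extra hypothesis $m\le M_F$. First I would observe that $m\le M_F$ forces $M_F+m\le M_F+M_F=2M_F$; combining this with Lemma \ref{mbasept} yields
\[
m_{P_j^{(1)}}(B)\;\le\;\min\{M_F+m,\,3m\}\;\le\;M_F+m\;\le\;2M_F,
\]
which is exactly the claimed inequality.

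Next I would comment on the role of the hypotheses. The non-reducedness of $F$ is never used in the chain of inequalities above; its only function is to keep the statement from being vacuous, since if $F$ were reduced then $M_F=1$ and the assumption $m\le M_F$ would force $m=1$. Likewise, $m\le M_F$ is precisely the condition that makes the cruder bound $M_F+m$ (rather than $3m$) the relevant one, so the corollary is really just the specialization of Lemma \ref{mbasept} to this range.

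I do not expect any obstacle here: the entire content is already contained in Lemma \ref{mbasept}, and nothing beyond a one-line estimate is needed. If a fully self-contained argument were preferred, one could instead repeat the short computation behind that lemma — namely $d_j^{(1)}=m_{P_j^{(1)}}(B)-m$ together with $d_j^{(1)}\le M_F$, whence $m_{P_j^{(1)}}(B)=m+d_j^{(1)}\le m+M_F\le 2M_F$ — but invoking the lemma directly is the cleaner route.
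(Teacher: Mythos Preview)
Your proof is correct and follows exactly the intended route: the paper states the corollary immediately after Lemma~\ref{mbasept} with no separate proof, so it is meant to be the one-line estimate $m_{P_j^{(1)}}(B)\le M_F+m\le 2M_F$ that you give.
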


\begin{prop}
If $F$ is reduced, then $B$ is reduced and, 
\[
 \frac{1}{m+1} < lct(\mathbb{P}^2,B)=\min\left\{lct_{P_j^{(1)}}(\mathbb{P}^2,B) ,lct(Y,F)\right\}\leq lct(Y,F)\leq lct(Y,\overline{F})
\]
\label{lctreduced}
\end{prop}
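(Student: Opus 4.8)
The plan is to unwind the definitions using the explicit log resolution given by $\pi$ and the formulas for the multiplicities $b_j^{(i)}$, $c_j^{(i)}$, $d_j^{(i)}$ deduced above. First I would address the claim that $B$ is reduced: if some component $D_i$ of $B$ appeared with multiplicity $\geq 2$, then its strict transform would appear in $\overline{F}$ (hence in $F$) with multiplicity $\geq 2$, contradicting that $F$ is reduced; so every component of $B$ is reduced, $F = \overline{F} + \sum d_j^{(i)} E_j^{(i)}$ and the reducedness of $F$ forces $d_j^{(i)} \leq 1$ for all $i,j$ as well. Next I would set up the log resolution $\mu = \pi$ of the pair $(\mathbb{P}^2, B)$ (note $\pi^{-1}(B) \cup \mathrm{Exc}(\pi)$ is snc since $F$ is a fiber of an elliptic fibration on a smooth surface, hence a curve with at worst normal-crossing singularities, and $\overline{F}$ together with the $E_j^{(i)}$ forms this divisor) and apply Lemma \ref{complct}. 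With $K_Y = \pi^* K_{\mathbb{P}^2} + \sum b_j^{(i)} E_j^{(i)}$ and $\pi^* B = \overline{F} + \sum c_j^{(i)} E_j^{(i)}$, Lemma \ref{complct} gives
\[
lct(\mathbb{P}^2, B) = \min\left\{ \min_{i,j} \frac{1 + b_j^{(i)}}{c_j^{(i)}},\ 1 \right\},
\]
where the "$1/d_i$" terms equal $1$ because $B$ is reduced.

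The core of the argument is then to identify which exceptional divisors $E_j^{(i)}$ actually contribute to this minimum. Using $b_j^{(i)} = i$ and $c_j^{(i)} = m_{P_j^{(1)}}(B) + \cdots + m_{P_j^{(i)}}(B) = d_j^{(i)} + i\cdot m$ from (\ref{formuladjcj}), the ratio at $E_j^{(i)}$ is $\frac{1+i}{d_j^{(i)} + im}$. I would split into two cases. If $d_j^{(i)} = 0$, i.e. $E_j^{(i)}$ does not appear in $F$, then the ratio is $\frac{1+i}{im} \geq \frac{1}{m} > \frac{1}{m+1}$ (with equality to $\frac1m$ only when $i=1$ is impossible unless... in any case it is $> \frac1{m+1}$ since $i \geq 1$ gives $\frac{1+i}{im} = \frac1m + \frac1{im} > \frac1{m+1}$ as $m \geq 1$; more carefully $\frac{1+i}{im} \ge \frac{1+i}{i(m+1)} \cdot \frac{m+1}{m} > \frac{1}{m+1}$). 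If $d_j^{(i)} \geq 1$, then by Lemma \ref{Csmoothdj} (for $i=1$) and more generally since such an $E_j^{(i)}$ is a component of $F$, the ratio $\frac{1+i}{d_j^{(i)}+im}$ is exactly the discrepancy-type quantity controlling log canonicity of $(Y, tF)$ along $E_j^{(i)}$; comparing with the description of $lct(Y, F)$ from Lemma \ref{complct} applied to $\mu = \mathrm{id}$ (so that $F$ itself is the snc divisor, with the $E_j^{(i)}$ now among the $\tilde D_i$ of coefficient $d_j^{(i)} \le 1$, and the components of $\overline{F}$ likewise), one sees $lct(Y,F) = \min\{ 1/M_F, \dots\}$ only involves the $1/1 = 1$ terms, so $lct(Y,F) = 1$ — wait, this needs care: $F$ reduced means $(Y,F)$ is lc with $lct(Y,F) \le 1$, and since $F$ is a fiber it is in fact snc with all coefficients $1$, giving $lct(Y,F)=1$. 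Hence the asserted equality $lct(\mathbb{P}^2,B) = \min\{ lct_{P_j^{(1)}}(\mathbb{P}^2,B),\, lct(Y,F)\}$ must be read correctly: the minimum over the exceptional ratios that is $< 1$ comes only from the divisors over the plane base points $P_j^{(1)}$ themselves, i.e. the $i = 1$ level, giving the local contributions $lct_{P_j^{(1)}}(\mathbb{P}^2,B) = \min_j \frac{2}{m_{P_j^{(1)}}(B)}$; all deeper infinitely-near contributions ($i \geq 2$) are bounded below by these by a direct induction on $i$ using $c_j^{(i)} \le c_j^{(i-1)} + $ (the drop), which I would verify by a short monotonicity computation. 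Finally, the strict lower bound: $lct_{P_j^{(1)}}(\mathbb{P}^2,B) = \frac{2}{m_{P_j^{(1)}}(B)}$ and by Lemma \ref{mbasept} with $M_F \le 1$ (as $F$ is reduced) we get $m_{P_j^{(1)}}(B) \le m+1$, hence $\frac{2}{m_{P_j^{(1)}}(B)} \ge \frac{2}{m+1} > \frac{1}{m+1}$, and since $lct(Y,F) = 1 > \frac1{m+1}$ as well, the chain $\frac{1}{m+1} < lct(\mathbb{P}^2,B) \le lct(Y,F) \le lct(Y,\overline{F})$ follows, the last inequality because $\overline{F} \le F$ as effective divisors so $(Y, t\overline{F})$ is lc whenever $(Y, tF)$ is.

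The main obstacle I anticipate is bookkeeping the comparison between the two applications of Lemma \ref{complct} — one with $\mu = \pi$ computing $lct(\mathbb{P}^2, B)$, one with $\mu = \mathrm{id}_Y$ (or a further resolution if $F$ is not already snc at points where $\overline F$ meets several $E_j^{(i)}$) computing $lct(Y, F)$ — and in particular checking cleanly that the infinitely-near exceptional ratios $\frac{1+i}{d_j^{(i)} + im}$ for $i \ge 2$ never undercut the $i=1$ contribution. This is a finite inductive estimate using (\ref{formuladjcj}) and the bound (\ref{bounddj}), but it requires being careful that $d_j^{(i)}$ can grow with $i$. I would handle it by showing $\frac{1+i}{d_j^{(i)}+im} \ge \frac{2}{m_{P_j^{(1)}}(B)} = \frac{2}{d_j^{(1)}+m}$ directly, cross-multiplying and using $d_j^{(i)} \le d_j^{(1)} + (i-1)\cdot 2m$ together with $m_{P_j^{(i)}}(B) \le m_{P_j^{(1)}}(B)$ at each infinitely near stage; the inequality reduces to a polynomial inequality in $i, m, d_j^{(1)}$ that holds for all $i \ge 1$.
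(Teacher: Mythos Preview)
Your proposal has a genuine gap at the very first technical step: you assert that $\pi$ is a log resolution of $(\mathbb{P}^2,B)$ because ``$F$ is a fiber of an elliptic fibration on a smooth surface, hence a curve with at worst normal-crossing singularities.'' This is false. Among reduced fibers, only those of type $I_n$ are nodal; a fiber of type $II$ has a cusp, type $III$ has a tacnode (two smooth branches tangent to order $2$), and type $IV$ has an ordinary triple point. None of these is simple normal crossings, so $\pi^{-1}(B)\cup \mathrm{Exc}(\pi)$ is in general \emph{not} snc and Lemma~\ref{complct} does not apply directly with $\mu=\pi$. The same error propagates to your computation $lct(Y,F)=1$ for reduced $F$: in fact $lct(Y,F)=5/6,\,3/4,\,2/3$ for types $II,III,IV$ respectively, so the term $lct(Y,F)$ in the displayed minimum is genuinely informative and cannot be absorbed into a ``$1$.'' This in turn undermines your attempt to reinterpret the asserted equality, which you yourself flag as needing care.

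The paper avoids this by a different decomposition. Rather than treating $\pi$ as a global log resolution, it splits the singular points of $B$ into base points $P_j^{(1)}$ and non--base points. At a non--base point $P$ the map $\pi$ is a local isomorphism, so $lct_P(\mathbb{P}^2,B)=lct_{\pi^{-1}(P)}(Y,\overline{F})$, and since near such a point no exceptional curve passes, $\overline{F}=F$ locally there; the Kodaira classification of reduced fibers (a unique singular point for $II,III,IV$; only nodes for $I_n$) then gives $lct_{\pi^{-1}(P)}(Y,\overline{F})=lct(Y,F)$. For the base points one bounds the exceptional ratios directly: using $c_j^{(i)}=d_j^{(i)}+im$ and $d_j^{(i)}\le 1$ (reducedness of $F$) one gets $\frac{1+i}{c_j^{(i)}}\ge \frac{1+i}{im+1}>\frac{1}{m+1}$ for every $i$, which is both simpler and stronger than your proposed induction comparing the $i$-th ratio to the $i=1$ ratio. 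To repair your argument you would need exactly this split: handle the non-snc singularity of $F$ separately (it contributes the $lct(Y,F)$ term), and only use the explicit exceptional ratios over the base points.
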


\begin{proof}
We first show the equality. We have that $lct(\mathbb{P}^2,B)=\min_P\{ lct_P(\mathbb{P}^2,B)\}$, where $P$ runs over the singular points of $B$. But any singular point of $B$ is either a base point of $\mathcal{P}$ of it is not a base point and hence it must satisfy $lct_P(\mathbb{P}^2,B)=lct_P(Y,\overline{F})$. Moreover,  $lct_P(Y,\overline{F})=lct(Y,F)$, because either $F$ is of type $II,III$ or $IV$ and $F$ contains a unique singular point, namely (the strict transform of) $P$; or $F$ is of type $I_n, 1\leq n \leq 9$ and every singular point of $F$ is an ordinary node and we have that $lct_P(Y,\overline{F})=lct(Y,F)=1$.

Now, because $F$ is reduced we have
\[
 \frac{1}{m+1}\leq \frac{1}{2} < lct(Y,F)
\]

On the other hand, for any $j$ we can find some $i$ so that 
\[
lct_{P_j^{(1)}}(\mathbb{P}^2,B)=\frac{1+b^{(i)}}{c_j^{(i)}}=\frac{1+i}{c_j^{(i)}}\geq\frac{1+i}{i\cdot m+1}>\frac{1}{m+1}
\]
where we have used Equation (\ref{formuladjcj}) together with the fact that each $d_j^{(i)}$ satisfies $d_j^{(i)}\leq 1$. More precisely, 
\[
c_j^{(i)}= m_{P_j^{(1)}}+\ldots+m_{P_j^{(i)}}=i\cdot m+d_j^{(i)}\leq i\cdot m+1
\]

Finally, it is clear that (see e.g. \cite[Theorem 8.20]{singpairs}) $lct(Y,F)\leq lct(Y,\overline{F})$ because 
\[
F=\overline{F}+\sum_{i,j}d_j^{(i)}E_j^{(i)}
\]
\end{proof}

\begin{prop}
If $m>1$ and $F$ is reduced, then $lct(\mathbb{P}^2,B)>\frac{1}{m}$.
\label{lctreducedm1}
\end{prop}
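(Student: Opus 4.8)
The plan is to revisit the proof of Proposition~\ref{lctreduced} and sharpen its estimates using the extra hypothesis $m>1$ (equivalently $m\geq 2$). By that proposition we already know that $B$ is reduced and that
\[
lct(\mathbb{P}^2,B)=\min\left\{lct_{P_j^{(1)}}(\mathbb{P}^2,B),\ lct(Y,F)\right\},
\]
so it suffices to prove that both terms on the right-hand side are strictly larger than $\frac{1}{m}$.

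First I would dispose of the global term. Since $F$ is reduced it is of Kodaira type $I_n$ ($n\leq 9$), $II$, $III$ or $IV$, so the strict transform $\overline{F}$ has at worst a node, a cusp, a tacnode or an ordinary triple point as singularity; computing the log canonical threshold in each of these cases (or simply quoting the bound $\frac{1}{2}<lct(Y,F)$ already used in the proof of Proposition~\ref{lctreduced}) gives $lct(Y,F)\geq \frac{2}{3}>\frac{1}{2}\geq \frac{1}{m}$, where the last inequality uses $m\geq 2$.

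Next I would treat the local thresholds at the base points exactly as in the proof of Proposition~\ref{lctreduced}: for each $j$ there is an index $i$ for which
\[
lct_{P_j^{(1)}}(\mathbb{P}^2,B)=\frac{1+b_j^{(i)}}{c_j^{(i)}}=\frac{1+i}{c_j^{(i)}},
\]
and, because $F$ is reduced, $d_j^{(i)}\leq 1$, so by~(\ref{formuladjcj}) we have $c_j^{(i)}=i\cdot m+d_j^{(i)}\leq i\cdot m+1$. Hence
\[
lct_{P_j^{(1)}}(\mathbb{P}^2,B)\ \geq\ \frac{1+i}{i\cdot m+1}\ >\ \frac{1}{m},
\]
the final inequality being equivalent to $m(1+i)>i\cdot m+1$, i.e. to $m>1$. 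Taking the minimum over the two kinds of contributions yields $lct(\mathbb{P}^2,B)>\frac{1}{m}$.

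I do not expect a genuine obstacle here: the heart of the matter is the elementary observation that the bound $\frac{1+i}{i\cdot m+1}>\frac{1}{m+1}$ appearing in Proposition~\ref{lctreduced} in fact improves to $>\frac{1}{m}$ precisely when $m>1$. The only point needing a separate argument is the global threshold $lct(Y,F)$, and there it is enough to list the few reduced Kodaira fibers and their well-known log canonical thresholds (the smallest being $\frac{2}{3}$, for type $IV$), which comfortably exceeds $\frac{1}{m}$ for every integer $m\geq 2$.
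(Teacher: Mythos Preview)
Your proposal is correct and follows essentially the same route as the paper's own proof: both observe that the minimum formula from Proposition~\ref{lctreduced} reduces the claim to checking $lct(Y,F)>\tfrac{1}{2}\geq\tfrac{1}{m}$ and $\tfrac{1+i}{i\cdot m+1}>\tfrac{1}{m}$, the latter being equivalent to $m>1$. Your version is slightly more explicit in listing the reduced fiber types and their thresholds, but the argument is the same.
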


\begin{proof}
It follows from the proof of Proposition \ref{lctreduced} by observing that $lct(Y,F)>\frac{1}{2}\geq \frac{1}{m}$ and
\[
\frac{1+i}{i\cdot m+1}>\frac{1}{m}
\]
\end{proof}

\begin{prop}
If  $F$ is non-reduced and $m\leq M_F$, where $M_F$ denotes the largest multiplicity of a component of $F$,  then 
\[
lct(Y,F)\leq lct(\mathbb{P}^2,B)\leq lct(Y,\overline{F})
\]
\label{lctnonreduced}
\end{prop}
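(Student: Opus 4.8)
The plan is to compare three log canonical thresholds by exploiting the factorization $\pi^*B = \overline{F} + \sum_{i,j} c_j^{(i)}E_j^{(i)}$ together with the relation $F = \overline{F} + \sum_{i,j}d_j^{(i)}E_j^{(i)}$ and the formulas $b_j^{(i)}=i$, $d_j^{(i)}=c_j^{(i)}-i\cdot m$ from Equation~(\ref{formuladjcj}). The upper bound $lct(\mathbb{P}^2,B)\leq lct(Y,\overline{F})$ should come for free (or nearly so) from the fact that $\pi$ is a log resolution over a neighborhood of each base point and that away from the base points $B$ and $\overline{F}$ are locally isomorphic; more precisely, $\overline{F}=\pi_*^{-1}B$ and the discrepancy computation via Lemma~\ref{complct} for the pair $(\mathbb{P}^2,B)$ at a base point $P_j^{(1)}$ involves the ratios $\frac{1+i}{c_j^{(i)}}$, while for $(Y,\overline{F})$ the same exceptional divisors $E_j^{(i)}$ reappear, now as divisors on $Y$, contributing nothing worse — so the minimum defining $lct(\mathbb{P}^2,B)$ runs over a superset of the data for $lct(Y,\overline{F})$. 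I would phrase this as: each exceptional $E_j^{(i)}$ that is ``used up'' in resolving the base point of $\mathcal{P}$ over $\mathbb{P}^2$ either survives on $Y$ or has already been contracted, and in both cases the constraint it imposes on $t$ for $(\mathbb{P}^2,tB)$ is at least as strong as any constraint for $(Y,t\overline{F})$.

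For the lower bound $lct(Y,F)\leq lct(\mathbb{P}^2,B)$ — which I expect to be the real content — the key point is that $F$ is a larger divisor than $\overline{F}$ by the effective amount $\sum d_j^{(i)}E_j^{(i)}$, and under the hypothesis $m\leq M_F$ the coefficients $d_j^{(i)}$ are controlled: by Equation~(\ref{bounddj}) and the relation $d_j^{(i)}\le M_F$ (each $E_j^{(i)}$ appears in $F$ with multiplicity at most $M_F$), together with $c_j^{(i)}=i\cdot m + d_j^{(i)}$. The idea is to run the log resolution of $(\mathbb{P}^2,B)$ through $\pi$, compare the discrepancy inequalities $1+i \geq t\cdot c_j^{(i)}$ (for $(\mathbb{P}^2,tB)$ to be log canonical along $E_j^{(i)}$) with the corresponding inequalities defining when $(Y,tF)$ is log canonical, and show that any $t$ admissible for $(Y,F)$ is admissible for $(\mathbb{P}^2,B)$. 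On $Y$ the divisor $F$ is a fiber of an elliptic fibration, hence $K_Y+F\equiv$ (a fiber class) $-K_Y$, err — more usefully, $F$ being a fiber means its own singularities are mild (nodes/cusps from Kodaira's list) so $lct(Y,F)$ is essentially governed by $\frac{1}{M_F}$ via the simple-normal-crossing criterion (Remark~\ref{snc}) plus the one special point; the hypothesis $m\leq M_F$ is exactly what makes $\frac{1}{M_F}\leq \frac{1}{m}$ comparable to the $\frac{1+i}{c_j^{(i)}}\geq \frac{1+i}{i\cdot m+i\cdot M_F}$-type estimates coming from $(\mathbb{P}^2,B)$.

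Concretely, I would proceed as follows. First, fix a log resolution $\mu:\tilde{X}\to\mathbb{P}^2$ of $(\mathbb{P}^2,B)$ that factors as $\tilde{X}\to Y\xrightarrow{\pi}\mathbb{P}^2$, possible because $\pi$ already resolves the base locus and only the non-base singularities of $B$ (equivalently of $\overline{F}$, equivalently of $F$, since $F$ is a Kodaira fiber) need further blow-ups, and these are common to $(Y,F)$ and $(Y,\overline{F})$. Second, apply Lemma~\ref{complct} to each of the three pairs over this common resolution; the exceptional divisors split into those over the base points of $\mathcal{P}$ (the $E_j^{(i)}$, with $b$-coefficients $i$ and $B$-multiplicities $c_j^{(i)}$) and those over the remaining singular points (common to all three computations, and where $F=\overline{F}=B$ locally). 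Third, for the divisors of the second kind the three thresholds see literally the same inequalities, so the comparison reduces to the $E_j^{(i)}$ and the strict transforms. There, use $c_j^{(i)}=im+d_j^{(i)}$ with $0\le d_j^{(i)}\le M_F$ to show $\frac{1+i}{c_j^{(i)}}\geq \frac{1+i}{i(m+M_F)}\geq \frac{1}{M_F}$ when $m\le M_F$ — hmm, one must be careful here: $\frac{1+i}{i(m+M_F)}\geq \frac{1}{M_F}$ demands $M_F(1+i)\geq i(m+M_F)$, i.e. $M_F\geq im$, which is not automatic; so the actual argument must instead use the sharper bound $d_j^{(i)}\le M_F$ directly to get $c_j^{(i)}\le im+M_F$ and compare against $lct(Y,F)$, which itself is $\le \frac{1}{M_F}$, rather than trying to beat $\frac{1}{M_F}$. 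That is the delicate inequality-juggling step, and I expect it to be the main obstacle: showing that the presence of the extra exceptional components in $F$ (versus $\overline{F}$) lowers $lct(Y,F)$ by at least as much as the base-point discrepancies lower $lct(\mathbb{P}^2,B)$ below $lct(Y,\overline{F})$, which should follow because $d_j^{(i)}/c_j^{(i)}$ and the ``deficit'' $c_j^{(i)}-d_j^{(i)}=im$ are linked by a ratio bounded in terms of $m$ and $M_F$. Finally, take minima over all $j,i$ and over the common divisors to conclude the sandwich $lct(Y,F)\leq lct(\mathbb{P}^2,B)\leq lct(Y,\overline{F})$.
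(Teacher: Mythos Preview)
Your approach is essentially the same as the paper's, but you stop precisely where the argument becomes easy, and you overcomplicate the setup in a way that obscures this.

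First, the setup: since $F$ is a \emph{non-reduced} Kodaira fiber (hence of type $I_n^*$, $II^*$, $III^*$, or $IV^*$), it is already a simple normal crossings divisor on $Y$. Therefore $\pi:Y\to\mathbb{P}^2$ is itself a log resolution of $(\mathbb{P}^2,B)$ --- no further blow-ups $\tilde X\to Y$ are needed. This is what the paper uses, and it immediately gives $lct(Y,\overline{F})=1/M_B$ and $lct(Y,F)=1/M_F$ exactly (Remark~\ref{snc}), not just inequalities. Your worries about ``non-base singularities of $B$'' and about $lct(Y,F)$ being merely $\leq 1/M_F$ are unfounded in the non-reduced case. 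With this, Lemma~\ref{complct} reads
\[
lct(\mathbb{P}^2,B)=\min_{i,j}\Bigl\{\tfrac{1+i}{c_j^{(i)}},\ \tfrac{1}{M_B}\Bigr\},
\]
so the upper bound $lct(\mathbb{P}^2,B)\le 1/M_B=lct(Y,\overline{F})$ is immediate.

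Second, the ``delicate inequality-juggling step'' you flag as the main obstacle is a one-liner once you use the sharper bound you yourself wrote down. You have $c_j^{(i)}=im+d_j^{(i)}\le im+M_F$ (since $d_j^{(i)}$ is the multiplicity of a component of $F$). Then
\[
\frac{1+i}{c_j^{(i)}}\ \ge\ \frac{1+i}{im+M_F}\ \ge\ \frac{1}{M_F}
\quad\Longleftrightarrow\quad (1+i)M_F\ge im+M_F
\quad\Longleftrightarrow\quad M_F\ge m,
\]
which is exactly the hypothesis. Hence every term in the minimum is $\ge 1/M_F=lct(Y,F)$, and the lower bound follows. The paper argues by contradiction and splits into cases $i=1$, $i=2$, $i\ge 3$, but your direct route (had you finished it) is actually cleaner: the case analysis in the paper is unnecessary, as the general inequality above covers all $i\ge 1$ uniformly.
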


\begin{proof}
If $F$ is non-reduced, then $\pi: Y \to \mathbb{P}^2$ is a log resolution of the pair $(\mathbb{P}^2,B)$ (see Definition \ref{logres}). It follows from Lemma \ref{complct} that
\begin{equation}
lct(\mathbb{P}^2,B)=\min_{i,j}\left\{\frac{1+b_j^{(i)}}{c_j^{(i)}},\frac{1}{M_B}\right\}\leq \frac{1}{M_B}=lct(Y,\overline{F})
\label{formulalctB}
\end{equation}
where $M_B$ denotes the largest multiplicity of a component of $B$.

If $lct(\mathbb{P}^2,B)=1/M_B$ there is nothing to prove, since $M_B\leq M_F$ and $lct(Y,F)=1/M_F$. 

Thus, assume there exists  some $i$ and some $j$ such that 
\[
lct(\mathbb{P}^2,B)=\frac{1+b_j^{(i)}}{c_j^{(i)}}< \frac{1}{M_F}\leq \frac{1}{M_B}
\]

If $i=1$, then
\[
\frac{1+b_j^{(i)}}{c_j^{(i)}}=\frac{2}{m_{P_j^{(1)}}}< \frac{1}{M_F} \iff m_{P_j^{(1)}} > 2 M_F
\]
which contradicts Corollary \ref{2mf}.

Similarly, if $i=2$, then 
\[
\frac{1+b_j^{(i)}}{c_j^{(i)}}=\frac{3}{m_{P_j^{(1)}}+m_{P_j^{(2)}}}< \frac{1}{M_F} \iff m_{P_j^{(1)}}+m_{P_j^{(2)}} > 3 M_F
\]
but $m_{P_j^{(1)}}+m_{P_j^{(2)}}=d_j^{(2)}+2m\leq M_F+2m\leq 3M_F$

Otherwise, using Equation \ref{formuladjcj}, we can write $c_j^{(i)}=d_j^{(i)}+i\cdot m$. Then,
\begin{eqnarray*}
\frac{1+b_j^{(i)}}{c_j^{(i)}}=\frac{1+b_j^{(i)}}{d_j^{(i)}+i\cdot m}< \frac{1}{M_F} &\iff& M_F (1+ b_j^{(i)})< d_j^{(i)}+i\cdot m\\
&\iff& M_F (1+ i)< d_j^{(i)}+i\cdot m
\end{eqnarray*}
which is a contradiction because $M_F\geq d_j^{(i)}$ and $M_F\geq m$.
\end{proof}

\begin{rmk}
Note that Equation (\ref{formulalctB}) in the proof of Proposition \ref{lctnonreduced} holds for any index $m$. In particular, if $F$ is of type $I_n^*,II^*,III^*$ or $IV^*$, then we also have that (see e.g. \cite{lct})
\[
\frac{1}{m_{P_{j_{max}}^{(1)}}}\leq lct(\mathbb{P}^2,B) 
\]
 where $m_{P_{j_{max}}^{(1)}}\doteq \max_j m_{P_j^{(1)}}(B)$.
\end{rmk}

Then Propositions \ref{notreduced} and \ref{mult3} allow us to further prove:

\begin{prop}
For any index $m$ we have $lct(Y,\overline{F})\leq 2lct(Y,F)$.
\label{lctstrict}
\end{prop}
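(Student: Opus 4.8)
The plan is to split into cases according to whether $F$ is reduced or not, and within the non-reduced case according to the size of $M_F$ relative to $m$; in each case I will directly compare $lct(Y,\overline F) = 1/M_B$ (which holds whenever $\pi$ is a log resolution of $(\mathbb P^2, B)$, e.g. when $F$ is non-reduced) or compute $lct(Y,\overline F)$ via Lemma \ref{complct}, against $lct(Y,F)$. First, if $F$ is reduced, then by Proposition \ref{lctreduced} we have $lct(Y,F) > 1/2$, and since $lct$ of any pair is at most $1$ we get $2\,lct(Y,F) > 1 \geq lct(Y,\overline F)$, so the inequality holds trivially. So from now on assume $F$ is non-reduced, which means $M_F \geq 2$ and $lct(Y,F) = 1/M_F$.

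For the non-reduced case the target becomes $lct(Y,\overline F) \leq 2/M_F$. If $M_F \geq m$, then Proposition \ref{lctnonreduced} gives $lct(\mathbb P^2, B) \leq lct(Y,\overline F)$ — wait, this goes the wrong direction — so instead I would argue directly. Since $\pi$ is a log resolution of $(\mathbb P^2, B)$ when $F$ is non-reduced, Equation (\ref{formulalctB}) gives $lct(Y,\overline F) = 1/M_B$, so I must show $M_B \geq M_F/2$, i.e. $M_F \leq 2 M_B$. The key geometric input is Lemma \ref{nodes} (the black-node/blue-node coloring of the dual graph of $F$): the components of $F$ of largest multiplicity $M_F$ are either strict transforms of components of $B$ (in which case $M_F \le M_B$ immediately, since strict transforms preserve multiplicity as a divisor in the pencil... actually one must be careful: the multiplicity $d_j^{(i)}$ of an exceptional curve in $F$ can exceed $M_B$), or they are exceptional curves $E_j^{(i)}$ appearing with multiplicity $d_j^{(i)} = M_F$. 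In the latter case I would use the bound (\ref{bounddj}), $d_j^{(i)} \leq i(m_{P_j^{(1)}}(B) - m) \le i \cdot 2m$, together with the fact that an exceptional chain realizing a large $d_j^{(i)}$ forces a component of $B$ through $P_j^{(1)}$ of multiplicity $m_{P_j^{(1)}}(B) \geq d_j^{(1)} + m$; more to the point, Propositions \ref{notreduced} and \ref{mult3} (cited explicitly as the inputs "allow us to further prove") tell us that when $F$ is of type $II^*, III^*, IV^*$, $B$ is non-reduced and (for $II^*$) $M_B \geq 3$. I would leverage the coloring lemma to show that whenever $F$ has a component of multiplicity $M_F$, tracing back along the exceptional chain and using Lemma \ref{nodes} forces $B$ to carry a component of multiplicity at least $\lceil M_F/2 \rceil$, i.e. $M_B \geq M_F/2$, which is exactly $1/M_B \leq 2/M_F$.

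The remaining subcase is $F$ non-reduced with $M_F < m$; here I would again use that $\pi$ is a log resolution and $lct(Y,\overline F) = 1/M_B$, and show $M_B \geq M_F/2$ — but since $M_F < m$ and every base point has $m_{P_j^{(1)}}(B) \le M_F + m < 2m$ combined with the chain analysis, the multiplicities in play are small and the bound is easier; in fact if no exceptional curve appears in $F$ then $F = \overline F$ has all the same component multiplicities as $B$ up to the reduced strict transform, giving $M_B = M_F$ directly. The main obstacle I anticipate is the careful bookkeeping in the case where a high-multiplicity exceptional curve $E_j^{(i)}$ sits in $F$: one must convert the arithmetic bound $d_j^{(i)} = m_{P_j^{(1)}}(B) + \cdots + m_{P_j^{(i)}}(B) - im$ from (\ref{formuladjcj}) into a lower bound on $\max_\ell m_{P_j^{(\ell)}}(B)$, hence on $M_B$, of the form $M_B \geq d_j^{(i)}/2 + (\text{something})$; getting the constant $2$ exactly right (rather than something weaker) is where Lemma \ref{nodes} must be invoked to control how the $d_j^{(i)}$ can grow along the chain — a chain of black nodes each attached to at most two others limits how the multiplicities can stack up, and this is precisely the structural fact that yields the factor $2$ and not more.
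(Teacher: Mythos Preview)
Your reduced case is fine and matches the paper. The non-reduced case, however, never lands: you split on $M_F \ge m$ versus $M_F < m$, which is not the relevant dichotomy, and then sketch a chain-tracing argument via Lemma~\ref{nodes} and the formulas (\ref{formuladjcj})--(\ref{bounddj}) that you admit you cannot complete (``getting the constant $2$ exactly right \dots\ is where Lemma~\ref{nodes} must be invoked''). That general argument is both unnecessary and not obviously salvageable as stated.

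The idea you are missing is that Kodaira's classification already does the work. Since $lct(Y,\overline F)\le 1/M_B$ always, it suffices to show $1/M_B\le 2\,lct(Y,F)=2/M_F$, i.e.\ $M_F\le 2M_B$. If $M_F\le 2$ this is trivial since $M_B\ge 1$. The only Kodaira fiber types with $M_F\ge 3$ are $IV^*$ ($M_F=3$), $III^*$ ($M_F=4$), and $II^*$ ($M_F=6$). For $IV^*$ and $III^*$ you need $M_B\ge 2$, which is exactly Proposition~\ref{notreduced}; for $II^*$ you need $M_B\ge 3$, which is exactly Proposition~\ref{mult3}. That is the paper's entire proof. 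You cited both propositions but then tried to reprove a more general statement instead of realizing that, thanks to the classification, those three cases are \emph{all} the cases. The split on $m$ is a red herring: the index plays no role here.
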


\begin{proof}
By contradiction, assume $1/M_B>2lct(Y,F)$. If $F$ does not contain a component with multiplicity $\geq 3$, then $2lct(Y,F)\geq 1$ and we conclude $M_B<1$, a contradiction. If $F$ is of type $III^*$ or $IV^*$, then $B$ must be reduced (i.e., $M_B=1$) and if $F$ is of type $II^*$, then we conclude $M_B<3$, contradicting Propositions \ref{notreduced} and \ref{mult3}.
\end{proof}

\begin{rmk}
Note that when $F$ is of type $II^*,III^*$ or $IV^*$, then the inequality $1/M_B\leq 2lct(Y,F)$ implies Propositions \ref{notreduced} and \ref{mult3}.
\end{rmk}

In particular, combining Propositions \ref{lctreduced}, \ref{lctnonreduced} and \ref{lctstrict} we obtain:

\begin{cor}
For any index $m$ we have $lct(\mathbb{P}^2,B)\leq 2 lct(Y,F)$.
\label{lctB2lctF}
\end{cor}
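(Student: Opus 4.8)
The plan is to deduce the Corollary by combining the three preceding propositions according to whether the fiber $F$ is reduced or not. First I would split into two cases. If $F$ is reduced, then by Proposition \ref{lctreduced} we already have $lct(\mathbb{P}^2,B)\leq lct(Y,F)\leq 2\,lct(Y,F)$, since $lct(Y,F)>\tfrac{1}{2}>0$; in fact the last inequality is trivially true because all log canonical thresholds are nonnegative. So the reduced case is immediate and requires no work beyond quoting Proposition \ref{lctreduced}.

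The substantive case is $F$ non-reduced, i.e. $M_F\geq 2$. Here I would first observe that, since $Y$ is smooth and $F$ has simple normal crossings (the fibers in Kodaira's list have smooth components meeting transversally, in the worst case three-fold points do not occur in a minimal elliptic surface), Remark \ref{snc} gives $lct(Y,F)=\tfrac{1}{M_F}$ and likewise $lct(Y,\overline{F})=\tfrac{1}{M_B}$ (the strict transform $\overline{F}=\pi_*^{-1}B$ still meets the exceptional locus transversally so $(Y,\overline F)$ is again snc). The goal is then the purely combinatorial inequality $\tfrac{1}{M_B}\leq \tfrac{2}{M_F}$, i.e. $M_F\leq 2M_B$. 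To get this I would want to invoke Proposition \ref{lctnonreduced}, but that proposition requires the hypothesis $m\leq M_F$, so a preliminary step is to handle the range $2\leq M_F<m$ separately. In that subrange one can argue directly from Lemma \ref{mbasept} and Equation (\ref{bounddj}): every component of $B$ has multiplicity $\leq 3m$ but more to the point, the multiplicity $M_B$ of a component of $B$ is bounded below using that its strict transform contributes to $F$; alternatively, since the bound $lct(Y,\overline F)\le 2\,lct(Y,F)$ is exactly Proposition \ref{lctstrict}, which is stated for \emph{any} index $m$ and proved using only Propositions \ref{notreduced} and \ref{mult3}, I can simply cite Proposition \ref{lctstrict} directly and avoid re-deriving $M_F\leq 2M_B$ by hand.

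So the cleanest route is: quote Proposition \ref{lctstrict} to get $lct(Y,\overline{F})\leq 2\,lct(Y,F)$ unconditionally; then in the non-reduced case with $m\leq M_F$ apply Proposition \ref{lctnonreduced} to get $lct(\mathbb{P}^2,B)\leq lct(Y,\overline{F})$ and chain the two; in the non-reduced case with $M_F<m$, note $F$ non-reduced forces $M_F\geq 2$ hence $m\geq 3$, and here I would fall back on Proposition \ref{lctreduced}'s style of local computation at the base points — or better, observe that the inequality $lct(\mathbb P^2,B)\le 1/M_B$ from (\ref{formulalctB}) combined with $lct(Y,\overline F)=1/M_B$ and Proposition \ref{lctstrict} still closes the argument provided $\pi$ is a log resolution of $(\mathbb P^2,B)$, which holds whenever $F$ is non-reduced as noted in the proof of Proposition \ref{lctnonreduced}. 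Thus in every case $lct(\mathbb{P}^2,B)\leq lct(Y,\overline F)\leq 2\,lct(Y,F)$.

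The main obstacle I anticipate is bookkeeping around the hypothesis $m\leq M_F$: Proposition \ref{lctnonreduced} is only stated under that assumption, so I need to be sure the remaining sliver ($F$ non-reduced, $2\le M_F<m$) is genuinely covered — and it is, because in that sliver $\pi$ is still a log resolution of $(\mathbb P^2,B)$ (non-reducedness of $F$ is all that was used for that in the proof of \ref{lctnonreduced}), so $lct(\mathbb P^2,B)\le 1/M_B=lct(Y,\overline F)$ holds regardless of how $M_F$ compares with $m$, and then Proposition \ref{lctstrict} finishes. I would state the proof in three lines: reduced case via \ref{lctreduced}; non-reduced case via $lct(\mathbb{P}^2,B)\le lct(Y,\overline{F})$ (from the log resolution description, cf. the proof of \ref{lctnonreduced}) together with Proposition \ref{lctstrict}.
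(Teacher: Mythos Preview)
Your proof is correct and follows essentially the same route as the paper, which simply cites Propositions \ref{lctreduced}, \ref{lctnonreduced} and \ref{lctstrict}. You are right to flag the hypothesis $m\le M_F$ in Proposition \ref{lctnonreduced}, and your resolution of that apparent gap is exactly what the paper has in mind: the upper bound $lct(\mathbb{P}^2,B)\le 1/M_B=lct(Y,\overline F)$ from Equation (\ref{formulalctB}) only requires that $\pi$ be a log resolution of $(\mathbb{P}^2,B)$, which holds whenever $F$ is non-reduced, independently of how $M_F$ compares with $m$ (the paper records this explicitly in the Remark immediately following Proposition \ref{lctnonreduced}); chaining with Proposition \ref{lctstrict} then finishes the non-reduced case, and Proposition \ref{lctreduced} handles the reduced case.
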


\section{The curve $B$ when $F$ is of type $II^*,III^*$ or $IV^*$}
\label{iistariiistar}

We will now assume $m=2$ and $\mathcal{P}$ is a Halphen pencil of index two and we will only consider fibers $F$ of type $II^*,III^*$ and $IV^*$. The results obtained in the previous sections allow us to completely characterize which sextics can and which cannot yield these types of fiber. 

We adopt the same notations as in Section \ref{hp} and our strategy can be summarized as follows: given  $F$ we know the number of its components $n_F$. Assuming we also know the number $n_B$ of components of $B$ we can compute $k$, the number of base points \footnote{not counting infinitely near points} in $B$, from Equation \ref{numbercomp}, which we recall next 
\[
n_F=n_B+9-k-n_{E\backslash C}
\] 
and Lemma \ref{nEC}, where $n_{E\backslash C}$ denotes the difference between the number of components of $E$ and the number of components of $C$.

There are exactly $k-n_{E\backslash C}$ disjoint chains of rational curves in $F$ as in Figure \ref{chains}. And, moreover, together with the strict transform of $B$ under $\pi$ these are all the components of $F$. Thus, analyzing how the dual graph of $F$ must look like we can decide whether the components coming from $B$ and these chains could possibly yield the given fiber. 

The desired configuration of rational curves imposes restrictions on how the curves $B$ and $C$ can intersect and how the components of $B$ must intersect. Since $B$ and $C$ can only intersect at base points of $\mathcal{P}$ we can use Equation (\ref{formulaintBC}), which we also recall below
\[
I_{P_j^{(1)}}(B,C)=a_j\cdot m
\]

The desired configuration also imposes restrictions on the multiplicities $d_j^{(1)}$ of the components  $E_j^{(1)}$ appearing in $F$. Recall we have (Equation (\ref{formuladjcj}))
\[
d_j^{(1)}=m_{P_j^{(1)}}(B)-2
\]
In particular, we know what $m_{P_j^{(1)}}(B)$, the multiplicity of $B$ at the base point $P_j^{(1)}$, must be.

Moreover, every time we consider the dual graph of $F$ we can color the components coming from $B$ in blue and in black we indicate the missing components as in Lemma \ref{nodes}. Then the possible configurations are those where the components in black are arranged in exactly $k-n_{E\backslash C}$ disjoint chains as in Figure \ref{chains}. In particular, every black node can only be connected to at most two other black nodes (Lemma \ref{nodes}).

These considerations not only give us an algorithm to decide whether a sextic $B$ can yield the desired type of fiber but they also give us an algorithm to construct all possible examples for a given type of fiber, which we do in Section \ref{constructions}.

\subsection{Non-Examples}

\begin{prop}
If $F$ is of type $II^*$, then $B$ does not consist of any of the following curves:
\begin{enumerate}[(i)]
\item a line with multiplicity $6$
\item a line with multiplicity four and a double line 
\item  a triple line, a double line and another line
\end{enumerate}
\label{notiistar}
\end{prop}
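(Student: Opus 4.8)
### Proof proposal

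The plan is to rule out each of the three candidate sextics (i), (ii), (iii) by the same dual-graph argument that underlies Lemma~\ref{nodes}, Proposition~\ref{notreduced} and Proposition~\ref{mult3}: a fiber of type $II^*$ has dual graph $\tilde{E}_8$, which is a chain of nine nodes with one extra node attached to the third node from one end; in particular it contains a node of valence three. The blue nodes (components of $B$) together with the black chains of exceptional curves $d_j^{(i)}E_j^{(i)}$ account for all nine components of $F$, and every black node is connected to at most two other black nodes (Lemma~\ref{nodes}). So the valence-three vertex of $\tilde{E}_8$ must be blue, and more importantly the whole graph cannot be assembled if there are too few blue nodes or too few chains. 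First I would, for each of (i)--(iii), compute $n_B$, then use Equation~(\ref{numbercomp}) together with Lemma~\ref{nEC} (which for type $II^*$ forces $n_{E\backslash C}=0$) to solve for $k$, the number of genuine base points, and hence for the number $k$ of disjoint black chains in $F$ and their total length $\sum(a_j-1)=9-k$.

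For case (i), $B$ is a line with multiplicity $6$, so $n_B=1$ and $B$ is non-reduced; Remark~\ref{rmkCiistar} says $C$ is smooth at every base point, hence irreducible, so $E\simeq C$ and $n_{E\backslash C}=0$. Then $9 = n_F = 1 + 9 - k$, giving $k=1$: there is a single base point $P_1^{(1)}$ and a single black chain of length $8$. But then $F=\overline{F}+\sum_{i=1}^{8} d_1^{(i)}E_1^{(i)}$ with $\overline{F}$ (the strict transform of a single line) attached to the end of a linear chain of eight curves — this is a chain of nine nodes, i.e. $\tilde{A}_8$, not $\tilde{E}_8$; there is nowhere for the valence-three vertex to sit. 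Alternatively, and more robustly, $d_1^{(1)}=m_{P_1^{(1)}}(B)-2=6-2=4$, and by Lemma~\ref{Csmoothdj} this is consistent, but Equation~(\ref{bounddj}) and $d_1^{(i)}$ growing then shrinking back to $0$ along the chain cannot produce the coefficients of $\tilde{E}_8$ (whose multiplicities are $1,2,3,4,5,6,4,2,3$): the reduced component $\overline{F}$ has multiplicity $1$ and must be a \emph{leaf}, and the black chain carries the rest, but a leaf of $\tilde{E}_8$ adjacent to a multiplicity-$2$ vertex forces the chain structure to be linear except at the branch point, which must therefore be black — contradicting Lemma~\ref{nodes}. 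I would phrase this cleanly by simply noting: with $k=1$ the dual graph of $F$ is $\overline{F}$ glued to the end of one chain, hence has no vertex of valence~$\geq 3$ unless $\overline{F}$ itself meets the chain in more than one point, impossible for a chain coming from blowing up a single (possibly infinitely near) tower.

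For cases (ii) and (iii), $B$ is again non-reduced with $M_F\geq m=2$, so Proposition~\ref{lctnonreduced}/the fact that $\pi$ is a log resolution applies, but the cleaner route is still the graph count. In (ii), $B=$ (line of multiplicity $4$) $+$ (double line), so $n_B=2$; then $9=2+9-k$ gives $k=2$, so $F$ consists of the strict transforms of the two lines together with two disjoint black chains whose lengths sum to $7$. In (iii), $B=$ (triple line) $+$ (double line) $+$ (line), so $n_B=3$; then $9=3+9-k$ gives $k=3$, so $F$ is the three strict transforms plus three disjoint black chains of total length $6$. In each case I would then argue that the valence-three vertex of $\tilde{E}_8$ must be one of the (few) blue nodes, examine which blue node it can be — keeping track of the prescribed multiplicities $d_j^{(1)}=m_{P_j^{(1)}}(B)-2$ (e.g. in (iii) the triple line meets a base point of multiplicity $3$ only if that is forced, while a reduced component can only meet a base point of multiplicity $\leq 3$ by Lemma~\ref{mbasept}) and the intersection constraint $I_{P_j^{(1)}}(B,C)=2a_j$ from Equation~(\ref{formulaintBC}) — and show that attaching $k$ disjoint chains of the required total length to the blue subgraph can never reproduce $\tilde{E}_8$: either some chain would have to branch (violating Lemma~\ref{nodes}), or the resulting graph has the wrong number of vertices of valence $1$, or the prescribed exceptional multiplicities are incompatible with the $\tilde{E}_8$ multiplicity vector $(1,2,3,4,5,6,4,2,3)$.

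The main obstacle is (iii): with three blue nodes and three black chains there is genuine combinatorial freedom in how the blue components of $B$ are arranged and how the chains attach, so the argument there is not a one-line valence count but a short case analysis over the possible incidence patterns of the three lines of $B$ and the way the pencil separates at each base point. I expect to handle it by first pinning down that the triple line must pass through a base point forcing $d^{(1)}=1$ (so the chains can only carry small multiplicities), then showing the multiplicity-$6$ vertex of $\tilde{E}_8$ cannot be realized — a reduced or doubled component of $B$ gives multiplicity at most $\ldots$ on its strict transform, and no single short black chain built over one base-point tower reaches $6$ while also branching correctly — which is the contradiction. Cases (i) and (ii) should fall out quickly from the $k=1$ and $k=2$ counts respectively, essentially because $\tilde{E}_8$ needs its branch vertex together with enough surrounding structure that a graph built from only one or two chains plus one or two reduced/low-degree blue pieces simply cannot supply.
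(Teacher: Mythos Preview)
Your overall strategy---compute $k$ from Equation~(\ref{numbercomp}) with $n_{E\backslash C}=0$ (Remark~\ref{rmkCiistar}), then analyze how the blue nodes and black chains can sit inside $\tilde{E}_8$---is exactly the paper's approach. But your execution of case~(i) contains a real error, and your sketches for (ii) and (iii) miss the geometric input that actually closes the argument.

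\textbf{Case (i).} You assert that $\overline{F}$ is ``attached to the end of a linear chain of eight curves'' so that the dual graph is $\tilde{A}_8$, and separately that ``the reduced component $\overline{F}$ has multiplicity $1$''. Both statements are false. Since $B=6L$, the strict transform is $\overline{F}=6\overline{L}$, a single component of multiplicity~$6$. Moreover, $k=1$ forces $a_1=9$, so $I_{P_1}(6L,C)=18$ and hence $I_{P_1}(L,C)=3$: the line $L$ is an inflection line of $C$. Tracing the blow-ups, $\overline{L}$ meets exactly $E_1^{(3)}$ (the last exceptional divisor over which $L$ and $C$ are still tangent), so $\overline{L}$ is attached to an \emph{interior} node of the chain $E_1^{(1)}-\cdots-E_1^{(8)}$. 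The resulting dual graph \emph{does} have the shape of $\tilde{E}_8$; what fails is the multiplicity labeling. The leaf $\overline{L}$ carries multiplicity $6$, but in $\tilde{E}_8$ the leaves have multiplicities $1$, $2$, $3$ and the multiplicity-$6$ vertex is trivalent. Equivalently, one computes $d_1^{(i)}=(4,8,12,10,8,6,4,2)$, which is twice the $II^*$ multiplicity vector---not a $II^*$ fiber. The correct one-line argument is: $\overline{L}$ has multiplicity~$6$ in $F$, hence must be the trivalent vertex of $\tilde{E}_8$, but the strict transform of a line through a single tower of infinitely near points is always a leaf.

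\textbf{Cases (ii) and (iii).} Your combinatorial outline is right, but the contradiction does not come from a pure valence/chain count---it requires the intersection data with $C$. In (ii), the unique admissible coloring (blue nodes at the adjacent multiplicity-$2$ and multiplicity-$4$ positions) forces $a_1=8$, $a_2=1$, and forces the two lines \emph{not} to meet at a base point. Since every point of $L_i\cap C$ is a base point and $k=2$, each line meets $C$ at a single base point with multiplicity~$3$; both are inflection lines, so $a_i\geq 3$, contradicting $a_2=1$. In (iii) the paper likewise identifies the two admissible colorings and rules them out by showing the triple line must be an inflection line of $C$ with the three lines concurrent at a base point, then checking this forces the wrong chain structure. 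Your plan to ``show the multiplicity-$6$ vertex cannot be realized'' is not the mechanism that works here; it is the mismatch between the chain lengths $a_j-1$ dictated by the picture and the values of $a_j$ forced by Equation~(\ref{formulaintBC}) that gives the contradiction. You should make this explicit.
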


\begin{proof}
If $B=6L$, where $L$ is a line, then Equation (\ref{numbercomp}) implies $k=1$.  And $\pi$ consists of blowing-up a single point $P_1$ in $B$ nine times. In particular, $L$ must be an inflection line of $C$, with the flex at $P_1$. But then blowing-up $\mathbb{P}^2$ at the nine base points $P_1^{(1)},\ldots,P_1^{(9)}$ of $\mathcal{P}$ would yield a (dual) configuration of rational curves in $F$, which is different from the (affine) Dynkin diagram $\tilde{E}_8$, the dual graph of a fiber of type $II^*$. 

If $B$ consists of a line with multiplicity $4$ and another line with multiplicity $2$ and $F$ is of type $II^*$,  then $n_F=9,B_B=2,n_{E\backslash C}=0$, hence $k=2$ and the only possible picture is the following one:

\begin{figure}[H]
\centering
\begin{tikzpicture}[line cap=round,line join=round,>=triangle 45,x=1.0cm,y=1.0cm]
\clip(-3.,-2.5) rectangle (5.5,1.);
\draw [line width=1.5pt] (-2.,0.)-- (5.,0.);
\draw [line width=1.5pt] (0.,0.)-- (0.,-1.);
\begin{scriptsize}
\draw [fill=black] (0.,-1.) circle (2.5pt);
\draw[color=black] (0.,-1.5) node {3};
\draw [fill=blue] (-2.,0.) circle (2.5pt);
\draw[color=blue] (-2.,.5) node {2};
\draw [fill=blue] (-1.,0.) circle (2.5pt);
\draw[color=blue] (-1.,.5) node {4};
\draw [fill=black] (0.,0.) circle (2.5pt);
\draw[color=black] (0.,.5) node {6};
\draw [fill=black] (1.,0.) circle (2.5pt);
\draw[color=black] (1.,0.5) node {5};
\draw [fill=black] (2.,0.) circle (2.5pt);
\draw[color=black] (2.,.5) node {4};
\draw [fill=black] (3.,0.) circle (2.5pt);
\draw[color=black] (3.,0.5) node {3};
\draw [fill=black] (4.,0.) circle (2.5pt);
\draw[color=black] (4,.5) node {2};
\draw [fill=black] (5.,0.) circle (2.5pt);
\draw[color=black] (5.,0.5) node {1};
\end{scriptsize}
\end{tikzpicture}
\end{figure}

In particular, up to relabeling, $\pi$ is the blow-up of $\mathbb{P}^2$ at the nine points $P_1^{(1)},\ldots,P_1^{(8)}, P_2^{(1)}$ and, moreover, the two lines do not intersect at a base point of $\mathcal{P}$. 

But $C$ must intersect each of the two lines at least once (at a base point) and $k=2$. So both lines must be inflection lines of $C$. In particular, we need to blow-up each of the base points at least three times in order to separate $\mathcal{P}$. That is, $a_1\geq3$ and $a_2\geq 3$, a contradiction.

Finally, if $B$ consists of a triple line, a double line and another line, it follows from Equation (\ref{numbercomp}) that $k=3$ and $\pi$ is the blow-up of $\mathbb{P}^2$ at the nine points
\[
P_1^{(1)},\ldots,P_1^{(a_1)}, P_2^{(1)},\ldots,P_2^{(a_2)},P_3^{(1)},\ldots,P_3^{(a_3)} 
\]

If we consider the dual graph of $F$, the picture must be one of the following:
\[
\hspace*{-0.5cm}
\begin{matrix}
\begin{tikzpicture}[line cap=round,line join=round,>=triangle 45,x=1.0cm,y=1.0cm]
\clip(-3.,-2.5) rectangle (5.5,1.);
\draw [line width=1.5pt] (-2.,0.)-- (5.,0.);
\draw [line width=1.5pt] (0.,0.)-- (0.,-1.);
\begin{scriptsize}
\draw [fill=blue] (0.,-1.) circle (2.5pt);
\draw[color=blue] (0.,-1.5) node {3};
\draw [fill=blue] (-2.,0.) circle (2.5pt);
\draw[color=blue] (-2.,.5) node {2};
\draw [fill=black] (-1.,0.) circle (2.5pt);
\draw[color=black] (-1.,.5) node {4};
\draw [fill=black] (0.,0.) circle (2.5pt);
\draw[color=black] (0.,.5) node {6};
\draw [fill=black] (1.,0.) circle (2.5pt);
\draw[color=black] (1.,0.5) node {5};
\draw [fill=black] (2.,0.) circle (2.5pt);
\draw[color=black] (2.,.5) node {4};
\draw [fill=black] (3.,0.) circle (2.5pt);
\draw[color=black] (3.,0.5) node {3};
\draw [fill=black] (4.,0.) circle (2.5pt);
\draw[color=black] (4,.5) node {2};
\draw [fill=blue] (5.,0.) circle (2.5pt);
\draw[color=blue] (5.,0.5) node {1};
\end{scriptsize}
\end{tikzpicture}
&
\begin{tikzpicture}[line cap=round,line join=round,>=triangle 45,x=1.0cm,y=1.0cm]
\clip(-3.,-2.5) rectangle (5.5,1.);
\draw [line width=1.5pt] (-2.,0.)-- (5.,0.);
\draw [line width=1.5pt] (0.,0.)-- (0.,-1.);
\begin{scriptsize}
\draw [fill=blue] (0.,-1.) circle (2.5pt);
\draw[color=blue] (0.,-1.5) node {3};
\draw [fill=black] (-2.,0.) circle (2.5pt);
\draw[color=black] (-2.,.5) node {2};
\draw [fill=black] (-1.,0.) circle (2.5pt);
\draw[color=black] (-1.,.5) node {4};
\draw [fill=black] (0.,0.) circle (2.5pt);
\draw[color=black] (0.,.5) node {6};
\draw [fill=black] (1.,0.) circle (2.5pt);
\draw[color=black] (1.,0.5) node {5};
\draw [fill=black] (2.,0.) circle (2.5pt);
\draw[color=black] (2.,.5) node {4};
\draw [fill=black] (3.,0.) circle (2.5pt);
\draw[color=black] (3.,0.5) node {3};
\draw [fill=blue] (4.,0.) circle (2.5pt);
\draw[color=blue] (4,.5) node {2};
\draw [fill=blue] (5.,0.) circle (2.5pt);
\draw[color=blue] (5.,0.5) node {1};
\end{scriptsize}
\end{tikzpicture}
\end{matrix}
\]

If either the line with multiplicity three is not an inflection line of $C$ or the three lines are not concurrent at a base point, then we need to blow-up at least two points lying in the triple line and, in order to separate $\mathcal{P}$, we need to blow-up each of these points at least twice. This means that in $F$ we would have at least two disjoint chains  connected to the blue component of multiplicity $3$, where each chain has length $\geq 1$, excluding both pictures.

Thus, the only possibility is for the line with multiplicity three to be an inflection line of $C$ and for the three lines to be concurrent at a base point, which excludes the second picture. It is routine to check that this case does not yield the first picture either. 
\end{proof}

\begin{prop}
If $F$ is of type $III^*$, then $B$ does not consist of a double line and a (rational) quartic.
\label{not2-1,1-4}
\end{prop}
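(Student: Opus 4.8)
The plan is to argue by contradiction: suppose $F$ is of type $III^*$ and $B=\pi(F)=2L+Q$, where $L$ is a line and $Q$ an irreducible rational quartic. I would combine the dual-graph bookkeeping of this section with two self-intersection computations in $Y$, one for the strict transform $\overline{Q}$ of the quartic and one for $\overline{L}$.

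\emph{Combinatorial reduction.} Here $n_B=2$ and $n_F=8$, so Equation~(\ref{numbercomp}) gives $k+n_{E\backslash C}=3$, and Lemma~\ref{nEC} leaves $(k,n_{E\backslash C})\in\{(3,0),(2,1)\}$. The non-exceptional components of $F$ are exactly $\overline{L}$, occurring in $\overline{F}=2\overline{L}+\overline{Q}$ with multiplicity $2$, and $\overline{Q}$, with multiplicity $1$; colour these two blue and the remaining six (exceptional) components black. The multiplicity-$4$ vertex of $\tilde{E}_7$ is its unique trivalent vertex; it cannot be blue, no blue vertex having multiplicity $>2$, so it is black, and by Lemma~\ref{nodes} at least one of its three neighbours — of multiplicities $3,3,2$ — must be blue. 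The only possibility is that the multiplicity-$2$ neighbour, i.e.\ the vertex hanging off the trivalent one, is $\overline{L}$. Then $\overline{Q}$ is one of the two terminal vertices of $\tilde{E}_7$, and deleting $\overline{L}$ leaves the black part a single chain of six vertices. So $F$ contains exactly one chain of exceptional curves, $k-n_{E\backslash C}=1$, which with $k+n_{E\backslash C}=3$ forces $(k,n_{E\backslash C})=(2,1)$ and excludes the case $(3,0)$. Since the multiple fibre of a $III^*$-configuration lies in $\{I_0,I_1,I_2\}$, from $n_{E\backslash C}=1$ we get $E=I_2$ and $C$ irreducible; an irreducible cubic producing an $I_2$ must be a nodal cubic with its node a base point $P_1^{(1)}$ (a cuspidal cubic would give a multiple fibre of type $III$, impossible in characteristic zero), and $E_1^{(1)}$, a $(-2)$-component of that $I_2$, forces $a_1\ge 2$.

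\emph{Two self-intersection facts.} Since $\overline{Q}$ is a component of a $III^*$ fibre, $\overline{Q}^2=-2$; as $Q$ is rational, its singularities — all lying at base points blown up on $Q$, because $\overline{Q}$ is smooth — satisfy $\sum\binom{m_P(Q)}{2}=p_a(Q)=3$, while $\overline{Q}^2=Q^2-\sum m_P(Q)^2=16-\sum m_P(Q)^2=-2$. Hence $\sum m_P(Q)=12$ and $\sum m_P(Q)^2=18$, which is possible only if $Q$ passes through all nine base points; in particular through the node $P_1^{(1)}$ of $C$. Since $C$ is singular at the base point $P_1^{(1)}$, the corollary to Lemma~\ref{Csmoothdj} gives $m_{P_1^{(1)}}(B)=2$, so $L$ cannot also pass through $P_1^{(1)}$, and therefore $L$ avoids the whole tower over $P_1^{(1)}$. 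Finally $\overline{L}^2=L^2-\sum m_P(L)^2=1-\sum m_P(L)^2=-2$ and $L$ smooth force $L$ through exactly three base points; by the last sentence these lie in the tower over $P_2^{(1)}$, so they are $P_2^{(1)},P_2^{(2)},P_2^{(3)}$.

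\emph{Conclusion.} It remains to locate $\overline{L},\overline{Q}$ in $\tilde{E}_7$ exactly. The single black chain of six curves lies in one tower: not the $P_1$-tower, since $E_1^{(1)}$ belongs to the multiple fibre rather than to $F$, so this would force $a_1\ge 8$ and hence $a_2\le 1$, against $a_2\ge 3$; so it lies in the $P_2$-tower, and as $m_{P_2^{(1)}}(B)\ge 3$ the curve $E_2^{(1)}$ occurs in $F$, whence the chain is $E_2^{(1)}-E_2^{(2)}-\cdots-E_2^{(6)}$. Because $L$ meets this chain only in $E_2^{(3)}$ and $\overline{L}$ hangs off the trivalent vertex of $\tilde{E}_7$, the trivalent vertex is $E_2^{(3)}$; and since the fourth vertex of the seven-vertex chain $\overline{Q}-E_2^{(1)}-\cdots-E_2^{(6)}$ is $E_2^{(3)}$, this pins $\overline{Q}$ adjacent to $E_2^{(1)}$ and to no other $E_2^{(i)}$. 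In particular $\overline{Q}$ does not meet $E_2^{(2)}$, i.e.\ $Q$ does not pass through the base point $P_2^{(2)}$ — contradicting that $Q$ passes through all nine base points. I expect the main obstacle to be exactly this last step — converting the dual-graph positions of $\overline{L}$ and $\overline{Q}$ into incidence data of $L,Q$ with the infinitely near base points over $P_2^{(1)}$; by comparison, the reduction to $(k,n_{E\backslash C})=(2,1)$ and to $C$ nodal, and the two self-intersection computations, are routine.
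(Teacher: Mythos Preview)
Your argument contains two genuine gaps.

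\textbf{First}, in the combinatorial reduction you conclude from the single connected black chain that $k-n_{E\backslash C}=1$, thereby excluding $(k,n_{E\backslash C})=(3,0)$. This is not valid: when $a_j=1$ the tower over $P_j^{(1)}$ contributes only a $(-1)$-curve (a multisection) and hence no component to $F$ at all. So in the case $k=3$ with $a_1=7$, $a_2=a_3=1$ --- exactly the case the paper analyses --- there is still just one visible chain of six exceptional curves in the dual graph, yet $k-n_{E\backslash C}=3$. The case $(3,0)$ is therefore not excluded and must be treated separately.

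\textbf{Second}, your final contradiction in the $(2,1)$ case is spurious. You infer from $\overline{Q}\cdot E_2^{(2)}=0$ that $Q$ does not pass through the base point $P_2^{(2)}$, but this implication is false: writing $m_i=m_{P_2^{(i)}}(Q)$, one has $\overline{Q}\cdot E_2^{(i)}=m_i-m_{i+1}$ for $1\le i<a_2$, so $\overline{Q}\cdot E_2^{(2)}=0$ only says $m_2=m_3$. In fact the multiplicities $d_2^{(i)}$ read off from the $\tilde{E}_7$ diagram, combined with your own determination $m_{P_2^{(i)}}(L)=1,1,1,0,0,0,0$, force $m_{P_2^{(i)}}(Q)=2,1,1,1,1,1,1$ for $i=1,\ldots,7$. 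With these values $\overline{Q}$ is indeed adjacent only to $E_2^{(1)}$ among $E_2^{(1)},\ldots,E_2^{(6)}$, yet $Q$ passes through every $P_2^{(i)}$ --- so there is no contradiction with your self-intersection count $\sum m_P(Q)=12$.

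The paper's route is different: it uses Equation~(\ref{formulaintBC}) directly. In the $(3,0)$ case it bounds $I_{P_1^{(1)}}(Q,C)$ and $I_{P_1^{(1)}}(L,C)$, forces $Q$ to be singular at $P_1^{(1)}$ by rationality, and then shows the resulting chain cannot sit inside $\tilde{E}_7$; in the $(2,1)$ case it obtains a direct numerical contradiction for $I_{P_1^{(1)}}(B,C)$.
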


\begin{proof}
By contradiction, assume $B=2L+Q$, where $L$ is a line and $Q$ is a rational quartic. Then from Equation (\ref{numbercomp}) and Lemma \ref{nEC} we know that either $k=3$ ($n_{E\backslash C}=0$) or $k=2$ ($n_{E\backslash C}=1$). In any case, considering the dual graph of $F$, we conclude the picture must be the following one:

\begin{figure}[H]
\centering
\begin{tikzpicture}[line cap=round,line join=round,>=triangle 45,x=1.0cm,y=1.0cm]
\clip(-4.,-2.) rectangle (4.,1.);
\draw [line width=1.5pt] (-3.,0.)-- (3.,0.);
\draw [line width=1.5pt] (0.,0.)-- (0.,-1.);
\begin{scriptsize}
\draw [fill=blue] (0.,-1.) circle (2.5pt);
\draw[color=blue] (0.,-1.5) node {$2$};
\draw [fill=black] (-3.,0.) circle (2.5pt);
\draw[color=black] (-3.,.5) node {$1$};
\draw [fill=black] (-2.,0.) circle (2.5pt);
\draw[color=black] (-2.,.5) node {$2$};
\draw [fill=black] (-1.,0.) circle (2.5pt);
\draw[color=black] (-1.,.5) node {$3$};
\draw [fill=black] (0.,0.) circle (2.5pt);
\draw[color=black] (0.,.5) node {$4$};
\draw [fill=black] (1.,0.) circle (2.5pt);
\draw[color=black] (1.,0.5) node {$3$};
\draw [fill=black] (2.,0.) circle (2.5pt);
\draw[color=black] (2.,.5) node {$2$};
\draw [fill=blue] (3.,0.) circle (2.5pt);
\draw[color=blue] (3.,0.5) node {$1$};
\end{scriptsize}
\end{tikzpicture}
\end{figure}

Thus, up to relabeling, we have that $Q\cap L=\{P_1^{(1)}\}$ and we also have that either $a_1=7$ and $a_2=a_3=1$ ($k=3$) or $a_1=7$ and $a_2=2$ ($k=2$). Moreover, if the former holds then $d_1^{(7)}=d_2^{(1)}=d_3^{(1)}=0$ and if the latter holds, then $d_1^{(7)}=d_2^{(1)}=d_2^{(2)}=0$.

In any case we know from Equation (\ref{formulaintBC}) that we must have
\begin{equation}
I_{P_1^{(1)}}(B,C)=a_1\cdot m =14
\label{eq1}
\end{equation}
and since $I_{P_1^{(1)}}(B,C)=I_{P_1^{(1)}}(Q,C)+2I_{P_1^{(1)}}(L,C)$, it follows that if $k=3$, then either
\begin{itemize}
\item $I_{P_1^{(1)}}(Q,C)=12$ and $I_{P_1^{(1)}}(L,C)=1$ or
\item $I_{P_1^{(1)}}(Q,C)=10$ and $I_{P_1^{(1)}}(L,C)=2$ or
\item $I_{P_1^{(1)}}(Q,C)=8$ and $I_{P_1^{(1)}}(L,C)=3$
\end{itemize}

The conclusion is that $Q$ must be singular at $P_1^{(1)}$ because $Q$ must be rational and any other base point lying in $Q$ must be an ordinary double point. Note that there is only one chain of exceptional curves intersecting the reduced component in blue.

But if $Q$ is singular at $P_1^{(1)}$, then $m_{P_1^{(1)}}(B)>3$, hence $d_1^{(1)}>1$. From the picture, this further implies we have $d_1^{(1)}=2$ and  $E_1^{(1)}$ meets the strict transform of $Q$ under $\pi$. The only possibility then is for $E_1^{(6)}$ to appear with multiplicity one in $F$ and we must have $E_1^{(6)}\cdot E_1^{(7)}=2$. But the two curves meet transversally at a single point.  

Finally, if $k=2$ then there are no other base points lying in neither $Q$ nor $L$ (besides $P_1^{(1)}$). Thus, the intersection multiplicity of $Q$ (resp. $L$) and $C$ at $P_1^{(1)}$ is twelve (resp. three). But then
\[
I_{P_1^{(1)}}(B,C)=I_{P_1^{(1)}}(Q,C)+2I_{P_1^{(1)}}(L,C)=18
\]
contradicting Equation \ref{eq1}.
\end{proof}

\begin{prop}
If $F$ is of type $III^*$, then $B$ does not consist of a double line and two conics.
\label{not2-1,1-2,1-2}
\end{prop}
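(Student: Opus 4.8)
The plan is to obtain a contradiction purely from the combinatorics of the dual graph of a type $III^*$ fiber, following the algorithm sketched at the start of this section. First I would pin down the numerics: writing $B = 2L + Q_1 + Q_2$ with $L$ a line and $Q_1,Q_2$ two distinct irreducible conics gives $n_B = 3$, while a fiber of type $III^*$ has $n_F = 8$ components, so Equation (\ref{numbercomp}) forces $k + n_{E\backslash C} = 4$. Since Lemma \ref{nEC} restricts $n_{E\backslash C}$ to $\{0,1\}$ for a type $III^*$ fiber, the pair $(k,n_{E\backslash C})$ is either $(4,0)$ or $(3,1)$; in either case $k - n_{E\backslash C} \ge 2$, so the $\pi$-exceptional curves occurring in $F$ form at least two disjoint chains as in Figure \ref{chains}.

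Next I would colour the dual graph of $F$ — which is the affine diagram $\tilde{E}_7$, a chain of seven vertices together with one extra vertex attached to the central one — painting blue the three components coming from $B$ and black the rest. Because $\overline F = 2\overline L + \overline{Q_1} + \overline{Q_2}$, the strict transform $\overline L$ occurs with multiplicity $2$ in $F$ while each $\overline{Q_i}$ occurs with multiplicity $1$. The only multiplicity-$1$ vertices of $\tilde{E}_7$ are the two endpoints of the seven-vertex chain, so $\overline{Q_1},\overline{Q_2}$ must be exactly those endpoints; and $\overline L$, of multiplicity $2$, must sit at one of the three multiplicity-$2$ vertices, i.e. one of the two chain-vertices adjacent to an endpoint, or the branch vertex attached to the central (multiplicity-$4$) vertex.

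The remaining step is a short case check on the placement of $\overline L$, which I expect to be routine. If $\overline L$ is one of the two multiplicity-$2$ chain-vertices, then the central multiplicity-$4$ vertex stays black and is adjacent to three black vertices — its two chain-neighbours and the branch vertex — contradicting Lemma \ref{nodes}. If instead $\overline L$ is the branch vertex, then the five remaining black vertices are precisely the five interior vertices of the chain, which form a single chain, contradicting the fact that there must be at least two disjoint black chains. Either way we reach a contradiction and the proposition follows.

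The one point that needs care is keeping the case analysis genuinely exhaustive: the phrase ``two conics'' should be read as forcing $n_B = 3$ (neither conic degenerating to a pair of lines, which would be a different entry on the list), and one should note that the two constraints invoked — Lemma \ref{nodes} and the precise chain count $k - n_{E\backslash C}$ — are each really used, since they eliminate different positions of $\overline L$. Unlike Proposition \ref{not2-1,1-4}, this argument needs no appeal to the intersection-multiplicity identity (\ref{formulaintBC}); the multiplicity bookkeeping on $\tilde{E}_7$ already closes it.
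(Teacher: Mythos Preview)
Your Case~1 (when $\overline L$ sits at one of the two chain vertices of multiplicity~$2$) is correct and is exactly the implicit reduction the paper makes to arrive at its single displayed picture. The problem is Case~2.

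The quantity $k - n_{E\backslash C}$ counts the exceptional chains indexed by the base points $P_j^{(1)}$, but a chain with $a_j = 1$ contributes no $(-2)$-curve to $F$ at all: it is empty. So finding all five black vertices in one connected chain is perfectly compatible with $k - n_{E\backslash C} \in \{2,4\}$ --- one simply takes $a_1 = 6$ and the remaining $a_j$ equal to~$1$ (or, in the $k=3$ case, $a_2=2$ with $E_2^{(1)}$ absorbed into the multiple fiber). The paper's own proofs of Propositions~\ref{not2-1,1-4} and~\ref{not2-2,1-1,1-1} display precisely this situation: a single non-empty black chain despite $k - n_{E\backslash C} \ge 2$. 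So your asserted contradiction does not exist, and the branch-vertex configuration survives the pure combinatorics.

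The paper therefore does not stop at the dual graph. From the picture it reads off that $\overline L,\overline{Q_1},\overline{Q_2}$ are pairwise non-adjacent and each meets only the one exceptional chain over $P_1^{(1)}$; hence all three curves pass through $P_1^{(1)}$, and the multiplicity condition $m_{P_j^{(1)}}(B)=2$ at the remaining base points forces $L\cap Q_1 = L\cap Q_2 = \{P_1^{(1)}\}$, i.e.\ $L$ is tangent to both conics there. One then checks (via $d_1^{(i)} = d_1^{(i-1)} + m_{P_1^{(i)}}(B) - 2$) that the resulting sequence of multiplicities cannot be $2,3,4,3,2$. Some geometric input of this kind --- Equation~(\ref{formulaintBC}) or the recursion for the $d_j^{(i)}$ --- is genuinely needed here; the multiplicity bookkeeping on $\tilde E_7$ alone does not close this case, contrary to your final remark.
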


\begin{proof}
By contradiction, assume $B=Q_1+Q_2+2L$, where the $Q_i$ are conics and $L$ is a line. Then from Equation (\ref{numbercomp}) and Lemma \ref{nEC} we know that either $k=4$ ($n_{E\backslash C}=0$) or $k=3$ ($n_{E\backslash C}=1$). In any case the picture of the dual graph of $F$ must be:

\begin{figure}[H]
\centering
\begin{tikzpicture}[line cap=round,line join=round,>=triangle 45,x=1.0cm,y=1.0cm]
\clip(-4.,-2.) rectangle (4.,1.);
\draw [line width=1.5pt] (-3.,0.)-- (3.,0.);
\draw [line width=1.5pt] (0.,0.)-- (0.,-1.);
\begin{scriptsize}
\draw [fill=blue] (0.,-1.) circle (2.5pt);
\draw[color=blue] (0.,-1.5) node {$2$};
\draw [fill=blue] (-3.,0.) circle (2.5pt);
\draw[color=blue] (-3.,.5) node {$1$};
\draw [fill=black] (-2.,0.) circle (2.5pt);
\draw[color=black] (-2.,.5) node {$2$};
\draw [fill=black] (-1.,0.) circle (2.5pt);
\draw[color=black] (-1.,.5) node {$3$};
\draw [fill=black] (0.,0.) circle (2.5pt);
\draw[color=black] (0.,.5) node {$4$};
\draw [fill=black] (1.,0.) circle (2.5pt);
\draw[color=black] (1.,0.5) node {$3$};
\draw [fill=black] (2.,0.) circle (2.5pt);
\draw[color=black] (2.,.5) node {$2$};
\draw [fill=blue] (3.,0.) circle (2.5pt);
\draw[color=blue] (3.,0.5) node {$1$};
\end{scriptsize}
\end{tikzpicture}
\end{figure}

Thus, up to relabeling , all the  curves $Q_1,Q_2$ and $L$ must intersect at the base point $P_1^{(1)}$ and we have that $L\cap Q_1=L\cap Q_2=\{P_1^{(1)}\}$ that is, $L$ is tangent to both $Q_1$ and $Q_2$ at $P_1^{(1)}$ so $Q_1$ and $Q_2$ must be tangent at $P_1^{(1)}$.

It is routine to check  that blowing-up $P_1^{(1)},\ldots,P_1^{(6)}$ does not yield the desired chain of exceptional curves. Therefore, we do not obtain a fiber of type $III^*$.

\end{proof}

\begin{prop}
If $F$ is of type $III^*$, then $B$ does not consist of a double conic and a double line.
\label{not2-2,2-1}
\end{prop}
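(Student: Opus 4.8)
The plan is to observe that $B=2Q+2L$ is twice a cubic and then invoke the uniqueness of the multiple cubic in a Halphen pencil. Write $B=2(Q+L)$; since $Q+L$ is a plane cubic, this exhibits $B$ as a cubic of multiplicity two belonging to the index-two Halphen pencil $\mathcal{P}$. By the canonical bundle formula (as recalled right after Lemma \ref{l1}), $\mathcal{P}$ contains exactly one cubic of multiplicity two, namely the unique multiple cubic $2C$, and under $\pi$ this member corresponds to the unique multiple fiber $2E$ of $f$. Hence $B=2C$, and the fiber $F$ with $B=\pi(F)$ is the multiple fiber; but over a field of characteristic zero a multiple fiber is of type $I_n$ with $0\le n\le 9$, which is not of type $III^{*}$. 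This contradiction proves the proposition.

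An alternative, self-contained argument avoids Lemma \ref{l1} and uses a parity count on $Y$. From $F=\overline{F}+\sum_{i,j}d_j^{(i)}E_j^{(i)}$, $\pi^{*}B=\overline{F}+\sum_{i,j}c_j^{(i)}E_j^{(i)}$ and $d_j^{(i)}=c_j^{(i)}-im$ (see (\ref{formuladjcj})) one gets $F=\pi^{*}B-m\sum_{i,j}iE_j^{(i)}$; with $m=2$ and $B=2(Q+L)$ this gives $F=2\bigl(\pi^{*}(Q+L)-\sum_{i,j}iE_j^{(i)}\bigr)$, a divisor divisible by $2$ in $\mathrm{Div}(Y)$. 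Thus every component of $F$ occurs with even multiplicity, which is impossible for a fiber of type $III^{*}$: its dual graph is the affine diagram $\tilde{E}_7$, with marks $1,2,3,4,3,2,1$ along the chain and a $2$ on the central node, so $F$ has components of multiplicity one. One could also run the dual-graph bookkeeping used for the previous non-examples: Equation (\ref{numbercomp}) forces $k+n_{E\backslash C}=3$, hence $(k,n_{E\backslash C})\in\{(3,0),(2,1)\}$ by Lemma \ref{nEC}; the two blue vertices $\overline{Q},\overline{L}$, each of multiplicity two, must occupy the two multiplicity-$2$ vertices of $\tilde{E}_7$; and then Lemma \ref{nodes} together with $d_j^{(1)}=m_{P_j^{(1)}}(B)-2\in\{-2,0,2\}$ excludes the one remaining configuration.

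I do not foresee a real obstacle. The two points that need a word of care are that $B=2(Q+L)$ is genuinely a degree-$6$ member of $\mathcal{P}$ of the form ``cubic with multiplicity two'' (immediate from degrees) and that the pencil-to-fibration dictionary matches this member with the multiple fiber (this is precisely the canonical bundle formula statement following Lemma \ref{l1}, already available). Everything else is formal, and the first, three-line argument is the one I would actually write down.
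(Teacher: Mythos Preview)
Your argument is correct and genuinely different from the paper's. The paper proceeds by the dual-graph bookkeeping you sketch in your third paragraph: it uses Equation~(\ref{numbercomp}) and Lemma~\ref{nEC} to get $k\in\{2,3\}$, pins down the only admissible coloring of $\tilde{E}_7$, deduces that $L$ must be tangent to $Q$ at a base point with $a_1=6$, and then checks by hand that the chain of exceptional curves over that point does not produce the right multiplicities.

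Your first two arguments bypass all of this. The parity argument is the cleanest: from $d_j^{(i)}=c_j^{(i)}-im$ and $d_j^{(a_j)}=0$ one gets $F=\pi^{*}B-m\sum_{i,j}iE_j^{(i)}$, and with $m=2$ and $B=2(Q+L)$ this forces every component of $F$ to have even multiplicity, which is incompatible with $\tilde{E}_7$. The uniqueness-of-the-multiple-cubic argument is equally valid and already available from the paragraph after Lemma~\ref{l1}. What your approach buys is generality: the same three lines simultaneously prove Propositions~\ref{not2-1,2-1,2-1}, \ref{notivstar2}, and~\ref{notivstar3} (three double lines for $III^{*}$ and $IV^{*}$, and double conic plus double line for $IV^{*}$), since in each case $B$ is twice a cubic. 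The paper's case-by-case dual-graph analysis, by contrast, yields more granular information about which base-point configurations fail and why---useful for the constructive direction in Section~\ref{constructions}, but unnecessary for the bare non-existence statement.
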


\begin{proof}
By contradiction, assume $B=2Q+2L$, where $Q$ is a conic and $L$ is a line. Then from Equation (\ref{numbercomp}) and Lemma \ref{nEC} we know that either $k=3$ ($n_{E\backslash C}=0$) or $k=2$ ($n_{E\backslash C}=1$). In any case the picture of the dual graph of $F$ must be the following one:

\begin{figure}[H]
\centering
\begin{tikzpicture}[line cap=round,line join=round,>=triangle 45,x=1.0cm,y=1.0cm]
\clip(-4.,-2.) rectangle (4.,1.);
\draw [line width=1.5pt] (-3.,0.)-- (3.,0.);
\draw [line width=1.5pt] (0.,0.)-- (0.,-1.);
\begin{scriptsize}
\draw [fill=blue] (0.,-1.) circle (2.5pt);
\draw[color=blue] (0.,-1.5) node {$2$};
\draw [fill=black] (-3.,0.) circle (2.5pt);
\draw[color=black] (-3.,.5) node {$1$};
\draw [fill=blue] (-2.,0.) circle (2.5pt);
\draw[color=blue] (-2.,.5) node {$2$};
\draw [fill=black] (-1.,0.) circle (2.5pt);
\draw[color=black] (-1.,.5) node {$3$};
\draw [fill=black] (0.,0.) circle (2.5pt);
\draw[color=black] (0.,.5) node {$4$};
\draw [fill=black] (1.,0.) circle (2.5pt);
\draw[color=black] (1.,0.5) node {$3$};
\draw [fill=black] (2.,0.) circle (2.5pt);
\draw[color=black] (2.,.5) node {$2$};
\draw [fill=black] (3.,0.) circle (2.5pt);
\draw[color=black] (3.,0.5) node {$1$};
\end{scriptsize}
\end{tikzpicture}
\end{figure}

Thus, up to relabeling we have $d_1^{(1)}\geq 1$ and $d_2^{(1)}\geq 1$. Since $k=2$ would imply $a_1=6$ and $a_2=3$ we see that we must have $k=3$ and $a_1=6,a_2=2,a_3=1$. Moreover,  $L$ must be tangent to $Q$ at $P_1^{(1)}$.  Otherwise, $E_1^{(1)}$ would appear with multiplicity $2$ in $F$, but the picture tells us such curve appears with multiplicity $3$ since $d_1^{(1)}=m_{P_1^{(1)}}(B)-2\geq 2$.

It is routine to check  that blowing-up $P_1^{(1)},\ldots,P_1^{(6)}$ does not yield the desired chain of exceptional curves. Thus, we do not obtain a fiber of type $III^*$.
\end{proof}

\begin{prop}
If $F$ is of type $III^*$, then $B$ does not consist of a double conic and two lines.
\label{not2-2,1-1,1-1}
\end{prop}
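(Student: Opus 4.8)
The plan is to follow the strategy used for Propositions \ref{not2-1,1-4}--\ref{not2-2,2-1}: argue by contradiction, use Equation (\ref{numbercomp}) together with Lemmas \ref{nEC} and \ref{nodes} to pin down the shape of the dual graph of $F$, and then check that no admissible blue/black colouring of the $\tilde{E}_7$ diagram survives.

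Suppose then that $F$ is of type $III^*$ and $B = 2Q + L_1 + L_2$ with $Q$ an irreducible conic and $L_1, L_2$ distinct lines (the case $L_1 = L_2$ being Proposition \ref{not2-2,2-1}). Then $n_B = 3$ and $n_F = 8$, so Equation (\ref{numbercomp}) gives $k + n_{E\backslash C} = 4$, and by Lemma \ref{nEC} we have $n_{E\backslash C}\in\{0,1\}$; hence $(k, n_{E\backslash C}) \in \{(4,0),(3,1)\}$ and $F$ contains exactly $k - n_{E\backslash C} \in \{4,2\}$ disjoint chains of exceptional curves as in Figure \ref{chains}, together accounting for $n_F - n_B = 5$ black nodes.

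Next I would colour the dual graph $\tilde{E}_7$ of $F$ as in Lemma \ref{nodes}. Since $\overline{F} = 2\overline{Q} + \overline{L_1} + \overline{L_2}$, exactly one blue node (coming from $Q$) has multiplicity $2$ in $F$ and two blue nodes (coming from $L_1, L_2$) have multiplicity $1$. But the multiplicity vector of $\tilde{E}_7$ is $(1,2,3,4,3,2,1)$ along its chain of seven nodes, with a node of multiplicity $2$ attached to the central one; so the only multiplicity-$1$ nodes are the two endpoints of the chain, and these must be $\overline{L_1}$ and $\overline{L_2}$, while $\overline{Q}$ is one of the three multiplicity-$2$ nodes. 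Up to the evident symmetry of $\tilde{E}_7$ there are two cases. If $\overline{Q}$ is adjacent to an endpoint of the chain, then the central multiplicity-$4$ node is black and is joined to three black nodes, contradicting Lemma \ref{nodes}. If $\overline{Q}$ is the trivalent (branch) node, then the five remaining black nodes form a single chain, so $F$ would contain exactly one chain of exceptional curves; but $k - n_{E\backslash C}\in\{4,2\}$, a contradiction. Hence no such $B$ occurs.

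The one point that needs a little care — and that I would settle first — is the bookkeeping that $\overline{Q}$ appears in $F$ with multiplicity exactly $2$ and each $\overline{L_i}$ with multiplicity exactly $1$ (this is immediate since $F = \overline{F} + \sum d_j^{(i)} E_j^{(i)}$ and $\overline{F}$, the strict transform of $B = 2Q + L_1 + L_2$, equals $2\overline{Q} + \overline{L_1} + \overline{L_2}$), because it is this that forces the placement of the blue nodes; with that and the mark vector of $\tilde{E}_7$ in hand, the remainder is the same kind of short finite check as in the preceding proofs, and I do not expect a genuine obstacle.
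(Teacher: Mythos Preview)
Your setup and your handling of the first case are fine and match the paper's implicit reasoning. The gap is in your second case, where you conclude a contradiction from the chain count.

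The assertion ``$F$ contains exactly $k-n_{E\backslash C}$ disjoint chains of exceptional curves'' does \emph{not} mean that the dual graph of $F$ must show $k-n_{E\backslash C}$ nonempty black chains. A base point $P_j^{(1)}$ with $a_j=1$ contributes only the $(-1)$ multisection and no $(-2)$ curve at all, so its ``chain'' is empty and invisible in the picture. Concretely, when $\overline{Q}$ sits at the trivalent node the five black vertices form a single connected chain; since the $E_j$ are pairwise disjoint this forces all five to lie over a single base point $P_1^{(1)}$ with $a_1=6$, and then $a_2=a_3=a_4=1$ (for $k=4$) or $a_2=2$, $a_3=1$ (for $k=3$). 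Neither of these contradicts $k-n_{E\backslash C}\in\{4,2\}$. (You can see the same phenomenon in the paper's proofs of Propositions \ref{not2-1,1-4} and \ref{iiistar2-2,1-2}, where a single visible black chain coexists with $k-n_{E\backslash C}>1$.)

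The paper closes this case with a geometric argument instead. From the picture, $\overline{L_1}$, $\overline{L_2}$ and $\overline{Q}$ are pairwise non-adjacent, so $\overline{L_i}\cdot\overline{Q}=0$ in $Y$ and every point of $L_i\cap Q$ is a base point. But at any base point $P_j^{(1)}$ with $j\geq 2$ one has $d_j^{(1)}=0$, hence $m_{P_j^{(1)}}(B)=2$, which is impossible if that point lies on both $Q$ and some $L_i$ (the multiplicity would be at least $2\cdot 1+1=3$). Therefore $Q\cap L_1=Q\cap L_2=\{P_1^{(1)}\}$, i.e.\ both lines are tangent to the smooth conic $Q$ at the same point --- a contradiction. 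That is the step you are missing.
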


\begin{proof}
By contradiction, assume $B=2Q+L_1+L_2$, where $Q$ is a conic and each $L_i$ is a line. Then from Equation (\ref{numbercomp}) and Lemma \ref{nEC} we know that either $k=4$ ($n_{E\backslash C}=0$) or $k=3$ ($n_{E\backslash C}=1$). In any case, considering the dual graph of $F$, the picture must be: 

\begin{figure}[H]
\centering
\begin{tikzpicture}[line cap=round,line join=round,>=triangle 45,x=1.0cm,y=1.0cm]
\clip(-4.,-2.) rectangle (4.,1.);
\draw [line width=1.5pt] (-3.,0.)-- (3.,0.);
\draw [line width=1.5pt] (0.,0.)-- (0.,-1.);
\begin{scriptsize}
\draw [fill=blue] (0.,-1.) circle (2.5pt);
\draw[color=blue] (0.,-1.5) node {$2$};
\draw [fill=blue] (-3.,0.) circle (2.5pt);
\draw[color=blue] (-3.,.5) node {$1$};
\draw [fill=black] (-2.,0.) circle (2.5pt);
\draw[color=black] (-2.,.5) node {$2$};
\draw [fill=black] (-1.,0.) circle (2.5pt);
\draw[color=black] (-1.,.5) node {$3$};
\draw [fill=black] (0.,0.) circle (2.5pt);
\draw[color=black] (0.,.5) node {$4$};
\draw [fill=black] (1.,0.) circle (2.5pt);
\draw[color=black] (1.,0.5) node {$3$};
\draw [fill=black] (2.,0.) circle (2.5pt);
\draw[color=black] (2.,.5) node {$2$};
\draw [fill=blue] (3.,0.) circle (2.5pt);
\draw[color=blue] (3.,0.5) node {$1$};
\end{scriptsize}
\end{tikzpicture}
\end{figure}

Therefore, up to relabeling, we have that either $a_1=6$ and  $a_2=a_3=a_4=1$ ($k=4$) or $a_1=6,a_2=2$ and $a_3=1$ ($k=3$). If the former holds, then $d_2^{(1)}=d_3^{(1)}=d_4^{(1)}=0$ and if the latter holds, then $d_2^{(1)}=d_2^{(2)}=d_3^{(1)}=0$. But then, in any case, we must have that both $Q\cap L_1=\{P_1^{(1)}\}$ and  $Q\cap L_2=\{P_1^{(1)}\}$. That is, $L_1$ and $L_2$ are both tangent lines to $Q$ at $P_1^{(1)}$, which is an absurd.
\end{proof}

\begin{prop}
If $F$ is of type $III^*$, then $B$ does not consist of two double lines and a conic.
\label{not2-1,2-1,1-2}
\end{prop}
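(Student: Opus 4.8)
The plan is to argue by contradiction, in the same spirit as Propositions \ref{not2-1,1-4}--\ref{not2-2,1-1,1-1}. Suppose $B=2L_1+2L_2+Q$ with $L_1,L_2$ lines and $Q$ a conic. I first dispose of two degenerate sub-cases: if $Q$ is reducible, then $B$ is a double conic together with two lines, already excluded by Proposition \ref{not2-2,1-1,1-1}; and if $L_1=L_2$, then $B=4L_1+Q$, a genuinely different (in fact admissible) situation. So I may assume $L_1\neq L_2$ and $Q$ an irreducible (hence smooth) conic. Since a fiber of type $III^*$ has $n_F=8$ components and $n_B=3$, Equation (\ref{numbercomp}) together with Lemma \ref{nEC} forces $k+n_{E\backslash C}=4$, hence either $(k,n_{E\backslash C})=(4,0)$ or $(k,n_{E\backslash C})=(3,1)$.

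Next I would colour the dual graph $\tilde{E}_7$ of $F$ as in Lemma \ref{nodes}: $\overline{Q}$ is a blue node of multiplicity $1$ and $\overline{L_1},\overline{L_2}$ are blue nodes of multiplicity $2$. In $\tilde{E}_7$ the only multiplicity-$1$ nodes are the two ends of the long arm, and the only multiplicity-$2$ nodes are the node on the short arm and the two penultimate nodes of the long arm. If $\overline{L_1}$ and $\overline{L_2}$ occupied the two penultimate long-arm nodes, the central (multiplicity-$4$) node would be adjacent to three black nodes, contradicting Lemma \ref{nodes}; so at least one of $\overline{L_1},\overline{L_2}$ sits at the short-arm node. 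Up to the symmetry of $\tilde{E}_7$ this leaves two candidate pictures. Reading the multiplicities $d_j^{(i)}$ along the black chains via Equation (\ref{formuladjcj}) and imposing $d_j^{(a_j)}=0$ together with $m_{P_j^{(i)}}(B)\geq 1$ at every (infinitely near) base point kills one of them outright and fixes the orientation of the long chain in the other; in the surviving picture I would then read off $a_1=6$ and the string $m_{P_1^{(1)}}(B),\dots,m_{P_1^{(6)}}(B)$, in particular $m_{P_1^{(1)}}(B)=5$. Since $L_1,L_2$ are lines and $Q$ is smooth, each has multiplicity at most $1$ at $P_1^{(1)}$, so $2m_{P_1^{(1)}}(L_1)+2m_{P_1^{(1)}}(L_2)+m_{P_1^{(1)}}(Q)=5$ forces equality: $L_1,L_2$ and $Q$ all pass through $P_1^{(1)}$, and the rest of the string pins down which pairs among $L_1,L_2,Q$ are tangent at $P_1^{(1)}$ and which $E_1^{(i)}$ each strict transform must meet.

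To reach the contradiction I would use the cubic $C$. Because $d_1^{(1)}=m_{P_1^{(1)}}(B)-2=3>0$, Lemma \ref{Csmoothdj} gives that $C$ is smooth at $P_1^{(1)}$, so Equation (\ref{formulaintBC}) yields $I_{P_1^{(1)}}(B,C)=a_1m=12$. Expanding $I_{P_1^{(1)}}(B,C)=2I_{P_1^{(1)}}(L_1,C)+2I_{P_1^{(1)}}(L_2,C)+I_{P_1^{(1)}}(Q,C)$ and using that, being a member of the pencil, $C$ passes through all nine base points and therefore (being smooth at $P_1^{(1)}$) shares with $Q$ the entire string $P_1^{(1)},\dots,P_1^{(6)}$, one gets $I_{P_1^{(1)}}(Q,C)=Q\cdot C=6$; hence $C$ is irreducible (a line or conic component of $C$ through $P_1^{(1)}$ could meet $Q$ there with multiplicity at most $2$ or $4$) and meets $Q$ nowhere else, and the remaining B\'ezout budget on the two lines then locates every other base point on $L_1,L_2$ and $C$. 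Performing the iterated blow-up of $P_1^{(1)},\dots,P_1^{(6)}$ dictated by this data — tracking which exceptional curve each of $\overline{L_1},\overline{L_2},\overline{Q}$ meets after each step and the resulting self-intersections — one then checks, exactly as in the proofs of Propositions \ref{not2-2,2-1} and \ref{not2-1,1-4}, that the configuration produced is not the chain of $(-2)$-curves dual to $\tilde{E}_7$, a contradiction.

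The step I expect to be the main obstacle is precisely this last verification: turning the ``routine to check'' argument into something rigorous, i.e. proving that the unique surviving picture cannot be realized by an actual plane configuration. A secondary subtlety, to be handled separately but along the same lines, is the case $n_{E\backslash C}=1$, in which one exceptional chain is absorbed into the multiple fibre $mE$ (so $C$ is singular at a base point and $E$ is of type $I_1$ or $I_2$ by \cite{list}); there one exploits that $m_{P}(B)=m$ at any base point $P$ where $C$ is singular, which sharply restricts where $L_1,L_2$ and $Q$ can pass and leads to the same kind of contradiction.
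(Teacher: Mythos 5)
Your overall strategy is the paper's: argue by contradiction, compute $k+n_{E\backslash C}=4$ from Equation (\ref{numbercomp}) and Lemma \ref{nEC}, colour $\tilde{E}_7$ as in Lemma \ref{nodes} to reduce to two candidate configurations (one double line on the short arm, the other at a penultimate node, the conic at an end), and then run the $d_j^{(i)}$ and $I_{P_j^{(1)}}(B,C)=a_jm$ bookkeeping. The genuine gap is the step where you ``kill one of the two pictures outright'' by imposing $m_{P_j^{(i)}}(B)\geq 1$ at every infinitely near base point. That inequality is false in general: $m_{P_j^{(i)}}(B)$ is the multiplicity of the \emph{strict} transform of $B$, and as soon as $d_j^{(i-1)}\geq 2$ the exceptional part of the induced member already carries the required base-point multiplicity, so the strict transform of $B$ may miss $P_j^{(i)}$ entirely. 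This occurs in the paper's own constructions, e.g.\ the triple conic of Example \ref{exeivstar}, where $d_1^{(2)}=2$ forces $m_{P_1^{(3)}}(3Q)=0$. Consequently the configuration you discard --- the one whose black part is an isolated multiplicity-one node plus a length-four chain, with $m$-string ending in $0$ --- is perfectly consistent at the combinatorial level. It is precisely the configuration the paper works hardest to exclude: there one shows that $L_1,L_2,Q$ all pass through $P_1^{(1)}$, that exactly one of the lines is tangent to $Q$ there, pins down $I_{P_1^{(1)}}(Q,C)$, $I_{P_1^{(1)}}(L_i,C)$ and the second base point on $Q$, and then verifies that the iterated blow-up cannot produce the required chain. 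Without this case your proof is incomplete.

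Two further remarks. First, for the configuration you do keep, your bookkeeping (the forced orientation of the chain, $m_{P_1^{(1)}}(B)=5$, hence $Q$ through $P_1^{(1)}$ and $I_{P_1^{(1)}}(Q,C)=6$) is carried out more carefully than in the paper, which at the corresponding point asserts $P_1^{(1)}\notin Q$ and thereby obtains a one-line contradiction (both lines would be inflection lines of $C$ at the same point). Under your reading that shortcut is unavailable, and everything is pushed onto the final ``routine to check'' blow-up verification, which you flag as the main obstacle but do not execute; as it stands that endgame is a plan, not a proof. Second, your preliminary reductions (reducible $Q$ handled by Proposition \ref{not2-2,1-1,1-1}, the case $L_1=L_2$ set aside) are sensible additions not present in the paper and cause no harm.
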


\begin{proof}
By contradiction, assume $B=2L_1+2L_2+Q$, where each $L_i$ is a line and $Q$ is a conic. Then from Equation (\ref{numbercomp}) and Lemma \ref{nEC} we know that either $k=4$ ($n_{E\backslash C}=0$) or $k=3$ ($n_{E\backslash C}=1$). In both cases the picture of the dual graph of $F$ must be one of the following:

\[
\begin{matrix}

\begin{tikzpicture}[line cap=round,line join=round,>=triangle 45,x=1.0cm,y=1.0cm]
\clip(-4.,-2.) rectangle (4.,1.);
\draw [line width=1.5pt] (-3.,0.)-- (3.,0.);
\draw [line width=1.5pt] (0.,0.)-- (0.,-1.);
\begin{scriptsize}
\draw [fill=blue] (0.,-1.) circle (2.5pt);
\draw[color=blue] (0.,-1.5) node {$2$};
\draw [fill=black] (-3.,0.) circle (2.5pt);
\draw[color=black] (-3.,.5) node {$1$};
\draw [fill=black] (-2.,0.) circle (2.5pt);
\draw[color=black] (-2.,.5) node {$2$};
\draw [fill=black] (-1.,0.) circle (2.5pt);
\draw[color=black] (-1.,.5) node {$3$};
\draw [fill=black] (0.,0.) circle (2.5pt);
\draw[color=black] (0.,.5) node {$4$};
\draw [fill=black] (1.,0.) circle (2.5pt);
\draw[color=black] (1.,0.5) node {$3$};
\draw [fill=blue] (2.,0.) circle (2.5pt);
\draw[color=blue] (2.,.5) node {$2$};
\draw [fill=blue] (3.,0.) circle (2.5pt);
\draw[color=blue] (3.,0.5) node {$1$};
\end{scriptsize}
\end{tikzpicture}
&
\begin{tikzpicture}[line cap=round,line join=round,>=triangle 45,x=1.0cm,y=1.0cm]
\clip(-4.,-2.) rectangle (4.,1.);
\draw [line width=1.5pt] (-3.,0.)-- (3.,0.);
\draw [line width=1.5pt] (0.,0.)-- (0.,-1.);
\begin{scriptsize}
\draw [fill=blue] (0.,-1.) circle (2.5pt);
\draw[color=blue] (0.,-1.5) node {$2$};
\draw [fill=black] (-3.,0.) circle (2.5pt);
\draw[color=black] (-3.,.5) node {$1$};
\draw [fill=blue] (-2.,0.) circle (2.5pt);
\draw[color=blue] (-2.,.5) node {$2$};
\draw [fill=black] (-1.,0.) circle (2.5pt);
\draw[color=black] (-1.,.5) node {$3$};
\draw [fill=black] (0.,0.) circle (2.5pt);
\draw[color=black] (0.,.5) node {$4$};
\draw [fill=black] (1.,0.) circle (2.5pt);
\draw[color=black] (1.,0.5) node {$3$};
\draw [fill=black] (2.,0.) circle (2.5pt);
\draw[color=black] (2.,.5) node {$2$};
\draw [fill=blue] (3.,0.) circle (2.5pt);
\draw[color=blue] (3.,0.5) node {$1$};
\end{scriptsize}
\end{tikzpicture}
\end{matrix}
\]

If we have the first configuration (from the left to the right), then, up to relabeling, we have that $L_1\cap L_2=\{P_1^{(1)}\}$, $P_1^{(1)}\notin Q$ and we know from Equation (\ref{formulaintBC}) that the intersection multiplicity of $B$ and $C$ at $P_1^{(1)}$ is equal to 
\[
I_{P_1^{(1)}}(B,C)=2I_{P_1^{(1)}}(L_1,C)+2I_{P_1^{(1)}}(L_2,C)=a_1\cdot m =12
\]
Thus, it must be the case that  $I_{P_1^{(1)}}(L_1,C)=I_{P_1^{(1)}}(L_2,C)=3$. That is, both $L_1$ and $L_2$  are inflection lines of $C$ at $P_1^{(1)}$, which is an absurd.

Now, if we have the second configuration, then up to relabeling, $L_1\cap L_2=\{P_1^{(1)}\}$, we have $P_1^{(1)}\in Q$ and there are at most two base points lying in $Q$. Thus, one of the lines, say $L_1$, must be tangent to $Q$ at $P_1^{(1)}$ and since both lines cannot be tangent to $Q$ at $P_1^{(1)}$ we must have $P_2^{(1)}\in Q$, $I_{P_2^{(1)}}(Q,C)=2$ and $I_{P_1^{(1)}}(Q,C)= 4$. 

Now, Equation (\ref{formulaintBC}) gives the intersection multiplicity of $B$ and $C$ at $P_1^{(1)}$ is equal to 
\[
I_{P_1^{(1)}}(B,C)=I_{P_1^{(1)}}(Q,C)+2I_{P_1^{(1)}}(L_1,C)+2I_{P_1^{(1)}}(L_2,C)=a_1\cdot m =12
\]
and it follows that $I_{P_1^{(1)}}(L_1,C)= 3$ and $I_{P_1^{(1)}}(L_2,C)= 1$, since $L_2$ cannot be tangent to $C$ at $P_1^{(1)}$. It is routine to check  that blowing-up $P_1^{(1)},\ldots,P_1^{(5)}$ does not yield the desired chain of exceptional curves.

\end{proof}

\begin{prop}
If $F$ is of type $III^*$, then $B$ does not consist of three double lines.
\label{not2-1,2-1,2-1}
\end{prop}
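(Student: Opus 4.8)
The plan is to force a contradiction from a parity observation: since every irreducible component of $B$ is a line occurring with the \emph{even} coefficient $2$, every exceptional curve appearing in the fiber $F$ must appear in $F$ with even multiplicity, whereas a fiber of type $III^*$ requires odd multiplicities on some of its exceptional components.

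First I would set up the bookkeeping exactly as in the previous non-examples. Writing $B=2L_1+2L_2+2L_3$ with $L_1,L_2,L_3$ distinct lines, we have $\deg B=6=3m$ with $m=2$, and $n_B=3$, $n_F=8$; so Equation~(\ref{numbercomp}) together with Lemma~\ref{nEC} gives $(k,n_{E\backslash C})\in\{(4,0),(3,1)\}$ (this is recorded only for comparison with the other arguments --- the parity obstruction below disposes of both cases at once). The important point is that the strict transform of $B$ under $\pi$ is $\overline F=2\overline{L_1}+2\overline{L_2}+2\overline{L_3}$, so in the decomposition $F=\overline F+\sum_{i,j}d_j^{(i)}E_j^{(i)}$ each of the three curves $\overline{L_i}$ occurs with multiplicity exactly $2$ (none of the $\overline{L_i}$ is $\pi$-exceptional, since $\pi(\overline{L_i})=L_i$ is a curve).

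The key step is to match this against the shape of a type-$III^*$ fiber. Its dual graph is the affine diagram $\tilde E_7$, and the multiplicities with which its eight components occur in the fiber are $1,1,2,2,2,3,3,4$; in particular exactly three components have multiplicity $2$. Hence $\overline{L_1},\overline{L_2},\overline{L_3}$ are \emph{precisely} those three components, and the remaining five components of $F$ --- which are then exceptional curves $E_j^{(i)}$ with $d_j^{(i)}>0$ --- occur with the multiplicities $1,1,3,3,4$, so at least one $d_j^{(i)}$ is odd. But by Equation~(\ref{formuladjcj}) we have $d_j^{(i)}=m_{P_j^{(1)}}(B)+\cdots+m_{P_j^{(i)}}(B)-i\cdot m$ with $m=2$; each $m_{P_j^{(\ell)}}(B)$ is even, being twice the number of the $L_i$ whose strict transform passes through $P_j^{(\ell)}$, and $i\cdot m$ is even, so every $d_j^{(i)}$ is even --- a contradiction.

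I expect the only point requiring care is the claim that there is no freedom in placing $\overline{L_1},\overline{L_2},\overline{L_3}$ among the nodes of $\tilde E_7$; this is automatic because the number of multiplicity-$2$ nodes (three) equals the number of the $\overline{L_i}$, so every non-blue node is genuinely an exceptional curve and the parity count applies to it. If one prefers not to quote the multiplicity vector of $\tilde E_7$, one can instead run the chain-counting analysis of Propositions~\ref{not2-1,1-4}--\ref{not2-1,2-1,1-2}: draw the admissible $\tilde E_7$ pictures with the three blue nodes placed at the multiplicity-$2$ positions and rule them out using Equation~(\ref{formulaintBC}) --- but that is strictly longer, and the parity argument is the cleanest route.
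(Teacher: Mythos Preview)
Your parity argument is correct and it is genuinely different from the paper's approach. The paper argues by first pinning down the unique blue/black colouring of $\tilde E_7$ (the three $\overline{L_i}$ must sit at the three multiplicity-$2$ nodes), then reading off from the graph that the three lines are forced to be concurrent at a single base point $P_1^{(1)}$; since $m_{P_1^{(1)}}(B)=6$ this gives $d_1^{(1)}=4$, and one checks that the first exceptional curve $E_1^{(1)}$ cannot sit where the picture demands (it would have to meet all three blue nodes, whereas the multiplicity-$4$ vertex of $\tilde E_7$ meets only one). Your argument bypasses the concurrency discussion entirely: because $B=2(L_1+L_2+L_3)$, every $m_{P_j^{(\ell)}}(B)$ is even (each $L_i$ being a line, its strict transform is smooth and contributes $0$ or $1$ to the underlying multiplicity), hence by Equation~(\ref{formuladjcj}) every $d_j^{(i)}$ is even; but the five non-blue components of a $III^*$ fiber must carry the multiplicities $1,1,3,3,4$, and the odd ones among these cannot be realised by any $d_j^{(i)}$. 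This is shorter and avoids the chain-by-chain casework; the paper's route, on the other hand, is the same template used in Propositions~\ref{not2-1,1-4}--\ref{not4-1,2-1} and so fits the surrounding narrative. Both are complete as written; your parity observation would in fact also immediately exclude $B$ from being of the form $2D$ for \emph{any} reduced cubic $D$ when $F$ is of type $II^*$, $III^*$ or $IV^*$.
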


\begin{proof}
By contradiction, assume $B=2L_1+2L_2+2L_3$, where each $L_i$ is a line. The picture of the dual graph of $F$ can only  be:

\begin{figure}[H]
\centering
\begin{tikzpicture}[line cap=round,line join=round,>=triangle 45,x=1.0cm,y=1.0cm]
\clip(-4.,-2.) rectangle (4.,1.);
\draw [line width=1.5pt] (-3.,0.)-- (3.,0.);
\draw [line width=1.5pt] (0.,0.)-- (0.,-1.);
\begin{scriptsize}
\draw [fill=blue] (0.,-1.) circle (2.5pt);
\draw[color=blue] (0.,-1.5) node {$2$};
\draw [fill=black] (-3.,0.) circle (2.5pt);
\draw[color=black] (-3.,.5) node {$1$};
\draw [fill=blue] (-2.,0.) circle (2.5pt);
\draw[color=blue] (-2.,.5) node {$2$};
\draw [fill=black] (-1.,0.) circle (2.5pt);
\draw[color=black] (-1.,.5) node {$3$};
\draw [fill=black] (0.,0.) circle (2.5pt);
\draw[color=black] (0.,.5) node {$4$};
\draw [fill=black] (1.,0.) circle (2.5pt);
\draw[color=black] (1.,0.5) node {$3$};
\draw [fill=blue] (2.,0.) circle (2.5pt);
\draw[color=blue] (2.,.5) node {$2$};
\draw [fill=black] (3.,0.) circle (2.5pt);
\draw[color=black] (3.,0.5) node {$1$};
\end{scriptsize}
\end{tikzpicture}
\end{figure}

Thus, the three lines $L_1,L_2$ and $L_3$ must be concurrent at the base point $P_1^{(1)}$. But if that is the case, then $E_1^{(1)}$ would appear with multiplicity $4$ in $F$.

\end{proof}

\begin{prop}
If $F$ is of type $III^*$, then $B$ does not consist of two double lines and two other lines.
\label{not2-1,2-1,1-1,1-1}
\end{prop}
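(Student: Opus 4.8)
The plan is to run the dual‑graph analysis used in the proofs of Propositions \ref{not2-1,1-4}--\ref{not2-1,2-1,2-1}. Suppose for contradiction that $\mathcal{P}$ contains a member $B=2L_1+2L_2+L_3+L_4$ with the $L_i$ lines, whose strict transform is a fiber $F$ of type $III^*$. Then $n_B=4$ and $n_F=8$, so Equation (\ref{numbercomp}) together with Lemma \ref{nEC} gives $k+n_{E\backslash C}=5$ with $n_{E\backslash C}\in\{0,1\}$, that is, $(k,n_{E\backslash C})\in\{(5,0),(4,1)\}$. The dual graph of $F$ is the affine diagram $\tilde{E}_7$: a chain $v_1-v_2-v_3-v_4-v_5-v_6-v_7$ of components carrying multiplicities $1,2,3,4,3,2,1$, together with one more component $v_8$ of multiplicity $2$ joined to $v_4$. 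Color $\overline{L_1},\overline{L_2},\overline{L_3},\overline{L_4}$ blue and the remaining four (exceptional) components black, as in Lemma \ref{nodes}. Since $\tilde{E}_7$ has exactly two nodes of multiplicity $1$, namely $v_1$ and $v_7$, these must be $\overline{L_3}$ and $\overline{L_4}$; and exactly two of the three multiplicity‑$2$ nodes $v_2,v_6,v_8$ must be $\overline{L_1}$ and $\overline{L_2}$. If the two blue multiplicity‑$2$ nodes were $v_2$ and $v_6$, then $v_8$ would be black and the black node $v_4$ would be joined to the three black nodes $v_3,v_5,v_8$, contradicting Lemma \ref{nodes}. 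Hence, up to the left--right symmetry of $\tilde{E}_7$ and relabeling the $L_i$, we may take $\overline{L_1}=v_2$, $\overline{L_2}=v_8$, $\overline{L_3}=v_1$, $\overline{L_4}=v_7$, and the black nodes are exactly $v_3,v_4,v_5,v_6$, forming the chain $v_3-v_4-v_5-v_6$.

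The heart of the argument is then a self‑intersection count on this chain. Since the chains of exceptional curves over distinct base points are disjoint, the connected chain of four black $(-2)$‑curves lies in a single tower over one base point $P=P_j^{(1)}$ of $\mathcal{P}$ (Figure \ref{chains}). Assume first that $C$ is smooth at $P$; then this tower is $E_P^{(1)},\dots,E_P^{(a_P-1)}$ with $a_P-1=4$, so $a_P=5$, and matching the two path graphs identifies the pair of end curves $\{E_P^{(1)},E_P^{(4)}\}$ with the pair of ends $\{v_3,v_6\}$ of the chain. In $\tilde{E}_7$ the node $v_3$ is joined to the blue node $v_2=\overline{L_1}$ and the node $v_6$ to the blue node $v_7=\overline{L_4}$; hence, whichever of $v_3,v_6$ is the deepest curve $E_P^{(4)}$, its blue neighbour is the strict transform $\overline{L}$ of one of the lines $L\in\{L_1,L_4\}$, and the relation $\overline{L}\cdot E_P^{(4)}=1$ forces $L$ to pass through the four (infinitely near) base points $P^{(1)},P^{(2)},P^{(3)},P^{(4)}$. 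On the other hand every component of a fiber of type $III^*$ is a $(-2)$‑curve, and for a line $L$ one has $\overline{L}^2=1-N$, where $N$ is the number of base points of $\mathcal{P}$, proper or infinitely near, through which $L$ passes; thus $-2=\overline{L}^2=1-N$ gives $N=3<4$, a contradiction.

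It remains to rule out $C$ being singular at $P$, which can only occur when $(k,n_{E\backslash C})=(4,1)$. In that case the corollary to Lemma \ref{Csmoothdj} shows $B$ is, locally at $P$, a double line $2L_j$; since only $L_j$ then reaches the infinitely near points of the tower, $m_{P^{(i)}}(B)\in\{0,2\}$ for every $i$, and the recursion $d_P^{(i)}=d_P^{(i-1)}+m_{P^{(i)}}(B)-2$ with $d_P^{(1)}=0$ forces $d_P^{(i)}=0$ for all $i$ (otherwise some $d_P^{(i)}$ would be negative). So no exceptional curve over $P$ is a component of $F$; as $P$ is then the only base point blown up more than once, the chain $v_3-v_4-v_5-v_6$ cannot exist, again a contradiction. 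I expect the only delicate point to be this last case analysis --- and, more generally, keeping precise track of which exceptional curves lie on $F$ when $C$ acquires a node at a base point; the rest is the routine translation between the diagram $\tilde{E}_7$, the exceptional chains of Figure \ref{chains}, and the geometry of the four lines, exactly as in the preceding proofs.
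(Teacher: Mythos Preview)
Your argument is correct and takes a genuinely different route from the paper's. After reaching the same (forced) coloring of $\tilde E_7$, the paper observes that the two double lines and one simple line must be concurrent at the base point $P$ carrying the black chain, invokes Lemma~\ref{Csmoothdj} to see that $C$ is smooth there, and then obtains the contradiction from Equation~(\ref{formulaintBC}): at most one of the three concurrent lines can be tangent to $C$ at $P$, so $I_P(B,C)<10=2a_1$. Your self-intersection count on the line whose strict transform meets the deep end $E_P^{(4)}$ of the exceptional chain is a clean alternative that bypasses the cubic entirely, and the same trick would dispatch several of the neighbouring propositions just as quickly.

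Your instinct that the final paragraph is the delicate point is right, but the resolution is simpler than your treatment. Once you know the four black $(-2)$-curves form a connected chain inside a single tower over $P$, you already have $d_P^{(1)}>0$: the sequence $(d_P^{(i)})$ is concave (its increments $m_{P^{(i)}}(B)-m$ are non-increasing) with $d_P^{(0)}=d_P^{(a_P)}=0$, so it is either identically zero or strictly positive on $1\le i\le a_P-1$. Lemma~\ref{Csmoothdj} then forces $C$ smooth at $P$, and your second case is vacuous. The assertion there that ``$P$ is then the only base point blown up more than once'' is not justified --- with $(k,n_{E\backslash C})=(4,1)$ one could perfectly well have characteristic sequence $(5,2,1,1)$, the black chain sitting over the first point and $C$ nodal at the second --- but since the case never arises this gap is harmless.
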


\begin{proof}
By contradiction, assume $B=2L_1+2L_2+L_3+L_4$, where each $L_i$ is a line. If we consider the dual graph of $F$, the picture is the following one:

\begin{figure}[H]
\centering
\begin{tikzpicture}[line cap=round,line join=round,>=triangle 45,x=1.0cm,y=1.0cm]
\clip(-4.,-2.) rectangle (4.,1.);
\draw [line width=1.5pt] (-3.,0.)-- (3.,0.);
\draw [line width=1.5pt] (0.,0.)-- (0.,-1.);
\begin{scriptsize}
\draw [fill=blue] (0.,-1.) circle (2.5pt);
\draw[color=blue] (0.,-1.5) node {$2$};
\draw [fill=blue] (-3.,0.) circle (2.5pt);
\draw[color=blue] (-3.,.5) node {$1$};
\draw [fill=blue] (-2.,0.) circle (2.5pt);
\draw[color=blue] (-2.,.5) node {$2$};
\draw [fill=black] (-1.,0.) circle (2.5pt);
\draw[color=black] (-1.,.5) node {$3$};
\draw [fill=black] (0.,0.) circle (2.5pt);
\draw[color=black] (0.,.5) node {$4$};
\draw [fill=black] (1.,0.) circle (2.5pt);
\draw[color=black] (1.,0.5) node {$3$};
\draw [fill=black] (2.,0.) circle (2.5pt);
\draw[color=black] (2.,.5) node {$2$};
\draw [fill=blue] (3.,0.) circle (2.5pt);
\draw[color=blue] (3.,0.5) node {$1$};
\end{scriptsize}
\end{tikzpicture}
\end{figure}

In particular, up to relabeling, the two double lines together with one of the other lines, say $L_3$, must be concurrent at the point $P_1^{(1)}$ and, further, $C$ must be smooth at $P_1^{(1)}$, since $d_1^{(1)}>0$ (see Lemma \ref{Csmoothdj}).

In particular, Equation (\ref{formulaintBC}) gives us
\begin{equation}
I_{P_1^{(1)}}(B,C)=a_1\cdot m =10
\label{eq2}
\end{equation}
where $I_{P_1^{(1)}}(B,C)$ denotes the intersection multiplicity of $B$ and $C$ at the point $P_1^{(1)}$, but 
\[
I_{P_1^{(1)}}(B,C)=2I_{P_1^{(1)}}(L_1,C)+2I_{P_1^{(1)}}(L_2,C)+I_{P_1^{(1)}}(L_3,C)\leq 2\cdot 3 +1+1=8 
\]
contradicting Equation \ref{eq2}.

\end{proof}

\begin{prop}
If $F$ is of type $III^*$, then $B$ does not consist of a line with multiplicity four and a double line.
\label{not4-1,2-1}
\end{prop}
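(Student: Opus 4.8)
The plan is to reach a contradiction by counting the components of $F$. Write $B=4L_1+2L_2$ with $L_1\neq L_2$ lines (if $L_1=L_2$ then $B=6L$ and $F$ would contain a component of multiplicity $6>M_F=4$, which is absurd). Since $\overline{F}=4\overline{L_1}+2\overline{L_2}$, the curve $\overline{L_1}$ is a component of $F$ of multiplicity $4=M_F$; but a fiber of type $III^*$ has a \emph{unique} component of multiplicity $4$, namely the trivalent central vertex of $\tilde{E}_7$. Hence $\overline{L_1}$ is that vertex and, in particular, every exceptional curve $E_j^{(i)}$ occurring in $F$ satisfies $d_j^{(i)}\leq 3$.

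Next I would classify the base points. Every base point of $\mathcal{P}$ has multiplicity $\geq m=2$ on $B$, hence lies on $L_1\cup L_2$. No base point can be the point $L_1\cap L_2$: there $m(B)=6$, so $d^{(1)}=4$ and $E^{(1)}$ would be a second component of multiplicity $4$. Therefore each base point $P_j^{(1)}$ is of one of two kinds — on $L_1\setminus L_2$, where $m_{P_j^{(1)}}(B)=4$ and $d_j^{(1)}=2$ (``type I''), or on $L_2\setminus L_1$, where $m_{P_j^{(1)}}(B)=2$ and $d_j^{(1)}=0$ (``type II''). Let $n_I$ and $n_{II}$ be the numbers of each.

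The key step is to analyse each chain $E_j$ (Figure \ref{chains}) using $d_j^{(i)}=d_j^{(i-1)}+m_{P_j^{(i)}}(B)-m$, the inequality $0\leq d_j^{(i)}\leq 3$, and the observation that near a type-I point only the branch of $B$ along $L_1$ passes while near a type-II point only the branch along $L_2$ passes (so in each case every infinitely near base point on the chain lies on the strict transform of that one line). Walking along the chain one gets: a type-II chain contributes \emph{no} component to $F$; and a type-I chain must have $a_j=2$ — if $a_j\geq 3$ one is forced either into $d_j^{(2)}=4$ or into a negative $d_j^{(3)}$ — so it contributes exactly the component $E_j^{(1)}$. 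I expect this bookkeeping, i.e.\ ruling out chains of length $\geq 3$ over points of $L_1$, to be the one genuinely delicate point.

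Finally, counting components of $F$ gives $n_F=2+n_I$ (the two strict transforms, plus one exceptional curve per type-I chain), so $n_F=8$ forces $n_I=6$; but then the type-I chains alone use $\sum_{\text{type I}}a_j\geq 2n_I=12$ blow-ups, contradicting $a_1+\dots+a_k=9$. Equivalently, once $\overline{L_1}$ is pinned to the central vertex, since $L_1\cap L_2$ is not a base point the strict transforms $\overline{L_1}$ and $\overline{L_2}$ meet, so $\overline{L_2}$ is the branch vertex of $\tilde{E}_7$; the two black chains hanging off $\overline{L_1}$ then have length three and, by Lemma \ref{nodes}, each must sit inside a single chain $E_j$, which — as in the proof of Proposition \ref{not2-1,2-1,2-1} — forces some $m_{P_j^{(1)}}(B)\in\{3,5\}$, impossible for this $B$.
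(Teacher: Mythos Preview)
Your counting argument is correct and takes a genuinely different route from the paper. The paper uses Equation~\eqref{numbercomp} together with Lemma~\ref{nEC} to pin down $k\in\{2,3\}$, draws the unique admissible colouring of $\tilde E_7$ (with $4L$ at the centre and $2L'$ on the short branch), deduces that the two lines do not meet at a base point, and then observes that the two base points lying on $L$ each give $d_j^{(1)}=m_{P_j^{(1)}}(B)-2=2$, which is incompatible with the picture (the black vertices adjacent to the centre have multiplicity $3$). You instead classify base points into types I and II, show directly that each type-I chain is forced to have $a_j=2$ (so contributes exactly one exceptional component) while type-II chains contribute nothing, and finish by comparing $n_F=2+n_I=8$ with $\sum a_j=9<2n_I$. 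Your route is more self-contained and bypasses both the computation of $k$ and the graph colouring; the paper's route has the advantage of reusing the same visual bookkeeping uniformly across Propositions~\ref{not2-1,1-4}--\ref{not4-1,2-1}.

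Two minor remarks. The parenthetical ``every infinitely near base point on the chain lies on the strict transform of that one line'' is not literally true --- the infinitely near base points follow $\overline C$, which may well separate from $\overline L_i$ after one blow-up --- but your actual case split (tangent gives $d_j^{(2)}=4$, transversal forces $a_j=2$) does not use this and is fine. And in the ``Equivalently'' paragraph, the reference to Proposition~\ref{not2-1,2-1,2-1} is misplaced (that proof is a concurrency argument, not a multiplicity argument); your stated conclusion, that each length-$3$ black arm being an entire $E_j$-chain forces $d_j^{(1)}\in\{1,3\}$ and hence $m_{P_j^{(1)}}(B)\in\{3,5\}$, is nonetheless correct once one checks the arm really coincides with the full chain.
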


\begin{proof}
By contradiction, assume $B=4L+2L'$, where $L$ and $L'$ are two distinct lines.  It follows from Equation (\ref{numbercomp}) and Lemma \ref{nEC} that either $k=3$ ($n_{E\backslash C}=0$) or  $k=2$ ($n_{E\backslash C}=1$). Thus, if we look at the dual graph of $F$, the picture is:

\begin{figure}[H]
\centering
\begin{tikzpicture}[line cap=round,line join=round,>=triangle 45,x=1.0cm,y=1.0cm]
\clip(-4.,-2.) rectangle (4.,1.);
\draw [line width=1.5pt] (-3.,0.)-- (3.,0.);
\draw [line width=1.5pt] (0.,0.)-- (0.,-1.);
\begin{scriptsize}
\draw [fill=blue] (0.,-1.) circle (2.5pt);
\draw[color=blue] (0.,-1.5) node {$2$};
\draw [fill=black] (-3.,0.) circle (2.5pt);
\draw[color=black] (-3.,.5) node {$1$};
\draw [fill=black] (-2.,0.) circle (2.5pt);
\draw[color=black] (-2.,.5) node {$2$};
\draw [fill=black] (-1.,0.) circle (2.5pt);
\draw[color=black] (-1.,.5) node {$3$};
\draw [fill=blue] (0.,0.) circle (2.5pt);
\draw[color=blue] (0.,.5) node {$4$};
\draw [fill=black] (1.,0.) circle (2.5pt);
\draw[color=black] (1.,0.5) node {$3$};
\draw [fill=black] (2.,0.) circle (2.5pt);
\draw[color=black] (2.,.5) node {$2$};
\draw [fill=black] (3.,0.) circle (2.5pt);
\draw[color=black] (3.,0.5) node {$1$};
\end{scriptsize}
\end{tikzpicture}
\end{figure}

and the two lines do not intersect at a base point of $\mathcal{P}$. In particular, up to relabeling, we have $P_1^{(1)},P_2^{(1)}\in L$ and $P_1^{(1)},P_2^{(1)}\notin L'$. So that the equality $d_j^{(1)}=m_{P_{j}^{(1)}}(B)-2$ implies both $E_1^{(1)}$ and $E_2^{(1)}$ appear with multiplicity $2$ in $F$, a contradiction.
\end{proof}

\begin{lema}
Let $C:c=0$ be a smooth cubic. Let $P\in C$ be a flex point and let $L:l=0$ be the corresponding inflection line. If $Q: q=0$ is a quartic such that $I_P(Q,L)=3$, then 
\[
q=lc'-l'c
\]
where $c'$ has degree three and $P$ does not lie in the line $l'=0$. 
\label{q1}
\end{lema}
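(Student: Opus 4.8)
The plan is to restrict everything to the inflection line $L\cong\mathbb{P}^1$ and thereby reduce the statement to a one-variable divisibility. First I would record the elementary facts. Since $C$ is smooth it is irreducible, so $L\not\subset C$ and $\restr{c}{L}$ is a nonzero degree-$3$ form on $L$; as $P$ is a flex with inflection line $L$ we have $I_P(C,L)=3$, hence $\operatorname{div}(\restr{c}{L})=3P$. Likewise $L\not\subset Q$ — otherwise $I_P(Q,L)$ would be infinite rather than $3$ — so $\restr{q}{L}$ is a nonzero degree-$4$ form on $L$ with $\operatorname{ord}_P(\restr{q}{L})=I_P(Q,L)=3$. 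Thus $\operatorname{div}(\restr{q}{L})=3P+P'$ for a \emph{single} point $P'$, and $P'\neq P$ precisely because the order of vanishing at $P$ equals $3$ and not $4$. Consequently $\restr{c}{L}$ divides $\restr{q}{L}$ and the quotient is a linear form on $L$ vanishing at $P'$ but not at $P$.

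Next I would build the linear form. The restriction map from linear forms on $\mathbb{P}^2$ to linear forms on $L$ is surjective with kernel $\mathbb{C}\cdot l$, so there is a linear form $l_0'$ on $\mathbb{P}^2$ with $\restr{l_0'}{L}=-\restr{q}{L}/\restr{c}{L}$. Since this form does not vanish at $P$, the line $\{l_0'=0\}$ avoids $P$. Moreover $q+l_0'c$ restricts to the zero form on $L$, so $l$ divides $q+l_0'c$; writing $q+l_0'c=l\,h$ with $h$ homogeneous of degree $\le 3$ already gives $q=l\,h-l_0'\,c$, which has the desired shape except that the cubic factor $h$ could a priori vanish.

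Finally I would remove that degeneracy using the freedom in the representation. For $t\in\mathbb{C}$ set $l'\doteq l_0'+tl$ and $c'\doteq h+tc$; a one-line computation gives $lc'-l'c=lh-l_0'c=q$ for every $t$, and $l'$ still avoids $P$ because $l(P)=0$. Choosing $t$ outside the at most one value for which $h+tc=0$ makes $c'$ nonzero, and then, since $q\neq 0$ is homogeneous of degree $4$ while $l$ is linear, $c'$ is forced to have degree exactly $3$. This produces $q=lc'-l'c$ with $\deg c'=3$ and $P\notin\{l'=0\}$, as required.

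The only step that is not pure linear algebra — and the place where the hypothesis $I_P(Q,L)=3$ is essential, rather than the weaker $I_P(Q,L)\ge 3$ which would only give $\operatorname{ord}_P\ge 3$ — is concluding that $\restr{q}{L}/\restr{c}{L}$ is an honest linear form \emph{nonvanishing} at $P$; everything else, namely surjectivity of the restriction map and the one-parameter adjustment avoiding $c'=0$, is routine.
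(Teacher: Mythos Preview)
Your proof is correct and follows essentially the same approach as the paper: both arguments show that $q+l'c$ vanishes identically on $L$ and is therefore divisible by $l$, the paper by computing $q(x,y,0)$ explicitly in Weierstrass coordinates and matching coefficients, you by the equivalent coordinate-free divisor argument on $L\cong\mathbb{P}^1$. Your version is in fact slightly more careful, since you explicitly use the one-parameter freedom $l'\mapsto l'+tl$ to rule out the degenerate case $c'=0$ (i.e.\ $q=-l'c$), which the paper passes over in silence.
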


\begin{proof}
We can choose coordinates in $\mathbb{P}^2$ so that $C$ is the cubic $y^2z-x(x-z)(x-\alpha\cdot z)=0$ for some $a \in \mathbb{C}\backslash \{0,1\}$. Then we can assume $P$ is the point $(0:1:0)$ and $L$ is the line $z=0$.

Since $I_P(Q,L)=3$ we have that $q(x,y,0)=\alpha x^4+\beta x^3y$ for some $\alpha,\beta \in \mathbb{C}$ with $\beta \neq 0$.

Letting $q'(x,y,z)$ be the polynomial 
\[
q+(\alpha x+\beta y+\gamma z)(y^2z-x(x-z)(x-a\cdot z))
\]
it follows that $q'(x,y,0)=0$.

Thus, there exists $c'$ a polynomial of degree three such that $q'=lc'$. Letting $l'$ be the line $\alpha x+\beta y+\gamma z=0$ it follows that
\[
q=lc'-l'c
\]
and $p$ is not a point in the line $l'=0$, since $\beta\neq 0$.
\end{proof}

\begin{cor}
Let $Q,C,L$ and $P$ be as in Lemma \ref{q1}. If $I_P(Q,C)=6$, then 
\[
c'=lq''-c
\]
where $q''$ has degree two and does not vanish on $P$.
\label{q1cor1}
\end{cor}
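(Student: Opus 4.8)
The plan is to build on Lemma \ref{q1} and its proof, keeping the same choice of coordinates. By Lemma \ref{q1} we may write $q = lc' - l'c$, where $c'$ has degree three, $l'$ is the line $\alpha x + \beta y + \gamma z = 0$ with $\beta \neq 0$, and $P = (0:1:0)$ does not lie on $l' = 0$. The aim is to extract, from the hypothesis $I_P(Q,C) = 6$, an analogous factorization of $c'$ modulo $c$, namely $c' = lq'' - c$ with $q''$ of degree two not vanishing at $P$.

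First I would compute $I_P(Q,C)$ using the factorization $q = lc' - l'c$. On the curve $C$ (i.e. working in the local ring $\mathcal{O}_{C,P}$, or equivalently modulo $c$), we have $q \equiv lc' \pmod{c}$, so $I_P(Q,C) = I_P(L,C) + I_P(C',C)$ where $C' : c' = 0$. Since $L$ is the inflection line at $P$, $I_P(L,C) = 3$, so the hypothesis $I_P(Q,C) = 6$ forces $I_P(C',C) = 3$. In other words $c'$, restricted to $C$, vanishes to order exactly $3$ at $P$, i.e. $I_P(C',L \cdot (\text{stuff}))$... more precisely, $c'$ and $L$ have the same restriction to $C$ up to third order at $P$. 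Concretely, in the coordinates above, $C$ is parametrized near $P$ and $l = z$ restricts to a local parameter raised to a controlled power; I would use the explicit form $c = y^2 z - x(x-z)(x - a z)$ and the parametrization of $C$ near $(0:1:0)$ to see that $c'$ restricted to $C$ agrees with a scalar multiple of $l$ restricted to $C$ through order three. Rescaling $c'$ if necessary (which is harmless, since $q$ is only defined up to scalar), I can arrange that $c' - \lambda l$ vanishes on $C$ to order $\geq 4$ at $P$ for the appropriate constant; but in fact the cleaner route is: $c' + c$ vanishes on $C$ exactly where $c'$ does, and I want to show $c' + c$ is divisible by $l$.

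The key step is therefore: show that $c' + c$ (after possibly adjusting $c'$ by the allowed rescaling absorbed into $l'$ and $c'$ in Lemma \ref{q1}, or more honestly by re-deriving the factorization) is divisible by $l = z$. Set $g = c' + c$, a cubic. I claim $g(x,y,0) = 0$. Indeed $c(x,y,0) = -x^3$ (from the explicit equation, $c = y^2 z - x(x-z)(x-az)$ gives $c(x,y,0) = -x\cdot x \cdot x = -x^3$), so $g(x,y,0) = c'(x,y,0) - x^3$. Now $I_P(C', C) = 3$ with $P = (0:1:0)$ and $L = \{z=0\}$ the flex line means the intersection of $C'$ with $C$ at $P$ is concentrated along the $L$-direction to order three; combined with $I_P(L,C) = 3$, one gets that $c'(x,y,0)$ is forced to equal a scalar multiple of $x^3$ — the same computation that in Lemma \ref{q1} produced $q(x,y,0) = \alpha x^4 + \beta x^3 y$ now, applied to the cubic $c'$ with the sharper intersection data, produces $c'(x,y,0) = \delta x^3$ for some $\delta \in \mathbb{C}$. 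Rescaling $c'$ (equivalently $l'$, within the freedom of Lemma \ref{q1}'s construction) so that $\delta = -1$, we get $g(x,y,0) = 0$, hence $g = z \cdot q'' = l q''$ for a quadratic $q''$. Thus $c' = lq'' - c$. Finally, to see $q''$ does not vanish at $P = (0:1:0)$: evaluate the coefficient of $y^2$ in $lq'' = c' + c$. Since $l = z$, the left side $z q''$ has no $y^2$-only term, i.e. its value obtained by setting $z = 0$ is zero — consistent — so instead I examine the coefficient of $y^2 z$: in $c = y^2 z - \cdots$ it is $1$, in $c'$ (degree three) the $y^2 z$ coefficient is some constant $e$, so $g = c' + c$ has $y^2 z$-coefficient $e + 1$, which equals the coefficient of $y^2$ in $q''$. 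I would argue $e + 1 \neq 0$, i.e. $q''(P) \neq 0$: if it were zero, $q''$ would vanish at $P$, forcing $c' + c$ to vanish to higher order at $P$ along directions transverse to $L$, contradicting that $I_P(C',C) = I_P(C' + C \text{-type curve}, C)$ is exactly $3$ and not larger — more carefully, $q''(P) = 0$ would make $I_P(C', C) \geq 4$ via the tangency it forces, contradicting $I_P(C',C)=3$.

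The main obstacle I anticipate is making the intersection-multiplicity bookkeeping fully rigorous: specifically, cleanly justifying that $I_P(C',C) = 3$ together with the flex condition forces $c'(x,y,0)$ to be a pure multiple of $x^3$, and then that $q''(P) \neq 0$. Both amount to the same local analysis on $C$ near $P$ that underlies Lemma \ref{q1} — parametrize $C$ near $(0:1:0)$, substitute into $c'$, and read off vanishing orders — so the safest writeup mirrors the proof of Lemma \ref{q1} step for step, using the division-by-$c$ trick ($q \equiv lc' \bmod c$, then $c' \equiv -c \bmod l$ after normalization) rather than chasing coefficients by hand. Everything else is routine polynomial algebra in the fixed coordinate system.
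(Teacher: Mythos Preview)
Your approach is essentially the same as the paper's: from $q = lc' - l'c$ deduce $I_P(C',C) = 3$ (since $q \equiv lc' \pmod c$ and $I_P(L,C)=3$), conclude $c'(x,y,0)$ is a scalar multiple of $x^3$, normalize, then set $g = c'+c$ and factor out $l=z$. One small slip: since $c(x,y,0) = -x^3$, you need $\delta = 1$ (not $\delta = -1$) to get $g(x,y,0)=0$; this is the same normalization the paper uses. Your justification that $q''(P)\neq 0$ is actually cleaner than the paper's: from $c' \equiv lq'' \pmod c$ one gets $3 = I_P(C',C) = I_P(L,C) + I_P(Q'',C) = 3 + I_P(Q'',C)$, forcing $q''(P)\neq 0$.
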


\begin{proof}
Since we can write $q=lc'-l'c$, the condition $I_P(Q,C)=6$ implies $I_p(C',C)=3$, where $C'$ is the cubic $c'=0$. Thus, $c'(x,y,0)=\alpha x^3$ for some $\alpha \neq 0$ and, up to a change of coordinates we can assume $\alpha=1$. Letting $c''$ be the polynomial $c'+c$, it follows that $c''(x,y,0)=0$. And we conclude there exists $q''$ a polynomial of degree two such that $c''=lq''$. By assumption, such polynomial cannot vanish on $P$.
\end{proof}

Combining Lemma \ref{q1} and Corollary \ref{q1cor1} we obtain:

\begin{cor}
Let $Q,C,L$ and $P$ be as in Lemma \ref{q1}. If $I_P(Q,C)=6$, then
\[
q=l^2q''-(l'+l)c
\]
\label{q1cor2}
\end{cor}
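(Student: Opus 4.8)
The plan is to chain together the two previous results and substitute. From Lemma \ref{q1} we already have a representation $q = lc' - l'c$, where $c'$ is a cubic form and $l'$ is a linear form with $P \notin \{l'=0\}$. From Corollary \ref{q1cor1}, under the additional hypothesis $I_P(Q,C)=6$, we have $c' = lq'' - c$ for a quadratic form $q''$ not vanishing at $P$. So the entire content of the corollary is the algebraic manipulation of plugging the second equation into the first.

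Concretely, I would write
\[
q = lc' - l'c = l(lq'' - c) - l'c = l^2 q'' - lc - l'c = l^2 q'' - (l + l')c,
\]
which is exactly the claimed identity $q = l^2 q'' - (l' + l)c$ (the order of $l'$ and $l$ in the sum being immaterial). That is the whole proof — it is a one-line substitution, and this is reflected in the phrasing "Combining Lemma \ref{q1} and Corollary \ref{q1cor1} we obtain".

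There is essentially no obstacle here: the two cited results do all the work, and one only needs to be careful that the polynomials $c'$, $l'$, $q''$ are the same objects in both statements, which they are since both the lemma and its corollary are phrased for the same fixed data $Q, C, L, P$ and the corollary's proof begins from the representation produced by the lemma. If anything, the only point worth a remark is that the resulting $q''$ and $l'$ inherit the non-vanishing-at-$P$ properties from Lemma \ref{q1} and Corollary \ref{q1cor1} respectively, though the statement as written does not even assert these, so they need not be re-proved. Thus the proof is simply: substitute the expression for $c'$ from Corollary \ref{q1cor1} into the expression for $q$ from Lemma \ref{q1}, and collect the two terms containing $c$.
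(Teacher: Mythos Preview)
Your proof is correct and is exactly the approach the paper intends: the corollary is stated immediately after the sentence ``Combining Lemma \ref{q1} and Corollary \ref{q1cor1} we obtain'', with no further proof, precisely because the substitution you wrote out is the entire content.
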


\begin{cor}
Let $C,L$ and $P$ be as in Lemma \ref{q1}. The space of quartics $Q$ such that $I_P(Q,L)=3$ and $I_P(Q,C)=6$ has dimension eight.
\label{q1cor3}
\end{cor}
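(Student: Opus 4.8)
The plan is to describe this space explicitly as a dense open subset of a linear subspace of the $15$-dimensional space of quartic forms on $\mathbb{P}^2$, using the presentation furnished by Corollary \ref{q1cor2}, and then to compute its dimension via an injective linear parametrization. Recall that for spaces of plane curves ``dimension'' means projective dimension, so a $d$-dimensional space corresponds to a $(d+1)$-dimensional space of forms; thus the target answer $8$ corresponds to a $9$-dimensional space of quartic forms.

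First I would record both inclusions. By Corollary \ref{q1cor2}, any quartic $Q\colon q=0$ with $I_P(Q,L)=3$ and $I_P(Q,C)=6$ can be written as
\[
q=l^2q''-mc,
\]
where $q''$ is a quadratic form with $q''(P)\neq 0$ (Corollary \ref{q1cor1}) and $m\doteq l+l'$ is a linear form; since $l(P)=0$ while $l'(P)\neq 0$ (Lemma \ref{q1}), we have $m(P)\neq 0$. Conversely, for \emph{any} quadratic form $q''$ and linear form $m$ with $q''(P)\neq 0$ and $m(P)\neq 0$, the quartic $q=l^2q''-mc$ satisfies both conditions: neither $C$ nor $L$ is a component of $Q$ (if $c\mid q$ then $c\mid l^2q''$, and since $c$ is irreducible and cannot divide $l^2$ for degree reasons, $c\mid q''$, forcing $q''=0$; similarly $l\mid q$ forces $l\mid m$, so $m=\lambda l$, which we have excluded); restricting to $L$ gives $q|_L=-m\,c|_L$, which vanishes to order exactly $3$ at $P$ because $I_P(L,C)=3$ and $m(P)\neq 0$; and restricting to $C$ gives $q|_C=l^2q''|_C$, which vanishes to order exactly $2\cdot 3+0=6$ at $P$ because $I_P(L,C)=3$ and $q''(P)\neq 0$ (here we use that $C$ is smooth at $P$, so that $\mathcal{O}_{C,P}$ is a discrete valuation ring and intersection multiplicity equals order of vanishing).

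Next I would parametrize. Consider the linear map
\[
\Phi\colon \{\text{quadratic forms}\}\oplus\{\text{linear forms}\}\longrightarrow\{\text{quartic forms}\},\qquad (q'',m)\longmapsto l^2q''-mc .
\]
By the two inclusions above, the space in question is exactly the image under $\Phi$ of the dense open subset $\{q''(P)\neq 0\}\cap\{m(P)\neq 0\}$ of the $(6+3)$-dimensional source. It remains to check that $\Phi$ is injective: if $l^2q''=mc$, then $c$ divides $l^2q''$; since $C$ is smooth, $c$ is irreducible, and it cannot divide $l^2$ for degree reasons, so $c\mid q''$, whence $q''=0$ (again by degree) and then $m=0$. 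Therefore $\operatorname{Im}\Phi$ is a linear subspace of quartic forms of dimension $6+3=9$, the dense open subset carved out by the two nonvanishing conditions has the same dimension, and so the corresponding space of quartics has projective dimension $9-1=8$.

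The computations are routine once Corollary \ref{q1cor2} is in hand; the only steps requiring a little care are the reverse inclusion---checking that $C$ and $L$ genuinely fail to be components of $Q$ and that the intersection multiplicities are \emph{exactly} $3$ and $6$, which is precisely where the open conditions $q''(P)\neq 0$ and $m(P)\neq 0$ are used---and the bookkeeping observation that replacing the linear subspace $\operatorname{Im}\Phi$ by its dense open subset does not change the dimension. I expect the reverse inclusion to be the main (and only mild) obstacle.
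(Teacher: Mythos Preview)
Your proof is correct and follows essentially the same approach as the paper: both use the parametrization of Corollary~\ref{q1cor2} and count the $9$ free coefficients ($6$ from $q''$, $3$ from the linear form). The paper's own proof is a one-line coefficient count; you have filled in exactly the details it omits---the injectivity of $\Phi$ (so that distinct parameters give distinct quartics), the reverse inclusion (so that every form $l^2q''-mc$ with the open conditions really does satisfy $I_P(Q,L)=3$ and $I_P(Q,C)=6$ on the nose), and the observation that the locus is a dense open in a linear space rather than the linear space itself---so your argument is strictly more complete.
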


\begin{proof}
It follows from the proofs of Lemma \ref{q1} and Corollary \ref{q1cor1} (or Corollary \ref{q1cor3}). The quartic $Q$ is determined by $9$ coefficients (the coefficients of $q''$ and the coefficients of $l'$).
\end{proof}

\begin{lema}
Let $C:c=0$ be a smooth cubic. Let $P\in C$ be a flex point and let $L:l=0$ be the corresponding inflection line. Choose three distinct points in $C$, say $P_1,P_2,P_3$ different from $p$. There does not exist a quartic $Q$ with nodes at $P_1,P_2,P_3$ and such that $I_P(Q,L)=3$ and $I_P(Q,C)=6$.
\label{q2}
\end{lema}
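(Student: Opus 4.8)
The plan is to assume such a $Q$ exists and to reach a contradiction, starting from the normal form supplied by Corollary \ref{q1cor2}: since $I_P(Q,L)=3$ and $I_P(Q,C)=6$, we may write $q=l^2q''-(l'+l)c$, where $q''$ has degree two with $P\notin\{q''=0\}$ and $l'$ is a linear form with $P\notin\{l'=0\}$, so that $(l'+l)(P)\neq 0$. Write $Q''=\{q''=0\}$. First I would show $Q''$ passes through the three points: each $P_i$ lies on $C$ and differs from $P$, and since the inflection line meets $C$ only at $P$ we get $l(P_i)\neq 0$, whence $q(P_i)=l(P_i)^2q''(P_i)=0$ gives $q''(P_i)=0$. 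Next I would exploit that $P_i$ is a node: then $\nabla q(P_i)=0$, and because $q''(P_i)=c(P_i)=0$ this reduces to $l(P_i)^2\,\nabla q''(P_i)=(l'+l)(P_i)\,\nabla c(P_i)$, with $\nabla c(P_i)\neq 0$ since $C$ is smooth. So $Q''$ is either tangent to $C$ at $P_i$ or singular there; in both cases $I_{P_i}(Q'',C)\geq 2$. As $\deg(Q''\cdot C)=6$, Bézout then forces $Q''\cdot C=2P_1+2P_2+2P_3$ exactly. Taking the flex $P$ as origin of the group law on $C$ and using that a conic cuts $C$ in a divisor of $|6P|$, this gives $2(P_1\oplus P_2\oplus P_3)=O$; hence $P_1\oplus P_2\oplus P_3\in C[2]$. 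For three points in general position on $C$ this is already impossible, so the remaining work is confined to the special triples for which this torsion condition holds.

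To handle those I would split by the type of the conic $Q''$. If $Q''=2L_0$ is a double line, then $L_0\cdot C=P_1+P_2+P_3$, so the $P_i$ are collinear, and since $\nabla q''$ vanishes along $L_0$ the displayed relation gives $(l'+l)(P_i)=0$ for all $i$; thus $l'+l$ is proportional to the equation $l_0$ of $L_0$, and $q=l_0\,(l^2l_0-c_0\,c)$ is reducible, so $Q$ contains the line $L_0$ — equivalently, a line through three collinear nodes of a plane quartic is a component of it, contradicting that $Q$ is an (irreducible) quartic. If $Q''=L_1\cup L_2$ is a pair of distinct lines, I would distribute $2P_1+2P_2+2P_3$ among the degree-three divisors $L_1\cdot C$ and $L_2\cdot C$; each admissible splitting forces a collinearity among the $P_i$ or a relation $2P_i\oplus P_j=O$, and a short direct analysis of the local structure of $q$ at the $P_i$ (or of the blow-up of the base points, as in Lemma \ref{q1} and its corollaries) then excludes each possibility or again makes $q$ reducible. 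When $Q''$ is a smooth conic it is tritangent to $C$ at $P_1,P_2,P_3$; here I would blow up the base scheme $3P+2P_1+2P_2+2P_3$ of the pencil spanned by $C$ and the cubic $C'=\{lq''-c=0\}$ (so that $L\cup Q''$ becomes a fiber) to get a rational elliptic surface, and argue that the strict transform of $Q$ — which would have to be a smooth rational curve of self-intersection one meeting a fiber in degree three — cannot occur with exactly the three prescribed nodes.

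I expect the smooth tritangent conic to be the main obstacle: the first paragraph disposes of every triple outside the $2$-torsion locus, and among the remaining triples the double-line subcase (the only one when $P_1\oplus P_2\oplus P_3=O$) is easy, but excluding a smooth tritangent conic seems to need genuine work. Concretely, the reduction is to show that $q=l^2q''-(l'+l)c$ cannot have an ordinary node at $P_i$: its tangent cone there is $l(P_i)^2\bigl(q_2''-\mu_i c_2\bigr)+c_1\bigl(2l(P_i)\mu_i l_1-l_1''\bigr)$ in local coordinates, where $\mu_i$ is determined by $\nabla q''(P_i)=\mu_i\nabla c(P_i)$ and $c_1$ cuts out the tangent line of $C$ at $P_i$, and one would try to prove this binary quadratic form is a perfect square — so that $P_i$ is a cusp or worse — or else show the numerical class of the strict transform of $Q$ on the rational elliptic surface above admits no irreducible representative of arithmetic genus zero.
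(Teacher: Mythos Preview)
Your opening reduction is correct and rather nice: from $q=l^2q''-(l'+l)c$ you extract that $Q''$ is tritangent to $C$ at the $P_i$, hence $P_1\oplus P_2\oplus P_3$ lies in $C[2]$. But the proof then stops short. You dispose of generic triples and of the double-line case for $Q''$ (modulo the irreducibility of $Q$, which the lemma does not state but which is how it is used in Proposition~\ref{notivstar1}), while the remaining cases---two distinct lines and, above all, a smooth conic---are only sketched, and you say so yourself. This is not a cosmetic gap: in the application (Proposition~\ref{notivstar1}) the three node-points are base points of a Halphen pencil of index two with $a_1=6$ at the flex and $a_2=a_3=a_4=1$, so the base-point relation forces $P_1\oplus P_2\oplus P_3=\varepsilon_2$, a \emph{nontrivial} $2$-torsion element. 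Thus the $P_i$ are not collinear, the unique conic cutting out $2P_1+2P_2+2P_3$ on $C$ is not a double line, and you land squarely in the cases you have not finished. The ``general position'' paragraph and the double-line case together cover precisely none of the configurations that actually arise.

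The paper takes a completely different and much shorter route: by Corollary~\ref{q1cor3} the quartics with $I_P(Q,L)=3$ and $I_P(Q,C)=6$ form an $8$-dimensional linear system, and requiring singular points at three prescribed $P_i$ imposes nine linear conditions, with the remark that one may alternatively check directly from the normal form of Corollary~\ref{q1cor2} that no such $q$ has three nodes. Your approach is more structural and, if completed, would explain geometrically why the obstruction arises; but as written it does not reach the conclusion. If you want to salvage it, the honest way forward is to compute the second-order part of $q$ at each $P_i$ in terms of $q'',l',c,l$ and show that the three simultaneous node conditions overdetermine the pair $(q'',l')$---which is essentially the linear-algebra content of the paper's dimension count, seen through your auxiliary conic $Q''$.
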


\begin{proof}
By Corollary \ref{q1cor3}, the space of quartics $Q$ satisfying the conditions $I_P(Q,L)=3$ and $I_P(Q,C)=6$ has dimension eight. Asking for $Q$ to have nodes at three points imposes nine more conditions. Alternatively, one can check that a quartic with equation in the form of Corollary \ref{q1cor2} does not have three nodes.
\end{proof}

\begin{prop}
If $F$ is of type $IV^*$, then $B$ does not consist of a double line and a rational quartic.
\label{notivstar1}
\end{prop}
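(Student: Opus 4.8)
The plan is to follow the template of the earlier non‑examples (Propositions \ref{not2-1,1-4}--\ref{not4-1,2-1}): restrict the shape of the dual graph of $F$ from the component count, read off the forced local multiplicities and intersection numbers, and then contradict the fact that $Q$ is an irreducible rational quartic. Write $B=2L+Q$, so $n_B=2$; since $F$ is of type $IV^*$ we have $n_F=7$ with dual graph $\tilde E_6$, so Equation (\ref{numbercomp}) gives $k+n_{E\backslash C}=4$, and Lemma \ref{nEC} leaves $(k,n_{E\backslash C})\in\{(4,0),(3,1),(2,2)\}$. Colour $\overline L$ and $\overline Q$ blue; as $\overline F=2\overline L+\overline Q$, the vertex $\overline L$ has multiplicity $2$ in $F$ and $\overline Q$ has multiplicity $1$, so $\overline L$ is a mark‑$2$ vertex of $\tilde E_6$ and $\overline Q$ a mark‑$1$ vertex. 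Deleting these, the five black vertices form either a single chain of length $5$ (if $\overline L,\overline Q$ lie on the same arm of $\tilde E_6$) or a chain of length $4$ together with an isolated vertex (different arms); by Lemma \ref{nodes} these must be $k-n_{E\backslash C}$ disjoint chains, so only the second case survives and $(k,n_{E\backslash C})=(3,1)$, with the dual graph completely determined.

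From this picture: one base point $P_1$ gives the length‑$4$ chain, a second base point $P_2$ gives the isolated exceptional vertex, and since $n_{E\backslash C}=1$ the third base point $P_3$ is the unique point at which $C$ is singular (Proposition \ref{lctmc} and Section \ref{multiplecubic}), so $m_{P_3}(B)=2$ with a single branch there, while $C$ is smooth at $P_1$ and $P_2$. Running the recursion $d_j^{(i)}=d_j^{(i-1)}+m_{P_j^{(i)}}(B)-m$ along the chains, exactly as in the proof of Proposition \ref{not2-1,1-4}, forces $a_1=5$, $a_2=a_3=2$, gives $m_{P_2^{(1)}}(B)=3$ and $m_{P_2^{(2)}}(B)=1$, and leaves precisely two possibilities for $(m_{P_1^{(i)}}(B))_{i=1}^{5}$, namely $(3,3,3,1,0)$ or $(4,3,1,1,1)$, according to which end of the length‑$4$ chain is $E_1^{(1)}$. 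Equation (\ref{formulaintBC}) then yields $I_{P_1}(B,C)=10$ and $I_{P_2}(B,C)=I_{P_3}(B,C)=4$, and with B\'ezout $L$ and $Q$ meet $C$ only at $P_1,P_2,P_3$; moreover $\overline L\cdot\overline Q=\overline Q\cdot\overline C=0$ on $Y$ (the first because $\overline L,\overline Q$ are non‑adjacent components of the fibre $F$, the second because $\overline C\subset E$ lies in a different fibre), so $L\cap Q$ and $Q\cap C$ also occur only at $P_1,P_2,P_3$.

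Now split $B=2L+Q$ into local multiplicity sequences at the (infinitely near) base points and argue a contradiction, using that $Q$ irreducible forces $m_\bullet(Q)\le 3$ (equality making $Q$ smooth elsewhere) and $Q$ rational forces $\sum_p\delta_p(Q)=3$. In the case $(3,3,3,1,0)$ one finds $I_{P_1}(L,C)=3$, so $L$ is an inflection line of $C$ with flex at $P_1$ and meets $C$ nowhere else; then $P_3\notin L$ gives $m_{P_3}(Q)=3$, an ordinary triple point of $Q$, which uses the entire $\delta$‑budget and leaves $Q$ smooth at $P_2$ --- impossible since $m_{P_2}(B)=2$ and $P_2\notin L$. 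In the case $(4,3,1,1,1)$ one finds $I_{P_1}(L,C)=2$ and a double point $\delta_{P_1}(Q)=1$, with $L$ forced through exactly one more base point of $C$: if that point is $P_2$ one again produces a triple point of $Q$ at $P_3$ and total $\delta>3$; if it is $P_3$, the sequences at $P_2$ force an $A_4$‑singularity of $Q$ there, and one is left with a quartic having $\delta_{P_1}=1$, $\delta_{P_2}=2$, one node‑branch with fifth‑order contact with $C$ at $P_1$, and passing through the infinitely near points $P_3^{(1)},P_3^{(2)}$ --- which is excluded by a dimension count in the spirit of Corollary \ref{q1cor3} (or by the explicit equations of Lemmas \ref{q1}--\ref{q2}), the imposed conditions exceeding the dimension $14$ of the linear system of plane quartics.

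The recursion along the chains and the elimination of $(4,0)$, $(2,2)$ and of the first orientation are routine; the genuinely delicate point is the last configuration in the $(4,3,1,1,1)$ case, where the numerical constraints are individually consistent and ruling it out requires combining the genus‑zero condition on $Q$ with every contact condition forced by the dual graph and by $I_{P_1}(B,C)=10$, essentially rerunning the argument behind Lemma \ref{q1}.
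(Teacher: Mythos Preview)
There is a genuine gap: you eliminated the wrong configuration. When $\overline L$ and $\overline Q$ sit on the \emph{same} arm of $\tilde E_6$ (your Case A, the paper's first picture), the five black vertices form a single chain of length $5$ --- and this \emph{is} compatible with $(k,n_{E\backslash C})=(4,0)$. Your step ``by Lemma \ref{nodes} these must be $k-n_{E\backslash C}$ disjoint chains'' conflates two different counts: Lemma \ref{nodes} only says the black subgraph is a union of paths, while the $k-n_{E\backslash C}$ chains of Figure \ref{chains} are indexed by base points and may well be \emph{empty} (namely whenever $a_j=1$). With $(a_1,a_2,a_3,a_4)=(6,1,1,1)$ three of the four chains are empty, so a single visible black chain of length $5$ is perfectly consistent with $(4,0)$.

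This matters because Case A is precisely the hard case. In the paper's argument it is the \emph{second} picture that dies quickly (the forced characteristic sequence $a_1=5,a_2=2,a_3=a_4=a_5=1$ fails $\sum a_j=9$), while Case A survives all the numerical bookkeeping: one gets $a_1=6$, $L$ an inflection line of $C$ at $P_1$, $I_{P_1}(Q,C)=6$, $I_{P_1}(Q,L)=3$, and the three remaining base points forced to be ordinary nodes of $Q$. Ruling this out is exactly the content of Lemma \ref{q2}, which shows that no plane quartic can simultaneously satisfy $I_{P_1}(Q,L)=3$, $I_{P_1}(Q,C)=6$, and have three nodes at prescribed points of $C$ --- the dimension count of Corollary \ref{q1cor3} leaves only an $8$-dimensional family before the nodal conditions. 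Your dimension-count sketch at the end is aimed at a different configuration (your Case B with $(3,1)$) and does not cover this. So as written, the proof does not address the one configuration that actually needs work.
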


\begin{proof}
If $B$ consists of a double line and a rational quartic and $F$ is of type $IV^*$, then the picture of the dual graph of $F$ is one of the following:

\[
\begin{matrix}
\begin{tikzpicture}[line cap=round,line join=round,>=triangle 45,x=1.0cm,y=1.0cm]
\clip(-3.5,-3.) rectangle (3.,1.);
\draw [line width=1.5pt] (-2.,0.)-- (2.,0.);
\draw [line width=1.5pt] (0.,0.)-- (0.,-2.);
\begin{scriptsize}
\draw [fill=black] (0.,-2.) circle (2.5pt);
\draw[color=black] (0.5,-2.) node {$1$};
\draw [fill=black] (0.,-1.) circle (2.5pt);
\draw[color=black] (0.5,-1.) node {$2$};
\draw [fill=black] (-2.,0.) circle (2.5pt);
\draw[color=black] (-2.,.5) node {$1$};
\draw [fill=black] (-1.,0.) circle (2.5pt);
\draw[color=black] (-1.,.5) node {$2$};
\draw [fill=black] (0.,0.) circle (2.5pt);
\draw[color=black] (0.,.5) node {$3$};
\draw [fill=blue] (1.,0.) circle (2.5pt);
\draw[color=blue] (1.,0.5) node {$2$};
\draw [fill=blue] (2.,0.) circle (2.5pt);
\draw[color=blue] (2.,.5) node {$1$};
\end{scriptsize}
\end{tikzpicture}
&
\begin{tikzpicture}[line cap=round,line join=round,>=triangle 45,x=1.0cm,y=1.0cm]
\clip(-3.5,-3.) rectangle (3.,1.);
\draw [line width=1.5pt] (-2.,0.)-- (2.,0.);
\draw [line width=1.5pt] (0.,0.)-- (0.,-2.);
\begin{scriptsize}
\draw [fill=black] (0.,-2.) circle (2.5pt);
\draw[color=black] (0.5,-2.) node {$1$};
\draw [fill=blue] (0.,-1.) circle (2.5pt);
\draw[color=blue] (0.5,-1.) node {$2$};
\draw [fill=black] (-2.,0.) circle (2.5pt);
\draw[color=black] (-2.,.5) node {$1$};
\draw [fill=black] (-1.,0.) circle (2.5pt);
\draw[color=black] (-1.,.5) node {$2$};
\draw [fill=black] (0.,0.) circle (2.5pt);
\draw[color=black] (0.,.5) node {$3$};
\draw [fill=black] (1.,0.) circle (2.5pt);
\draw[color=black] (1.,0.5) node {$2$};
\draw [fill=blue] (2.,0.) circle (2.5pt);
\draw[color=blue] (2.,.5) node {$1$};
\end{scriptsize}
\end{tikzpicture}
\end{matrix}
\]

In any case one can deduce the rational quartic must have three nodes from how the curves $B$ and $C$ must intersect. 

In the first picture (from the left to the right ), we can show that the quartic must intersect $C$ at a flex  point with multiplicity $6$ and the corresponding inflection line (of $C$) must intersect the quartic at the flex with multiplicity three; and at a fourth point which is not in $C$. Thus, all the missing base points must lie in the quartic, but not in the line. Since the quartic is reduced these points must be ordinary double points. In particular, up to relabeling, $a_1=6, a_2=a_3=a_4=1$.

 Similarly, in the second picture the quartic must intersect $C$ at a flex  point with multiplicity $4$ and the corresponding inflection line (of $C$) must intersect the quartic at the flex with multiplicity three; and at a fourth point where the quartic is tangent to $C$ with multiplicity two. Again, we conclude the missing base points must be ordinary double points. But then, up to relabeling, we would have $a_1=5,a_2=2,a_3=a_4=a_5=1$, contradicting the fact that $\sum a_i=9$.

Thus, the only possible picture is the first one. Since Equation (\ref{numbercomp}) tells us $C$ must be smooth, the result follows from Lemma \ref{q2}.
\end{proof}

\begin{prop}
If $F$ is of type $IV^*$, then $B$ does not consist of three double lines.
\label{notivstar2}
\end{prop}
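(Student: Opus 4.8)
The plan is to short-circuit the dual-graph analysis of the previous propositions with a parity argument. Write $B = 2L_1 + 2L_2 + 2L_3$ with the three lines distinct, so that $B = 2D$ for the \emph{reduced} divisor $D = L_1 + L_2 + L_3$. The key point is that forming strict transforms under blow-ups respects this factorization: at every base point $P_j^{(\ell)}$ of $\mathcal{P}$ — in $\mathbb{P}^2$ or infinitely near — the strict transform of $B$ is twice the strict transform of $D$, and the latter is a reduced arrangement of (strict transforms of) lines. Hence $m_{P_j^{(\ell)}}(B)$ equals twice the number of those lines through $P_j^{(\ell)}$, and in particular it is even.

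Granting this, I would finish as follows. By Equation (\ref{formuladjcj}), for all $i,j$ we have $d_j^{(i)} = m_{P_j^{(1)}}(B) + \cdots + m_{P_j^{(i)}}(B) - i\cdot m$ with $m = 2$, so every $d_j^{(i)}$ is even. On the other hand, the strict transform of $B$ under $\pi$ is $\overline{F} = 2\overline{L_1} + 2\overline{L_2} + 2\overline{L_3}$, where $\overline{L_i}$ denotes the strict transform of $L_i$, so each $\overline{L_i}$ occurs in $F$ with multiplicity $2$. Using the decomposition $F = \overline{F} + \sum_{i,j} d_j^{(i)} E_j^{(i)}$ of Equation (\ref{eqF}), every irreducible component of $F$ — a strict transform $\overline{L_i}$ or an exceptional curve $E_j^{(i)}$ — therefore appears with even multiplicity. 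But a fiber of type $IV^*$ has dual graph $\tilde{E}_6$, three of whose seven components have multiplicity one; this is the desired contradiction.

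Alternatively, and in the spirit of Propositions \ref{not2-1,2-1,2-1} and \ref{notivstar1}, one could substitute $n_F = 7$, $n_B = 3$ into Equation (\ref{numbercomp}) and invoke Lemma \ref{nEC} to constrain $k$ and $n_{E\backslash C}$, observe that the three blue vertices $\overline{L_i}$ carry multiplicity $2$ and hence are forced to be the three multiplicity-$2$ vertices of $\tilde{E}_6$, read off from the dual graph that the unique black vertex adjacent to all three of them must be a first exceptional curve $E_1^{(1)}$ with $L_1, L_2, L_3$ concurrent at $P_1^{(1)}$ (distinct lines cannot share an infinitely near base point), and then note that $d_1^{(1)} = m_{P_1^{(1)}}(B) - 2 = 4$ while the corresponding vertex of $\tilde{E}_6$ has multiplicity $3$. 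I do not expect any genuine obstacle on the parity route: the only step needing a word of justification is the evenness of $m_{P_j^{(\ell)}}(B)$ at infinitely near base points, which is immediate from $B = 2D$, and everything after that is bookkeeping; this is precisely why I would prefer it to the dual-graph route, which otherwise has to rule out the infinitely-near configurations by hand.
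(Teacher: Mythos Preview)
Your proposal is correct, and your primary parity argument is genuinely different from the paper's proof. The paper proceeds exactly along your alternative route: it observes that the three blue multiplicity-$2$ vertices must occupy the three multiplicity-$2$ positions of $\tilde{E}_6$, forcing the three lines to be concurrent at a single base point $P_1^{(1)}$, and then computes $d_1^{(1)} = m_{P_1^{(1)}}(B) - 2 = 4$, which contradicts the central vertex having multiplicity $3$. Your parity argument is cleaner and more general: it applies verbatim to any $B$ of the form $2D$ with $D$ reduced, so it also dispatches Propositions~\ref{not2-2,2-1} and~\ref{not2-1,2-1,2-1} (double conic plus double line, and three double lines, for type $III^*$) in one stroke, without any dual-graph casework. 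The only trade-off is that the paper's approach identifies the precise geometric obstruction (concurrency together with the multiplicity mismatch at the first exceptional divisor), which is informative when one is trying to enumerate the admissible configurations; for a bare non-existence statement, however, your parity route is preferable.
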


\begin{proof}
If $B$ consists of three double lines and $F$ is of type $IV^*$, then the picture of the dual graph of $F$ is the following one:

\begin{figure}[H]
\centering
\begin{tikzpicture}[line cap=round,line join=round,>=triangle 45,x=1.0cm,y=1.0cm]
\clip(-3.5,-3.) rectangle (3.,1.);
\draw [line width=1.5pt] (-2.,0.)-- (2.,0.);
\draw [line width=1.5pt] (0.,0.)-- (0.,-2.);
\begin{scriptsize}
\draw [fill=black] (0.,-2.) circle (2.5pt);
\draw[color=black] (0.5,-2.) node {$1$};
\draw [fill=blue] (0.,-1.) circle (2.5pt);
\draw[color=blue] (0.5,-1.) node {$2$};
\draw [fill=black] (-2.,0.) circle (2.5pt);
\draw[color=black] (-2.,.5) node {$1$};
\draw [fill=blue] (-1.,0.) circle (2.5pt);
\draw[color=blue] (-1.,.5) node {$2$};
\draw [fill=black] (0.,0.) circle (2.5pt);
\draw[color=black] (0.,.5) node {$3$};
\draw [fill=blue] (1.,0.) circle (2.5pt);
\draw[color=blue] (1.,0.5) node {$2$};
\draw [fill=black] (2.,0.) circle (2.5pt);
\draw[color=black] (2.,.5) node {$1$};
\end{scriptsize}
\end{tikzpicture}
\end{figure}

Thus, up to relabeling, the three lines must be concurrent at the base point $P_1^{(1)}$. But then $d_1^{(1)}=m_{P_1^{(1)}}(B)-2=4$ and $E_1^{(1)}$ appears with multiplicity four in $F$, which is an absurd.
\end{proof}

\begin{prop}
If $F$ is of type $IV^*$, then $B$ does not consist of a double conic and a double line.
\label{notivstar3}
\end{prop}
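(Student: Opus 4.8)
The plan is to argue by contradiction, assuming $B = 2Q + 2L$ with $Q$ an (irreducible) conic and $L$ a line, and to extract a contradiction purely from counting the components of $F$ against the shape of its dual graph, in the spirit of Propositions \ref{notivstar1} and \ref{notivstar2}. First I would apply Equation (\ref{numbercomp}) with $n_F = 7$ (a fiber of type $IV^*$ has dual graph $\tilde E_6$, hence seven components) and $n_B = 2$, which gives $k + n_{E\backslash C} = 4$; by Lemma \ref{nEC} we also know $n_{E\backslash C}\in\{0,1,2\}$. The crucial observation is that $\overline F = 2\overline Q + 2\overline L$, so the strict transforms $\overline Q$ and $\overline L$ are two distinct components of $F$, each occurring with multiplicity exactly $2$.

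The key step is then to locate $\overline Q$ and $\overline L$ inside the dual graph $\tilde E_6$ of $F$. The only nodes of $\tilde E_6$ carrying multiplicity $2$ are the three ``mid-branch'' nodes, each adjacent to the central (multiplicity $3$) node and to a leaf (multiplicity $1$); by the $S_3$-symmetry of $\tilde E_6$ we may assume $\overline Q$ and $\overline L$ occupy two of them. Deleting these two blue nodes, the remaining five exceptional nodes split into exactly three connected pieces: the length-$3$ chain [leaf $-$ mid-branch $-$ center] coming from the untouched branch, together with the two now-isolated leaves that were attached to $\overline Q$ and $\overline L$. By Lemma \ref{nodes} and the discussion preceding it, these are precisely the disjoint chains of exceptional curves of $F$, so their number equals $k - n_{E\backslash C} = 3$. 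Combined with $k + n_{E\backslash C} = 4$ this forces $k = \tfrac{7}{2}$, which is absurd.

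I expect the main obstacle to be making the step ``the five exceptional nodes form exactly three chains'' fully airtight: one must exclude that a single exceptional chain $E_j$ contributes a disconnected piece of the dual graph, and that the multiplicity-$3$ central node could be a strict-transform component rather than an exceptional one. The latter is immediate since $\overline F = 2\overline Q + 2\overline L$ has no multiplicity-$3$ component; the former follows from the fact that the curves $E_j^{(i)}$ appearing in $F$ form a contiguous sub-chain $E_j^{(1)},\dots,E_j^{(r_j)}$, because the multiplicities $m_{P_j^{(i)}}(B)$ of the successive strict transforms are non-increasing along each chain, so that $d_j^{(i)}$ (cf.\ Equation (\ref{formuladjcj})) is concave, nonnegative, and vanishes at the end — hence has no interior gap. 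If one prefers to sidestep this bookkeeping, the alternative is the fully explicit route used for $III^*$ in Proposition \ref{not2-2,2-1}: the placement above pins down the dual graph of $F$ uniquely, forcing $L$ to be tangent to $Q$ at the base point lying under the central node, and then a direct check — using Equation (\ref{formulaintBC}) for the forced value of $I_P(B,C)$ together with the relations in Equation (\ref{formuladjcj}) between the $d_j^{(i)}$ and the multiplicities of $B$ at the base points — shows that blowing up the required infinitely near points cannot produce the chain of exceptional curves demanded by $\tilde E_6$.
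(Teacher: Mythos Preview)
Your primary parity argument has a genuine gap. The identification ``number of connected black pieces $= k - n_{E\backslash C}$'' is not what that quantity means. The sentence before Lemma~\ref{nodes} (``there are exactly $k-n_{E\backslash C}$ disjoint chains\ldots'') is counting \emph{all} the chains allotted to $F$, including empty ones coming from base points with $a_j=1$; it is not a count of the connected components of the black subgraph. Concretely: from the placement you correctly found, the five black nodes form three chains of lengths $3,1,1$, so three base points $P_{j}^{(1)}$ have $d_j^{(1)}>0$ with $(a_j-1)\in\{3,1,1\}$, giving $\sum_{\text{these }j} a_j = 8$. The remaining $a_j$'s must sum to $1$, which forces exactly one more base point with $a_j=1$ and none contributing to $mE$. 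Hence $k=4$ and $n_{E\backslash C}=0$, perfectly consistent with $k+n_{E\backslash C}=4$. No contradiction arises; the ``$k=7/2$'' is an artifact of the wrong identification. (You can cross-check this against Proposition~\ref{not2-1,1-4}: there $k=3$, $n_{E\backslash C}=0$, but the black subgraph has a \emph{single} chain, not three.)

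The paper's proof proceeds along the lines of your ``alternative route,'' but the key observation is cleaner than a tangency computation. Since $\overline{Q}$ and $\overline{L}$ occupy two non-adjacent mid-branch nodes of $\tilde E_6$, they are disjoint in $Y$; hence every point of $L\cap Q$ in $\mathbb P^2$ is a base point $P_{j}^{(1)}$. At such a point $m_{P_j^{(1)}}(B)=4$, so $d_j^{(1)}=2$ and $E_j^{(1)}$ must be the unique multiplicity-$2$ black node (the third mid-branch). But that node is adjacent to \emph{two} black nodes (the centre and a leaf), whereas $E_j^{(1)}$ can be adjacent to at most one exceptional curve, namely $E_j^{(2)}$, and to no exceptional from another chain. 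This is the contradiction. No explicit blow-up check with Equation~\eqref{formuladjcj} is needed.
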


\begin{proof}
If $B$ consists of a double conic and a double line and $F$ is of type $IV^*$, it follows from Equation (\ref{numbercomp}) and Lemma \ref{nEC} that either $k=4 \, (n_{E\backslash C}=0)$, or $k=3 \, (n_{E\backslash C}=1)$, or $k=2 \, (n_{E\backslash C}=2)$. Since the picture of the dual graph of $F$ can only be the following one:

\begin{figure}[H]
\centering
\begin{tikzpicture}[line cap=round,line join=round,>=triangle 45,x=1.0cm,y=1.0cm]
\clip(-3.5,-3.) rectangle (3.,1.);
\draw [line width=1.5pt] (-2.,0.)-- (2.,0.);
\draw [line width=1.5pt] (0.,0.)-- (0.,-2.);
\begin{scriptsize}
\draw [fill=black] (0.,-2.) circle (2.5pt);
\draw[color=black] (0.5,-2.) node {$1$};
\draw [fill=black] (0.,-1.) circle (2.5pt);
\draw[color=black] (0.5,-1.) node {$2$};
\draw [fill=black] (-2.,0.) circle (2.5pt);
\draw[color=black] (-2.,.5) node {$1$};
\draw [fill=blue] (-1.,0.) circle (2.5pt);
\draw[color=blue] (-1.,.5) node {$2$};
\draw [fill=black] (0.,0.) circle (2.5pt);
\draw[color=black] (0.,.5) node {$3$};
\draw [fill=blue] (1.,0.) circle (2.5pt);
\draw[color=blue] (1.,0.5) node {$2$};
\draw [fill=black] (2.,0.) circle (2.5pt);
\draw[color=black] (2.,.5) node {$1$};
\end{scriptsize}
\end{tikzpicture}
\end{figure}

we must have $k=4$ and, up to relabeling, $a_1=4,a_2=2,a_3=2$ and $a_4=1$. Moreover, since the two blue components do not intersect, any intersection point between the line and the conic must be a base point of $\mathcal{P}$. But then for at least some $j=1,\ldots,4$ we must have $d_j^{(1)}=m_{P_j^{(1)}}(B)-2=2$, which implies $E_j^{(1)}$ must appear with multiplicity $2$ in $F$. Thus, we cannot possibly obtain the desired type of fiber.

\end{proof}

\subsection{All possible examples}

At last we can completely characterize the curve $B$ whenever $F$ is of type $II^*,III^*$ or $IV^*$.

When $F$ is of type $II^*$ the characterization is as follows:

\begin{thm}
If $F$ is of type $II^*$, then the sextic $B$ consists of one of the following (non-reduced) curves:
\begin{enumerate}[(i)]
\item a triple conic (Example \ref{iistartripleconic})
\item a nodal cubic and an inflection line, with the line taken with multiplicity three (Example \ref{iistartriplelinecubic})
\item two triples lines (Example \ref{iistartwotriplelines})
\item a conic and a tangent line, with the line taken with multiplicity four (Example \ref{iistarline4})
\item a line with multiplicity five and another line (Example \ref{iistarline5})
\end{enumerate}
\label{allpossibleiistar}
\end{thm}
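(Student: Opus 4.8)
The plan is to combine three structural facts---that $B$ is non-reduced with $M_B\geq 3$ (Propositions~\ref{notreduced} and~\ref{mult3}), that $\deg B=6$, and that the dual graph of $F$ is the extended Dynkin diagram $\tilde{E}_8$---to cut the possibilities for $B$ down to a short finite list, and then to pin down the extra geometric conditions (tangency, inflection line, nodality) by inspecting the blow-up $\pi$. The key observation is that $\tilde{E}_8$ has a \emph{unique} vertex of mark $1$ (the affine vertex, which lies at the end of the long arm and whose only neighbour has mark $2$), whereas every reduced irreducible component $D$ of $B$ contributes the \emph{distinct} multiplicity-one component $\pi_{*}^{-1}D$ to $F$, since $F=\overline{F}+\sum d_j^{(i)}E_j^{(i)}$ with $\overline{F}$ the strict transform of $B$. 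Hence $B$ has at most one reduced irreducible component.

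First I would enumerate: writing $B=\sum a_iD_i$ with the $D_i$ distinct and irreducible, any $D_i$ with $a_i\geq 3$ satisfies $\deg D_i\leq 2$, and combining this with $\deg B=6$, non-reducedness, and the ``at most one reduced component'' bound leaves only the shapes $3Q$, $5L+L'$, $4L+2L'$, $4L+Q$, $3L+3L'$, $3L+2L'+L''$, $3L+C_0$, and $6L$, where the $L$'s are lines, $Q$ is an irreducible conic and $C_0$ is an irreducible cubic. Proposition~\ref{notiistar} discards $6L$, $4L+2L'$ and $3L+2L'+L''$; the remaining shapes $3Q$, $3L_1+3L_2$ and $5L+L'$ carry no further restriction and are exactly cases (i), (iii) and (v). So it remains to treat $4L+Q$ and $3L+C_0$.

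For these two shapes I would use that $F$ of type $II^*$ forces $n_{E\backslash C}=0$ (Remark~\ref{rmkCiistar}), so Equation~(\ref{numbercomp}) gives $k=n_B=2$: there are exactly two proper base points and two exceptional chains. For $B=4L+Q$: put $\overline{Q}$, which has multiplicity one in $F$, at the unique mark-$1$ vertex of $\tilde{E}_8$; its only neighbour has mark $2$ and is therefore exceptional, so $\overline{L}$ (mark $4$) is not adjacent to $\overline{Q}$ and $\overline{L}\cdot\overline{Q}=0$ on $Y$, i.e. $L$ and $Q$ meet only at base points. Using $d_j^{(1)}=m_{P_j^{(1)}}(B)-2=2+m_{P_j^{(1)}}(Q)$ together with $I_{P_j^{(1)}}(B,C)=2a_j$ (Equation~(\ref{formulaintBC})), the case where $L$ meets $Q$ transversally at two base points forces an exceptional curve adjacent to both $\overline{L}$ and $\overline{Q}$, which is impossible because it would have to carry mark $d_j^{(1)}=3$ while the unique neighbour of $\overline{Q}$ has mark $2$; hence $L$ is tangent to $Q$, which is case (iv). For $B=3L+C_0$: a base point off $L$ must be the unique singular point of $C_0$, and again $\overline{L}\cdot\overline{C_0}=0$ (they are not adjacent in $\tilde{E}_8$), so $L$ meets $C_0$ only at base points; running the blow-up of the two base points and their infinitely near points and matching the resulting weighted graph against $\tilde{E}_8$, one finds that $C_0$ smooth or cuspidal never produces $\tilde{E}_8$, while $C_0$ nodal forces $L$ to be an inflection line of $C_0$ through a base point and does produce $\tilde{E}_8$---this is case (ii). The converse statements---realizability of (i)--(v)---are the explicit pencils of Examples~\ref{iistartripleconic}, \ref{iistartriplelinecubic}, \ref{iistartwotriplelines}, \ref{iistarline4} and~\ref{iistarline5}.

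The main obstacle is the shape $B=3L+C_0$: unlike the others, excluding it is not a one-line mark-counting argument, and one must actually carry out the blow-up along $L\cap C_0$, the singular point of $C_0$, and their infinitely near points, tracking the chains of exceptional $(-2)$-curves, their multiplicities $d_j^{(i)}$, the placement of the trivalent mark-$6$ vertex, and the constraint $\sum_j a_j=9$---with a separate argument for each local type of $C_0$ (smooth, node, cusp) and for whether the base points lie on $L$. The parallel argument for $4L+Q$ (proving the tangency) is shorter, and the verification that $3Q$, $3L_1+3L_2$ and $5L+L'$ impose no additional condition is immediate.
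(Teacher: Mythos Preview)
Your approach is essentially the paper's: read off from the marks of $\tilde{E}_8$ that $B$ has at most one reduced component, combine this with $\deg B=6$ and $M_B\geq 3$ (Propositions~\ref{notreduced}, \ref{mult3}) to get a short list, discard three shapes via Proposition~\ref{notiistar}, and then argue the extra geometric conditions in cases (ii) and (iv) by inspecting the blow-up; the paper's proof is the same outline, only stated more tersely.

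There is one small slip in your treatment of $B=4L+Q$. You claim that if $L$ and $Q$ meet transversally at a base point $P_j^{(1)}$ then $E_j^{(1)}$ is adjacent to $\overline{Q}$ in the dual graph of $F$. But from $I_{P_j^{(1)}}(B,C)=2a_j$ one gets $I_{P_j^{(1)}}(Q,C)\equiv 0\pmod 2$, hence $\geq 2$, so $C$ is tangent to $Q$ at $P_j^{(1)}$; the next infinitely near base point $P_j^{(2)}$ then lies on the strict transform of $Q$, and after that blow-up $\overline{Q}$ no longer meets $E_j^{(1)}$. The transversal case is still impossible, but for a simpler reason: since $L$ and $Q$ have distinct tangent directions at each $P_j^{(1)}$ and $C$ is tangent to $Q$ there, $C$ meets $L$ transversally at both base points, giving $L\cdot C=1+1=2$, contradicting $L\cdot C=3$. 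With this fix your argument for (iv) goes through, and the rest of your proposal---including the acknowledged case-by-case work for $3L+C_0$---matches what the paper does.
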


\begin{proof}
A fiber of type $II^*$ has exactly two components with multiplicity three, two components with multiplicity two and only one component with multiplicity one. Thus, combining Propositions \ref{notreduced}, \ref{mult3} and \ref{notiistar}  we conclude the only possibilities for $B$ are the ones listed above.

If $B$ consists of a nodal cubic and a triple line, then the line must be an inflection line of the cubic because of the following reasoning. From Equation (\ref{numbercomp}) we know that $k=2$ so we need to blow-up exactly two points in $B$. But all the components of $F$ are supported in smooth curves so we also know that we have to blow-up the node. That is, the node is a base point. But then the second base point must lie in both the triple line and the cubic and that must be their only intersection point.

Similarly, in case $(iv)$ we can deduce how the components of $B$ must intersect from Equations (\ref{numbercomp}) and (\ref{formulaintBC}), the dual graph of $F$ and Lemma \ref{nodes}.
\end{proof}

When $F$ is of type $III^*$ we obtain the following description for the curve $B$:

\begin{thm}
If $F$ is of type $III^*$, then $B$ consists of one of the following curves:
\begin{enumerate}[(i)]
\item a double line, a cubic and another line (Example \ref{iiistar2-1,1-3,1-1})
\item a double conic and another conic (Example \ref{iiistar2-2,1-2})
\item a triple conic (Example \ref{exeiiistar})
\item two triple lines (Example \ref{iiistar3-1,3-1})
\item a triple line, a double line and another line (Examples \ref{iiistar3-1,2-1,1-1} and \ref{iiistar3-1,2-1,1-1conc} )
\item a triple line, a conic and a line (Example \ref{iiistar3-1,1-2,1-1})
\item a triple line and a cubic (Example \ref{iiistar3-1,1-3})
\item a conic and a line, with the line taken with multiplicity four (Example \ref{iiistar1-2,4-1}) 
\item a line with multiplicity four and two other lines  (Example \ref{iiistar4-1,1-1,1-1})
\end{enumerate}
\label{allpossibleiiistar}
\end{thm}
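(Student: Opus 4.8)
The plan is to mirror exactly the structure already used for Theorem~\ref{allpossibleiistar}: first use the numerical/combinatorial constraints to produce a finite list of candidate sextics $B$, then rule out the impossible ones using the non-example propositions proved above, and finally verify that each surviving candidate actually occurs by pointing to the explicit constructions in Section~\ref{constructions}. A fiber of type $III^*$ has dual graph $\tilde{E}_7$, hence exactly $8$ components with multiplicities $1,2,2,3,3,4,1,2$ arranged as in the Dynkin diagram $\tilde{E}_7$ (one node of valence three, carrying the multiplicity-$2$ label at the branch point, a central chain $1,2,3,4,3,2,1$, and the extra node of multiplicity $2$ attached at the middle). First I would enumerate all ways to distribute these $8$ nodes into ``blue'' nodes (coming from components of $B$) and ``black'' nodes (exceptional chains), subject to Lemma~\ref{nodes} (every black node meets at most two other black nodes) and to the fact that the blue nodes, together with their multiplicities, must assemble into a plane sextic, so $\sum_{\text{blue}} (\deg)\cdot(\text{mult}) = 6$ and $n_B = n_F - 9 + k + n_{E\backslash C}$ with $n_{E\backslash C}\in\{0,1\}$ by Lemma~\ref{nEC}. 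This already forces $M_B\le 4$ and, combined with Proposition~\ref{notreduced} (so $M_B\ge 2$), narrows the partitions of $6$ into component-degrees-with-multiplicities to a manageable finite list.

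**Pruning the candidate list.** For each numerically admissible partition I would read off $k$ from Equation~(\ref{numbercomp}), reconstruct the only possible coloring(s) of the $\tilde{E}_7$ graph (the blue nodes must sit at the ``ends'' or at the branch point, since a reduced/low-multiplicity component can only be attached to at most two black chains), and then either (a) exhibit the construction, or (b) invoke one of the non-example results. Concretely, the sextics of the form $2L+Q_4$ (double line plus quartic) are excluded by Proposition~\ref{not2-1,1-4}; $2Q+Q'$ survives as case (ii); $2Q+Q_1+Q_2$ is excluded by Proposition~\ref{not2-1,1-2,1-2}; $2Q+2L$ by Proposition~\ref{not2-2,2-1}; $2Q+L_1+L_2$ by Proposition~\ref{not2-2,1-1,1-1}; $2L_1+2L_2+Q$ by Proposition~\ref{not2-1,2-1,1-2}; $2L_1+2L_2+2L_3$ by Proposition~\ref{not2-1,2-1,2-1}; $2L_1+2L_2+L_3+L_4$ by Proposition~\ref{not2-1,2-1,1-1,1-1}; and $4L+2L'$ by Proposition~\ref{not4-1,2-1}. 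What remains after this sweep is precisely the nine curves in the statement (the double line with a cubic and a line, the double conic with a conic, the triple conic, the two triple lines, the triple line with a double line and a line, the triple line with a conic and a line, the triple line with a cubic, the conic with a line of multiplicity four, and the line of multiplicity four with two lines). For the converse direction — that each of these nine curves really does arise in some Halphen pencil of index two with $F$ of type $III^*$ — I would simply cite Examples~\ref{iiistar2-1,1-3,1-1} through \ref{iiistar4-1,1-1,1-1} from Section~\ref{constructions}.

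**Main obstacle.** I expect the bookkeeping of the coloring step to be the real work: for a given multiplicity-partition of $B$ there can be several a priori placements of the blue nodes inside $\tilde{E}_7$, and for each placement one must check whether the resulting intersection pattern forced on the components of $B$ and on $C$ (via Equation~(\ref{formulaintBC}), $I_{P}(B,C)=a\cdot m = 2a$, and via $d_j^{(1)}=m_{P_j^{(1)}}(B)-2$) is geometrically realizable. This is exactly the type of ``routine check'' the non-example propositions already packaged, so the difficulty in the write-up is not conceptual but organizational: making sure the case analysis is exhaustive and that every excluded branch is matched to a cited proposition. A secondary subtlety is handling the ambiguity $n_{E\backslash C}\in\{0,1\}$ from Lemma~\ref{nEC}, which changes $k$ and hence the number of exceptional chains; as in the proofs of Propositions~\ref{not2-1,1-4}--\ref{not4-1,2-1}, in each case both values of $n_{E\backslash C}$ lead to the same graph picture, so the argument goes through uniformly, but this must be stated rather than assumed. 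The proof itself will therefore be short: cite Propositions~\ref{notreduced}, \ref{mult3}, \ref{not2-1,1-4}, \ref{not2-1,1-2,1-2}, \ref{not2-2,2-1}, \ref{not2-2,1-1,1-1}, \ref{not2-1,2-1,1-2}, \ref{not2-1,2-1,2-1}, \ref{not2-1,2-1,1-1,1-1} and \ref{not4-1,2-1} to eliminate everything outside the list, and cite the constructions of Section~\ref{constructions} for the converse.
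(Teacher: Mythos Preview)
Your proposal is correct and follows essentially the same route as the paper: the paper's proof simply says ``It follows from Propositions~\ref{notreduced} and \ref{not2-1,1-4} through \ref{not4-1,2-1},'' which is exactly your plan of enumerating the possible non-reduced sextics and eliminating all but the nine listed types via the non-example propositions. Two minor points: Proposition~\ref{mult3} concerns only $II^*$ fibers and is not needed here, and in your description of $\tilde{E}_7$ the valence-three node carries multiplicity $4$, not $2$ (the attached leaf carries the $2$); neither affects the argument.
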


\begin{proof}
It follows from Propositions \ref{notreduced} and \ref{not2-1,1-4} through \ref{not4-1,2-1}.

\end{proof}

Finally, when $F$ is of type $IV^*$ the description for the curve $B$ is as follows:

\begin{thm}
If $F$ is of type $IV^*$, then $B$ consists of one of the following curves:
\begin{enumerate}[(i)]
\item a double conic and a conic (Example \ref{ivstar2-2,1-2})
\item a double line, a conic and two lines (Example \ref{ivstar2-1,1-2,1-1,1-1})
\item a double line, a cubic and a line (Example \ref{ivstar2-1,1-3,1-1})
\item a double line and two conics (Example \ref{ivstar2-1,1-2,1-2})
\item two double lines and two lines (Example \ref{ivstar2-1,2-1,1-1,1-1})
\item two double lines and a conic (Example \ref{ivstar2-1,2-1,1-2})
\item a double conic and two lines (Example \ref{ivstar2-2,1-1,1-1})
\item a triple conic (Example \ref{exeivstar})
\item a triple line, a conic and a line (Example \ref{ivstar3-1,1-2,1-1})
\item a triple line, a double line and another line (Example \ref{ivstar3-1,2-1,1-1}) 
\item a triple line and three lines (Example \ref{ivstar3-1,1-1,1-1,1-1})
\item a triple line and a cubic (Example \ref{ivstar3-1,1-3})
\end{enumerate}
\label{allpossibleivstar}
\end{thm}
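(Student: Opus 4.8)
The plan is to mimic the proofs of Theorems \ref{allpossibleiistar} and \ref{allpossibleiiistar}: use the combinatorics of the Dynkin diagram $\tilde{E}_6$ to cut the list of candidate sextics down to a short explicit list, eliminate the few numerically admissible configurations that cannot actually occur by appealing to the non-example Propositions of this section, and invoke the constructions of Section \ref{constructions} for those that do occur.

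First recall that a fiber of type $IV^*$ has $n_F=7$ components --- one of multiplicity $3$, three of multiplicity $2$, and three of multiplicity $1$ --- so $M_F=3$. Writing $F=\overline{F}+\sum_{i,j}d_j^{(i)}E_j^{(i)}$, the strict transform of each component of $B$ occurs in $F$ with exactly the multiplicity it has in $B$; hence each component of $B$ of multiplicity $\mu$ accounts for one of the (finitely many) components of $F$ of that multiplicity. Combining this with Proposition \ref{notreduced} (so $B$ is not reduced) gives the numerical constraints: $M_B\leq 3$, and $B$ has at most one component of multiplicity $3$, at most three of multiplicity $2$, and at most three reduced components; moreover $B\neq 2C$, since otherwise $F$ would be the multiple fiber (which is of type $I_n$). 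Running through the partitions of $6=\deg B$ into $(\text{degree})\times(\text{multiplicity})$ subject to these constraints, one finds that the underlying curve $B$ must be one of the twelve curves (i)--(xii), or else one of: a double line together with a rational quartic; three double lines; a double conic together with a double line. (For instance, two triple lines is discarded at this stage because it would force two components of multiplicity $3$ in $F$; a double line with four lines because it would force four components of multiplicity $1$; a non-rational quartic because every component of a Kodaira fiber is rational; and a double cubic is the excluded case $B=2C$.)

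The three surviving exotic candidates are precisely those settled by the non-example results of this section: a double line and a rational quartic is excluded by Proposition \ref{notivstar1}, three double lines by Proposition \ref{notivstar2}, and a double conic and a double line by Proposition \ref{notivstar3}. Hence $B$ must be one of (i)--(xii). For the (implicit) converse, that each of (i)--(xii) is actually realized, one invokes the explicit Halphen pencils of index two constructed in Section \ref{constructions} and cited alongside items (i)--(xii) in the statement, which completes the argument.

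The genuinely substantial work sits in the supporting Propositions \ref{notivstar1}--\ref{notivstar3} (whose proofs combine the dual-graph analysis with the intersection-theoretic Lemmas \ref{q1}--\ref{q2}) and in the constructions of Section \ref{constructions}; granting those, Theorem \ref{allpossibleivstar} reduces to bookkeeping. The one step that needs genuine care is the enumeration: one must make sure that every non-reduced sextic of degree $6$ has been accounted for, and that the component count of $\tilde{E}_6$ is applied correctly --- in particular to rule out, with no extra geometric input, the configurations with four reduced components or with two components of multiplicity $3$. I anticipate no real obstacle beyond that careful case check.
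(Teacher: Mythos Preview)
Your proposal is correct and follows essentially the same route as the paper: the paper's proof simply reads ``Combining Propositions \ref{notreduced}, \ref{notivstar1}, \ref{notivstar2} and \ref{notivstar3} we conclude the only possibilities for $B$ are the ones listed above,'' leaving the multiplicity-count enumeration implicit, whereas you spell it out. One small imprecision: your exclusion of an irreducible double cubic as ``the excluded case $B=2C$'' is not quite a proof --- the clean argument is that if $B=2D$ then the pencil $\lambda d^2+\mu c^2$ has every member reducible over $\mathbb{C}$, so the generic fiber cannot be smooth; but the paper does not spell this out either.
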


\begin{proof}
Combining Propositions \ref{notreduced}, \ref{notivstar1},\ref{notivstar2} and \ref{notivstar3}we conclude the only possibilities for $B$ are the ones listed above.

\end{proof}

 In Sections \ref{constructions} and \ref{gd} we show the converse statements of Theorems \ref{allpossibleiistar}, \ref{allpossibleiiistar} and \ref{allpossibleivstar}  also hold. More precisely, we explicitly construct Halphen pencils of index two, $\lambda B + \mu(2C)=0$, where $B$ is one of the possible curves we have listed and the corresponding (non-multiple) fiber is of type $II^*,III^*$ or $IV^*$. 

\section{Constructions of Halphen Pencils of index two}
\label{constructions}

Following the algorithm outlined in the previous section, for each of the types of singular fibers that occur (see Proposition \ref{mainexe}) we can construct  explicit examples of a rational elliptic surface $f:Y\to \mathbb{P}^1$ of index $m=2$ having that type of singular fiber. In view of Proposition \ref{Halphen}, these are obtained by explicitly constructing the corresponding Halphen pencils $\mathcal{P}$.

We use the same notations as in Section \ref{hp}. In particular $\mathcal{P}$ is the pencil\footnote{with the exception of Example \ref{exelaface}} $\lambda B + \mu (2C)=0$, where $C$ is the unique cubic through the nine base points and $B$ corresponds to a (non-multiple) singular fiber $F$ of $f$ having the desired Kodaira type. We further assume $C$ is smooth at a base point and  use an additive notation when referring to both the components of a curve in the pencil $\mathcal{P}$ and the components of the corresponding fiber.

All the information we need to completely characterize $\mathcal{P}$, hence $Y$, can be encoded into some numerical data that we describe next.

From Proposition \ref{Halphen} we know that we obtain the rational elliptic surface $Y$ from $\mathcal{P}$ by blowing-up $\mathbb{P}^2$ at its nine base points 
$
P_1^{(1)}, \ldots, P_1^{(a_1)}, \ldots,P_k^{(1)}, \ldots, P_k^{(a_k)}
$. We call the $k-$uple $(a_1,\ldots,a_k)$ the \textbf{characteristic sequence} of $Y$ (and/or $\mathcal{P}$), where the choice of name is borrowed from \cite{laface}.

Now, the curve $B$ has $n_B$ components, where $n_B$ satisfies Equation (\ref{numbercomp}), and  since each of these components appears with a certain multiplicity, we can associate to it a \textbf{multiplicity sequence}  $
(m_1-d_1,\ldots,m_{n_B}-d_{n_B})$.  And we simply mean that we can write 
\[
B=m_1B_1+\ldots+ m_{n_B}B_{n_B}
\] where each $B_i$ is a reduced and irreducible curve of degree $d_i$ and $m_i$ is its multiplicity in $B$.

For instance, if $B$ consists of a double line and two conics, then $B$ has a multiplicity sequence $(2-1,1-2,1-2)$. It also has $(1-2,2-1,1-2)$ as a multiplicity sequence as well as $(1-2,1-2,2-1)$. Any choice is fine.

Once we choose a multiplicity sequence it is relevant to consider the intersection multiplicity of the components of $B$ and $C$ at a base point $P_j^{(1)}$ and record these numbers by arranging them in a matrix. 

Given a Halphen pencil $\mathcal{P}$ generated by $B=m_1B_1+\ldots+ m_{n_B}B_{n_B}$ and $2C$ we define its \textbf{intersection matrix} as the $n_B \times k$ matrix $A=(a_{ij})$ whose entries $a_{ij}$ are given precisely by the intersection multiplicities of $m_iB_i$ and $C$ at the point  $P_j^{(1)}$.

These three numerical data (the multiplicity and characteristic sequences and the intersection matrix) allows us to present all of our constructions\footnote{with the exception of Example \ref{exelaface}} in Tables   \ref{examplesi7i8} through \ref{examplesiistar} below.  A much more detailed geometric description of the constructions is given in the next section.

\begin{table}[H]
\centering
\hspace*{-1cm}
\begin{tabular}{|c|c|c|c|c|c|}
\hline
\bf{\makecell{Type \\ of \\ Fiber}}  & \boldmath{$lct(\mathbb{P}^2,B)$} & \bf{\makecell{Multiplicity \\ Sequence}} & \bf{\makecell{Characteristic \\ Sequence}} & \bf{\makecell{Intersection \\ Matrix}} & \bf{Example} 
\\
\hline \hline
&&&&&\\
\makecell{$I_7$} & $\frac{2}{3}$ & $\underbrace{(1-1,\ldots,1-1)}_{6}$ &  $(2,\underbrace{1,\ldots,1}_{7})$ & $\begin{pmatrix} 1&0&0&0&1&1&0&0\\2&0&0&1&0&0&0&0\\1&0&1&0&0&0&0&1\\0&0&0&0&1&0&1&1\\0&1&1&1&0&0&0&0\\0&1&0&0&0&1&1&0 \end{pmatrix}$ & \ref{examplei7}
\\
&&&&&\\\cline{1-6}
&&&&&\\
\makecell{$I_8$} & $\frac{2}{3}$  & $\underbrace{(1-1,\ldots,1-1)}_{6}$ &  $(2,2,\underbrace{1,\ldots,1}_{5})$ & $\begin{pmatrix} 1&1&0&0&1&0&0\\1&0&0&1&0&1&0\\2&0&1&0&0&0&0\\ 0&0&0&0&1&1&1\\ 0&2&0&0&0&0&1\\ 0&1&1&1&0&0&0 \end{pmatrix}$ & \ref{examplei8}
\\
&&&&&\\\cline{1-6}
&&&&&\\
\makecell{$I_9$} & $\frac{2}{3}$  & $\underbrace{(1-1,\ldots,1-1)}_{6}$ &  $(2,2,2,1,1,1)$ & $\begin{pmatrix} 2&0&1&0&0&0\\1&0&0&1&1&0\\1&1&0&0&0&1\\ 0&1&1&0&1&0\\ 0&0&2&0&0&1\\ 0&2&0&1&0&0 \end{pmatrix}$ & \ref{examplei9}
\\
&&&&&\\\cline{1-6}
\hline
\end{tabular}
\caption{Examples yielding a fiber of type $\protect I_7, I_8$ or $\protect I_9$}
\label{examplesi7i8}
\end{table}

\begin{table}[H]
\centering
\hspace*{-1cm}
\begin{tabular}{|c|c|c|c|c|}
\hline
\bf{\makecell{Type \\ of \\ Fiber}}  & \bf{\makecell{Multiplicity \\ Sequence}} & \bf{\makecell{Characteristic \\ Sequence}} & \bf{\makecell{Intersection \\ Matrix}} & \bf{Example} 
\\
\hline \hline
&&&&\\
\makecell{\\$I_1$} &  $(1-6)$ &  $\underbrace{(1,1,\ldots,1)}_{9}$ & $\begin{pmatrix} 2 & 2 & \cdots & 2 \end{pmatrix}$ & \ref{examplei1}
\\
&&&&\\\cline{1-5}
&&&&\\
\multirow{3}{*}{\makecell{\vspace{1.7cm}\\$I_2$}} & $(1-3,1-3)$ & $\underbrace{(1,1,\ldots,1)}_{9}$ & $\begin{pmatrix} 1 & 1 & \cdots & 1\\ 1 & 1 & \cdots & 1  \end{pmatrix}$  & \ref{i21-3,1-3}
\\
&&&&\\\cline{2-5}
&&&&\\
 &  $(1-4,1-2)$ &  $\underbrace{(1,1,\ldots,1)}_{9}$ & $\begin{pmatrix} 1 & 1 & \cdots & 1\\ 1 & 1 & \cdots & 1 \end{pmatrix}$ & \ref{i21-4,1-2}
\\
&&&&\\\cline{2-5}
&&&&\\
 &  $(1-5,1-1)$ &  $\underbrace{(1,1,\ldots,1)}_{9}$ & $\begin{pmatrix} 1 & 1 & \cdots & 1 \\1 & 1 & \cdots & 1 \end{pmatrix}$ & \ref{i21-5,1-1}
\\
&&&&\\\cline{1-5}
&&&&\\
\multirow{3}{*}{\makecell{\vspace{1.7cm}\\$I_3$}} &  $(1-2,1-2,1-2)$ & $\underbrace{(1,1,\ldots,1)}_{9}$ & $\begin{pmatrix} 1&1&1&1&1&1&0&0&0\\1&1&1&0&0&0&1&1&1\\0&0&0&1&1&1&1&1&1  \end{pmatrix}$  & \ref{i31-2,1-2,1-2}
\\
&&&&\\\cline{2-5}
&&&&\\
 &  $(1-4,1-1,1-1)$ &  $\underbrace{(1,1,\ldots,1)}_{9}$ & $\begin{pmatrix} 2&2&2&1&1&1&1&1&1\\0&0&0&1&1&1&0&0&0\\0&0&0&0&0&0&1&1&1  \end{pmatrix}$ & \ref{i31-4,1-1,1-1}
\\
&&&&\\\cline{2-5}
&&&&\\
 &  $(1-3,1-2,1-1)$ &  $\underbrace{(1,1,\ldots,1)}_{9}$ & $\begin{pmatrix} 2&1&1&1&1&1&1&1&0\\0&1&1&1&1&1&0&0&1\\0&0&0&0&0&0&1&1&1  \end{pmatrix}$ & \ref{i31-3,1-2,1-1}
\\
&&&&\\\cline{1-5}
&&&&\\
\multirow{3}{*}{\makecell{\vspace{1cm}\\$I_4$}} &  $(1-3,1-1,1-1,1-1)$ & $\underbrace{(1,1,\ldots,1)}_{9}$ & $\begin{pmatrix} 2&1&1&1&1&1&1&1&0\\0&1&1&0&0&0&0&0&1\\0&0&0&1&1&0&0&0&1\\ 0&0&0&0&0&1&1&1&0  \end{pmatrix}$  & \ref{i41-3,1-1,1-1,1-1}
\\
&&&&\\\cline{2-5}
&&&&\\
 &  $(1-2,1-2,1-1,1-1)$ &  $\underbrace{(1,1,\ldots,1)}_{9}$ & $\begin{pmatrix} 1&1&1&1&1&1&0&0&0\\1&1&1&1&0&0&1&1&0\\0&0&0&0&0&1&1&0&1\\0&0&0&0&1&0&0&1&1  \end{pmatrix}$ & \ref{i41-2,1-2,1-1,1-1}
\\
&&&&\\\cline{2-5}
\hline
\end{tabular}
\caption{Examples yielding a fiber of type $\protect I_n, n\leq 4$}
\label{examplesin}
\end{table}

\begin{table}[H]
\centering
\hspace*{-1cm}
\begin{tabular}{|c|c|c|c|c|c|}
\hline
\bf{\makecell{Type \\ of \\ Fiber}}  & \boldmath{$lct(\mathbb{P}^2,B)$} & \bf{\makecell{Multiplicity \\ Sequence}} & \bf{\makecell{Characteristic \\ Sequence}} & \bf{\makecell{Intersection \\ Matrix}} & \bf{Example} 
\\
\hline \hline
&&&&&\\
\makecell{$II$} & $\frac{5}{6}$ & $(1-6)$ &  $(\underbrace{1,\ldots,1}_{9})$ & $\begin{pmatrix} 2& 2 & \cdots & 2 \end{pmatrix}$ & \ref{exampleii}
\\
&&&&&\\\cline{1-6}
&&&&&\\
\multirow{3}{*}{\makecell{\vspace{1.5cm}\\$III$}} & $\frac{3}{4}$  & $(1-3,1-3)$ &  $(\underbrace{1,\ldots,1}_{9})$ & $\begin{pmatrix} 2&1&1&1&1&1&1&1&0\\0&1&1&1&1&1&1&1&2 \end{pmatrix}$ & \ref{iii1-3,1-3}
\\
&&&&&\\\cline{2-6}
&&&&&\\
 & $\frac{3}{4}$  & $(1-4,1-2)$ &  $(\underbrace{1,\ldots,1}_{9})$ & $\begin{pmatrix} 2&2&2&1&1&1&1&1&1\\0&0&0&1&1&1&1&1&1 \end{pmatrix}$ & \ref{iii1-4,1-2}
\\
&&&&&\\\cline{2-6}
&&&&&\\
 & $\frac{3}{4}$  & $(1-5,1-1)$ &  $(\underbrace{1,\ldots,1}_{9})$ & $\begin{pmatrix} 2&2&2&2&2&2&1&1&1\\0&0&0&0&0&0&1&1&1 \end{pmatrix}$ & \ref{iii1-5,1-1}
\\
&&&&&\\\cline{1-6}
&&&&&\\
\multirow{3}{*}{\makecell{\vspace{1.5cm}\\$IV$}} & $\frac{2}{3}$  & $(1-2,1-2,1-2)$ &  $(\underbrace{1,\ldots,1}_{9})$ & $\begin{pmatrix} 1&1&1&1&1&1&0&0&0\\1&1&1&0&0&0&1&1&1\\0&0&0&1&1&1&1&1&1 \end{pmatrix}$ & \ref{iv1-2,1-2,1-2}
\\
&&&&&\\\cline{2-6}
&&&&&\\
 & $\frac{2}{3}$  & $(1-4,1-1,1-1)$ &  $(\underbrace{1,\ldots,1}_{9})$ & $\begin{pmatrix} 2&2&2&1&1&1&1&1&1\\0&0&0&1&1&1&0&0&0\\0&0&0&0&0&0&1&1&1 \end{pmatrix}$ & \ref{iv1-4,1-1,1-1}
\\
&&&&&\\\cline{2-6}
&&&&&\\
 & $\frac{2}{3}$  & $(1-3,1-2,1-1)$ &  $(\underbrace{1,\ldots,1}_{9})$ & $\begin{pmatrix} 2&1&1&1&1&1&1&1&0\\0&1&1&1&1&1&0&0&1\\0&0&0&0&0&0&1&1&1 \end{pmatrix}$ & \ref{iv1-3,1-2,1-1}
\\
&&&&&\\\cline{2-6}
\hline
\end{tabular}
\caption{Examples yielding a fiber of type $\protect II,III$ or $\protect IV$}
\label{examplesii,iii,iv}
\end{table}

\begin{rmk}
All the examples listed in Tables \ref{examplesin} and \ref{examplesinstar} are such that $lct(\mathbb{P}^2,B)=lct(Y,F)$.
\end{rmk}

\begin{table}[H]
\centering
\hspace*{-1cm}
\begin{tabular}{|c|c|c|c|c|}
\hline
\bf{\makecell{Type \\ of \\ Fiber}}  & \bf{\makecell{Multiplicity \\ Sequence}} & \bf{\makecell{Characteristic \\ Sequence}} & \bf{\makecell{Intersection \\ Matrix}} & \bf{Example} 
\\
\hline \hline
&&&&\\
\multirow{4}{*}{\makecell{\vspace{2.5cm}\\$I_0^*$}} &  $(1-1,1-1,1-1,1-1,2-1)$ &  $\underbrace{(1,1,\ldots,1)}_{9}$ & $\begin{pmatrix} 1&1&1&0&0&0&0&0&0\\1&0&0&1&1&0&0&0&0\\0&1&0&1&0&1&0&0&0\\0&0&1&0&1&1&0&0&0\\0&0&0&0&0&0&2&2&2 \end{pmatrix}$ & \ref{iostarfivelines}
\\
&&&&\\\cline{2-5}
&&&&\\
&  $(2-2,1-2)$ &  $(2,2,2,1,1,1)$ & $\begin{pmatrix}2&2&2&2&2&2 \\2&2&2&0&0&0 \end{pmatrix}$ & \ref{iostar2-2,1-2}
\\
&&&&\\\cline{2-5}
&&&&\\
&  $(2-1,1-2,1-2)$ &  $(2,2,\underbrace{1,\ldots,1}_{5})$ & $\begin{pmatrix}2&2&0&0&0&0&2\\2&0&1&1&1&1&0\\0&2&1&1&1&1&0\end{pmatrix}$ & \ref{iostar2-1,1-2,1-2}
\\
&&&&\\\cline{2-5}
&&&&\\
&  $(2-1,1-2,1-1,1-1)$ &  $(2,\underbrace{1,\ldots,1}_{7})$ & $\begin{pmatrix}2&2&2&0&0&0&0&0\\2&0&0&0&1&1&1&1\\0&0&0&1&1&1&0&0&\\0&0&0&1&0&0&1&1\end{pmatrix}$ & \ref{iostar2-1,1-2,1-1,1-1}
\\
&&&&\\\cline{1-5}
&&&&\\
\makecell{$I_1^*$} & $(2-2,1-2)$ & $(2,2,3,1,1)$ & $\begin{pmatrix} 2&2&4&2&2\\2&2&2&0&0   \end{pmatrix}$  & \ref{i1star2-2,1-2}
\\
&&&&\\\cline{1-5}
&&&&\\
\makecell{$I_2^*$} & $(2-2,1-1,1-1)$ & $(1,1,3,3,1)$ & $\begin{pmatrix} 2&2&4&4&0\\0&0&2&0&1\\0&0&0&2&1   \end{pmatrix}$  & \ref{i2star2-2,1-1,1-1}
\\
&&&&\\\cline{1-5}
&&&&\\
\makecell{$I_3^*$} & $(2-1,2-1,1-1,1-1)$ & $(3,2,2,1,1)$ & $\begin{pmatrix} 4&2&0&0&0\\2&0&2&0&2\\0&2&0&1&0\\0&0&2&1&0   \end{pmatrix}$  & \ref{i3star2-1,2-1,1-1,1-1}
\\
&&&&\\\cline{1-5}
&&&&\\
\makecell{$I_4^*$} & $(2-1,2-1,1-1,1-1)$ & $(3,3,2,1)$ & $\begin{pmatrix} 2&4&0&0\\0&2&2&2\\1&0&2&0\\3&0&0&0   \end{pmatrix}$  & \ref{i4star2-1,2-1,1-1,1-1}
\\
&&&&\\\cline{1-5}
\hline
\end{tabular}
\caption{Examples yielding a fiber of type $\protect I_n^*, n\leq 4$}
\label{examplesinstar}
\end{table}

\begin{table}[H]
\centering
\hspace*{-1cm}
\begin{tabular}{|c|c|c|c|c|c|}
\hline
\bf{\makecell{Type \\ of \\ Fiber}} & \boldmath{$lct(\mathbb{P}^2,B)$} & \bf{\makecell{Multiplicity \\ Sequence}} & \bf{\makecell{Characteristic \\ Sequence}} & \bf{\makecell{Intersection \\ Matrix}} & \bf{Example} 
\\
\hline \hline
&&&&&\\
\multirow{10}{*}{\makecell{\vspace{7cm}\\$IV^*$}} & $\frac{1}{2}$ & $(2-1,2-1,1-1,1-1)$ &  $(3,2,1,1,1,1)$ & $\begin{pmatrix} 2&2&0&0&2&0\\ 2&0&2&0&0&2\\ 2&0&0&1&0&0\\0&2&0&1&0&0 \end{pmatrix}$ & \ref{ivstar2-1,2-1,1-1,1-1} 
\\
&&&&&\\\cline{2-6}
&&&&&\\
& $\frac{2}{5}$ &$(2-1,2-1,1-2)$ &  $(2,2,3,1,1)$ & $\begin{pmatrix} 2&0&2&2&0 \\0&2&2&0&2\\2&2&2&0&0  \end{pmatrix}$ & \ref{ivstar2-1,2-1,1-2} 
\\
&&&&&\\\cline{2-6}
&&&&&\\
& $\frac{3}{7}$ &$(2-2,1-1,1-1)$ &  $(4,2,1,1,1)$ & $\begin{pmatrix} 4&2&2&2&2\\3&0&0&0&0\\1&2&0&0&0  \end{pmatrix}$ & \ref{ivstar2-2,1-1,1-1} 
\\
&&&&&\\\cline{2-6}
&&&&&\\
& $\frac{1}{3}$ &$(3-2)$  & $(3,3,3)$ & $\begin{pmatrix} 6 & 6 & 6  \end{pmatrix}$ & \ref{exeivstar} 
\\
&&&&&\\\cline{2-6}
&&&&&\\
& $\frac{1}{3}$ &$(3-1,1-2,1-1)$  & $(2,2,3,1,1)$ & $\begin{pmatrix} 3&3&3&0&0\\1&0&3&1&1\\0&1&0&1&1 \end{pmatrix}$  & \ref{ivstar3-1,1-2,1-1}
\\
&&&&&\\\cline{2-6}
&&&&&\\
& $\frac{1}{3}$ &$(3-1,2-1,1-1)$  & $(3,3,1,1,1)$ & $\begin{pmatrix} 3&6&0&0&0\\0&0&2&2&2\\3&0&0&0&0 \end{pmatrix}$  & \ref{ivstar3-1,2-1,1-1} 
\\
&&&&&\\\cline{2-6}
&&&&&\\
& $\frac{1}{3}$ &$(3-1,1-1,1-1,1-1)$  & $(2,2,2,1,1,1)$ & $\begin{pmatrix} 3&3&3&0&0&0\\1&0&0&1&1&0\\0&1&0&1&0&1\\0&0&1&0&1&1 \end{pmatrix}$ & \ref{ivstar3-1,1-1,1-1,1-1} 
\\
&&&&&\\\cline{2-6}
&&&&&\\
& $\frac{1}{3}$ &$(3-1,1-3)$  & $(3,3,2,1)$ & $\begin{pmatrix} 3&3&3&0\\3&3&1&2  \end{pmatrix}$  & \ref{ivstar3-1,1-3}
\\
&&&&&\\\cline{2-6}
\hline
\end{tabular}
\caption{Examples yielding a fiber of type $\protect IV^*$}
\label{examplesivstar}
\end{table}

\begin{table}[H]
\centering
\begin{tabular}{|c|c|c|c|c|c|}
\hline
\bf{\makecell{Type \\ of \\ Fiber}} & \boldmath{$lct(\mathbb{P}^2,B)$} & \bf{\makecell{Multiplicity \\ Sequence}} & \bf{\makecell{Characteristic \\ Sequence}} & \bf{\makecell{Intersection \\ Matrix}} & \bf{Example} 
\\
\hline \hline
&&&&&\\
\multirow{10}{*}{\makecell{\vspace{9cm}\\$III^*$}} & $\frac{2}{5}$ & $(2-1,1-3,1-1)$ &  $(1,6,1,1)$ & $\begin{pmatrix} 0 & 6 & 0 & 0\\ 2 & 5 & 1 & 1\\ 0 & 1 & 1 & 1 \end{pmatrix}$ & \ref{iiistar2-1,1-3,1-1} 
\\
&&&&&\\\cline{2-6}
&&&&&\\
& $\frac{5}{12}$ &$(2-2,1-2)$ &  $(7,1,1)$ & $\begin{pmatrix} 8 & 2 & 2 \\ 6 & 0 & 0  \end{pmatrix}$ & \ref{iiistar2-2,1-2} 
\\
&&&&&\\\cline{2-6}
&&&&&\\
& $\frac{1}{3}$ &$(3-2)$ &  $(3,6)$ & $\begin{pmatrix} 6 & 12  \end{pmatrix}$ & \ref{exeiiistar} 
\\
&&&&&\\\cline{2-6}
&&&&&\\
& $\frac{1}{3}$ &$(3-1,3-1)$  & $(3,3,3)$ & $\begin{pmatrix} 6 & 0 & 3 \\ 0 & 6 & 3 \end{pmatrix}$ & \ref{iiistar3-1,3-1} 
\\
&&&&&\\\cline{2-6}
&&&&&\\
& $\frac{1}{3}$ &$(3-1,2-1,1-1)$  & $(5,2,1,1)$ & $\begin{pmatrix} 9 & 0 & 0 & 0\\ 0 & 2 & 2 & 2\\ 1 & 2 & 0 & 0 \end{pmatrix}$ & \ref{iiistar3-1,2-1,1-1} 
\\
&&&&&\\\cline{2-6}
&&&&&\\
& $\frac{1}{3}$ &$(3-1,2-1,1-1)$  & $(4,3,1,1)$ & $\begin{pmatrix} 3 & 6 & 0 & 0\\ 2 & 0 & 2 & 2\\ 3 & 0 & 0 & 0 \end{pmatrix}$  & \ref{iiistar3-1,2-1,1-1conc}
\\
&&&&&\\\cline{2-6}
&&&&&\\
& $\frac{1}{3}$ &$(3-1,1-2,1-1)$  & $(5,1,1,2)$ & $\begin{pmatrix} 6 & 0 & 0 & 3\\ 4 & 1 & 1 & 0\\ 0 & 1 & 1 & 1 \end{pmatrix}$  & \ref{iiistar3-1,1-2,1-1} 
\\
&&&&&\\\cline{2-6}
&&&&&\\
& $\frac{1}{3}$ &$(3-1,1-3)$  & $(1,5,3)$ & $\begin{pmatrix} 0 & 6 & 3 \\ 2 & 4 & 3  \end{pmatrix}$  & \ref{iiistar3-1,1-3}
\\
&&&&&\\\cline{2-6}
&&&&&\\
& $\frac{1}{4}$ &$(4-1,1-2)$  & $(4,3,2)$ & $\begin{pmatrix} 4 & 4 & 4 \\ 4 & 2 & 0  \end{pmatrix}$   & \ref{iiistar1-2,4-1}
\\
&&&&&\\\cline{2-6}
&&&&&\\
& $\frac{1}{4}$ &$(4-1,1-1,1-1)$ & $(3,3,2,1)$ & $\begin{pmatrix} 4 & 4 & 4 & 0 \\ 2 & 0 & 0 & 1 \\ 0 & 2 & 0 & 1 \end{pmatrix}$  & \ref{iiistar4-1,1-1,1-1} 
\\
&&&&&\\\cline{2-6}
\hline
\end{tabular}
\caption{All possible example yielding a fiber of type $\protect III^*$}
\label{examplesiiistar}
\end{table}

\begin{table}[H]
\centering
\hspace*{-1cm}
\begin{tabular}{|c|c|c|c|c|c|}
\hline
\bf{\makecell{Type of \\ Fiber}} & \boldmath{$lct(\mathbb{P}^2,B)$} & \bf{\makecell{Multiplicity \\ Sequence}} & \bf{\makecell{Characteristic \\ Sequence}} & \bf{\makecell{Intersection \\ Matrix}} & \bf{Example} 
\\
\hline \hline
&&&&&\\
\multirow{5}{*}{\makecell{\vspace{3.5cm}\\$II^*$}} & $\frac{1}{3}$ &$(3-2)$  & $(9)$ & $\begin{pmatrix} 18 \end{pmatrix}$  & \ref{iistartripleconic} 
\\
&&&&&\\\cline{2-6}
&&&&&\\
& $\frac{1}{3}$ &$(3-1,3-1)$  & $(6,3)$ & $\begin{pmatrix} 9 & 0 \\ 3 & 6 \end{pmatrix}$ & \ref{iistartwotriplelines} 
\\
&&&&&\\\cline{2-6}
&&&&&\\
& $\frac{1}{3}$ &$(3-1,1-3)$  & $(1,8)$ & $\begin{pmatrix} 0 & 9 \\ 2 & 7 \end{pmatrix}$ & \ref{iistartriplelinecubic} 
\\
&&&&&\\\cline{2-6}
&&&&&\\
& $\frac{1}{4}$ &$(4-1,1-2)$  & $(7,2)$ & $\begin{pmatrix} 8 & 4 \\ 0 & 6 \end{pmatrix}$ & \ref{iistarline4} 
\\
&&&&&\\\cline{2-6}
&&&&&\\
& $\frac{1}{5}$ &$(5-1,1-1)$  & $(4,5)$ & $\begin{pmatrix} 5 & 10 \\ 3 & 0 \end{pmatrix}$ & \ref{iistarline5} 
\\
&&&&&\\\cline{2-6}
\hline
&&&&&\\
\multirow{5}{*}{\makecell{\vspace{3cm}\\$IV^*$}} & $\frac{4}{9}$ & $(2-2,1-2)$ &  $(6,1,1,1)$ & $\begin{pmatrix} 6&2&2&2\\ 6&0&0&0 \end{pmatrix}$ & \ref{ivstar2-2,1-2}
\\
&&&&&\\\cline{2-6}
&&&&&\\
& $\frac{3}{7}$ & $(2-1,1-2,1-1,1-1)$ &  $(4,1,1,1,1,1)$ & $\begin{pmatrix} 4&0&0&0&0&2\\ 3&1&1&1&0&0 \\ 1&1&0&0&1&0 \\ 0&0&1&1&1&0 \end{pmatrix}$ & \ref{ivstar2-1,1-2,1-1,1-1}
\\
&&&&&\\\cline{2-6}
&&&&&\\
& $\frac{4}{9}$ & $(2-1,1-3,1-1)$ &  $(5,1,1,1,1)$ & $\begin{pmatrix} 6&0&0&0&0\\ 4&1&1&1&2 \\ 0&1&1&1&0 \end{pmatrix}$ & \ref{ivstar2-1,1-3,1-1}
\\
&&&&&\\\cline{2-6}
&&&&&\\
& $\frac{3}{7}$ & $(2-1,1-2,1-2)$ &  $(5,1,1,1,1)$ & $\begin{pmatrix} 6&0&0&0&0\\ 4&1&1&1&2 \\ 0&1&1&1&0 \end{pmatrix}$ & \ref{ivstar2-1,1-2,1-2}
\\
&&&&&\\\cline{2-6}
\hline
\end{tabular}
\caption{All possible examples yielding a fiber of type $\protect II^*$ and more examples yielding a fiber of type $\protect IV^*$}
\label{examplesiistar}
\end{table}

\section{Geometric Descriptions}
\label{gd}

We now provide a detailed geometric description of all the examples we constructed. Our exposition is organized as follows: All the examples of Halphen pencils yielding a fixed type of singular fiber are grouped together within a subsection. The title of each subsection indicates the type of fiber. Further, we adopt the following pattern: inside parenthesis (and next to the numeration of each example) we describe the components of the curve $B$. 

For completeness we also present some constructions that can be found in the literature and proper references are mentioned. However, the vast majority of the examples are new and these are marked with a dagger ($\dagger$). 

\subsection{Type $I_n$}

\begin{exe}[A rational sextic \cite{kimu1}]
Let $\mathcal{S}=\{P_1,\ldots,P_{10}\}$ be the set consisting of the ten nodes of an irreducible rational plane curve $B$ of degree $6$. Any choice of nine points in $\mathcal{S}$ defines a Halphen pencil of index two. Blowing-up the chosen nine points, the strict transform of $B$ becomes a singular fiber of type $I_1$. 
\label{examplei1}
\end{exe}

\begin{rmk} 
Note that if $F$ is of type $I_1$ (resp. $II$), then $B$ must be an irreducible rational curve of degree six with a single node (resp. cusp) and $9$ ordinary double points. Thus, there are $19$ (resp. $17$) (see e.g. \cite{kang},\cite{gradolato}) independent sextics in $\mathbb{P}^2$ so that blowing-up its nine nodes we obtain a rational elliptic surface of index two with a singular fiber of type $I_1$ (resp. $II$).
\label{curvefiberii}
\end{rmk}

\begin{exe}[Two cubics \cite{kimu1}]
Consider two nodal cubics $C_1$ and $C_2$. Denote the two nodes by $P_1$ and $P_2$. The two cubics intersect at nine other points $P_3,\ldots,P_9,P_{10},P_{11}$. 

Blowing-up the nine points $P_1,\ldots,P_9$ we obtain a rational elliptic surface of index two with a fiber of type $I_2$, namely the strict transform of $B=C_1+C_2$.
\label{i21-3,1-3}
\end{exe}

\begin{exen}[A quartic and a conic]
Consider a quartic $Q_1$ with three nodes $P_1,P_2$ and $P_3$ and a conic $Q_2$ that intersects $Q_1$ at other eight points $P_4,\ldots,P_9,P_{10},P_{11}$. If we blow-up the nine points $P_1,\ldots,P_9$ we obtain a rational elliptic surface of index two with a fiber of type $I_2$. Such fiber is given by the strict transform of $B=Q_1+Q_2$.
\label{i21-4,1-2}
\end{exen}

\begin{exen}[A quintic and a line]
Choose a quintic $Q$ with six nodes $P_1,\ldots,P_6$ and a line $L$ that intersects $Q$ at other five points $P_7,\ldots,P_9,P_{10},P_{11}$. We get a Halphen pencil of index two generated by $B=Q+L$ and the unique cubic through the nine points $P_1,\ldots,P_9$. Thus, blowing-up the nine base points we obtain a rational elliptic surface of index two. The strict transform of $B$ is a singular fiber of type $I_2$.
\label{i21-5,1-1}
\end{exen}

\begin{exe}[Three conics \cite{kimu1}]
Consider three conics $C_1,C_2$ and $C_3$. They intersect at twelve points. Among those, we can choose three points so that if we blow-up the remaining nine points $P_1,\ldots,P_9$, the strict transform of $B=C_1+C_2+C_3$ becomes a fiber of type $I_3$ in the corresponding rational elliptic surface. 

\label{i31-2,1-2,1-2}
\end{exe}

\begin{exen}[A quartic with three nodes and two lines]
Let $Q$ be a rational plane quartic with three nodes $P_1,P_2$ and $P_3$. Consider two lines $L_1$ and $L_2$ that intersect $Q$ at other eight points $P_4,\ldots,P_9,P_{10},P_{11}$ and denote by $P_{12}$ the point of intersection of $L_1$ and $L_2$, which we can assume to be distinct from the other $P_i$. Without loss of generality, we may further assume $P_{10} \in L_1$ and $P_{11}\in L_2$. Then, blowing-up the nine points $P_1,\ldots,P_9$ we obtain a rational elliptic surface of index two with a singular fiber of type $I_3$. Namely, the strict transform of $B=Q+L_1+L_2$. 
\label{i31-4,1-1,1-1}
\end{exen}

\begin{exen}[A cubic with a node, a conic and a line]
Let $C'$ be a nodal cubic (hence a rational curve). Let $Q$ be a conic and let $L$ be a line. Generically, these curves determine twelve distinct intersection points (of multiplicity $2$). We can choose three of these points so that if we blow-up the remaining nine points $P_1,\ldots,P_9$, the strict transform of $B=C'+Q+L$ is a singular fiber of type $I_3$ in the corresponding rational elliptic surface. 
\label{i31-3,1-2,1-1}
\end{exen}

\begin{exen}[A cubic with a node and three lines]
Consider a nodal cubic $C'$ and three distinct lines $L_1,L_2$ and $L_3$. Generically, these curves determine $13$ distinct intersection points: each line intersects the cubic at three points and any two pair of lines intersect at a point. We can then choose four of these points so that if we blow-up the remaining nine points $P_1,\ldots,P_9$, the strict transform of $B=C'+L_1+L_2+L_3$ becomes a singular fiber of type $I_4$. 
\label{i41-3,1-1,1-1,1-1}
\end{exen}

\begin{exen}[Two conics and two lines]
Consider two distinct conics $Q_1$ and $Q_2$ and a pair of distinct lines $L_1$ and $L_2$. There is a way of choosing four points, among their $13$ intersection points, so that  if we blow-up the remaining nine points the strict transform of $B=Q_1+Q_2+L_1+L_2$ is a singular fiber of type $I_4$ in the corresponding rational elliptic surface.  
\label{i41-2,1-2,1-1,1-1}
\end{exen}

\begin{exen}[Three concurrent lines and three other lines]
Choose six lines $L_1,\ldots,L_6$ intersecting as in the picture below

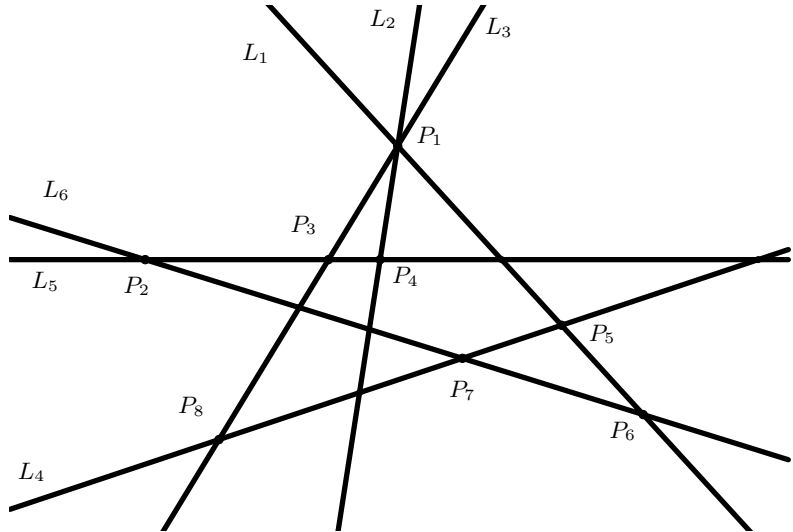
\begin{figure}[H]
\centering
\definecolor{zzccqq}{rgb}{0,0,0}
\definecolor{qqwwzz}{rgb}{0,0,0}
\definecolor{ffwwqq}{rgb}{0,0,0}
\begin{tikzpicture}[line cap=round,line join=round,>=triangle 45,x=1.0cm,y=1.0cm, scale=0.45]
\clip(-2.,-8.) rectangle (22.,7.5);
\draw [line width=2.pt,color=qqwwzz,domain=-2.:21.] plot(\x,{(--64.372-8.68*\x)/-5.28});
\draw [line width=2.pt,color=qqwwzz,domain=-2.:21.] plot(\x,{(--66.4004-5.3*\x)/4.84});
\draw [line width=2.pt,color=qqwwzz,domain=-2.:21.] plot(\x,{(--67.9668-3.38*\x)/-10.12});
\draw [line width=2.pt,color=qqwwzz,domain=-2.:21.] plot(\x,{(--65.20409255397594-7.293452557464656*\x)/-1.1285620951308672});
\draw [line width=2.pt,color=qqwwzz,domain=-2.:21.] plot(\x,{(-0.-0.*\x)/1.});
\draw [line width=2.pt,color=qqwwzz,domain=-2.:21.] plot(\x,{(--9.161905790838375-4.580952895419188*\x)/14.711738115816768});
\begin{scriptsize}
\draw [fill=ffwwqq] (4.18,-5.32) circle (3.5pt);
\draw[color=ffwwqq] (3.37,-4.34) node {$P_8$};
\draw [fill=ffwwqq] (9.46,3.36) circle (3.5pt);
\draw[color=ffwwqq] (10.41,3.66) node {$P_1$};
\draw [fill=ffwwqq] (14.3,-1.94) circle (3.5pt);
\draw[color=ffwwqq] (15.5,-2.2) node {$P_5$};
\draw[color=qqwwzz] (12.45,6.86) node {$L_3$};
\draw[color=qqwwzz] (5.27,6.1) node {$L_1$};
\draw[color=qqwwzz] (-1.35,-6.28) node {$L_4$};
\draw [fill=zzccqq] (8.331437904869134,-3.9334525574646566) circle (2.5pt);
\draw[color=qqwwzz] (9.03,7.12) node {$L_2$};
\draw[color=qqwwzz] (-0.95,-0.64) node {$L_5$};
\draw [fill=ffwwqq] (2.,0.) circle (3.5pt);
\draw[color=ffwwqq] (1.75,-0.78) node {$P_2$};
\draw [fill=ffwwqq] (16.711738115816768,-4.580952895419188) circle (3.5pt);
\draw[color=ffwwqq] (16.11,-5.06) node {$P_6$};
\draw[color=qqwwzz] (-0.63,2.04) node {$L_6$};
\draw [fill=zzccqq] (6.553621382250244,-1.417910303421947) circle (2.5pt);
\draw [fill=zzccqq] (8.621070032169598,-2.0616741336655195) circle (2.5pt);
\draw [fill=ffwwqq] (11.371485452688862,-2.918100708489293) circle (3.5pt);
\draw[color=ffwwqq] (11.37,-3.86) node {$P_7$};
\draw [fill=zzccqq] (12.528377358490568,0.) circle (2.5pt);
\draw [fill=ffwwqq] (8.940085925045372,0.) circle (3.5pt);
\draw[color=ffwwqq] (9.69,-0.5) node {$P_4$};
\draw [fill=ffwwqq] (7.416129032258064,0.) circle (3.5pt);
\draw[color=ffwwqq] (6.69,1.02) node {$P_3$};
\draw [fill=zzccqq] (20.10852071005917,0.) circle (2.5pt);
\end{scriptsize}
\end{tikzpicture}
\caption{Six lines yielding a fiber of type $I_7$}
\end{figure}
and so that we can choose a smooth cubic that is tangent to $L_2$ at $P_1$ (with multiplicity $2$) and that passes through $P_2,\ldots,P_8$. The blow-up of $\mathbb{P}^2$ at $P_1^{(1)},P_1^{(2)},P_2^{(1)},\ldots,P_8^{(1)}$ determines a rational elliptic surface of index two with a singular fiber of type $I_7$.  
\label{examplei7}
\end{exen}

\begin{exen}[Two pairs of three concurrent lines]
Choose six lines $L_1,\ldots,L_6$ as in the picture below:

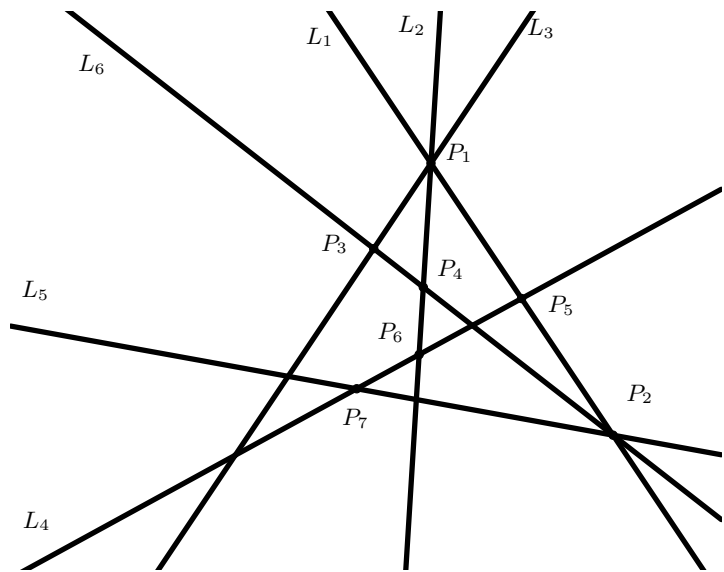
\begin{figure}[H]
\centering
\definecolor{qqwwzz}{rgb}{0,0,0}
\definecolor{zzccqq}{rgb}{0,0,0}
\definecolor{ffwwqq}{rgb}{0,0,0}
\begin{tikzpicture}[line cap=round,line join=round,>=triangle 45,x=1.0cm,y=1.0cm,scale=0.45]
\clip(-4.,-8.) rectangle (17.,8.5);
\draw [line width=2.pt,color=qqwwzz,domain=-4.:17.] plot(\x,{(--36.166-6.3*\x)/-4.22});
\draw [line width=2.pt,color=qqwwzz,domain=-4.:17.] plot(\x,{(--89.2168-8.04*\x)/5.38});
\draw [line width=2.pt,color=qqwwzz,domain=-4.:17.] plot(\x,{(--14.772--1.74*\x)/-9.6});
\draw [line width=2.pt,color=qqwwzz,domain=-4.:17.] plot(\x,{(--57.09896400484654-6.986575082155839*\x)/-0.43199954672640484});
\draw [line width=2.pt,color=qqwwzz,domain=-4.:17.] plot(\x,{(--47.54755943421352-5.515533753976824*\x)/7.070991675907589});
\draw [line width=2.pt,color=qqwwzz,domain=-4.:17.] plot(\x,{(-51.40050816134187--4.6320881898609745*\x)/8.45874582990119});
\begin{scriptsize}
\draw [fill=ffwwqq] (8.42,4.) circle (3.5pt);
\draw[color=ffwwqq] (9.25,4.3) node {$P_1$};
\draw [fill=zzccqq] (4.2,-2.3) circle (2.0pt);
\draw [fill=ffwwqq] (13.8,-4.04) circle (3.5pt);
\draw[color=ffwwqq] (14.59,-2.82) node {$P_2$};
\draw[color=qqwwzz] (11.67,7.98) node {$L_3$};
\draw[color=qqwwzz] (5.13,7.74) node {$L_1$};
\draw[color=qqwwzz] (-3.27,0.24) node {$L_5$};
\draw [fill=zzccqq] (7.988000453273595,-2.986575082155839) circle (2.0pt);
\draw[color=qqwwzz] (7.85,8.08) node {$L_2$};
\draw [fill=ffwwqq] (6.729008324092412,1.475533753976824) circle (3.5pt);
\draw[color=ffwwqq] (5.57,1.62) node {$P_3$};
\draw[color=qqwwzz] (-1.59,6.92) node {$L_6$};
\draw [fill=zzccqq] (2.637871085521697,-4.6320881898609745) circle (2.0pt);
\draw [fill=ffwwqq] (11.096616915422887,0.) circle (3.5pt);
\draw[color=ffwwqq] (12.27,-0.2) node {$P_5$};
\draw[color=qqwwzz] (-3.23,-6.58) node {$L_4$};
\draw [fill=zzccqq] (9.641921691063908,-0.7966046863331621) circle (2.0pt);
\draw [fill=ffwwqq] (8.193281806074575,0.33336725384596544) circle (3.5pt);
\draw[color=ffwwqq] (9.,0.86) node {$P_4$};
\draw [fill=ffwwqq] (8.070193400244822,-1.6572977725159532) circle (3.5pt);
\draw[color=ffwwqq] (7.23,-1.) node {$P_6$};
\draw [fill=ffwwqq] (6.22597639659359,-2.667208221882588) circle (3.5pt);
\draw[color=ffwwqq] (6.19,-3.54) node {$P_7$};
\end{scriptsize}
\end{tikzpicture}
\caption{Six lines yielding a fiber of type $I_8$}
\end{figure}
and such that we choose a smooth cubic that is tangent to $L_3$ at $P_1$ (with multiplicity $2$), is tangent to $L_5$ at $P_2$ (with multiplicity $2$) and that passes through $P_3,\ldots,P_7$. The blow-up of $\mathbb{P}^2$ at $P_1^{(1)},P_1^{(2)},P_2^{(1)},P_2^{(2)},\ldots,P_7^{(1)}$ gives rise to a rational elliptic surface of index two with a singular fiber of type $I_8$. 
\label{examplei8}
\end{exen}

\begin{exe}[Six Lines \cite{kimu2}]
Choose six lines $L_1,\ldots,L_6$ intersecting as in the picture below:
\begin{figure}[H]
\centering
\begin{tikzpicture}[line cap=round,line join=round,>=triangle 45,x=1.0cm,y=1.0cm,scale=0.6]
\clip(3.,-8.) rectangle (20.,6.);
\draw [line width=2.pt,domain=3.:20.] plot(\x,{(--68.4-8.72*\x)/-4.7});
\draw [line width=2.pt,domain=3.:20.] plot(\x,{(-53.2152--0.28*\x)/10.96});
\draw [line width=2.pt,domain=3.:20.] plot(\x,{(-109.44--8.44*\x)/-6.26});
\draw [line width=2.pt] (10.000474129981082,-8.) -- (10.000474129981082,6.);
\draw [line width=2.pt,domain=3.:20.] plot(\x,{(-61.087869056460725--5.847825991252279*\x)/-7.657158009302099});
\draw [line width=2.pt,domain=3.:20.] plot(\x,{(-59.93582428012012--4.015450511314855*\x)/8.18939334113377});
\begin{scriptsize}
\draw [fill=black] (10.,4.) circle (2.5pt);
\draw[color=black] (10.73,3.82) node {$P_1$};
\draw [fill=black] (5.3,-4.72) circle (2.5pt);
\draw[color=black] (5.85,-5.5) node {$P_2$};
\draw [fill=black] (16.26,-4.44) circle (2.5pt);
\draw[color=black] (16.53,-3.76) node {$P_3$};
\draw[color=black] (12,5) node {$L_3$};
\draw[color=black] (19,-3.76) node {$L_4$};
\draw[color=black] (8,5) node {$L_1$};
\draw [fill=black] (10.001019367720465,-4.599900965058237) circle (2.5pt);
\draw[color=black] (10.65,-5.26) node {$P_5$};
\draw [fill=black] (8.602841990697902,1.407825991252278) circle (2.5pt);
\draw[color=black] (7.75,1.36) node {$P_6$};
\draw[color=black] (11,-7) node {$L_2$};
\draw[color=black] (4,4) node {$L_5$};
\draw[color=black] (17,2) node {$L_6$};
\draw [fill=black] (10.00076039870729,-2.4151075665459985) circle (2.0pt);
\draw[color=black] (10.71,-2.68) node {$P_4$};
\end{scriptsize}
\end{tikzpicture}
\caption{Six lines yielding a fiber of type $I_9$}
\end{figure}
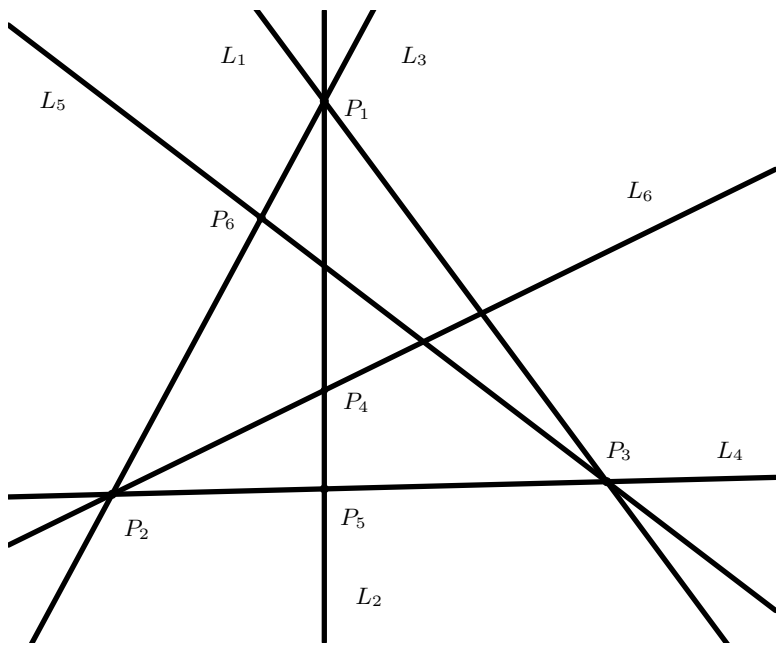
and such that we can choose a smooth cubic $C$ that is tangent to $L_1$ at $P_1$ (with multiplicity $2$), is tangent to $L_5$ at $P_3$ (with multiplicity $2$), is tangent to $L_6$ at $P_2$ (with multiplicity $2$) and that also passes through $P_4,P_5$ and $P_6$. Then, blowing-up $\mathbb{P}^2$ at the points
\[
P_1^{(1)},P_1^{(2)},P_2^{(1)},P_2^{(2)},P_3^{(1)},P_3^{(2)},P_4^{(1)},P_5^{(1)},P_6^{(1)}\]
yields a rational elliptic surface of index two with a fiber of type $I_9$.
\label{examplei9}
\end{exe}

\subsection{Type $II$}

\begin{exen}[A rational sextic]
Let $B$ denote a rational sextic curve with exactly a single cusp and nine nodes, say $P_1,\ldots, P_9$. Note that such curve indeed exists (see Remark \ref{curvefiberii}). Letting $C$ denote a smooth cubic through the nine points $P_1,\ldots, P_9$ we obtain a Halphen pencil of index two, namely the pencil generated by $B$ and $2C$. Blowing-up $\mathbb{P}^2$ at its nine base points $P_1,\ldots,P_9$, but not the cusp, the strict transform of $B$ is a singular fiber of type $II$ in the corresponding elliptic surface. 
\label{exampleii}
\end{exen}

\subsection{Type $III$}

\begin{exen}[Two cubics]
Let $C_1$ and $C_2$ be two distinct nodal cubics which are tangent with multiplicity two at a single point and assume $C_1$ and $C_2$ do not intersect at their nodes.

Denote by $B$ the sextic curve defined by $C_1+C_2$ and let $C$ denote a smooth cubic through the two nodes and the seven points where $C_1$ and $C_2$ intersect transversally. We obtain a Halphen pencil of index two generated by $B$ and $2C$. Blowing-up $\mathbb{P}^2$ at the nine base points (but not the tangency point), the strict transform of $B$ is a type $III$ singular fiber in the corresponding elliptic surface. 
\label{iii1-3,1-3}
\end{exen}

\begin{exen}[A quartic and a conic]
Consider a quartic $Q_1$ with three nodes $P_1,P_2$ and $P_3$ and a conic $Q_2$ that intersects $Q_1$ at other seven points $P_4,\ldots,P_9,P_{10}$, where $P_{10}$ is a tangency point. If we blow-up the nine points $P_1,\ldots,P_9$ we obtain a rational elliptic surface of index two with a fiber of type $III$, namely the strict transform of $B=Q_1+Q_2$.
\label{iii1-4,1-2} 
\end{exen}

\begin{exen}[A quintic and a line]
Choose a quintic $Q$ with six nodes $P_1,\ldots,P_6$ and a line $L$ that intersects $Q$ at other four points $P_7,\ldots,P_9,P_{10}$, where $P_{10}$ is a tangency point. Consider the Halphen pencil of index two generated by $B=Q+L$ and $2C$, where $C$ is the unique cubic through the nine points $P_1,\ldots,P_9$. Blowing-up the nine base points we obtain a rational elliptic surface of index two and the strict transform of $B$ is a singular fiber of type $III$. 
\label{iii1-5,1-1}
\end{exen}

\subsection{Type $IV$}

\begin{exen}[Three conics]
Let $C_1,C_2$ and $C_3$ be three different conics through a common fixed point $P$. Denote by $B$ the sextic curve defined by $C_1+C_2+C_3$ and note that, generically, the three conics intersect at other nine points, say $P_1,\ldots, P_9$. Letting $C$ denote a smooth cubic through the points $P_1,\ldots,P_9$ we obtain a Halphen pencil of degree two generated by $B$ and $2C$. Blowing-up $\mathbb{P}^2$ at the nine base points $P_1,\ldots,P_9$, but not $P$, the strict transform of $B$ is a type $IV$ singular fiber in the corresponding elliptic surface.
\label{iv1-2,1-2,1-2}
\end{exen}

\begin{exen}[A quartic with three nodes and two lines]
Let $Q$ be a plane quartic with three nodes $P_1,P_2$ and $P_3$, then $Q$ is rational. Consider two lines $L_1$ and $L_2$ that intersect $Q$ at other seven points $P_4,\ldots,P_9,P_{10}$. That is, assume $L_1,L_2$ and $Q$ pass through a common point $P_{10}$. Then, blowing-up the nine points $P_1,\ldots,P_9$ we obtain a rational elliptic surface of index two with a singular fiber of type $IV$, which is given by the strict transform of $B=Q+L_1+L_2$. 
\label{iv1-4,1-1,1-1}
\end{exen}

\begin{exen}[A cubic with a node, a conic and a line]
Let $C'$ be a nodal cubic (hence a rational curve). Let $Q$ be a conic and let $L$ be a line. Assume these curves pass through a common point and hence determine other nine distinct intersection points (of multiplicity $2$). Blowing-up these nine double points, the strict transform of $B=C'+Q+L$ is a singular fiber of type $IV$ in the corresponding rational elliptic surface. 
\label{iv1-3,1-2,1-1}
\end{exen}

\subsection{Type $I_n^*$}

\begin{exe}[Five lines \cite{dz}.]
Take five lines in the plane in general position, say $L_1,L_2,L_3, L_4$ and $L_5$. Choose three distinct points in $L_5$ that do not lie in any of the other four lines. Consider the six intersection points $L_i\cap L_j$ for $i,j\neq 5$ and choose a cubic $C$ through these nine points. Let $B=L_1+L_2+L_3+L_4+2L_5$, then the pencil generated by $B$ and $2C$ is a Halphen pencil of index $2$ and the corresponding rational elliptic surface contains a singular fiber of type $I_0^*$. Namely, the strict transform of $B$. 
\label{iostarfivelines}
\end{exe}

\begin{exen}[A double conic and another conic]
Let $Q$ be a (smooth) conic and choose three distinct points on it, say $P_1,P_2$ and $P_3$. We can construct a cubic $C$ so that $C$ is tangent to $Q$ at $P_i$ with multiplicity two. Let $Q'$ be another conic through $P_1,P_2$ and $P_3$ so that $Q'$ intersects $Q$ at a fourth point $P$, then $P\notin C$. We can construct $Q'$ (and $C$) so that $Q'$ intersects $C$ at three other points, say $P_4,P_5$ and $P_6$. Then the pencil generated by $B=2Q'+Q$ and $2C$ is a Halphen pencil of index two which yields a fiber of type $I_0^*$ in the associated rational elliptic surface.

Concretely, we can choose coordinates in $\mathbb{P}^2$ so that $Q$ is the conic given by $x^2+yz=0$ and we have $P_1=(0:0:1),P_2=(0:1:0)$ and $P_3=(1:-1:1)$. Then we can let $C$ be the cubic given by 
\[
y(z(2x+y-z)+x^2+yz)=0
\]
and we can choose $Q'$ to be the conic given by $xy+2xz+yz=0$. 

Blowing-up $\mathbb{P}^2$ at the points $
P_1^{(1)},P_1^{(2)},P_2^{(1)},P_2^{(2)},P_3^{(1)},P_3^{(2)},P_4^{(1)},P_5^{(1)},P_6^{(1)}
$  we obtain the following (dual) configuration of rational curves:
\begin{figure}[H]
\centering
\begin{tikzpicture}[line cap=round,line join=round,>=triangle 45,x=1.0cm,y=1.0cm]
\clip(-2,-2.) rectangle (2.,2.);
\draw [line width=1.5pt] (-1.,1.)-- (0.,0.);
\draw [line width=1.5pt] (0.,0.)-- (-1.,-1.);
\draw [line width=1.5pt] (0.,0.)-- (1.,1.);
\draw [line width=1.5pt] (0.,0.)-- (1.,-1.);
\begin{scriptsize}
\draw [fill=black] (0.,0.) circle (2.5pt);
\draw[color=black] (-0.7,0.) node {$2Q'$};
\draw [fill=black] (1.,1.) circle (2.5pt);
\draw[color=black] (1.5,1) node {$Q$};
\draw [fill=black] (1.,-1.) circle (2.5pt);
\draw[color=black] (1.5,-1) node {$E_1^{(1)}$};
\draw [fill=black] (-1.,1.) circle (2.5pt);
\draw[color=black] (-1.5,1) node {$E_2^{(1)}$};
\draw [fill=black] (-1.,-1.) circle (2.5pt);
\draw[color=black] (-1.5,-1) node {$E_3^{(1)}$};
\end{scriptsize}
\end{tikzpicture}
\end{figure}
\label{iostar2-2,1-2}
\end{exen}

\begin{exen}[A double line and two conics]
Let $Q_1$ and $Q_2$ be two (smooth) conics intersecting at four distinct points, say $P_3,P_4,P_5$ and $P_6$. Let $P_1$ be a point in $Q_1$ that is not in $Q_2$ and let $P_2$ be a point in $Q_2$ that is not in $Q_1$. Let $L$ be the line joining $P_1$ and $P_2$. We can choose $P_1$ and $P_2$ so that the $P_i$ for $i=3,\ldots,6$ are not in $L$ and $L$ intersects both $Q_1$ and $Q_2$ at a second point different than the $P_i$ for $i=3,\ldots,6$.

Now, we can construct a cubic $C$ through $P_1,\ldots,P_6$ such that the intersection multiplicity of $C$ and $Q_i$ at $P_i$ is two, for $i=1,2$, and $C$ intersects $L$ at a third point, say $P_7$.

The pencil generated by $B=2L+Q_1+Q_2$ and $2C$ is a Halphen pencil of index two and blowing-up $\mathbb{P}^2$ at its nine base points $
P_1^{(1)},P_1^{(2)},P_2^{(1)},P_2^{(2)},P_3^{(1)},P_4^{(1)},P_5^{(1)},P_6^{(1)},P_7^{(1)}
$ we obtain the following (dual) configuration of rational curves:

\begin{figure}[H]
\centering
\begin{tikzpicture}[line cap=round,line join=round,>=triangle 45,x=1.0cm,y=1.0cm]
\clip(-2,-2.) rectangle (2.,2.);
\draw [line width=1.5pt] (-1.,1.)-- (0.,0.);
\draw [line width=1.5pt] (0.,0.)-- (-1.,-1.);
\draw [line width=1.5pt] (0.,0.)-- (1.,1.);
\draw [line width=1.5pt] (0.,0.)-- (1.,-1.);
\begin{scriptsize}
\draw [fill=black] (0.,0.) circle (2.5pt);
\draw[color=black] (-0.7,0.) node {$2L$};
\draw [fill=black] (1.,1.) circle (2.5pt);
\draw[color=black] (1.5,1) node {$Q_1$};
\draw [fill=black] (1.,-1.) circle (2.5pt);
\draw[color=black] (1.5,-1) node {$Q_2$};
\draw [fill=black] (-1.,1.) circle (2.5pt);
\draw[color=black] (-1.5,1) node {$E_1^{(1)}$};
\draw [fill=black] (-1.,-1.) circle (2.5pt);
\draw[color=black] (-1.5,-1) node {$E_2^{(1)}$};
\end{scriptsize}
\end{tikzpicture}
\end{figure}

That is, we obtain a fiber of type $I_0^*$ in the corresponding rational elliptic surface.

Concretely, we can choose coordinates in $\mathbb{P}^2$ so that $Q_1$ is the conic given by $x^2+yz=0$ and $Q_2$ is the conic given by $y^2+xz=0$. Then we can let $P_1=(1:-1:1)$ and $P_2=(-4:-2:1)$ so that $C$ is the cubic 
\[
x^3 - 4x^2y + 2xy^2 + y^3 - 2x^2z + 2xyz - 5y^2z - xz^2 - 4yz^2=0
\]
and $L$ is the line $x-5y-6z=0$.
\label{iostar2-1,1-2,1-2}
\end{exen}

\begin{lema}[{\cite[Problem 5.41]{fultoncurves}}]
Let $C$ be a smooth cubic, let $P_1,\ldots,P_{3d}$ be points on $C$ (not necessarily distinct) and choose the group law $\oplus$ on $C$ with a flex point $O$ as the origin. These points satisfy the equation $P_1\oplus \ldots \oplus P_{3d}=0$ if and only if there exists a plane curve $D$ of degree $d$ intersecting $C$ precisely at them (where a certain number of repetitions means the intersection multiplicity of $C$ and $D$ at that point).
\label{sum3d}
\end{lema}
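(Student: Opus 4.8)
The plan is to translate the statement into one about linear equivalence of divisors on the elliptic curve $(C,\oplus)$ and then to promote divisors in the relevant complete linear system back to plane curves by a lifting argument.

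First I would set up the dictionary between the group law and linear equivalence. Since $O$ is a flex, the tangent line to $C$ at $O$ meets $C$ in the divisor $3O$, so the hyperplane class restricts on $C$ to $\mathcal{O}_C(3O)$; consequently a plane curve of degree $d$ not containing $C$ restricts to an effective divisor of degree $3d$ in the class of $3d\cdot O$. On the other hand, the defining property of the chord-tangent group law with origin at the flex $O$ is precisely that three points of $C$, counted with intersection multiplicity, are collinear if and only if they sum to $O$; equivalently, $P\mapsto \mathcal{O}_C(P-O)$ is an isomorphism $C\cong\operatorname{Pic}^0(C)$. Hence, for points $P_1,\dots,P_{3d}$ on $C$, one has $P_1\oplus\cdots\oplus P_{3d}=O$ if and only if $\sum_i P_i\sim 3d\cdot O$ as divisors on $C$, i.e. if and only if the effective divisor $\sum_i P_i$ belongs to the complete linear system $|\mathcal{O}_C(d)|$.

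Granting this dictionary, the ``only if'' direction is immediate: if a degree-$d$ curve $D$ meets $C$ precisely in $P_1,\dots,P_{3d}$, then $D$ does not contain $C$, so by B\'ezout $D\cdot C$ is an effective divisor of degree $3d$, necessarily equal to $\sum_i P_i$, and it is the divisor of the nonzero section of $\mathcal{O}_C(d)$ given by $D$; thus $\sum_i P_i\sim 3d\cdot O$ and $P_1\oplus\cdots\oplus P_{3d}=O$. For the ``if'' direction, assume $P_1\oplus\cdots\oplus P_{3d}=O$, so that $\sum_i P_i$ is the zero divisor of some nonzero $s\in H^0(C,\mathcal{O}_C(d))$. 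From the ideal-sheaf sequence $0\to\mathcal{O}_{\mathbb{P}^2}(d-3)\to\mathcal{O}_{\mathbb{P}^2}(d)\to\mathcal{O}_C(d)\to 0$ together with the vanishing $H^1(\mathbb{P}^2,\mathcal{O}_{\mathbb{P}^2}(d-3))=0$, the restriction map $H^0(\mathbb{P}^2,\mathcal{O}_{\mathbb{P}^2}(d))\to H^0(C,\mathcal{O}_C(d))$ is surjective, so $s$ lifts to a homogeneous form of degree $d$ cutting out a plane curve $D$. Since $s\neq 0$, $D$ does not contain $C$, hence $D\cdot C$ is effective of degree $3d$ and dominates $\sum_i P_i$; comparing degrees forces $D\cdot C=\sum_i P_i$, i.e. $D$ meets $C$ exactly at the given points with the prescribed multiplicities.

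The step requiring the most care is the ``if'' direction, and within it the assertion that every divisor of $|\mathcal{O}_C(d)|$ is cut out by a genuine plane curve of degree $d$; the cohomological lifting above is the cleanest way to see this, but it can also be obtained inside the elementary intersection theory of plane curves by induction on $d$ via residuation (Max Noether's $AF+BG$ theorem): the base case $d=1$ is exactly the collinearity characterization of $\oplus$, and the inductive step replaces the $3d$ points by a small number of auxiliary third intersection points of lines through pairs of the $P_i$ (whose sum is again $O$), and then divides the product of those lines out of a curve passing through all of the points. I would present the cohomological argument as the main line and only remark on the residuation alternative.
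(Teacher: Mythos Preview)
Your argument is correct. Both directions are sound: the dictionary $P_1\oplus\cdots\oplus P_{3d}=O \Leftrightarrow \sum_i P_i\sim 3d\cdot O$ is exactly the Abel--Jacobi isomorphism combined with the fact that the flex tangent cuts out $3O$, and the surjectivity of $H^0(\mathbb{P}^2,\mathcal{O}(d))\to H^0(C,\mathcal{O}_C(d))$ from $H^1(\mathbb{P}^2,\mathcal{O}(d-3))=0$ gives the lift to a genuine plane curve.

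Your route differs from the paper's. The paper (in a proof present in the source but ultimately commented out, deferring instead to Fulton) argues the ``only if'' direction by forming the rational function $f/g^d$ on $C$, where $f$ cuts out $D$ and $g$ is a line through three auxiliary collinear points of $C$, and invoking Abel's theorem directly on its divisor. For the ``if'' direction the paper runs precisely the elementary residuation induction you sketch at the end: from $P_1\oplus\cdots\oplus P_{3k+3}=0$ one produces auxiliary third points $P,Q,R$ via lines, applies the inductive hypothesis to $R,P_5,\ldots,P_{3k+3}$, builds a conic through $P,Q,P_1,\ldots,P_4$, and then uses a residuation lemma (``there exists $H$ of degree $d_F-d_G$ with $H\cdot C=F\cdot C-G\cdot C$'') twice to assemble the degree-$(k+1)$ curve. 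Your cohomological argument is shorter and conceptually cleaner, and handles repeated points without any fuss; the paper's approach has the advantage of staying entirely within the elementary intersection theory of plane curves in the spirit of the cited reference, with no appeal to sheaf cohomology. Since you already flag the residuation alternative, your write-up effectively subsumes the paper's line of reasoning.
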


\begin{exen}[A double line, a conic and two lines]
Let $C$ be a smooth cubic. Let $P$ be one of its flex points, which we fix  as the origin for the natural group law $\oplus$. Let $\varepsilon_2$ be a 2-torsion point (w.r.t $\oplus$) and choose another point $P_4\in C$. Let $P_1\doteq P_4-\varepsilon_2$. Then there exist points $P_2,P_3,P_5,P_6\in C$ (all distinct) such that
\[
P_1 \oplus P_2 \oplus P_3=0 \quad  P_4 \oplus P_5 \oplus P_6=0 \quad \mbox{and} \quad P_4 \oplus P_7 \oplus P_8=0
\]
By construction we have 
\begin{eqnarray*}
P_2 \oplus P_3 \oplus P_4&=&\varepsilon_2\\
P_1 \oplus P_5 \oplus P_6&=&-\varepsilon_2=\varepsilon_2\\
P_1 \oplus P_7 \oplus P_8&=&-\varepsilon_2=\varepsilon_2
\end{eqnarray*}
In particular, $2P_1\oplus P_2 \oplus \ldots \oplus P_8=3\cdot \varepsilon_2=\varepsilon_2$ and $2P_1\oplus P_5\oplus P_6\oplus P_7 \oplus P_8=2\cdot \varepsilon_2=0$.

It follows from Lemma \ref{sum3d} that we can construct three lines $L_1,L_2$ and $L_3$ such that $P_1,P_2,P_3 \in L_1$, $P_4,P_5,P_6 \in L_2$ and $P_4,P_7,P_8 \in L_3$. By Lemma \ref{sum3d} we can also construct a conic $Q$ through $P_1,P_5,P_6,P_7$ and $P_8$  so that the intersection multiplicity of $Q$ and $C$ at $P_1$ is two.

The pencil generated by $B=2L_1+L_2+L_3+Q$ and $2C$ is a Halphen pencil of index two and blowing-up $\mathbb{P}^2$ at its nine base points $
P_1^{(1)},P_1^{(2)},P_2^{(1)},P_3^{(1)},P_4^{(1)},P_5^{(1)},P_6^{(1)},P_7^{(1)},P_8^{(1)}
$ we obtain the following (dual) configuration of rational curves:

\begin{figure}[H]
\centering
\begin{tikzpicture}[line cap=round,line join=round,>=triangle 45,x=1.0cm,y=1.0cm]
\clip(-2,-2.) rectangle (2.,2.);
\draw [line width=1.5pt] (-1.,1.)-- (0.,0.);
\draw [line width=1.5pt] (0.,0.)-- (-1.,-1.);
\draw [line width=1.5pt] (0.,0.)-- (1.,1.);
\draw [line width=1.5pt] (0.,0.)-- (1.,-1.);
\begin{scriptsize}
\draw [fill=black] (0.,0.) circle (2.5pt);
\draw[color=black] (-0.7,0.) node {$2L_1$};
\draw [fill=black] (1.,1.) circle (2.5pt);
\draw[color=black] (1.5,1) node {$L_2$};
\draw [fill=black] (1.,-1.) circle (2.5pt);
\draw[color=black] (1.5,-1) node {$L_3$};
\draw [fill=black] (-1.,1.) circle (2.5pt);
\draw[color=black] (-1.5,1) node {$Q$};
\draw [fill=black] (-1.,-1.) circle (2.5pt);
\draw[color=black] (-1.5,-1) node {$E_1^{(1)}$};
\end{scriptsize}
\end{tikzpicture}
\end{figure}

That is, we obtain a fiber of type $I_0^*$ in the corresponding rational elliptic surface. 
\label{iostar2-1,1-2,1-1,1-1}
\end{exen}

\begin{exe}[Five lines, with three lines concurrent at a point \cite{exlaface}]
This next example is a geometrical description of an example communicated by Antonio Laface. The associated Jacobian is the surface $X_{141}$ in  Miranda and Persson's list \cite{extr}. 

Consider five distinct lines in $\mathbb{P}^2$, say $L_1,L_2,L_3,L_4$ and $L_5$ and assume that $L_1,L_3$ and $L_5$ are concurrent at a point $P_1$.

We can choose coordinates in $\mathbb{P}^2$ so that we have $L_1: z=0, L_2: y-z=0, L_3: x-z=0$, $L_4: x-y-z=0$ and $L_5:x=0$. These lines determine eight intersection points:

\begin{multicols}{2}
\begin{itemize}
\item $P_1=(0:1:0)$ ($L_1\cap L_3\cap L_5$)
\item $P_2=(1:0:1)$ ($L_3\cap L_4$)
\item $P_3=(1:1:0)$ ($L_1\cap L_4$) 
\item $P_4=(2:1:1)$ ($L_2\cap L_4$)
\item $P_5=(1:0:0)$ ($L_1\cap L_2$)
\item $P_6=(1:1:1)$ ($L_2\cap L_3$)
\item $Q_1=(0:1:1)$ ($L_2\cap L_5$)
\item $Q_2=(0:1:-1)$ ($L_4\cap L_5$)
\end{itemize}
\end{multicols}

Now consider the lines $R_1:x-y=0$ determined by the points $P_3$ and $P_6$; $R_2:y=0$, determined by the points $P_2$ and $P_5$; and $R_3:x-2z=0$ determined by the points $P_1$ and $P_4$

There exists a unique cubic $C$ through $P_1,\ldots,P_6$ which is tangent to $R_3$ at $P_1=(0:1:0)$ and which has a node at $P_7 \doteq (0:t:1)$  (for a parameter $t\neq \pm 1$ and $t\neq 0$).

Such curves determine five more intersection points

\begin{multicols}{2}
\begin{itemize}
\item $P_8=(0:0:1)$ 
\subitem $\{P_8\}=R_1\cap R_2\cap L_5$
\item $\{Q_3\}=R_1\cap R_3$
\item $\{Q_4\}=R_2\cap R_3$
\item $Q_5 \in R_1\cap C$
\item $Q_6\in R_2\cap C$
\end{itemize}
\end{multicols}

Consider the pencil $\mathcal{P}$ generated by $B\doteq L_1+L_2+L_3+L_4+2L_5$ and $D\doteq C+R_1+R_2+R_3$. If we blow-up of $\mathbb{P}^2$ at the points $
P_1^{(1)},P_1^{(2)},P_2^{(1)},P_3^{(1)}, \ldots, P_8^{(1)}$ we obtain a rational elliptic surface of index two with a singular fiber $F$ of type $I_1^*$ and a singular fiber $F'$ of type $I_4$. We have the following (dual) configurations of rational curves:

\begin{multicols}{2}
\begin{figure}[H]
\centering
\begin{tikzpicture}[line cap=round,line join=round,>=triangle 45,x=1.0cm,y=1.0cm]
\clip(-2,-2.) rectangle (4.,2.);
\draw [line width=1.5pt] (-1.,1.)-- (0.,0.);
\draw [line width=1.5pt] (0.,0.)-- (-1.,-1.);
\draw [line width=1.5pt] (0.,0.)-- (1.,0.);
\draw [line width=1.5pt] (1.,0.)-- (2.,1.);
\draw [line width=1.5pt] (1.,0.)-- (2.,-1.);
\begin{scriptsize}
\draw [fill=black] (0.,0.) circle (2.5pt);
\draw[color=black] (-0.7,0.) node {$2E_1^{(1)}$};
\draw [fill=black] (1.,0.) circle (2.5pt);
\draw[color=black] (1.6,0) node {$2L_5$};
\draw [fill=black] (2.,1.) circle (2.5pt);
\draw[color=black] (2.5,1) node {$L_2$};
\draw [fill=black] (2.,-1.) circle (2.5pt);
\draw[color=black] (2.5,-1) node {$L_4$};
\draw [fill=black] (-1.,1.) circle (2.5pt);
\draw[color=black] (-1.5,1) node {$L_1$};
\draw [fill=black] (-1.,-1.) circle (2.5pt);
\draw[color=black] (-1.5,-1) node {$L_3$};
\end{scriptsize}
\end{tikzpicture}
\end{figure}

\begin{figure}[H]
\centering
\begin{tikzpicture}[line cap=round,line join=round,>=triangle 45,x=1.0cm,y=1.0cm]
\clip(-1,-2.) rectangle (3.,2.);
\draw [line width=1.5pt] (0,0)-- (1,1);
\draw [line width=1.5pt] (0.,0.)-- (1.,-1.);
\draw [line width=1.5pt] (1.,1.)-- (2.,0.);
\draw [line width=1.5pt] (1.,-1.)-- (2.,0.);
\begin{scriptsize}
\draw [fill=black] (0.,0.) circle (2.5pt);
\draw[color=black] (-0.6,0.) node {$C$};
\draw [fill=black] (2.,0.) circle (2.5pt);
\draw[color=black] (2.6,0) node {$R_3$};
\draw [fill=black] (1.,1.) circle (2.5pt);
\draw[color=black] (1.,1.5) node {$R_1$};
\draw [fill=black] (1.,-1.) circle (2.5pt);
\draw[color=black] (1,-1.5) node {$R_2$};
\end{scriptsize}
\end{tikzpicture}
\end{figure}
\end{multicols}

\label{exelaface}
\end{exe}

\begin{exen}[A double conic and another conic]
Let $Q$ be a smooth conic and choose three (distinct) points on it, say $P_1,P_2$ and $P_3$. Then we can construct a cubic $C$ through $P_1,P_2$ and $P_3$ so that $C$ is tangent to $Q$ at each $P_i$ with multiplicity two. And we can also construct another conic $Q'$  through $P_1,P_2$ and $P_3$ so that $Q'$ is tangent to $Q$ and to $C$ at $P_3$ (with multiplicity two) and $Q'$ intersects $C$ at two other points, say $P_4$ and $P_5$.

Concretely,  we can choose coordinates in $\mathbb{P}^2$ so that $Q$ is the conic given by $x^2+yz=0$. We can let $P_1=(0:0:1),P_2=(0:1:0)$ and $P_3=(1:-1:1)$. And we can take $C$ to be the cubic given by
\[
yz(2x+y-z)-x(x^2+yz)=y^2z-yz^2-x^3+xyz=0
\]
Then $Q'$ is the conic given by $x^2+xy-xz-yz=0$ and we have that $P_4=(1:1:1)$ and $P_5=(-1:1:1)$.

By letting $B=2Q'+Q$ we have that the pencil generated by $B$ and $2C$ is a Halphen pencil of index two. Moreover, the corresponding rational elliptic surface has a fiber of type $I_1^*$.

We blow-up $\mathbb{P}^2$ at the points $
P_1^{(1)},P_1^{(2)},P_2^{(1)},P_2^{(2)},P_3^{(1)},P_3^{(2)},P_3^{(3)},P_4^{(1)},P_5^{(1)}
$, which produces the following (dual) configuration of rational curves:

\begin{figure}[H]
\centering
\begin{tikzpicture}[line cap=round,line join=round,>=triangle 45,x=1.0cm,y=1.0cm]
\clip(-2,-2.) rectangle (4.,2.);
\draw [line width=1.5pt] (-1.,1.)-- (0.,0.);
\draw [line width=1.5pt] (0.,0.)-- (-1.,-1.);
\draw [line width=1.5pt] (0.,0.)-- (1.,0.);
\draw [line width=1.5pt] (1.,0.)-- (2.,1.);
\draw [line width=1.5pt] (1.,0.)-- (2.,-1.);
\begin{scriptsize}
\draw [fill=black] (0.,0.) circle (2.5pt);
\draw[color=black] (-0.7,0.) node {$2Q'$};
\draw [fill=black] (1.,0.) circle (2.5pt);
\draw[color=black] (1.8,0) node {$2E_3^{(2)}$};
\draw [fill=black] (2.,1.) circle (2.5pt);
\draw[color=black] (2.5,1) node {$Q$};
\draw [fill=black] (2.,-1.) circle (2.5pt);
\draw[color=black] (2.7,-1) node {$E_3^{(1)}$};
\draw [fill=black] (-1.,1.) circle (2.5pt);
\draw[color=black] (-1.5,1) node {$E_1^{(1)}$};
\draw [fill=black] (-1.,-1.) circle (2.5pt);
\draw[color=black] (-1.5,-1) node {$E_2^{(1)}$};
\end{scriptsize}
\end{tikzpicture}
\end{figure}

\label{i1star2-2,1-2}
\end{exen}

\begin{exen}[A double conic and two tangent lines]
Let $C$ be a smooth cubic. Choose two distinct points $P_3$ and $P_4$ on $C$ such that the lines $L_1$ (resp. $L_2$) tangent to $C$ at $P_3$ (resp. $P_4$) intersect at a fifth point $P_5$ that is also in $C$. Then there exists a unique conic $Q$ that is tangent to the lines $L_1$ (resp. $L_2$) at $P_3$ (resp. $P_4$). Such conic intersects $C$ at two other distinct points, say $P_1$ and $P_2$.  If we let $B=2Q+L_1+L_2$, it follows that the pencil generated by $B$ and $2C$
is a Halphen pencil of index two. Thus, blowing-up the nine points $
P_1^{(1)},P_2^{(1)},P_3^{(1)},P_3^{(2)},P_3^{(3)},P_4^{(1)},P_4^{(2)},P_4^{(3)},P_5^{(1)}
$ we obtain a rational elliptic surface of index two with a type $I_2^*$ singular fiber. 

In fact, we obtain the following (dual) configuration of rational curves:
\begin{figure}[H]
\centering
\begin{tikzpicture}[line cap=round,line join=round,>=triangle 45,x=1.0cm,y=1.0cm]
\clip(-2,-2.) rectangle (4.,2.);
\draw [line width=1.5pt] (-1.,1.)-- (0.,0.);
\draw [line width=1.5pt] (0.,0.)-- (-1.,-1.);
\draw [line width=1.5pt] (0.,0.)-- (2.,0.);
\draw [line width=1.5pt] (2.,0.)-- (3.,1.);
\draw [line width=1.5pt] (2.,0.)-- (3.,-1.);
\begin{scriptsize}
\draw [fill=black] (0.,0.) circle (2.5pt);
\draw[color=black] (-0.7,0.) node {$2E_3^{(2)}$};
\draw [fill=black] (1.,0.) circle (2.5pt);
\draw[color=black] (1.,-0.5) node {$2Q$};
\draw [fill=black] (2.,0.) circle (2.5pt);
\draw[color=black] (2.7,0) node {$2E_4^{(2)}$};
\draw [fill=black] (3.,1.) circle (2.5pt);
\draw[color=black] (3.5,1) node {$E_4^{(1)}$};
\draw [fill=black] (3.,-1.) circle (2.5pt);
\draw[color=black] (3.5,-1) node {$L_2$};
\draw [fill=black] (-1.,1.) circle (2.5pt);
\draw[color=black] (-1.5,1) node {$E_3^{(1)}$};
\draw [fill=black] (-1.,-1.) circle (2.5pt);
\draw[color=black] (-1.5,-1) node {$L_1$};
\end{scriptsize}
\end{tikzpicture}
\end{figure}

\label{i2star2-2,1-1,1-1}
\end{exen}

\begin{exen}[Two double lines and two other lines]
Let $C$ be a smooth cubic. Choose $P_1\in C$ not a flex point and let $L_1$ be the tangent line to $C$ at $P_1$. Then $C$ intersects $L_1$ at another point, say $P_2$. We can choose $P_1$ so that $P_2$ is not a flex point. Let $L_3$ be the tangent line to $C$ at $P_2$ and let $P_4$ be the third intersection point. Choose a line $L_4$ through $P_4$ which is tangent to $C$ at a point $P_3$. Let $L_2$ be the line joining $P_1$ and $P_3$. Then $L_2$ intersects $C$ at a third point $P_5$. 

Letting $B=2L_1+2L_2+L_3+L_4$ we have that the pencil generated by $B$ and $2C$ is a Halphen pencil of index two, which yields a fiber of type $I_3^*$ in the corresponding rational elliptic surface. More precisely, blowing-up 
$
P_1^{(1)},P_1^{(2)},P_1^{(3)},P_2^{(1)},P_2^{(2)},P_3^{(1)},P_3^{(2)},P_4^{(1)},P_5^{(1)}
$ gives the following (dual) configuration of rational curves:
\begin{figure}[H]
\centering
\begin{tikzpicture}[line cap=round,line join=round,>=triangle 45,x=1.0cm,y=1.0cm]
\clip(-2,-2.) rectangle (5.,2.);
\draw [line width=1.5pt] (-1.,1.)-- (0.,0.);
\draw [line width=1.5pt] (0.,0.)-- (-1.,-1.);
\draw [line width=1.5pt] (0.,0.)-- (3.,0.);
\draw [line width=1.5pt] (3.,0.)-- (4.,1.);
\draw [line width=1.5pt] (3.,0.)-- (4.,-1.);
\begin{scriptsize}
\draw [fill=black] (0.,0.) circle (2.5pt);
\draw[color=black] (-0.7,0.) node {$2L_1$};
\draw [fill=black] (1.,0.) circle (2.5pt);
\draw[color=black] (0.8,-0.5) node {$2E_1^{(1)}$};
\draw [fill=black] (2.,0.) circle (2.5pt);
\draw[color=black] (2.2,-0.5) node {$2E_1^{(2)}$};
\draw [fill=black] (3.,0.) circle (2.5pt);
\draw[color=black] (3.7,0) node {$2L_2$};
\draw [fill=black] (4.,1.) circle (2.5pt);
\draw[color=black] (4.5,1) node {$E_3^{(1)}$};
\draw [fill=black] (4.,-1.) circle (2.5pt);
\draw[color=black] (4.5,-1) node {$L_3$};
\draw [fill=black] (-1.,1.) circle (2.5pt);
\draw[color=black] (-1.5,1) node {$E_2^{(1)}$};
\draw [fill=black] (-1.,-1.) circle (2.5pt);
\draw[color=black] (-1.5,-1) node {$L_4$};
\end{scriptsize}
\end{tikzpicture}
\end{figure}
\label{i3star2-1,2-1,1-1,1-1}
\end{exen}

\begin{exen}[Two double lines and two other lines]
Let $C$ be a smooth cubic. Let $P_1\in C$ be a flex point and let $L_4$ be the corresponding inflection line. Let $L_1$ be a line through $P_1$ which is tangent to $C$ at $P_2$ and let $L_3$ be another line through $P_1$ which is tangent to $C$ at $P_3\neq P_2$. Let $L_2$ be the line joining $P_2$ and $P_3$. Then $L_2$ intersects $C$ at a third point $P_4$

If we let $B=2L_1+2L_2+L_3+L_4$, then the pencil generated by $B$ and $2C$ is a Halphen pencil of index two, which gives a fiber of type $I_4^*$ in the corresponding rational elliptic surface. The blow-up of $\mathbb{P}^2$ at
$
P_1^{(1)},P_1^{(2)},P_1^{(3)},P_2^{(1)},P_2^{(2)},P_2^{(3)},P_3^{(1)},P_3^{(2)},P_4^{(1)}
$ yields the following (dual) configuration of rational curves:
\begin{figure}[H]
\centering
\begin{tikzpicture}[line cap=round,line join=round,>=triangle 45,x=1.0cm,y=1.0cm]
\clip(-2,-2.) rectangle (6.,2.);
\draw [line width=1.5pt] (-1.,1.)-- (0.,0.);
\draw [line width=1.5pt] (0.,0.)-- (-1.,-1.);
\draw [line width=1.5pt] (0.,0.)-- (4.,0.);
\draw [line width=1.5pt] (4.,0.)-- (5.,1.);
\draw [line width=1.5pt] (4.,0.)-- (5.,-1.);
\begin{scriptsize}
\draw [fill=black] (0.,0.) circle (2.5pt);
\draw[color=black] (-0.7,0.) node {$2L_2$};
\draw [fill=black] (1.,0.) circle (2.5pt);
\draw[color=black] (0.8,-0.5) node {$2E_2^{(1)}$};
\draw [fill=black] (2.,0.) circle (2.5pt);
\draw[color=black] (2.2,-0.5) node {$2E_2^{(2)}$};
\draw [fill=black] (3.,0.) circle (2.5pt);
\draw[color=black] (3.2,-0.5) node {$2L_1$};
\draw [fill=black] (4.,0.) circle (2.5pt);
\draw[color=black] (4.7,0) node {$2E_1^{(1)}$};
\draw [fill=black] (5.,1.) circle (2.5pt);
\draw[color=black] (5.5,1) node {$E_1^{(2)}$};
\draw [fill=black] (5.,-1.) circle (2.5pt);
\draw[color=black] (5.5,-1) node {$L_3$};
\draw [fill=black] (-1.,1.) circle (2.5pt);
\draw[color=black] (-1.5,1) node {$E_3^{(1)}$};
\draw [fill=black] (-1.,-1.) circle (2.5pt);
\draw[color=black] (-1.5,-1) node {$L_4$};
\end{scriptsize}
\end{tikzpicture}
\end{figure}

\label{i4star2-1,2-1,1-1,1-1}
\end{exen}

\subsection{Type $IV^*$}

We now construct all possible examples of Halphen pencils of index two that yield a fiber of type $IV^*$ in the corresponding rational elliptic surface (Theorem \ref{allpossibleivstar}).

\begin{defi}
Given a cubic $C$, a conic $Q$ and a point $P\in C$, we say $Q$ is an osculating conic of $C$ at $P$ if $I_P(Q,C)\geq 5$, where $I_P(Q,C)$ denotes the intersection multiplicity of $Q$ and $C$ at $P$.
\end{defi}

\begin{defi}
Given a cubic $C$, any point on it where a tangent conic intersects $C$ with multiplicity six is called a sextactic point.  If $C$ is smooth, there are exactly $27$ such points and if $C$ is nodal, then there only $3$ sextactic points (see e.g. \cite{cayley5},\cite{cayley6}).
\label{sextactic}
\end{defi}

\begin{exen}[A double conic and a conic]
Consider a smooth cubic $C$ and let $P_1$ be a sextactic point. Let $Q_1$ be the corresponding osculating conic. Assume we can construct another conic $Q_2$ so that $Q_2$ is tangent to both $Q_1$ and $C$ at $P_1$ with multiplicity three, $Q_2$ intersects $C$ at other three points $P_2,P_3,P_4$. Then the fourth intersection point between the two conics is different than the $P_i$'s.

Letting $B=Q_1+2Q_2$ we have that the pencil generated by $B$ and $2C$ is a Halphen pencil of index two and the corresponding rational elliptic surface has a fiber of type $IV^*$.

For instance, let $C$ be the cubic given by $xz^2+y^2z+x^3=0$, then we can let $P_1=(0:0:1)$ and we have that $Q_1$ is the conic $y^2+xz=0$. Choosing $Q_2$ to be the conic $xy+y^2+xz=0$ we get the desired pencil.

Blowing-up $\mathbb{P}^2$ at the points $P_1^{(1)},\ldots,P_1^{(6)},P_2^{(1)},P_3^{(1)},P_4^{(1)}$ we obtain the following (dual) configuration of rational curves:

\begin{figure}[H]
\centering
\begin{tikzpicture}[line cap=round,line join=round,>=triangle 45,x=1.0cm,y=1.0cm]
\clip(-3.5,-3.) rectangle (3.,1.);
\draw [line width=1.5pt] (-2.,0.)-- (2.,0.);
\draw [line width=1.5pt] (0.,0.)-- (0.,-2.);
\begin{scriptsize}
\draw [fill=black] (0.,-2.) circle (2.5pt);
\draw[color=black] (0.5,-2.) node {$Q_1$};
\draw [fill=black] (0.,-1.) circle (2.5pt);
\draw[color=black] (0.5,-1.) node {$2Q_2$};
\draw [fill=black] (-2.,0.) circle (2.5pt);
\draw[color=black] (-2.,.5) node {$E_1^{(1)}$};
\draw [fill=black] (-1.,0.) circle (2.5pt);
\draw[color=black] (-1.,.5) node {$2E_1^{(2)}$};
\draw [fill=black] (0.,0.) circle (2.5pt);
\draw[color=black] (0.,.5) node {$3E_1^{(3)}$};
\draw [fill=black] (1.,0.) circle (2.5pt);
\draw[color=black] (1.,0.5) node {$2E_1^{(4)}$};
\draw [fill=black] (2.,0.) circle (2.5pt);
\draw[color=black] (2.,.5) node {$E_1^{(5)}$};
\end{scriptsize}
\end{tikzpicture}
\end{figure}
\label{ivstar2-2,1-2}
\end{exen}

\begin{exen}[A double line, a conic and two lines]
Let $Q$ be a (smooth) conic and choose $P_1\in Q$. Let $T$ be the tangent line to $Q$ at $P_1$. Let $L_1$ be a line through $P_1$, intersecting $Q$ at a second point $P_2$. Choose two other points $P_3$ and $P_4$ in $Q$, let $L_2$ be the line joining them and let $\{P_5\}=L_1\cap L_2$. Assume we can construct a cubic $C$ through $P_1,\ldots,P_5$ which is tangent to $Q$ (resp. $T$) with multiplicity $3$ (resp. $2$.). Then $C$ intersects $T$ at another point $P_6$.

Letting $B=2T+Q+L_1+L_2$ we have that the pencil generated by $B$ and $2C$ is a Halphen pencil of index two and the corresponding rational elliptic surface has a fiber of type $IV^*$.

For instance, we can choose coordinates so that $Q$ is the conic $y^2+xz=0$ and we can choose $P_1=(0:0:1)$. Then $T$ is the line $x=0$. Choosing $L_1$ to be the line $x+y=0$ we have that $P_2=(-1:-1:1)$. Now, if we choose $P_4$ and $P_5$ so that $L_2$ is the line $x+y+z$, then $P_5=(-1:1:0)$ and $C$ is the cubic $x^3 + y^3 + 2xyz + y^2z + xz^2=0$. Thus, $P_6$ is the point $(0:1:-1)$.

Blowing-up $\mathbb{P}^2$ at the points $P_1^{(1)},\ldots,P_1^{(4)},P_2^{(1)},P_3^{(1)},P_4^{(1)},P_5^{(1)},P_6^{(1)}$ we obtain the following (dual) configuration of rational curves:

\begin{figure}[H]
\centering
\begin{tikzpicture}[line cap=round,line join=round,>=triangle 45,x=1.0cm,y=1.0cm]
\clip(-3.5,-3.) rectangle (3.,1.);
\draw [line width=1.5pt] (-2.,0.)-- (2.,0.);
\draw [line width=1.5pt] (0.,0.)-- (0.,-2.);
\begin{scriptsize}
\draw [fill=black] (0.,-2.) circle (2.5pt);
\draw[color=black] (0.5,-2.) node {$L_2$};
\draw [fill=black] (0.,-1.) circle (2.5pt);
\draw[color=black] (0.5,-1.) node {$2T$};
\draw [fill=black] (-2.,0.) circle (2.5pt);
\draw[color=black] (-2.,.5) node {$Q$};
\draw [fill=black] (-1.,0.) circle (2.5pt);
\draw[color=black] (-1.,.5) node {$2E_1^{(1)}$};
\draw [fill=black] (0.,0.) circle (2.5pt);
\draw[color=black] (0.,.5) node {$3E_1^{(2)}$};
\draw [fill=black] (1.,0.) circle (2.5pt);
\draw[color=black] (1.,0.5) node {$2E_1^{(3)}$};
\draw [fill=black] (2.,0.) circle (2.5pt);
\draw[color=black] (2.,.5) node {$L_1$};
\end{scriptsize}
\end{tikzpicture}
\end{figure}

\label{ivstar2-1,1-2,1-1,1-1}
\end{exen}

\begin{exen}[A double line, a cubic and another line]
Let $D$ be a nodal cubic and denote its node by $P_5$. Let $P_1$ be a flex point of $D$ and denote the corresponding inflection line by $L$. Let $L'$ be a line that intersects $D$ at three other points $P_2,P_3$ and $P_4$. Assume we can construct a cubic $C$ through $P_1,\ldots,P_5$ so that $C$ is tangent to $D$ (resp. $L$) at $P_1$ with multiplicity $4$ (resp. $3$).

For instance, let $D$ be the nodal cubic $y^2z-x^2(x+z)=0$. Then $P_5=(0:0:1)$ and we can let $P_1=(0:1:0)$ so that $L$ is the line $z=0$. Choosing $L'$ to be the line $x+y+z=0$ we have that $C$ is the cubic $xyz+xz^2+y^2z-x^3=0$.

Letting $B=2L+L'+D$ we have that the pencil generated by $B$ and $2C$ is a Halphen pencil of index two and the corresponding rational elliptic surface has a fiber of type $IV^*$.

Blowing-up $\mathbb{P}^2$ at the points $P_1^{(1)},\ldots,P_1^{(5)},P_2^{(1)},P_3^{(1)},P_4^{(1)},P_5^{(1)}$ we obtain the following (dual) configuration of rational curves:

\begin{figure}[H]
\centering
\begin{tikzpicture}[line cap=round,line join=round,>=triangle 45,x=1.0cm,y=1.0cm]
\clip(-3.5,-3.) rectangle (3.,1.);
\draw [line width=1.5pt] (-2.,0.)-- (2.,0.);
\draw [line width=1.5pt] (0.,0.)-- (0.,-2.);
\begin{scriptsize}
\draw [fill=black] (0.,-2.) circle (2.5pt);
\draw[color=black] (0.5,-2.) node {$L'$};
\draw [fill=black] (0.,-1.) circle (2.5pt);
\draw[color=black] (0.5,-1.) node {$2L$};
\draw [fill=black] (-2.,0.) circle (2.5pt);
\draw[color=black] (-2.,.5) node {$E_1^{(1)}$};
\draw [fill=black] (-1.,0.) circle (2.5pt);
\draw[color=black] (-1.,.5) node {$2E_1^{(2)}$};
\draw [fill=black] (0.,0.) circle (2.5pt);
\draw[color=black] (0.,.5) node {$3E_1^{(3)}$};
\draw [fill=black] (1.,0.) circle (2.5pt);
\draw[color=black] (1.,0.5) node {$2E_1^{(4)}$};
\draw [fill=black] (2.,0.) circle (2.5pt);
\draw[color=black] (2.,.5) node {$D$};
\end{scriptsize}
\end{tikzpicture}
\end{figure}
\label{ivstar2-1,1-3,1-1}
\end{exen}

\begin{exen}[A double line and two conics]
Let $C$ be a smooth cubic. Let $P_2$ be a flex point. There exists a line $L$ through $P_2$ which is tangent to $C$ at another point $P_1$. Then $P_1$ is a sextactic (see Definition \ref{sextactic}) point of $C$.

In fact, by Lemma \ref{sum3d} we have
\[
2P_1\oplus P_2=0 \qquad 3P_2=0
\]
hence $3(2P_1\oplus P_2)=6P_1=0$, where $\oplus$ denotes the group law with another flex point taken as the origin. Again, using Lemma \ref{sum3d} we conclude there exists an osculating conic which is  tangent to $C$ with multiplicity at $P_1$.

Concretely, we can choose coordinates in $\mathbb{P}^2$ so that $C$ is the cubic given by 
\[
y^2z=x(x-z)(x-\alpha\cdot z) \quad \alpha \in \mathbb{C}\backslash \{0,1\}
\]
and $C$ has a flex point at $P_2=(0:1:0)$. The line $x=0$ is tangent to $C$  at $P_1=(0:0:1)$ and the flex $P_2$ is a point in that line.

Now, let $\varepsilon_2$ be a two torsion point of $C$. Using the same argument as in Example \ref{iostar2-1,1-2,1-1,1-1}, we can always find three points $P_3,P_4$ and $P_5$ in $C$ so that $P_3\oplus P_4 \oplus P_5=\varepsilon_2$. In particular, 
$2P_3\oplus 2P_4 \oplus 2P_5=0$ and we claim we must have
\begin{equation}
3P_1\oplus P_3 \oplus P_4 \oplus P_5=0
\label{eqc1}
\end{equation}
and 
\begin{equation}
P_1\oplus 2P_2 \oplus P_3 \oplus P_4 \oplus P_5=0
\label{eqc2}
\end{equation}

In fact, if one of these sums is non zero, then adding the two equations we obtain
\[
0 \neq 4P_1\oplus 2P_2\oplus 2P_3\oplus 2P_4 \oplus 2P_5=4P_1\oplus 2P_2=0
\]
a contradiction.

Applying Lemma \ref{sum3d} two Equations (\ref{eqc1}) and (\ref{eqc2}) we conclude there exists two conics $Q$ and $Q'$ so that: $P_1,P_3,P_4,P_5\in Q$, the cubic $C$ is tangent $Q$ at $P_1$ with multiplicity three, $P_1,P_2,P_3,P_4,P_5\in Q'$ and the cubic $C$ is tangent $Q$ at $P_2$ with multiplicity two.

Note that, by construction, $L$ is also tangent to $Q$ at $P_1$.

Letting $B=2L+Q+Q'$ we have that the pencil generated by $B$ and $2C$ is a Halphen pencil of index two and the corresponding rational elliptic has a fiber of type $IV^*$.

More precisely, blowing-up $\mathbb{P}^2$ at the points $P_1^{(1)},\ldots,P_1^{(4)},P_2^{(1)},P_2^{(2)},P_3^{(1)},P_4^{(1)},P_5^{(1)}$ we obtain the following (dual) configuration of rational curves:

\begin{figure}[H]
\centering
\begin{tikzpicture}[line cap=round,line join=round,>=triangle 45,x=1.0cm,y=1.0cm]
\clip(-3.5,-3.) rectangle (3.,1.);
\draw [line width=1.5pt] (-2.,0.)-- (2.,0.);
\draw [line width=1.5pt] (0.,0.)-- (0.,-2.);
\begin{scriptsize}
\draw [fill=black] (0.,-2.) circle (2.5pt);
\draw[color=black] (0.5,-2.) node {$Q'$};
\draw [fill=black] (0.,-1.) circle (2.5pt);
\draw[color=black] (0.5,-1.) node {$2E_1^{(1)}$};
\draw [fill=black] (-2.,0.) circle (2.5pt);
\draw[color=black] (-2.,.5) node {$E_2^{(1)}$};
\draw [fill=black] (-1.,0.) circle (2.5pt);
\draw[color=black] (-1.,.5) node {$2L$};
\draw [fill=black] (0.,0.) circle (2.5pt);
\draw[color=black] (0.,.5) node {$3E_1^{(2)}$};
\draw [fill=black] (1.,0.) circle (2.5pt);
\draw[color=black] (1.,0.5) node {$2E_1^{(3)}$};
\draw [fill=black] (2.,0.) circle (2.5pt);
\draw[color=black] (2.,.5) node {$Q$};
\end{scriptsize}
\end{tikzpicture}
\end{figure}

\label{ivstar2-1,1-2,1-2}
\end{exen}

\begin{exen}[Two double lines and two other lines]
Let $Q$ be a smooth conic. And choose three distinct points on $Q$ say $P_1,P_2$ and $P_3$.  For each $i=1,2$ let $T_i$ be the tangent line to $Q$ at $P_i$. Let $L_i$ be the line joining $P_1$ and $P_i$, for $i=2,3$. And let $L$ be a line through $\{P_4\}=T_1\cap T_2 $ different than the $T_i$ and such that $P_3 \notin L$. Then $L$ intersects both $L_2$ and $L_3$ at two other points $P_5\in L_2$ and $P_6\in L_3$.

Letting $C$ be the cubic $Q+L$ and $B$ be the sextic $T_1+T_2+2L_2+2L_3$ we have that the pencil $\mathcal{P}$ generated by $B$ and $2C$ is a Halphen pencil of index two which yields a fiber of type $IV^*$ in the associated elliptic surface. 

Blowing-up $\mathbb{P}^2$ at the nine points $
P_1^{(1)},P_1^{(2)},P_1^{(3)},P_2^{(1)},P_2^{(2)},P_3^{(1)},P_4^{(1)},P_5^{(1)},P_6^{(1)}$
we obtain the following (dual) configuration of rational curves:

\begin{figure}[H]
\centering
\begin{tikzpicture}[line cap=round,line join=round,>=triangle 45,x=1.0cm,y=1.0cm]
\clip(-3.5,-3.) rectangle (3.,1.);
\draw [line width=1.5pt] (-2.,0.)-- (2.,0.);
\draw [line width=1.5pt] (0.,0.)-- (0.,-2.);
\begin{scriptsize}
\draw [fill=black] (0.,-2.) circle (2.5pt);
\draw[color=black] (0.5,-2.) node {$T_2$};
\draw [fill=black] (0.,-1.) circle (2.5pt);
\draw[color=black] (0.5,-1.) node {$2L_3$};
\draw [fill=black] (-2.,0.) circle (2.5pt);
\draw[color=black] (-2.,.5) node {$E_2^{(1)}$};
\draw [fill=black] (-1.,0.) circle (2.5pt);
\draw[color=black] (-1.,.5) node {$2L_2$};
\draw [fill=black] (0.,0.) circle (2.5pt);
\draw[color=black] (0.,.5) node {$3E_1^{(1)}$};
\draw [fill=black] (1.,0.) circle (2.5pt);
\draw[color=black] (1.,0.5) node {$2E_1^{(2)}$};
\draw [fill=black] (2.,0.) circle (2.5pt);
\draw[color=black] (2.,.5) node {$T_1$};
\end{scriptsize}
\end{tikzpicture}
\end{figure}

\label{ivstar2-1,2-1,1-1,1-1}
\end{exen}

\begin{exen}[Two double lines and a conic]
Let $Q$ be a smooth conic. And choose three distinct points on $Q$ say $P_1,P_2$ and $P_3$.  For each $i=1,2,3$ let $L_i$ be the tangent line to $Q$ at $P_i$. Let $L$ (resp. $R$) be the lines joining $P_1$ and $P_3$ (resp. $P_2$ and $P_3$). And let $\{P_4\}= L\cap L_2$ and $\{P_5\}= R\cap L_1$. 

Then the cubic $C=L_1+L_2+L_3$ is such that the intersection multiplicity of $Q$ and $C$ at $P_i$, for $i=1,2,3$ is two and the pencil $\mathcal{P}$ generated by $B=Q+2L+2R$ and $2C$ is a Halphen pencil of index two which yields a fiber of type $IV^*$ in the associated elliptic surface. In fact the Jacobian fibration of such surface is the surface $X_{431}$ in Miranda and Persson's list \cite{extr}.

Concretely, we can choose coordinates in $\mathbb{P}^2$ so that $Q$ is given by $
x^2-yz=0$,  $P_1=(0:0:1),P_2=(0:1:0)$ and $P_3=(1:-1:-1)$. Then $L_1$ is the line $y=0$, $L_2$ is the line $z=0$ and $L_3$ is the line $2x+y+z=0$. And, therefore, $L$ and $R$ are the lines $x+y=0$ and $x+z=0$, respectively. Moreover, $P_4=(1:-1:0)$ and $P_5=(1:0:-1)$.

Blowing-up the nine points $
P_1^{(1)},P_1^{(2)},P_2^{(1)},P_2^{(2)},P_3^{(1)},P_3^{(2)},P_3^{(3)},P_4^{(1)},P_5^{(1)}
$ we obtain the following (dual) configuration of rational curves:

\begin{figure}[H]
\centering
\begin{tikzpicture}[line cap=round,line join=round,>=triangle 45,x=1.0cm,y=1.0cm]
\clip(-3.5,-3.) rectangle (3.,1.);
\draw [line width=1.5pt] (-2.,0.)-- (2.,0.);
\draw [line width=1.5pt] (0.,0.)-- (0.,-2.);
\begin{scriptsize}
\draw [fill=black] (0.,-2.) circle (2.5pt);
\draw[color=black] (0.5,-2.) node {$Q$};
\draw [fill=black] (0.,-1.) circle (2.5pt);
\draw[color=black] (0.5,-1.) node {$2E_3^{(2)}$};
\draw [fill=black] (-2.,0.) circle (2.5pt);
\draw[color=black] (-2.,.5) node {$E_2^{(1)}$};
\draw [fill=black] (-1.,0.) circle (2.5pt);
\draw[color=black] (-1.,.5) node {$2R$};
\draw [fill=black] (0.,0.) circle (2.5pt);
\draw[color=black] (0.,.5) node {$3E_3^{(1)}$};
\draw [fill=black] (1.,0.) circle (2.5pt);
\draw[color=black] (1.,0.5) node {$2L$};
\draw [fill=black] (2.,0.) circle (2.5pt);
\draw[color=black] (2.,.5) node {$E_1^{(1)}$};
\end{scriptsize}
\end{tikzpicture}
\end{figure}

\label{ivstar2-1,2-1,1-2}
\end{exen}

\begin{exen}[A double conic and two lines]
Let $C$ be a smooth cubic. Let $L_1$ be an inflection line of $C$ at a point $P_1$ and choose a line $L_2$ through $P_1$ which is tangent to $C$ at another point $P_2$. We can construct a conic $Q$ through $P_1$ and $P_2$ so that $Q$ is tangent to $C$ at $P_1$ with multiplicity two and $Q$ meets $C$ transversally at $P_2$. Moreover, $Q$ intersects $C$ at other three points, say $P_3,P_4$ and $P_5$.

Concretely, choose coordinates in $\mathbb{P}^2$ so that $C$ is the cubic given by 
\[
y^2z=x(x-z)(x-\alpha\cdot z) \quad \alpha \in \mathbb{C}\backslash \{0,1\}
\]
Then we can let $L_1$ be the line $z=0$ and hence $P_1=(0:1:0)$ and we can let $L_2$ be either one of the  lines $x=0, x-z=0$ or $x-\alpha\cdot z=0$. 

If we choose $L_2$ as $x=0$, then $P_2 = (0:0:1)$ and, similarly, if we take $L_2$ as $x-z=0$ (resp. $x-\alpha\cdot z=0$), then $P_2=(1:0:1)$  (resp.  $P_2=(\alpha:0:1)$). 

Say we choose  $L_2$ to be the line $x=0$, then we can let $Q$ be the conic $x^2+yz=0$.

Now, the pencil $\mathcal{P}$ generated by $B=2Q+L_1+L_2$ and $2C$ is a Halphen pencil of index two that yields a fiber of type $IV^*$ in the corresponding rational elliptic surface.

More precisely, blowing-up $
P_1^{(1)},\ldots,P_1^{(4)},P_2^{(1)},P_2^{(2)},P_3^{(1)},P_4^{(1)},P_5^{(1)}
$ we obtain the following (dual) configuration of rational curves:

\begin{figure}[H]
\centering
\begin{tikzpicture}[line cap=round,line join=round,>=triangle 45,x=1.0cm,y=1.0cm]
\clip(-3.5,-3.) rectangle (3.,1.);
\draw [line width=1.5pt] (-2.,0.)-- (2.,0.);
\draw [line width=1.5pt] (0.,0.)-- (0.,-2.);
\begin{scriptsize}
\draw [fill=black] (0.,-2.) circle (2.5pt);
\draw[color=black] (0.5,-2.) node {$L_1$};
\draw [fill=black] (0.,-1.) circle (2.5pt);
\draw[color=black] (0.5,-1.) node {$2E_1^{(3)}$};
\draw [fill=black] (-2.,0.) circle (2.5pt);
\draw[color=black] (-2.,.5) node {$E_2^{(1)}$};
\draw [fill=black] (-1.,0.) circle (2.5pt);
\draw[color=black] (-1.,.5) node {$2Q$};
\draw [fill=black] (0.,0.) circle (2.5pt);
\draw[color=black] (0.,.5) node {$3E_1^{(2)}$};
\draw [fill=black] (1.,0.) circle (2.5pt);
\draw[color=black] (1.,0.5) node {$2E_1^{(1)}$};
\draw [fill=black] (2.,0.) circle (2.5pt);
\draw[color=black] (2.,.5) node {$L_2$};
\end{scriptsize}
\end{tikzpicture}
\end{figure}

\label{ivstar2-2,1-1,1-1}
\end{exen}

\begin{exe}[A triple conic {\cite[I.5.11]{pisa}}] In this example we consider a rational elliptic surface of index two whose Jacobian is the surface $X_{431}$ in  Miranda and Persson's list \cite{extr}.

Let $Q\subset \mathbb{P}^2$ be a smooth conic and choose three distinct points $P_1,P_2$ and $P_3$ on $Q$. Let $L_i$ be the line tangent to $Q$ at $P_i$ and consider the pencil generated by $B=3Q$ and $2C$, where $C=L_1+L_2+L_3$. Then the associated rational elliptic surface has as its Jacobian the surface $X_{431}$.

Note that we need to blow-up each of the three points three times. That is, to construct the desired surface we blow-up $\mathbb{P}^2$ at $
P_1^{(1)},P_1^{(2)},P_1^{(3)},P_2^{(1)},P_2^{(2)},P_2^{(3)},P_3^{(1)},P_3^{(2)},P_3^{(3)}
$, which produces three disjoint chains of $(-2)$-curves, each of length $2$ and formed by exceptional divisors over the corresponding three points. Pictorially, 

\begin{figure}[H]
\centering
\begin{tikzpicture}[line cap=round,line join=round,>=triangle 45,x=1.0cm,y=1.0cm]
\clip(-3.5,-3.) rectangle (3.,1.);
\draw [line width=1.5pt] (-2.,0.)-- (2.,0.);
\draw [line width=1.5pt] (0.,0.)-- (0.,-2.);
\begin{scriptsize}
\draw [fill=black] (0.,-2.) circle (2.5pt);
\draw[color=black] (0.5,-2.) node {$E_3^{(1)}$};
\draw [fill=black] (0.,-1.) circle (2.5pt);
\draw[color=black] (0.5,-1.) node {$2E_3^{(2)}$};
\draw [fill=black] (-2.,0.) circle (2.5pt);
\draw[color=black] (-2.,.5) node {$E_2^{(1)}$};
\draw [fill=black] (-1.,0.) circle (2.5pt);
\draw[color=black] (-1.,.5) node {$2E_2^{(2)}$};
\draw [fill=black] (0.,0.) circle (2.5pt);
\draw[color=black] (0.,.5) node {$3Q$};
\draw [fill=black] (1.,0.) circle (2.5pt);
\draw[color=black] (1.,0.5) node {$2E_1^{(2)}$};
\draw [fill=black] (2.,0.) circle (2.5pt);
\draw[color=black] (2.,.5) node {$E_1^{(1)}$};
\end{scriptsize}
\end{tikzpicture}
\end{figure}

\label{exeivstar}
\end{exe}

\begin{exen}[A triple line, a conic and another line]
Choose two (distinct) lines $L_1$ and $L_2$ and a smooth conic $Q$ in general position. Let $\{P_2\}=L_1\cap L_2$, let $\{P_2,P_4\}=L_1\cap Q$ and let $\{P_1,P_3\}=L_3\cap Q$. We can find a cubic $C$ so that $P_1,P_2,P_3,P_4,P_5\in C$ and $C$ is tangent to $Q$ at $P_3$ with multiplicity three.

Concretely, we can choose coordinates in $\mathbb{P}^2$ so that $Q$ is the conic $x^2+yz+xz=0$ and $L_1$ and $L_2$ are the lines $x+2y+z=0$ and $x=0$, respectively. 

Then $P_1=(0:1:0),P_2=(0:1:-2),P_3=(0:0:1),P_4=(1:0:-1)$ and $P_5=(1:-1:1)$ and we have that $C$ is the cubic given by
\[
xy(x+z)+(x^2+yz+xz)(2y+z)=0
\]
Now, the pencil generated by $B=Q+L_1+3L_2$ and $2C$ is a Halphen pencil of index two which yields a fiber of type $IV^*$ in the associated elliptic surface.

If we blow-up $\mathbb{P}^2$ at the nine points $
P_1^{(1)},P_1^{(2)},P_2^{(1)},P_2^{(2)},P_3^{(1)},P_3^{(2)},P_3^{(3)},P_4^{(1)},P_5^{(1)}
$ we obtain the following (dual) configuration of rational curves:

\begin{figure}[H]
\centering
\begin{tikzpicture}[line cap=round,line join=round,>=triangle 45,x=1.0cm,y=1.0cm]
\clip(-3.5,-3.) rectangle (3.,1.);
\draw [line width=1.5pt] (-2.,0.)-- (2.,0.);
\draw [line width=1.5pt] (0.,0.)-- (0.,-2.);
\begin{scriptsize}
\draw [fill=black] (0.,-2.) circle (2.5pt);
\draw[color=black] (0.5,-2.) node {$E_3^{(2)}$};
\draw [fill=black] (0.,-1.) circle (2.5pt);
\draw[color=black] (0.6,-1.) node {$2E_3^{(1)}$};
\draw [fill=black] (-2.,0.) circle (2.5pt);
\draw[color=black] (-2.,.5) node {$Q$};
\draw [fill=black] (-1.,0.) circle (2.5pt);
\draw[color=black] (-1.,.5) node {$2E_1^{(1)}$};
\draw [fill=black] (0.,0.) circle (2.5pt);
\draw[color=black] (0.,.5) node {$3L_2$};
\draw [fill=black] (1.,0.) circle (2.5pt);
\draw[color=black] (1.,0.5) node {$2E_2^{(1)}$};
\draw [fill=black] (2.,0.) circle (2.5pt);
\draw[color=black] (2.,.5) node {$L_1$};
\end{scriptsize}
\end{tikzpicture}
\end{figure}

\label{ivstar3-1,1-2,1-1}
\end{exen}

\begin{exen}[A triple line, a double line and another line]
Let $C$ be a smooth cubic and let $L_1$ be an inflection line of $C$ at a point $P_1$. We can choose another line $L_2$ through $P_1$ which is tangent to $C$ at another point $P_2$. Let  $L_3$ be a third line which intersects $C$ at three distinct points, say $P_3,P_4$ and $P_5$, all different than $P_1$ and $P_2$. Then the pencil $\mathcal{P}$ generated by $B=L_1+3L_2+2L_3$ and $2C$ is a Halphen pencil of index two and it yields a fiber of type $IV^*$ in the corresponding elliptic surface.

Concretely, we can choose coordinates in $\mathbb{P}^2$ so that $C$ is the cubic given by
\[
y^2z=x(x-z)(x-\alpha\cdot z) \qquad \alpha\in \mathbb{C}\backslash\{0,1\}
\]
we can let $L_1$ be the line $z=0$ (hence $P_1 = (0:1:0)$)  and we can choose $L_2$ to be either one of the  lines $x=0, x-z=0$ or $x-\alpha\cdot z=0$. 

If we choose $L_2$ as $x=0$, then $P_2 = (0:0:1)$ and we can let $L_3$ be the line $x+y+z=0$.

Now, if we blow-up $\mathbb{P}^2$ at the nine base points
\[
P_1^{(1)},P_1^{(2)},P_1^{(3)},P_2^{(1)},P_2^{(2)},P_2^{(3)},P_3^{(1)},P_4^{(1)},P_5^{(1)}
\]
we obtain the following (dual) configuration of rational curves:

\begin{figure}[H]
\centering
\begin{tikzpicture}[line cap=round,line join=round,>=triangle 45,x=1.0cm,y=1.0cm]
\clip(-3.5,-3.) rectangle (3.,1.);
\draw [line width=1.5pt] (-2.,0.)-- (2.,0.);
\draw [line width=1.5pt] (0.,0.)-- (0.,-2.);
\begin{scriptsize}
\draw [fill=black] (0.,-2.) circle (2.5pt);
\draw[color=black] (0.5,-2.) node {$E_1^{(2)}$};
\draw [fill=black] (0.,-1.) circle (2.5pt);
\draw[color=black] (0.6,-1.) node {$2E_1^{(1)}$};
\draw [fill=black] (-2.,0.) circle (2.5pt);
\draw[color=black] (-2.,.5) node {$E_2^{(1)}$};
\draw [fill=black] (-1.,0.) circle (2.5pt);
\draw[color=black] (-1.,.5) node {$2E_2^{(2)}$};
\draw [fill=black] (0.,0.) circle (2.5pt);
\draw[color=black] (0.,.5) node {$3L_2$};
\draw [fill=black] (1.,0.) circle (2.5pt);
\draw[color=black] (1.,0.5) node {$2L_3$};
\draw [fill=black] (2.,0.) circle (2.5pt);
\draw[color=black] (2.,.5) node {$L_1$};
\end{scriptsize}
\end{tikzpicture}
\end{figure}

\label{ivstar3-1,2-1,1-1}
\end{exen}

\begin{exen}[A triple line and three more lines]
Consider four (distinct) lines $L_1,L_2,L_3$ and $L_4$ in general position. That is, such that the $L_i$ determine six intersection points, say $P_1,\ldots,P_6$. Now, choose a cubic $C$ through these six points so that $C$ intersects each of the lines transversally, i.e. the $L_i$ are not tangent lines to $C$.

The pencil $\mathcal{P}$ generated by $B=L_1+L_2+L_3+3L_4$ and $2C$ is a Halphen pencil of index two and it yields a fiber of type $IV^*$ in the corresponding rational elliptic surface. 

Concretely, we blow-up $\mathbb{P}^2$ at the base points 
$
P_1^{(1)},P_1^{(2)},P_2^{(1)},P_2^{(2)},P_3^{(1)},P_3^{(2)},P_4^{(1)},P_5^{(1)},P_6^{(1)}
$ of $\mathcal{P}$ so that we obtain the following (dual) configuration of rational curves:

\begin{figure}[H]
\centering
\begin{tikzpicture}[line cap=round,line join=round,>=triangle 45,x=1.0cm,y=1.0cm]
\clip(-3.5,-3.) rectangle (3.,1.);
\draw [line width=1.5pt] (-2.,0.)-- (2.,0.);
\draw [line width=1.5pt] (0.,0.)-- (0.,-2.);
\begin{scriptsize}
\draw [fill=black] (0.,-2.) circle (2.5pt);
\draw[color=black] (0.5,-2.) node {$L_3$};
\draw [fill=black] (0.,-1.) circle (2.5pt);
\draw[color=black] (0.5,-1.) node {$2E_3^{(1)}$};
\draw [fill=black] (-2.,0.) circle (2.5pt);
\draw[color=black] (-2.,.5) node {$L_2$};
\draw [fill=black] (-1.,0.) circle (2.5pt);
\draw[color=black] (-1.,.5) node {$2E_2^{(1)}$};
\draw [fill=black] (0.,0.) circle (2.5pt);
\draw[color=black] (0.,.5) node {$3L_4$};
\draw [fill=black] (1.,0.) circle (2.5pt);
\draw[color=black] (1.,0.5) node {$2E_1^{(1)}$};
\draw [fill=black] (2.,0.) circle (2.5pt);
\draw[color=black] (2.,.5) node {$L_1$};
\end{scriptsize}
\end{tikzpicture}
\end{figure}

If we take $B$ and $C$ as in the picture below, then we obtain a rational elliptic surface whose Jacobian is the surface $X_{431}$ in  Miranda and Persson's list \cite{extr}.

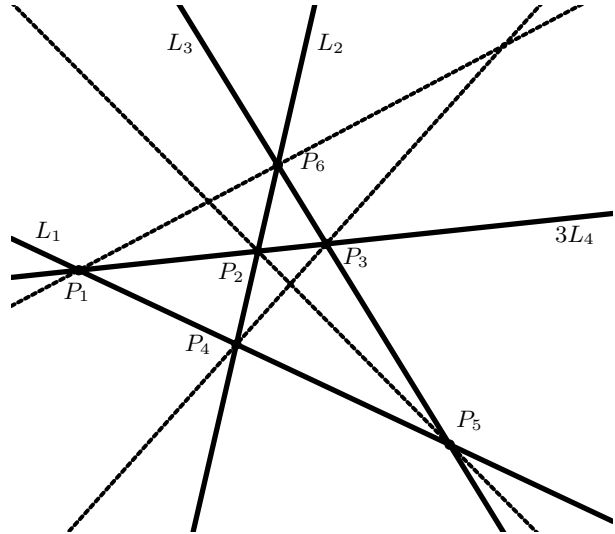
\begin{figure}[H]
\centering
\definecolor{uuuuuu}{rgb}{0,0,0}
\definecolor{ffzzqq}{rgb}{0,0,0}
\definecolor{qqwwzz}{rgb}{0,0,0}
\definecolor{qqzzqq}{rgb}{0,0,0}
\begin{tikzpicture}[line cap=round,line join=round,>=triangle 45,x=1.0cm,y=1.0cm,scale=0.5]
\clip(-8.,-4.) rectangle (8.,10.);
\draw [line width=2.pt,color=qqzzqq,domain=-8.:8.] plot(\x,{(--23.8168--0.7*\x)/6.58});
\draw [line width=2.pt,color=qqwwzz,domain=-8.:8.] plot(\x,{(--14.034-5.34*\x)/3.28});
\draw [line width=2.pt,color=qqwwzz,domain=-8.:8.] plot(\x,{(-0.4176--4.64*\x)/-9.86});
\draw [line width=2.pt,color=qqwwzz,domain=-8.:8.] plot(\x,{(-10.677209899381689-4.771255215290925*\x)/-1.100366901351058});
\draw [line width=1.5pt,dash pattern=on 1.5pt off 1.5pt,color=ffzzqq,domain=-8.:8.] plot(\x,{(-7.73181405659147-2.671803364193506*\x)/-2.389917851088798});
\draw [line width=1.5pt,dash pattern=on 1.5pt off 1.5pt,color=ffzzqq,domain=-8.:8.] plot(\x,{(--10.271181167054962-5.146558768612737*\x)/5.098347575040273});
\draw [line width=1.5pt,dash pattern=on 1.5pt off 1.5pt,color=ffzzqq,domain=-8.:8.] plot(\x,{(-33.016330665580284-2.7994518510974187*\x)/-5.29044905026226});
\begin{scriptsize}
\draw [fill=black] (-6.2,2.96) circle (3.5pt);
\draw[color=black] (-6.25,2.38) node {$P_1$};
\draw [fill=black] (0.38,3.66) circle (3.5pt);
\draw[color=black] (1.17,3.3) node {$P_3$};
\draw[color=qqzzqq] (7,3.9) node {$3L_4$};
\draw [fill=black] (3.66,-1.68) circle (3.5pt);
\draw[color=black] (4.19,-1.04) node {$P_5$};
\draw[color=qqwwzz] (-3.5,9) node {$L_3$};
\draw[color=qqwwzz] (-7,4) node {$L_1$};
\draw [fill=black] (-0.9095509497377404,5.759451851097419) circle (3.5pt);
\draw[color=black] (0.03,5.84) node {$P_6$};
\draw [fill=black] (-2.0099178510887983,0.9881966358064938) circle (3.5pt);
\draw[color=black] (-3.03,0.98) node {$P_4$};
\draw[color=qqwwzz] (0.5,9) node {$L_2$};
\draw [fill=black] (-1.4383475750402723,3.466558768612737) circle (3.5pt);
\draw[color=black] (-2.15,2.9) node {$P_2$};
\draw [fill=uuuuuu] (-2.7467243181060432,4.787307834965069) circle (2.0pt);
\draw [fill=uuuuuu] (5.104593093313228,8.941848376256457) circle (2.0pt);
\draw [fill=uuuuuu] (-0.5737368146161825,2.5937720421286414) circle (2.0pt);
\end{scriptsize}
\end{tikzpicture}
\caption{The two generators of $\mathcal{P}$ yielding a fiber of type $IV^*$ and a multiple fiber of type $I_3$}
\end{figure}
\label{ivstar3-1,1-1,1-1,1-1}
\end{exen}

\begin{exen}[A triple line and a cubic]
Let $D:d=0$ be a nodal cubic with node at a point $P_4$. Let $L_1:l_1=0$ and $L_2:l_2=0$ be two of its inflections lines at points $P_1$ and $P_2$ ($\neq P_4$), respectively.  And let $L_3$ be a line through the node $P_4$ which does not contain the flex points $P_1$ and $P_2$. Then the cubic $C$ given by $l_1l_2l_3+d=0$ is such that the intersection multiplicity of $D$ and $C$ at $P_i$ for $i=1,2$ is
\[
I_{P_i}(C,D)=I_{P_i}(l_i,d)=3
\]
and, by construction, the node $P_4$ lies on it.

Now let $L$ be the line joining $P_1$ and $P_2$. Then $L$ intersects $D$ at a third (flex) point $P_3$ and we have that the pencil $\mathcal{P}$ generated by $B=D+3L$ and $2C$ is a Halphen pencil of index two which yields a fiber of type $IV^*$ in the corresponding elliptic surface.

Blowing-up $\mathbb{P}^2$ at the nine base points $
P_1^{(1)},P_1^{(2)},P_1^{(3)},P_2^{(1)},P_2^{(2)},P_2^{(3)},P_3^{(1)},P_3^{(2)},P_4^{(1)}
$ we obtain the following (dual) configuration of rational curves:

\begin{figure}[H]
\centering
\begin{tikzpicture}[line cap=round,line join=round,>=triangle 45,x=1.0cm,y=1.0cm]
\clip(-3.5,-3.) rectangle (3.,1.);
\draw [line width=1.5pt] (-2.,0.)-- (2.,0.);
\draw [line width=1.5pt] (0.,0.)-- (0.,-2.);
\begin{scriptsize}
\draw [fill=black] (0.,-2.) circle (2.5pt);
\draw[color=black] (0.5,-2.) node {$E_1^{(2)}$};
\draw [fill=black] (0.,-1.) circle (2.5pt);
\draw[color=black] (0.6,-1.) node {$2E_1^{(1)}$};
\draw [fill=black] (-2.,0.) circle (2.5pt);
\draw[color=black] (-2.,.5) node {$E_2^{(2)}$};
\draw [fill=black] (-1.,0.) circle (2.5pt);
\draw[color=black] (-1.,.5) node {$2E_2^{(1)}$};
\draw [fill=black] (0.,0.) circle (2.5pt);
\draw[color=black] (0.,.5) node {$3L$};
\draw [fill=black] (1.,0.) circle (2.5pt);
\draw[color=black] (1.,0.5) node {$2E_3^{(1)}$};
\draw [fill=black] (2.,0.) circle (2.5pt);
\draw[color=black] (2.,.5) node {$D$};
\end{scriptsize}
\end{tikzpicture}
\end{figure}

\label{ivstar3-1,1-3}
\end{exen}

\subsection{Type $III^*$}

We now construct all possible examples of Halphen pencils of index two that yield a fiber of type $III^*$ in the corresponding rational elliptic surface (Theorem \ref{allpossibleiiistar}).

\begin{exen}[A double line, a cubic and another line]
Let $D$ be a nodal cubic and denote its node by $P_1$. Let $P_2$ be a flex point of $D$ and denote the corresponding inflection line by $L_1$. Let $L_2$ be a line through $P_2$ so that $L_2$ intersects $D$ at two other points, say $P_3$ and $P_4$. We can construct a cubic $C$ through $P_1,\ldots,P_4$ so that $C$ is tangent to $D$ (resp. $L_1$) at $P_2$ with multiplicity five (resp. three).

Concretely, let $D$ be the nodal cubic given by $y^2z-x^2(x+z)=0$. Then $P_1=(0:0:1)$ and we can let $L_1$ be the line $z=0$, hence $P_2=(0:1:0)$. Thus we can take $L_2$ to be the line $x-z=0$. And, further, we have that $P_4=(1:\sqrt{2}:1)$ and $P_5=(1:-\sqrt{2}:1)$. Choosing $C$ to be the cubic given by
\[
y^2z-x(x^2+z^2)=0
\]
we have that all the points $P_1,\ldots,P_4$ lie in $C$ and, moreover, the intersection multiplicity of $C$ and $D$ (resp. $L_1$) at $P_2$ is five (resp. three).

Now, the pencil $\mathcal{P}$ generated by $B=D+2L_1+L_2$ and $2C$ is a Halphen pencil of index two which yields a fiber of type $III^*$ in the corresponding rational elliptic surface. More precisely, blowing-up $
P_1^{(1)},P_2^{(1)}, \ldots, P_2^{(6)},P_3^{(1)},P_4^{(1)}
$ we obtain the following (dual) configuration of rational curves

\begin{figure}[H]
\centering
\begin{tikzpicture}[line cap=round,line join=round,>=triangle 45,x=1.0cm,y=1.0cm]
\clip(-4.,-2.) rectangle (4.,1.);
\draw [line width=1.5pt] (-3.,0.)-- (3.,0.);
\draw [line width=1.5pt] (0.,0.)-- (0.,-1.);
\begin{scriptsize}
\draw [fill=black] (0.,-1.) circle (2.5pt);
\draw[color=black] (0.,-1.5) node {$2L_1$};
\draw [fill=black] (-3.,0.) circle (2.5pt);
\draw[color=black] (-3.,.5) node {$D$};
\draw [fill=black] (-2.,0.) circle (2.5pt);
\draw[color=black] (-2.,.5) node {$2E_2^{(5)}$};
\draw [fill=black] (-1.,0.) circle (2.5pt);
\draw[color=black] (-1.,.5) node {$3E_2^{(4)}$};
\draw [fill=black] (0.,0.) circle (2.5pt);
\draw[color=black] (0.,.5) node {$4E_2^{(3)}$};
\draw [fill=black] (1.,0.) circle (2.5pt);
\draw[color=black] (1.,0.5) node {$3E_2^{(2)}$};
\draw [fill=black] (2.,0.) circle (2.5pt);
\draw[color=black] (2.,.5) node {$2E_2^{(1)}$};
\draw [fill=black] (3.,0.) circle (2.5pt);
\draw[color=black] (3.,0.5) node {$L_2$};
\end{scriptsize}
\end{tikzpicture}
\end{figure}

\label{iiistar2-1,1-3,1-1}
\end{exen}

\begin{exen}[A double conic and another conic]
Let $Q$ be a conic and choose a point $P_1\in Q$. We can construct another conic $Q'$ and a smooth cubic $C$ so that
$Q$ is tangent to both  $C$ and $Q'$ at $P_1$ with full multiplicity and, moreover, the intersection multiplicity of $Q'$ and $C$ at $P_1$ is four and $Q'$ intersects $C$ at two other points, say $P_2$ and $P_3$.
 
Concretely, choose coordinates in $\mathbb{P}^2$ so that $Q$ is the conic given by $x^2+yz=0$ and let $P_1$ be the point $(0:0:1)$. Then we can let $Q'$ be the conic given by $x^2+yz+y^2=0$ and we can let $C$ be the cubic given by
\[
y^3+z(x^2+yz)=0
\]
Thus, $P_2=(\alpha:1:1)$ and $P_3=(-\alpha:1:1)$, where $\alpha^2+2=0$

Now, the pencil $\mathcal{P}$ generated by $B=2Q'+Q$ and $2C$ is a Halphen pencil of index two such that the corresponding elliptic surface has a fiber of type $III^*$.

Blowing-up $\mathbb{P}^2$ at $
P_1^{(1)},\ldots,P_1^{(7)},P_2^{(1)},P_3^{(1)}
$ we obtain the following (dual) configuration of rational curves:

\begin{figure}[H]
\centering
\begin{tikzpicture}[line cap=round,line join=round,>=triangle 45,x=1.0cm,y=1.0cm]
\clip(-4.,-2.) rectangle (4.,1.);
\draw [line width=1.5pt] (-3.,0.)-- (3.,0.);
\draw [line width=1.5pt] (0.,0.)-- (0.,-1.);
\begin{scriptsize}
\draw [fill=black] (0.,-1.) circle (2.5pt);
\draw[color=black] (0.,-1.5) node {$2Q'$};
\draw [fill=black] (-3.,0.) circle (2.5pt);
\draw[color=black] (-3.,.5) node {$Q$};
\draw [fill=black] (-2.,0.) circle (2.5pt);
\draw[color=black] (-2.,.5) node {$2E_1^{(6)}$};
\draw [fill=black] (-1.,0.) circle (2.5pt);
\draw[color=black] (-1.,.5) node {$3E_1^{(5)}$};
\draw [fill=black] (0.,0.) circle (2.5pt);
\draw[color=black] (0.,.5) node {$4E_1^{(4)}$};
\draw [fill=black] (1.,0.) circle (2.5pt);
\draw[color=black] (1.,0.5) node {$3E_1^{(3)}$};
\draw [fill=black] (2.,0.) circle (2.5pt);
\draw[color=black] (2.,.5) node {$2E_1^{(2)}$};
\draw [fill=black] (3.,0.) circle (2.5pt);
\draw[color=black] (3.,0.5) node {$E_1^{(1)}$};
\end{scriptsize}
\end{tikzpicture}
\end{figure}

\label{iiistar2-2,1-2}
\end{exen}

\begin{exen}[A triple conic] In this new example we construct a rational elliptic surface whose Jacobian is the surface $X_{321}$ in  Miranda and Persson's list \cite{extr}.

Let $Q\subset \mathbb{P}^2$ be a (smooth) conic. Then, there exists a line $L$ (resp. a conic $R$) that is  tangent to $Q$ with full multiplicity $2$ (resp. 4). In fact we can assume we have determined two distinct intersection points this way. Now, generically, $L$ intersects $R$ at two other points.

Letting $C=L+R$ and $B=3Q$ we have that the pencil generated by $B$ and $2C$ is a Halphen npencil of index two. In particular, blowing-up $\mathbb{P}^2$ at the nine base points
$
P_1^{(1)},\ldots,P_1^{(3)},P_2^{(1)},\ldots,P_2^{(6)}
$ we obtain a rational elliptic surface of index two. And such surface has a type $III^*$ singular fiber.

In fact, we obtain the following (dual) configuration of rational curves:
\begin{figure}[H]
\centering
\begin{tikzpicture}[line cap=round,line join=round,>=triangle 45,x=1.0cm,y=1.0cm]
\clip(-4.,-2.) rectangle (4.,1.);
\draw [line width=1.5pt] (-3.,0.)-- (3.,0.);
\draw [line width=1.5pt] (0.,0.)-- (0.,-1.);
\begin{scriptsize}
\draw [fill=black] (0.,-1.) circle (2.5pt);
\draw[color=black] (0.,-1.5) node {$2E_2^{(5)}$};
\draw [fill=black] (-3.,0.) circle (2.5pt);
\draw[color=black] (-3.,.5) node {$E_1^{(1)}$};
\draw [fill=black] (-2.,0.) circle (2.5pt);
\draw[color=black] (-2.,.5) node {$2E_1^{(2)}$};
\draw [fill=black] (-1.,0.) circle (2.5pt);
\draw[color=black] (-1.,.5) node {$3Q$};
\draw [fill=black] (0.,0.) circle (2.5pt);
\draw[color=black] (0.,.5) node {$4E_2^{(4)}$};
\draw [fill=black] (1.,0.) circle (2.5pt);
\draw[color=black] (1.,0.5) node {$3E_2^{(3)}$};
\draw [fill=black] (2.,0.) circle (2.5pt);
\draw[color=black] (2.,.5) node {$2E_2^{(2)}$};
\draw [fill=black] (3.,0.) circle (2.5pt);
\draw[color=black] (3.,0.5) node {$E_2^{(1)}$};
\end{scriptsize}
\end{tikzpicture}
\end{figure}

\label{exeiiistar}
\end{exen}

\begin{exen}[Two triple lines]
Consider two (distinct) lines $L_1$ and $L_2$ and let $P_3$ be their intersection point. Choose a cubic $C$ which intersects $L_1$ and $L_2$ at $P_3$ with multiplicity one and which is tangent to each $L_i$ at a point $P_i$ (with multiplicity two). The pencil $\mathcal{P}$ generated by $B=3L_1+3L_2$ and $2C$ is a Halphen pencil of index two and it yields a fiber of type $III^*$ in the corresponding rational elliptic surface. 

Concretely, we blow-up $\mathbb{P}^2$ at the base points 
$
P_1^{(1)},P_1^{(2)},P_1^{(3)},P_2^{(1)},P_2^{(2)},P_2^{(3)},P_3^{(1)},P_3^{(2)},P_3^{(3)}
$ of $\mathcal{P}$ so that we obtain the following (dual) configuration of rational curves:

\begin{figure}[H]
\centering
\begin{tikzpicture}[line cap=round,line join=round,>=triangle 45,x=1.0cm,y=1.0cm]
\clip(-4.,-2.) rectangle (4.,1.);
\draw [line width=1.5pt] (-3.,0.)-- (3.,0.);
\draw [line width=1.5pt] (0.,0.)-- (0.,-1.);
\begin{scriptsize}
\draw [fill=black] (0.,-1.) circle (2.5pt);
\draw[color=black] (0.,-1.5) node {$2E_3^{(2)}$};
\draw [fill=black] (-3.,0.) circle (2.5pt);
\draw[color=black] (-3.,.5) node {$E_1^{(1)}$};
\draw [fill=black] (-2.,0.) circle (2.5pt);
\draw[color=black] (-2.,.5) node {$2E_1^{(2)}$};
\draw [fill=black] (-1.,0.) circle (2.5pt);
\draw[color=black] (-1.,.5) node {$3L_1$};
\draw [fill=black] (0.,0.) circle (2.5pt);
\draw[color=black] (0.,.5) node {$4E_3^{(1)}$};
\draw [fill=black] (1.,0.) circle (2.5pt);
\draw[color=black] (1.,0.5) node {$3L_2$};
\draw [fill=black] (2.,0.) circle (2.5pt);
\draw[color=black] (2.,.5) node {$2E_2^{(2)}$};
\draw [fill=black] (3.,0.) circle (2.5pt);
\draw[color=black] (3.,0.5) node {$E_2^{(1)}$};
\end{scriptsize}
\end{tikzpicture}
\end{figure}

If we take $B$ and $C$ as in the picture below, we obtain a rational elliptic surface whose Jacobian is the surface $X_{321}$ in  Miranda and Persson's list \cite{extr}.

\begin{figure}[H]
\centering
\definecolor{qqwwzz}{rgb}{0,0,0}
\definecolor{ffzzqq}{rgb}{0,0,0}
\begin{tikzpicture}[line cap=round,line join=round,>=triangle 45,x=1.0cm,y=1.0cm, scale=1.4]
\clip(-2.5,-1.5) rectangle (2.5,3.);
\draw [line width=2.pt,color=ffzzqq] (0.,0.) circle (1.cm);
\draw [line width=2.4pt,color=qqwwzz,domain=-2.:2.5] plot(\x,{(--1.--0.6485521326471825*\x)/0.7611702380143298});
\draw [line width=2.4pt,color=qqwwzz,domain=-2.:2.5] plot(\x,{(--1.-0.9306084606647819*\x)/0.3660162468239969});
\draw [line width=2.pt,color=ffzzqq,domain=-2.:2.5] plot(\x,{(-0.39901798854207815-2.55716702712656*\x)/-0.8788459358933687});
\begin{scriptsize}
\draw[color=ffzzqq] (-0.4,0.3) node {$2C$};
\draw [fill=black] (-0.6485521326471825,0.7611702380143298) circle (2.5pt);
\draw[color=black] (-1.1,0.9) node {$P_1$};
\draw [fill=black] (0.9306084606647819,0.3660162468239969) circle (2.5pt);
\draw[color=black] (1.2,0.6) node {$P_2$};
\draw[color=qqwwzz] (-2,-0.9700534759358264) node {$3L_1$};
\draw[color=qqwwzz] (-0.5,2.8) node {$3L_2$};
\draw [fill=black] (0.41782868440681503,1.6697758541753163) circle (2.5pt);
\draw[color=black] (0.9,1.6) node {$P_3$};
\draw [fill=black] (-0.46101725148655365,-0.8873911729512435) circle (2.0pt);
\draw [fill=black] (0.18190643756755348,0.983315843445778) circle (2.0pt);
\end{scriptsize}
\end{tikzpicture}
\caption{The two generators of $\mathcal{P}$ yielding a fiber of type $III^*$ and a multiple fiber of type $I_2$}
\end{figure}
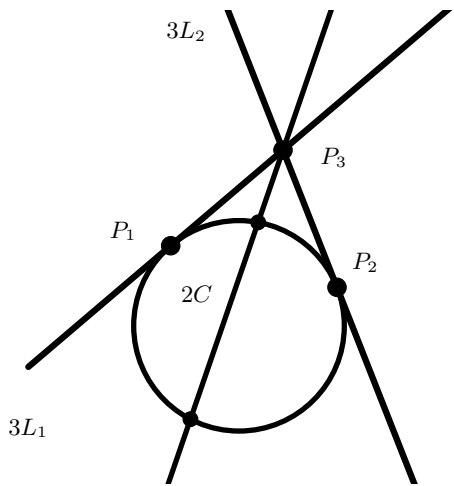

\label{iiistar3-1,3-1}
\end{exen}

\begin{exen}[A triple line, a double line and another line]
Let $C$ be a smooth cubic. Let $L_1$ be an inflection line of $C$ at a point $P_1$ and choose  a line $L_2$ through  $P_1$ which is tangent to $C$ at another point $P_2$. Let $L_3$ be any line through $P_2$ which intersects $C$ at another two points, say $P_3$ and $P_4$.

Then the pencil $\mathcal{P}$ generated by $B=3L_1+L_2+2L_3$ and $2C$ is a Halphen pencil of index two which yields a fiber of type $III^*$ in the associated rational elliptic surface.

Concretely, we can choose coordinates in $\mathbb{P}^2$ so that $C$ is the cubic given by
\[
y^2z=x(x-z)(x-\alpha\cdot z) \qquad \alpha\in \mathbb{C}\backslash\{0,1\}
\]
we can let $L_1$ be the line $z=0$ (hence $P_1 = (0:1:0)$)  and we can choose $L_2$ to be either one of the  lines $x=0, x-z=0$ or $x-\alpha\cdot z=0$. If we choose $L_2$ as $x=0$, then $P_2 = (0:0:1)$ and we can let $L_3$ be the line $y=0$.

Now, if we blow-up $\mathbb{P}^2$ at the nine base points
$
P_1^{(1)},\ldots,P_1^{(5)},P_2^{(1)},P_2^{(2)},P_3^{(1)},P_4^{(1)}
$ we obtain the following (dual) configuration of rational curves:

\begin{figure}[H]
\centering
\begin{tikzpicture}[line cap=round,line join=round,>=triangle 45,x=1.0cm,y=1.0cm]
\clip(-4.,-2.) rectangle (4.,1.);
\draw [line width=1.5pt] (-3.,0.)-- (3.,0.);
\draw [line width=1.5pt] (0.,0.)-- (0.,-1.);
\begin{scriptsize}
\draw [fill=black] (0.,-1.) circle (2.5pt);
\draw[color=black] (0.,-1.5) node {$2E_1^{(4)}$};
\draw [fill=black] (-3.,0.) circle (2.5pt);
\draw[color=black] (-3.,.5) node {$L_2$};
\draw [fill=black] (-2.,0.) circle (2.5pt);
\draw[color=black] (-2.,.5) node {$2E_1^{(1)}$};
\draw [fill=black] (-1.,0.) circle (2.5pt);
\draw[color=black] (-1.,.5) node {$3E_1^{(2)}$};
\draw [fill=black] (0.,0.) circle (2.5pt);
\draw[color=black] (0.,.5) node {$4E_1^{(3)}$};
\draw [fill=black] (1.,0.) circle (2.5pt);
\draw[color=black] (1.,0.5) node {$3L_1$};
\draw [fill=black] (2.,0.) circle (2.5pt);
\draw[color=black] (2.,.5) node {$2L_3$};
\draw [fill=black] (3.,0.) circle (2.5pt);
\draw[color=black] (3.,0.5) node {$E_2^{(1)}$};
\end{scriptsize}
\end{tikzpicture}
\end{figure}

\label{iiistar3-1,2-1,1-1}
\end{exen}

\begin{exen}[A triple line, a double line and another line concurrent at a point]
Consider three lines $L_1,L_2$ and $L_3$ concurrent at a point $P_1$ and choose a cubic $C$ as in the picture below

\begin{figure}[H]
\centering
\definecolor{qqwwzz}{rgb}{0,0,0}
\definecolor{ffzzqq}{rgb}{0,0,0}
\begin{tikzpicture}[line cap=round,line join=round,>=triangle 45,x=1.0cm,y=1.0cm,scale=0.8]
\clip(-6.,-4.) rectangle (6.,6.);
\draw[line width=2.4pt,color=ffzzqq,smooth,samples=100,domain=-6.0:6.0] plot(\x,{(\x)^(3.0)+2.0-4.0*(\x)});
\draw [line width=2.pt,color=qqwwzz,domain=-6.:6.] plot(\x,{(--3.302853447212932-1.7455965768312502*\x)/1.});
\draw [line width=2.pt,color=qqwwzz,domain=-6.:6.] plot(\x,{(-8.422827577703453--5.017613692674999*\x)/1.});
\draw [line width=2.pt,color=qqwwzz,domain=-6.:6.] plot(\x,{(--2.6953026222093515-0.9447763427834985*\x)/3.824783217058637});
\begin{scriptsize}
\draw[color=ffzzqq] (-2.,-3) node {2C};
\draw [fill=black] (-0.8668724287476887,4.816062991384289) circle (3.0pt);
\draw[color=black] (-0.35,5.3) node {$P_2$};
\draw[color=qqwwzz] (4.5,-3) node {$3L_2$};
\draw [fill=black] (1.7337448574953773,0.27643435887021717) circle (3.0pt);
\draw[color=black] (2.5,0.5) node {$P_1$};
\draw[color=qqwwzz] (3.5,5.3) node {$L_1$};
\draw [fill=black] (-2.0910383595632593,1.2212107016537157) circle (3.0pt);
\draw[color=black] (-2.6,0.88) node {$P_4$};
\draw[color=qqwwzz] (-5,2.5) node {$2L_3$};
\draw [fill=black] (0.357293502067882,0.6164375966485536) circle (3.0pt);
\draw[color=black] (0.,0.) node {$P_3$};
\end{scriptsize}
\end{tikzpicture}
\caption{The two generators of $\mathcal{P}$ yielding a fiber of type $III^*$ and a multiple fiber of type $I_0$}
\end{figure}
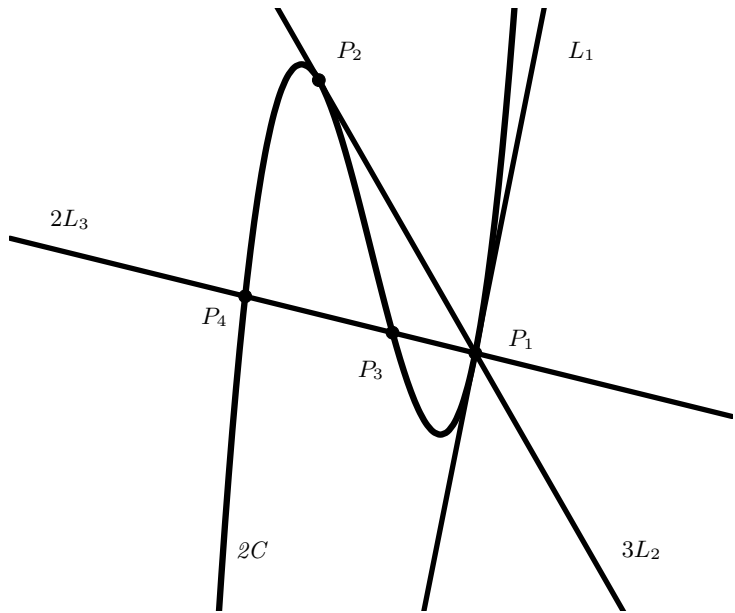

That is, choose a cubic $C$ so that $C$ is tangent to $L_1$ at $P_1$ with full multiplicity, $C$ is tangent to $L_2$ at a point $P_2 (\neq P_1)$ (with multiplicity two) and it intersects $L_3$ at two other points $P_3$ and $P_4$.

The pencil $\mathcal{P}$ generated by $B=L_1+3L_2+2L_3$ and $2C$ is a Halphen pencil of index two and such pencil yields a fiber of type $III^*$ in the associated rational elliptic surface.

Concretely, we blow-up $\mathbb{P}^2$ at the base points $
P_1^{(1)},P_1^{(2)},P_1^{(3)},P_1^{(4)},P_2^{(1)},P_2^{(2)},P_2^{(3)},P_3^{(1)},P_4^{(1)}
$ of $\mathcal{P}$ so that we obtain the following (dual) configuration of rational curves:

\begin{figure}[H]
\centering
\begin{tikzpicture}[line cap=round,line join=round,>=triangle 45,x=1.0cm,y=1.0cm]
\clip(-4.,-2.) rectangle (4.,1.);
\draw [line width=1.5pt] (-3.,0.)-- (3.,0.);
\draw [line width=1.5pt] (0.,0.)-- (0.,-1.);
\begin{scriptsize}
\draw [fill=black] (0.,-1.) circle (2.5pt);
\draw[color=black] (0.,-1.5) node {$2L_3$};
\draw [fill=black] (-3.,0.) circle (2.5pt);
\draw[color=black] (-3.,.5) node {$L_1$};
\draw [fill=black] (-2.,0.) circle (2.5pt);
\draw[color=black] (-2.,.5) node {$2E_1^{(3)}$};
\draw [fill=black] (-1.,0.) circle (2.5pt);
\draw[color=black] (-1.,.5) node {$3E_1^{(2)}$};
\draw [fill=black] (0.,0.) circle (2.5pt);
\draw[color=black] (0.,.5) node {$4E_1^{(1)}$};
\draw [fill=black] (1.,0.) circle (2.5pt);
\draw[color=black] (1.,0.5) node {$3L_2$};
\draw [fill=black] (2.,0.) circle (2.5pt);
\draw[color=black] (2.,.5) node {$2E_2^{(2)}$};
\draw [fill=black] (3.,0.) circle (2.5pt);
\draw[color=black] (3.,0.5) node {$E_2^{(1)}$};
\end{scriptsize}
\end{tikzpicture}
\end{figure}

\label{iiistar3-1,2-1,1-1conc}
\end{exen}

\begin{exen}[A triple line, a conic and a line]
Let $Q$ be a (smooth) conic. Choose a point $P_1$ in $Q$ and let $L_1$ be the tangent line to $Q$ at $P_1$. Choose two other points in $Q$, say $P_2$ and $P_3$, and let $L_2$ be the line joining them. Let $P_4$ be the intersection point between $L_1$ and $L_2$. We can construct a cubic $C$ through these four points so that $C$ is tangent to $Q$ (resp. $L_1$) at $P_1$ with multiplicity four (resp. two).

The pencil $\mathcal{P}$ generated by $B=3L_1+Q+L_2$ and $2C$ is a Halphen pencil of index two which yields a fiber of type $III^*$ in the corresponding rational elliptic surface.

Concretely, we can choose coordinates in $\mathbb{P}^2$ so that  $Q$ is the conic given by $x^2+yz=0$ and we have  $P_1=(0:1:0),P_2=(-1:-1:1)$ and $P_3=(0:0:1)$. Then $L_1$ is the line $z=0$, $L_2$ is the line $x+y=0, P_4=(-1:1:0)$ and $C$  is the cubic given by
\[
(x+z)xz+(x^2+yz)(x+y)=0
\]

If we blow-up $\mathbb{P}^2$ at the nine base points $
P_1^{(1)},\ldots,P_1^{(5)},P_2^{(1)},P_3^{(1)},P_4^{(1)},P_4^{(2)}
$ we obtain the following (dual) configuration of rational curves:

\begin{figure}[H]
\centering
\begin{tikzpicture}[line cap=round,line join=round,>=triangle 45,x=1.0cm,y=1.0cm]
\clip(-4.,-2.) rectangle (4.,1.);
\draw [line width=1.5pt] (-3.,0.)-- (3.,0.);
\draw [line width=1.5pt] (0.,0.)-- (0.,-1.);
\begin{scriptsize}
\draw [fill=black] (0.,-1.) circle (2.5pt);
\draw[color=black] (0.,-1.5) node {$2E_1^{(1)}$};
\draw [fill=black] (-3.,0.) circle (2.5pt);
\draw[color=black] (-3.,.5) node {$Q$};
\draw [fill=black] (-2.,0.) circle (2.5pt);
\draw[color=black] (-2.,.5) node {$2E_1^{(4)}$};
\draw [fill=black] (-1.,0.) circle (2.5pt);
\draw[color=black] (-1.,.5) node {$3E_1^{(3)}$};
\draw [fill=black] (0.,0.) circle (2.5pt);
\draw[color=black] (0.,.5) node {$4E_1^{(2)}$};
\draw [fill=black] (1.,0.) circle (2.5pt);
\draw[color=black] (1.,0.5) node {$3L_1$};
\draw [fill=black] (2.,0.) circle (2.5pt);
\draw[color=black] (2.,.5) node {$2E_4^{(1)}$};
\draw [fill=black] (3.,0.) circle (2.5pt);
\draw[color=black] (3.,0.5) node {$L_2$};
\end{scriptsize}
\end{tikzpicture}
\end{figure}

\label{iiistar3-1,1-2,1-1}
\end{exen}

\begin{exen}[A triple line and a cubic]
Let $D:d=0$ be a nodal cubic and let $P_1$ denote its node. Let $P_2$ be a point in $D$ which is not a flex and let $L:l=0$ denote the tangent line to $D$ at $P_2$. Let $P_3$ be the third intersection point between $L$ and $D$ and let $L':l'=0$ denote the line joining $P_1$ and $P_3$.

Then the cubic $C$ given by $l^2l'+d=0$ is such that the intersection multiplicity of $D$ and $C$ at $P=P_2$ (resp. $P=P_3$) is $4$ (resp. $3$). Moreover, by construction, the node $P_1$ lies in $C$.

Concretely, if $D$ is the nodal cubic given by $y^2z=x^2(x+z)$ we have that $P_1=(0:0:1)$ and we can let $P_2=(1:0:-1)$ so that $L$ is the line $x+z=0$. Then $P_3=(0:1:0)$ and $L'$ is the line $x=0$. Thus, $C$ is the cubic given by
\[
z(y^2+x^2+xz)=0
\]
Note that $C$ consists of a line $(z=0)$ and a conic $(y^2+x^2+xz=0)$. Moreover, the line is an inflection line of $D$ and the node $P_1$ lies in the conic.

Now, the pencil $\mathcal{P}$ generated by $B=3L+D$ and $2C$ is a Halphen pencil of index two and the associated rational elliptic surface has a fiber of type $III^*$.

More precisely, blowing-up $\mathbb{P}^2$ at $
P_1^{(1)},P_2^{(1)},\ldots,P_2^{(5)},P_3^{(1)},\ldots,P_3^{(3)}
$ we obtain the following (dual) configuration of rational curves:

\begin{figure}[H]
\centering
\begin{tikzpicture}[line cap=round,line join=round,>=triangle 45,x=1.0cm,y=1.0cm]
\clip(-4.,-2.) rectangle (4.,1.);
\draw [line width=1.5pt] (-3.,0.)-- (3.,0.);
\draw [line width=1.5pt] (0.,0.)-- (0.,-1.);
\begin{scriptsize}
\draw [fill=black] (0.,-1.) circle (2.5pt);
\draw[color=black] (0.,-1.5) node {$2E_2^{(1)}$};
\draw [fill=black] (-3.,0.) circle (2.5pt);
\draw[color=black] (-3.,.5) node {$D$};
\draw [fill=black] (-2.,0.) circle (2.5pt);
\draw[color=black] (-2.,.5) node {$2E_2^{(4)}$};
\draw [fill=black] (-1.,0.) circle (2.5pt);
\draw[color=black] (-1.,.5) node {$3E_2^{(3)}$};
\draw [fill=black] (0.,0.) circle (2.5pt);
\draw[color=black] (0.,.5) node {$4E_2^{(2)}$};
\draw [fill=black] (1.,0.) circle (2.5pt);
\draw[color=black] (1.,0.5) node {$3L$};
\draw [fill=black] (2.,0.) circle (2.5pt);
\draw[color=black] (2.,.5) node {$2E_3^{(1)}$};
\draw [fill=black] (3.,0.) circle (2.5pt);
\draw[color=black] (3.,0.5) node {$E_3^{(2)}$};
\end{scriptsize}
\end{tikzpicture}
\end{figure}

\label{iiistar3-1,1-3}
\end{exen}

\begin{exen}[A line with multiplicity four and a conic]
Consider either a smooth or nodal cubic $C$. Choose smooth points $P_1,P_2\in C$ so that there exists a conic $Q$ which is tangent to $C$ at $P_1$ (resp. $P_2$) with multiplicity $4$ (resp. $2$). Let $L$ be the line joining $P_1$ and $P_2$ and let $P_3$ be the third intersection point between $L$ and $C$. Then the pencil $\mathcal{P}$ generated by $B=Q+4L$ and $2C$ is a Halphen pencil of index two which yields a fiber of type $III^*$ in the associated rational elliptic surface.

For instance, consider the cubic $C$ given by $x^2z+(x^2+yz)(y+z)=0$ and let $P_1=(0:1:0)$ and $P_2=(0:0:1)$. Then $L:x=0$ and $P_3=(0:1:-1)$ and we can take $Q: x^2+yz=0$.

Now, if we blow-up $\mathbb{P}^2$ at the nine base points
$
P_1^{(1)},\ldots,P_1^{(4)},P_2^{(1)},\ldots,P_2^{(3)},P_3^{(1)},P_3^{(2)}
$ we obtain the following (dual) configuration of rational curves:

\begin{figure}[H]
\centering
\begin{tikzpicture}[line cap=round,line join=round,>=triangle 45,x=1.0cm,y=1.0cm]
\clip(-4.,-2.) rectangle (4.,1.);
\draw [line width=1.5pt] (-3.,0.)-- (3.,0.);
\draw [line width=1.5pt] (0.,0.)-- (0.,-1.);
\begin{scriptsize}
\draw [fill=black] (0.,-1.) circle (2.5pt);
\draw[color=black] (0.,-1.5) node {$2E_3^{(1)}$};
\draw [fill=black] (-3.,0.) circle (2.5pt);
\draw[color=black] (-3.,.5) node {$Q$};
\draw [fill=black] (-2.,0.) circle (2.5pt);
\draw[color=black] (-2.,.5) node {$2E_2^{(2)}$};
\draw [fill=black] (-1.,0.) circle (2.5pt);
\draw[color=black] (-1.,.5) node {$3E_2^{(1)}$};
\draw [fill=black] (0.,0.) circle (2.5pt);
\draw[color=black] (0.,.5) node {$4L$};
\draw [fill=black] (1.,0.) circle (2.5pt);
\draw[color=black] (1.,0.5) node {$3E_1^{(1)}$};
\draw [fill=black] (2.,0.) circle (2.5pt);
\draw[color=black] (2.,.5) node {$2E_1^{(2)}$};
\draw [fill=black] (3.,0.) circle (2.5pt);
\draw[color=black] (3.,0.5) node {$E_1^{(3)}$};
\end{scriptsize}
\end{tikzpicture}
\end{figure}

\label{iiistar1-2,4-1}
\end{exen}

\begin{exen}[A line with multiplicity four and two other lines]
Consider either a smooth or nodal cubic $C$ and let $P_4$ be a flex point of $C$. We can always choose two lines $L_1$ and $L_2$ through $P_4$ which are tangent to $C$ at two other points $P_1$ and $P_2$, respectively. Moreover, if $L_3$ is the line joining $P_1$ and $P_2$, then $C$ intersects $L_3$ at a third point $P_3$ and we have that the pencil $\mathcal{P}$ generated by $B=L_1+L_2+4L_3$ and $2C$ is a Halphen pencil of index two with base points
\[
P_1^{(1)},\ldots,P_1^{(3)},P_2^{(1)},\ldots,P_2^{(3)},P_3^{(1)},P_3^{(2)},P_4^{(1)}
\]
Blowing-up $\mathbb{P}^2$ at these nine base points yields a fiber of type $III^*$ in the associated rational elliptic surface. Explicitly, we obtain the following (dual) configuration of rational curves:

\begin{figure}[H]
\centering
\begin{tikzpicture}[line cap=round,line join=round,>=triangle 45,x=1.0cm,y=1.0cm]
\clip(-4.,-2.) rectangle (4.,1.);
\draw [line width=1.5pt] (-3.,0.)-- (3.,0.);
\draw [line width=1.5pt] (0.,0.)-- (0.,-1.);
\begin{scriptsize}
\draw [fill=black] (0.,-1.) circle (2.5pt);
\draw[color=black] (0.,-1.5) node {$2E_3^{(1)}$};
\draw [fill=black] (-3.,0.) circle (2.5pt);
\draw[color=black] (-3.,.5) node {$L_2$};
\draw [fill=black] (-2.,0.) circle (2.5pt);
\draw[color=black] (-2.,.5) node {$2E_2^{(2)}$};
\draw [fill=black] (-1.,0.) circle (2.5pt);
\draw[color=black] (-1.,.5) node {$3E_2^{(1)}$};
\draw [fill=black] (0.,0.) circle (2.5pt);
\draw[color=black] (0.,.5) node {$4L_3$};
\draw [fill=black] (1.,0.) circle (2.5pt);
\draw[color=black] (1.,0.5) node {$3E_1^{(1)}$};
\draw [fill=black] (2.,0.) circle (2.5pt);
\draw[color=black] (2.,.5) node {$2E_1^{(2)}$};
\draw [fill=black] (3.,0.) circle (2.5pt);
\draw[color=black] (3.,0.5) node {$L_1$};
\end{scriptsize}
\end{tikzpicture}
\end{figure}

Note that, concretely, we can choose coordinates in $\mathbb{P}^2$ so that $C$ is the cubic given by
\[
y^2z=x(x-z)(x-\alpha\cdot z) \qquad \alpha\in \mathbb{C}\backslash\{0,1\}
\]
we can let $P_4 = (0:1:0)$  and we can choose $L_1$ and $L_2$ to be the lines $x=0$ and $x-z=0$. Then $P_1 = (0:0:1), P_2=(1:0:1), L_3$ is the line $y=0$ and $P_3=(\alpha:0:1)$.
\label{iiistar4-1,1-1,1-1}
\end{exen}

\subsection{Type $II^*$}

We now construct all possible examples of Halphen pencils of index two that yield a fiber of type $II^*$ in the corresponding rational elliptic surface (Theorem \ref{allpossibleiistar}).

\begin{exe}[A triple conic \cite{fuji98}]
We begin with an example of a rational elliptic surface whose Jacobian is the surface $X_{211}$ in  Miranda and Persson's list \cite{extr}.

Let $C$ be a cubic with a node and  let $P_0$ be an inflection point of $C$ that we take as the identity for the group law. Choose  another point $P$ in $C$ satisfying $6P=P_0$. Then there exists a conic $Q$ tangent to $C$ at $P$ with multiplicity $6$ and to the pencil generated by $B=3Q$ and $2C$ we can associate a rational elliptic fibration $Y\to \mathbb{P}^1$ of index two with $II^*+\,_{2}I_1+I_1$ singular fibers. 

Concretely, we blow-up $\mathbb{P}^2$  at the nine points $P_1^{(1)},\ldots,P_1^{(9)}$ where $P_1^{(1)}=P$. The strict transform of $C$ is the multiple fiber and the strict transform of $Q$ is the component of multiplicity $3$ in the $II^*$ fiber that intersects the component of multiplicity $6$. 

More precisely, we get the following (dual) configuration of rational curves:

\begin{figure}[H]
\centering
\begin{tikzpicture}[line cap=round,line join=round,>=triangle 45,x=1.0cm,y=1.0cm]
\clip(-3.,-2.5) rectangle (5.5,1.);
\draw [line width=1.5pt] (-2.,0.)-- (5.,0.);
\draw [line width=1.5pt] (0.,0.)-- (0.,-1.);
\begin{scriptsize}
\draw [fill=black] (0.,-1.) circle (2.5pt);
\draw[color=black] (0.,-1.5) node {$3Q$};
\draw [fill=black] (-2.,0.) circle (2.5pt);
\draw[color=black] (-2.,.5) node {$2E_1^{(8)}$};
\draw [fill=black] (-1.,0.) circle (2.5pt);
\draw[color=black] (-1.,.5) node {$4E_1^{(7)}$};
\draw [fill=black] (0.,0.) circle (2.5pt);
\draw[color=black] (0.,.5) node {$6E_1^{(6)}$};
\draw [fill=black] (1.,0.) circle (2.5pt);
\draw[color=black] (1.,0.5) node {$5E_1^{(5)}$};
\draw [fill=black] (2.,0.) circle (2.5pt);
\draw[color=black] (2.,.5) node {$4E_1^{(4)}$};
\draw [fill=black] (3.,0.) circle (2.5pt);
\draw[color=black] (3.,0.5) node {$3E_1^{(3)}$};
\draw [fill=black] (4.,0.) circle (2.5pt);
\draw[color=black] (4,.5) node {$2E_1^{(2)}$};
\draw [fill=black] (5.,0.) circle (2.5pt);
\draw[color=black] (5.,0.5) node {$E_1^{(1)}$};
\end{scriptsize}
\end{tikzpicture}
\end{figure}

\label{iistartripleconic}
\end{exe}

\begin{exen}[Two triple lines]
Let $C$ be either a smooth or nodal cubic. Let $L_1$ be an inflection line of $C$ at a point $P_1$ and let $L_2$ be a line through $P_1$ which is tangent to $C$ at another point $P_2$.

Then the pencil $\mathcal{P}$ generated by $B=3L_1+3L_2$ and $2C$ is a Halphen pencil of index two which yields a fiber of type $II^*$ in the associated rational elliptic surface.

Concretely, (if $C$ is smooth) we can choose coordinates in $\mathbb{P}^2$ so that $C$ is the cubic given by
\[
y^2z=x(x-z)(x-\alpha\cdot z) \qquad \alpha\in \mathbb{C}\backslash\{0,1\}
\]
we can let $L_1$ be the line $z=0$ (hence $P_1 = (0:1:0)$)  and we can choose $L_2$ to be either one of the  lines $x=0, x-z=0$ or $x-\alpha\cdot z=0$. 

If we choose $L_2$ as $x=0$, then $P_2 = (0:0:1)$ and, similarly, if we take $L_2$ as $x-z=0$ (resp. $x-\alpha\cdot z=0$), then $P_2=(1:0:1)$  (resp.  $P_2=(\alpha:0:1)$).

In any case we blow-up $\mathbb{P}^2$ at the nine base points $
P_1^{(1)},\ldots,P_1^{(6)},P_2^{(1)},P_2^{(2)},P_2^{(3)}
$ and we obtain the following (dual) configuration of rational curves:

\begin{figure}[H]
\centering
\begin{tikzpicture}[line cap=round,line join=round,>=triangle 45,x=1.0cm,y=1.0cm]
\clip(-3.,-2.5) rectangle (5.5,1.);
\draw [line width=1.5pt] (-2.,0.)-- (5.,0.);
\draw [line width=1.5pt] (0.,0.)-- (0.,-1.);
\begin{scriptsize}
\draw [fill=black] (0.,-1.) circle (2.5pt);
\draw[color=black] (0.,-1.5) node {$3L_1$};
\draw [fill=black] (-2.,0.) circle (2.5pt);
\draw[color=black] (-2.,.5) node {$2E_1^{(5)}$};
\draw [fill=black] (-1.,0.) circle (2.5pt);
\draw[color=black] (-1.,.5) node {$4E_1^{(4)}$};
\draw [fill=black] (0.,0.) circle (2.5pt);
\draw[color=black] (0.,.5) node {$6E_1^{(3)}$};
\draw [fill=black] (1.,0.) circle (2.5pt);
\draw[color=black] (1.,0.5) node {$5E_1^{(2)}$};
\draw [fill=black] (2.,0.) circle (2.5pt);
\draw[color=black] (2.,.5) node {$4E_1^{(1)}$};
\draw [fill=black] (3.,0.) circle (2.5pt);
\draw[color=black] (3.,0.5) node {$3L_2$};
\draw [fill=black] (4.,0.) circle (2.5pt);
\draw[color=black] (4,.5) node {$2E_2^{(2)}$};
\draw [fill=black] (5.,0.) circle (2.5pt);
\draw[color=black] (5.,0.5) node {$E_2^{(1)}$};
\end{scriptsize}
\end{tikzpicture}
\end{figure}

\label{iistartwotriplelines}
\end{exen}

\begin{exen}[A triple line and a cubic]
Let $D: d=0$ be a nodal cubic and let $P_1$ denote its node. Let $L:l=0$ be an inflection line of  $D$ and denote the flex point by $P_2$. Let $L':l'=0$ be the line joining $P_1$ and $P_2$. 

Then the cubic $C$ given by $l^2l'+d=0$ is such that the intersection multiplicity of $D$ and $C$ at $P_2$ is $7$ and, by construction, the node $P_1$ lies on it. We also have that $L$ is also an inflection line of $C$ at $P_2$.

Concretely, we can choose as $D$ the nodal cubic given by $
y^2z=x^2(x+z)
$, then $P_1=(0:0:1)$ and we can choose $L$ to be the line $z=0$ so that $P_2=(0:1:0)$. Then $L'$ is the line $x=0$ and $C$ has equation
\[
z^2x+y^2z-x^3-x^2z=0
\]

Now, the pencil $\mathcal{P}$ generated by $B=D+3L$ and $2C$ is a Halphen pencil of index two which yields a fiber of type $II^*$ in the associated rational elliptic surface.

If we blow-up $\mathbb{P}^2$ at the nine base points $P_1^{(1)},P_2^{(1)},\ldots,P_2^{(8)}$ then we obtain the following (dual) configuration of rational curves:

\begin{figure}[H]
\centering
\begin{tikzpicture}[line cap=round,line join=round,>=triangle 45,x=1.0cm,y=1.0cm]
\clip(-3.,-2.5) rectangle (5.5,1.);
\draw [line width=1.5pt] (-2.,0.)-- (5.,0.);
\draw [line width=1.5pt] (0.,0.)-- (0.,-1.);
\begin{scriptsize}
\draw [fill=black] (0.,-1.) circle (2.5pt);
\draw[color=black] (0.,-1.5) node {$3L$};
\draw [fill=black] (-2.,0.) circle (2.5pt);
\draw[color=black] (-2.,.5) node {$2E_2^{(1)}$};
\draw [fill=black] (-1.,0.) circle (2.5pt);
\draw[color=black] (-1.,.5) node {$4E_2^{(2)}$};
\draw [fill=black] (0.,0.) circle (2.5pt);
\draw[color=black] (0.,.5) node {$6E_2^{(3)}$};
\draw [fill=black] (1.,0.) circle (2.5pt);
\draw[color=black] (1.,0.5) node {$5E_2^{(4)}$};
\draw [fill=black] (2.,0.) circle (2.5pt);
\draw[color=black] (2.,.5) node {$4E_2^{(5)}$};
\draw [fill=black] (3.,0.) circle (2.5pt);
\draw[color=black] (3.,0.5) node {$3E_2^{(6)}$};
\draw [fill=black] (4.,0.) circle (2.5pt);
\draw[color=black] (4,.5) node {$2E_2^{(7)}$};
\draw [fill=black] (5.,0.) circle (2.5pt);
\draw[color=black] (5.,0.5) node {$D$};
\end{scriptsize}
\end{tikzpicture}
\end{figure}

\label{iistartriplelinecubic}
\end{exen}

\begin{exen}[A line with multiplicity four and a conic]
Let $C$ be either a smooth or nodal cubic. Choose a sextactic point $P_1\in C$ (see Definition \ref{sextactic}). And let $Q$ be the corresponding osculating conic at $P_1$. Choose a line $L$ which is tangent to both $Q$ and $C$ at $P_1$ and let $P_2$ be the third point of intersection between $L$ and $C$. Then the pencil $\mathcal{P}$ generated by $B=Q+4L$ and $2C$ is a Halphen pencil of index two which yields a fiber of type $II^*$ in the associated rational elliptic surface.

For instance, consider the cubic $C$ given by
\[
-3x^3+xz^2+y^2z+2xy^2=x^3+(y^2-2x^2+xz)\cdot (2x+z)=0
\]
Let $P_1=(0:0:1)$, let $Q: y^2-2x^2+xz=0$ and let $L:x=0$. Then the intersection multiplicity of $Q$ and $C$ at $P_1$ is $6$ and we have that $P_2=(0:1:0)$ is a flex point with inflection line $2x+z=0$.

Now, if we blow-up $\mathbb{P}^2$ at $
P_1^{(1)},\ldots,P_1^{(7)},P_2^{(1)},P_2^{(2)}
$ we obtain the following (dual) configuration of rational curves:

\begin{figure}[H]
\centering
\begin{tikzpicture}[line cap=round,line join=round,>=triangle 45,x=1.0cm,y=1.0cm]
\clip(-3.,-2.5) rectangle (5.5,1.);
\draw [line width=1.5pt] (-2.,0.)-- (5.,0.);
\draw [line width=1.5pt] (0.,0.)-- (0.,-1.);
\begin{scriptsize}
\draw [fill=black] (0.,-1.) circle (2.5pt);
\draw[color=black] (0.,-1.5) node {$3E_1^{(1)}$};
\draw [fill=black] (-2.,0.) circle (2.5pt);
\draw[color=black] (-2.,.5) node {$2E_2^{(1)}$};
\draw [fill=black] (-1.,0.) circle (2.5pt);
\draw[color=black] (-1.,.5) node {$4L$};
\draw [fill=black] (0.,0.) circle (2.5pt);
\draw[color=black] (0.,.5) node {$6E_1^{(2)}$};
\draw [fill=black] (1.,0.) circle (2.5pt);
\draw[color=black] (1.,0.5) node {$5E_1^{(3)}$};
\draw [fill=black] (2.,0.) circle (2.5pt);
\draw[color=black] (2.,.5) node {$4E_1^{(4)}$};
\draw [fill=black] (3.,0.) circle (2.5pt);
\draw[color=black] (3.,0.5) node {$3E_1^{(5)}$};
\draw [fill=black] (4.,0.) circle (2.5pt);
\draw[color=black] (4,.5) node {$2E_1^{(6)}$};
\draw [fill=black] (5.,0.) circle (2.5pt);
\draw[color=black] (5.,0.5) node {$Q$};
\end{scriptsize}
\end{tikzpicture}
\end{figure}

\label{iistarline4}
\end{exen}

\begin{exen}[A line with multiplicity five and another line]
Consider either a smooth or nodal cubic $C$ and let $L_1$ be an inflection line of $C$ at a point $P_1$. We can always choose another line $L_2$ through $P_1$ which is tangent to $C$ at another point $P_2$. And the pencil $\mathcal{P}$ generated by $B=5L_2+L_1$ and $2C$ is a Halphen pencil of index two which yields a fiber of type $II^*$ in the associated rational elliptic surface.

Concretely, (if $C$ is smooth) we can choose coordinates in $\mathbb{P}^2$ so that $C$ is the cubic given by
\[
y^2z=x(x-z)(x-\alpha\cdot z) \qquad \alpha\in \mathbb{C}\backslash\{0,1\}
\]
we can let $L_1$ be the line $z=0$ (hence $P_1 = (0:1:0)$)  and we can choose $L_2$ to be either one of the  lines $x=0, x-z=0$ or $x-\alpha\cdot z=0$. 

If we choose $L_2$ as $x=0$, then $P_2 = (0:0:1)$ and, similarly, if we take $L_2$ as $x-z=0$ (resp. $x-\alpha\cdot z=0$), then $P_2=(1:0:1)$  (resp.  $P_2=(\alpha:0:1)$).

In any case we blow-up $\mathbb{P}^2$ at $
P_1^{(1)},\ldots,P_1^{(4)},P_2^{(1)},\ldots,P_2^{(5)}$
and we obtain the following (dual) configuration of rational curves:

\begin{figure}[H]
\centering
\begin{tikzpicture}[line cap=round,line join=round,>=triangle 45,x=1.0cm,y=1.0cm]
\clip(-3.,-2.5) rectangle (5.5,1.);
\draw [line width=1.5pt] (-2.,0.)-- (5.,0.);
\draw [line width=1.5pt] (0.,0.)-- (0.,-1.);
\begin{scriptsize}
\draw [fill=black] (0.,-1.) circle (2.5pt);
\draw[color=black] (0.,-1.5) node {$3E_2^{(1)}$};
\draw [fill=black] (-2.,0.) circle (2.5pt);
\draw[color=black] (-2.,.5) node {$2E_2^{(4)}$};
\draw [fill=black] (-1.,0.) circle (2.5pt);
\draw[color=black] (-1.,.5) node {$4E_2^{(3)}$};
\draw [fill=black] (0.,0.) circle (2.5pt);
\draw[color=black] (0.,.5) node {$6E_2^{(2)}$};
\draw [fill=black] (1.,0.) circle (2.5pt);
\draw[color=black] (1.,0.5) node {$5L_2$};
\draw [fill=black] (2.,0.) circle (2.5pt);
\draw[color=black] (2.,.5) node {$4E_1^{(1)}$};
\draw [fill=black] (3.,0.) circle (2.5pt);
\draw[color=black] (3.,0.5) node {$3E_1^{(2)}$};
\draw [fill=black] (4.,0.) circle (2.5pt);
\draw[color=black] (4,.5) node {$2E_1^{(3)}$};
\draw [fill=black] (5.,0.) circle (2.5pt);
\draw[color=black] (5.,0.5) node {$L_1$};
\end{scriptsize}
\end{tikzpicture}
\end{figure}

\label{iistarline5}
\end{exen}

\bibliography{references} 
\bibliographystyle{plain}

\end{document}